\definecolor{green}{rgb}{0,0.8,0} 
\definecolor{deepgreen}{cmyk}{1,0,1,0.5}
\newcommand{\Del}[1]{}
\numberwithin{equation}{section}
\newtheorem{theorem}{Theorem}[section]
\newtheorem{corollary}[theorem]{Corollary}
\newtheorem{lemma}[theorem]{Lemma}
\newtheorem{proposition}[theorem]{Proposition}
\newtheorem{remark}[theorem]{Remark}
\renewcommand{\div}{\mathrm{div}\,}
\renewcommand{\hbar}{{\underline h}}
\newcommand{\ybar}{{\underline y}}
\newcommand{\bbR}{\mathbb R}
\newcommand{\calB}{\mathcal B}
\newcommand{\calE}{\mathcal E}
\newcommand{\calN}{\mathcal N}
\newcommand{\tilR}{{\tilde{R}}}
\newcommand{\scE}{{\mathscr{E}}}
\newsavebox{\@brx}
\newcommand{\llangle}[1][]{\savebox{\@brx}{\(\m@th{#1\langle}\)}%
  \mathopen{\copy\@brx\kern-0.5\wd\@brx\usebox{\@brx}}}
\newcommand{\rrangle}[1][]{\savebox{\@brx}{\(\m@th{#1\rangle}\)}%
  \mathclose{\copy\@brx\kern-0.5\wd\@brx\usebox{\@brx}}}
\begin{document}
	
	\title[Stability and instability hard phase]{On stability analysis for steady states of the free boundary hard phase model in general relativity}
	\author{Zeming Hao
		\and Shuang Miao}
	\maketitle
	\begin{abstract}
		The hard phase model describes a relativistic barotropic fluid with sound speed equal to the speed of light. In the framework of general relativity, the motion of the fluid is coupled to the Einstein equations which describe the structure of the underlying spacetime. This model with free boundary admits a $1$-parameter family of steady states with spherical symmetry. In this work, for perturbations within spherical symmetry, we study the stability and instability of this family. We prove that the linearized operator around steady states with large central densities admits a growing mode, while such growing modes do not exist for steady states with small central densities. Based on the linear analysis, we further demonstrate a dynamical nonlinear instability for steady states with large central densities. The proof relies on a spectral analysis of the linearized operator and an a priori estimate on the full nonlinear free boundary problem. 
	\end{abstract}

	\section{Introduction}
	\subsection{General setup}
		General relativity presents us with a unified theory about space, time and gravitation, which is is considered on a $4$-dimensional spacetime manifold \(M\) with the metric \(g\) of signature $(-1,+1,+1,+1)$. An ideal fluid is defined to be matter whose energy-momentum tensor satisfies
	\begin{align}\label{energy tensor}
	T_{\alpha\beta}=(\rho+p)u_{\alpha}u_{\beta}+pg_{\alpha\beta},
	\end{align}
	where \(\rho\) is mass-energy density, \(p\) is pressure and \(u\) is a future directed unit timelike vector field. Einstein equations give the relation between the space-time  geometry and the matter represented by the tensor \(T\)
	\begin{align}\label{einstein eq}
	\bar{R}_{\alpha\beta}-\frac{1}{2}g_{\alpha\beta}\bar{R}=8\pi T_{\alpha\beta},
	\end{align}
	where \(\bar{R}_{\alpha\beta}\) and \(\bar{R}\) are respectively the Ricci curvature and scalar curvature of \(g\). The Bianchi identity applied to the field equations \(\eqref{einstein eq}\) yields conservation laws
	\begin{align}\label{conservation law}
	\nabla^{\alpha}T_{\alpha\beta}=0,
	\end{align}
	where \(\nabla\) denotes the Levi-Civita connection of $g$. When the fluid is isentropic, the component of \(\eqref{conservation law}\) along \(u\) is
	\begin{align}\label{mass conservation}
	u^{\alpha}\nabla_{\alpha}\rho+(\rho+p)\nabla^{\alpha}u_{\alpha}=0.
	\end{align}
	the projection of \(\eqref{conservation law}\) with respect to \(u\) is
	\begin{align}\label{momentum conservation}
	(\rho+p)u^{\alpha}\nabla_{\alpha}u_{\beta}+(g_{\alpha\beta}+u_{\alpha}u_{\beta})\nabla^{\alpha}p=0.
	\end{align}
	Assume the fluid is initially supported on a compact domain \(\mathcal{B}_{0}\subset\mathbb{R}^{3}\). The domain \(\calB(t)\) occupied by fluid does not have a fixed shape and as the system evolves \(\calB(t)\) changes in time. Therefore it leads to a free boundary problem subject to the following boundary conditions:
	\begin{align}\label{boundary condition fluid}
	p=0\quad \textrm{on}\quad \partial\mathcal{B}(t),\quad\quad
	u|_{\partial\calB}\in  \,T(t,\partial\mathcal{B}(t)).
	\end{align}
	In this work we consider an equation of state of the following form
	\begin{align}\label{state eq}
	p=\rho-\rho_{0},
	\end{align}
where $\rho_{0}>0$ is a constant. In this work we choose appropriate units so that $\rho_{0}=1$. The fluid described by \eqref{state eq} is called \textit{hard phase} fluid, which is an idealized model for the physical situation where, during the gravitational collapse of the degenerate core of a massive star, the mass-energy density exceeds the nuclear saturation density. See \cite{Ch-video,Ch-hp1, F-P, Lich-book, Rez-book, Walecka, Zeldovich, MS}. The well-posedness of the free boundary problem \eqref{energy tensor}-\eqref{einstein eq}-\eqref{conservation law}-
	 \eqref{boundary condition fluid}-\eqref{state eq} is proved in \cite{MS}\footnote{See also a version \cite{MSW1} in a fixed Minkowski spacetime and \cite{Oliynyk3, GinLin1} for similar models in a fixed spacetime.}.
	 
	\subsection{Statement of the main result}
	 	In this work we assume the underlying spacetime is spherically symmetric, i.e. the group $SO(3)$ acts as an isometry group on $M$ \footnote{See \cite{Ch-hp1} for more detailed discussions on the geometry of spacetime with spherical symmetry.}. In Schwarzschild coordinates $(t,r,\theta,\varphi)=(x^{0},x^{1},x^{2},x^{3})$, the spacetime metric $g$ can be written in the form
	\begin{align}\label{metric}
	g=-e^{2\mu(t,r)}dt^{2}+e^{2\lambda(t,r)}dr^{2}+r^{2}(d\theta^{2}+\sin^{2}\theta d\varphi^{2}).
	\end{align}
	It should be noted that the fluid occupies a bounded region, but the gravitational field exists in whole space.
	We consider space-time to be asymptotically flat, therefore the boundary condition hold:
	\begin{align}\label{boundary condition field}
	\lim_{r\to\infty}\lambda(t,r)=\lim_{r\to\infty}\mu(t,r)=0.
	\end{align}
	As radius $r\rightarrow0_{+}$, the mass of the portion of fluid occupying the ball centered at the center of symmetry with radius $r$ vanishes faster than $r$. This implies the following condition on $\lambda$ at the center: 
	\begin{align}\label{center condition}
	\lambda(t,0)=0.
	\end{align}
	 The nonvanishing connection coefficients of the metric \(\eqref{metric}\) are \(\Gamma^{1}_{AA},\Gamma^{A}_{BC}\) (\(A,B,C\in\{2,3\}\)) and
	\begin{align*}
	\Gamma^{0}_{00}=\dot{\mu},\quad \Gamma^{0}_{10}=\mu',\quad  \Gamma^{0}_{11}=e^{-2\mu}e^{2\lambda}\dot{\lambda},\quad  \Gamma^{1}_{00}=e^{-2\lambda}e^{2\mu}\mu',\quad \Gamma^{1}_{10}=\dot{\lambda},\quad \Gamma^{1}_{11}=\lambda',\quad \Gamma^{2}_{12}=\Gamma^{3}_{13}=r^{-1},
	\end{align*}
	where \(\dot{}\) and \('\) denote the derivatives with respect to \(t\) and \(r\) respectively. The spherically symmetric fluid quantities \(\rho=\rho(t,r),p=p(t,r),u=u(t,r)\) are scalar functions, and the four velocity is \(\vec{u}=(u^{0},u,0,0)\) where
	\begin{align*}
	u^{0}=e^{-\mu}\sqrt{1+e^{2\lambda}u^{2}}=:e^{-\mu}\left\langle u\right\rangle .
	\end{align*}
	A straightforward calculation gives the following Einstein-Euler system under spherical symmetry
	\begin{align}\label{1field eq00}
	&e^{-2\lambda}(2r\lambda'-1)+1=8\pi r^{2}\left(\rho+e^{2\lambda}(\rho+p)u^{2} \right),\\ \label{1field eq11}
	&e^{-2\lambda}(2r\mu'+1)-1=8\pi r^{2}\left(p+e^{2\lambda}(\rho+p)u^{2} \right),\\ \label{1field eq01}
	&\dot{\lambda}=-4\pi r e^{\mu+2\lambda}\left\langle u\right\rangle u(\rho+p),\\ \label{field eq k}
	&e^{-2\lambda}\left(\mu''+(\mu'-\lambda')(\mu'+\frac{1}{r}) \right) -e^{-2\mu}\left( \ddot{\lambda}+\dot{\lambda}(\dot{\lambda}-\dot{\mu})\right) =8\pi p,\\ \label{1mass eq SS}
	&\dot{\rho}+e^{\mu}\frac{u}{\left\langle u\right\rangle}\rho'+(\rho+p)\left[\dot{\lambda}+e^{\mu}\frac{u}{\left\langle u\right\rangle}\left( \lambda'+\mu'+\frac{2}{r}\right)+e^{\mu}\frac{u'}{\left\langle u\right\rangle} +e^{2\lambda}\frac{u}{\left\langle u\right\rangle} \frac{\dot{\lambda}u+\dot{u}}{\left\langle u\right\rangle}   \right] =0,\\ \label{1momentum eq SS}
	&(\rho+p)\left[ e^{2\lambda}(\dot{u}+2\dot{\lambda}u)+e^{\mu}\left\langle u\right\rangle\mu'+e^{\mu+2\lambda}\frac{u}{\left\langle u\right\rangle}(u'+\lambda'u)\right] +e^{\mu}\left\langle u\right\rangle p'+e^{2\lambda}u \dot{p}=0.
	\end{align}
Outside the fluid domain, we simply set $p=\rho=0$.
    As we shall show, the above system \eqref{1field eq00}-\eqref{1momentum eq SS} admits a family of steady states which satisfy the following static equations:
    \begin{align}\label{1steady field eq00}
    &e^{-2\lambda}(2r\lambda'-1)+1=8\pi r^{2}\rho,\\ \label{1steady field eq11}
    &e^{-2\lambda}(2r\mu'+1)-1=8\pi r^{2}p,\\ \label{steady field eq k}
    &e^{-2\lambda}\left(\mu''+(\mu'-\lambda')(\mu'+\frac{1}{r}) \right) =8\pi p,\\ \label{1steady momentum eq}
    &(\rho+p)\mu'+p'=0.
    \end{align}
	We consider the density at the center of symmetry called \textit{central density}. Then given a value of central density, we shall show in Section \ref{sec2} that there is a solution to the above static system.
	Therefore we obtain a 1-parameter family \((\rho_{\kappa},p_{\kappa},\lambda_{\kappa},\mu_{\kappa})\), where \(\kappa\) is called \textit{central redshift} defined in Section \ref{sec2}, which is used to parameterize the steady state solutions. In this paper we demonstrate a stability analysis for the hard phase model around the above  equilibrium states:
	\begin{theorem}\label{1.1.}
		Let \((\rho_{\kappa},p_{\kappa},\lambda_{\kappa},\mu_{\kappa})\) be a one-parameter family of spherically symmetric steady states to the Einstein-Euler system with equation of state \eqref{state eq} and \(\kappa\) be the central redshift of the corresponding steady state. Then,
		\begin{enumerate}
			\item For \(\kappa\) sufficiently large, the associated steady state is linearly unstable in
			the sense that its linearized system possesses an exponentially growing mode, while for a steady state with \(\kappa\) sufficiently small, there is no such growing mode, which means the steady state is linearly stable. \label{main result1}
			\item For the full nonlinear system \eqref{1field eq00}-\eqref{1momentum eq SS} and \(\kappa\) sufficiently large,  no matter how small the amplitude of the initial perturbation around the steady state is, we can find a solution such that the corresponding energy escapes at a later time: the associated steady state is unstable. More precisely, the nonlinear instability is driven by a linear growing mode.\label{main result2}
		\end{enumerate}
	\end{theorem}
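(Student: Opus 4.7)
My plan is to prove Theorem \ref{1.1.} in three stages: reduce the linearized spherically symmetric Einstein--Euler system around a steady state to a single self-adjoint spectral problem, analyze the sign of the corresponding quadratic form in the two regimes of $\kappa$, and bootstrap from the principal growing mode to nonlinear instability. For the first stage I would fix the free boundary by passing to Lagrangian coordinates adapted to each steady state $(\rho_\kappa,p_\kappa,\lambda_\kappa,\mu_\kappa)$, and write a perturbation as the radial displacement $\xi(t,r)$ of a fluid element from equilibrium. The constraints \eqref{1field eq00} and \eqref{1field eq01} then allow one to solve for the metric perturbation $\delta\lambda$, and subsequently $\delta\mu$ from \eqref{1field eq11}, algebraically in terms of $\xi$ and $\dot\xi$, eliminating all gravitational unknowns. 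Substituting into the linearization of the momentum equation \eqref{1momentum eq SS}, together with the equation of state \eqref{state eq} (for which the sound speed equals $1$), collapses the system to a single second-order wave-type equation
\begin{equation*}
\calA_\kappa\ddot\xi+\calL_\kappa\xi=0,
\end{equation*}
where $\calA_\kappa$ is a positive multiplication operator and $\calL_\kappa$ is a formally self-adjoint Sturm--Liouville-type operator on a weighted $L^{2}$ space, with regularity at the center \eqref{center condition} and propagation of $p=0$ at the moving boundary \eqref{boundary condition fluid} encoded as natural boundary conditions.

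Growing modes correspond to negative eigenvalues of $\calA_\kappa^{-1}\calL_\kappa$, hence to negative values of the Rayleigh quotient $\calI_\kappa(\xi)=\langle\calL_\kappa\xi,\xi\rangle/\langle\calA_\kappa\xi,\xi\rangle$. For $\kappa$ small the steady state is low-amplitude and nearly Newtonian; expanding $\calL_\kappa$ in $\kappa$ and estimating term by term one shows $\langle\calL_\kappa\xi,\xi\rangle\geq c(\kappa)\|\xi\|^{2}$ with $c(\kappa)>0$, so the operator is strictly positive and no growing mode exists. For $\kappa$ large I would construct an explicit trial displacement $\xi_\ast$ (Chandrasekhar's classical pulsation analysis suggests a homologous profile such as $\xi_\ast(r)=r$ restricted to $[0,R_\kappa]$) and identify $\calI_\kappa(\xi_\ast)$ with an integral whose sign can be tied to the monotonicity of the ADM mass $M(\kappa)$ along the steady family; past a critical central redshift this integral becomes negative. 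A standard min--max/variational argument, combined with a Weyl-type analysis ruling out negative essential spectrum, then promotes indefiniteness of $\calI_\kappa$ to a genuine eigenpair $(\mu^{2},\xi_{\mathrm{gm}})$ with $\mu>0$, and hence to an exponentially growing mode $e^{\mu t}\xi_{\mathrm{gm}}(r)$.

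To prove \eqref{main result2} I would follow the Guo--Strauss scheme for nonlinear instability driven by a linear growing mode. Take initial data of amplitude $\varepsilon$ aligned with the principal growing mode and, using the local well-posedness and higher-order energy estimates of \cite{MS} specialized to perturbations of the background steady state, establish a Duhamel inequality
\begin{equation*}
\|\mathbf{Z}(t)\|_{E}\leq C\varepsilon\, e^{\mu t}+C\!\int_0^t e^{\mu(t-s)}\|\mathbf{Z}(s)\|_{E}^{2}\,ds
\end{equation*}
for the full nonlinear perturbation $\mathbf{Z}$ in an energy norm $E$ compatible with the linear spectral framework. A Gronwall/bootstrap argument then yields $\|\mathbf{Z}(t)\|_{E}\lesssim \varepsilon e^{\mu t}$ up to the escape time $T_\varepsilon\sim\mu^{-1}\log(1/\varepsilon)$, at which point the nonlinear solution has left a fixed neighborhood of the steady state, establishing the desired dynamical instability.

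The main obstacle will be the large-$\kappa$ spectral step: linking a sharp sign of $\calI_\kappa$ to a scalar monotonicity statement about the steady family, while respecting the peculiarities of the hard phase equation of state $p=\rho-1$ for which $c_{s}=1$ invalidates the classical polytropic simplifications. The free surface adds a further complication: across $\partial\calB$ the density $\rho_\kappa$ jumps by $1$ rather than vanishing smoothly, so $\delta\rho$ carries a distributional boundary contribution from the displacement and the boundary terms in $\calL_\kappa$ must be handled explicitly; moreover the linear and nonlinear energy spaces must be tuned compatibly so that the Duhamel bootstrap actually closes on the free boundary problem of \cite{MS}.
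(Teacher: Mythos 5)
Your reduction to a Sturm--Liouville eigenvalue problem in Lagrangian coordinates is the same structural move the paper makes (Propositions \ref{prop: linearized operator} and \ref{3.2..}), and your nonlinear step via a Duhamel inequality and a Guo--Strauss bootstrap up to the escape time $T^{\delta}\sim\frac{1}{\sqrt{-\nu_{\ast}}}\log(1/\delta)$ is exactly the scheme carried out in Sections \ref{sec6}--\ref{sec7}. The gaps are in both limbs of the spectral analysis, and the large-$\kappa$ one is serious.

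For the large-$\kappa$ instability you propose the homologous trial displacement $\xi_\ast(r)=r$ and a turning-point-type argument tying the sign of the Rayleigh quotient to the monotonicity of the ADM mass $M(\kappa)$. This is not the route the paper takes, and it is not clear it can be made to work here: the paper establishes \emph{no} monotonicity or oscillation structure for the mass curve, and the classical Poincar\'e/HTWW turning-point criterion is a count of eigenvalue crossings along the family, not a direct sign criterion for a single $\kappa$; making it rigorous requires global information about $M(\kappa)$ that is not available (and for the hard phase $c_s=1$ the usual polytropic/Chandrasekhar $4-3\gamma_1$ simplification that makes $\xi_\ast=r$ decisive does not apply). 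The paper instead performs an asymptotic analysis of the steady states in the singular limit $\kappa\to\infty$: Lemmas \ref{2.6..}--\ref{2.8..} and Proposition \ref{2.9..} show that on the rescaled annulus $[\kappa^{\alpha_1}e^{-2\kappa},\kappa^{\alpha_2}e^{-2\kappa}]$ the quantities $r^2\rho_\kappa$, $m_\kappa/r$, $r\mu_\kappa'$, etc.\ converge to the equilibrium of a planar autonomous system, and Proposition \ref{prop: linear unstable} then uses a \emph{localized} test function $\chi_\kappa(y)=y^{-1}\xi_\kappa(y)$ (note $b=-1$, not $b=+1$) supported on that annulus so that the potential contribution is $\sim -\int y^{-1}\,dy \sim -\log\kappa$, dominating the $O(1)$ cut-off error. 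Without some replacement for this asymptotic structure your $\xi_\ast=r$ trial function gives no control on the sign of the quadratic form.

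On the small-$\kappa$ side, the ``nearly Newtonian, low-amplitude'' picture is not quite right for the hard phase model: as $\kappa\to 0^+$ one has $\rho_\kappa\to 1$ and $p_\kappa\to 0$, so the density does not become small; what does go to zero is the support radius $R_\kappa$. The paper exploits exactly this: in Proposition \ref{prop: linear stable} it rescales to the unit interval, applies a Poincar\'e--Hardy inequality with weight $y^4$ on $[0,1]$ including the boundary value, and absorbs the bounded potential and Robin-boundary contributions because they carry an extra factor $R_\kappa^2$. Your ``expand in $\kappa$ and estimate term by term'' plan would have to discover this smallness-of-domain mechanism; as written it assumes a smallness of the profile that isn't there.

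Finally, you are right that the free boundary produces a non-trivial boundary condition; note the paper shows it is a Robin condition \eqref{linear boundary condition chi}, which is essential both for the symmetry computation of $\langle L\chi_1,\chi_2\rangle$ and for the boundary term bookkeeping in the stability proof. Any version of your argument must reproduce and use this specific condition rather than treat the boundary perturbation as a distributional source.
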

\begin{remark}\label{rmk: general model}
	In this work we are interested in the hard phase fluid whose equation of state is given by \eqref{state eq}. In fact our argument can be generalized (without any essential modifications) to more general  model with equation of state in the form 
	\begin{align}\label{general model}
	p=c^{2}_{s}(\rho-\rho_{0}),\quad 0<c_{s}^{2}\leq 1,\quad \rho_{0}>0.
	\end{align}
\end{remark}
	
	\subsection{History and related works}
	In the past few decades, much progress has been made beyond the local theory for Einstein equations. However, in the presence of isolated bodies, especially with free boundary, our understanding on long time evolution is very limited. Such a theory for isolated bodies is of central importance as it is naturally a preliminary step in any further analysis of the motion and interaction of gravitating bodies. In \cite{FoSc1}, the authors proved the existence of solutions to static system \eqref{1steady field eq00}-\eqref{1steady momentum eq}, and show that \emph{small} stars are stable in the sense that these stars lie in a local minimum of a certain mass energy functional. Moreover in \cite{FoSc1} for small stars the authors proved a uniform boundedness for an energy of linearized system. However, the spectral stability of the linearized operator does not seem to be investigated in \cite{FoSc1}.
	
	For Einstein-Euler system over the entire spacetime, in \cite{HLR, HL} the authors gave a precise description on the spectral stability of the linearized operator around a $1$-parameter family of steady states, under the assumption of spherical symmetry. There are crucial differences between our work and \cite{HLR,HL}. First, the equation of state considered in \cite{HLR,HL} does not seem to cover the hard phase model, i.e. the sound speed of the model in \cite{HLR,HL} is not always equal to the speed of light. Second, when the steady states for the model considered in \cite{HLR,HL} are compactly supported, the density $\rho$ is continuous across the boundary of the support, which is more like a ``gaseous" model. While the hard phase model is more like a ``liquid" model in the sense that the density is discontinuous across the boundary of compact support. Third, the authors in \cite{HLR,HL} use a Hamiltonian formulation to investigate the spectral stability, while for the hard phase model we use the boundary condition satisfied by $\zeta$ to reduce the operator $L$ to a Sch\"odinger type operator with a localized potential and use a more direct approach to analyze the spectrum of $L$. 
	See \cite{Lam} for a  similar approach on linear stability analysis for liquid Lane-Emden stars in Newtonian framework.
	
	Based on the linear analysis in this work and the a priori estimates for the full general hard phase free boundary problem established in \cite{MS}, we further prove the nonlinear instability for steady states with sufficiently large central density. See a counterpart \cite{Jang,HM} for Newtonian Lane-Emden stars.  
	
	\subsection{Main ideas for the proof}
	\subsubsection{linear stability and instability}
	The hard phase model posses a corresponding one-parameter family of compactly supported steady states with finite mass, parameterized by the value of the central redshift \(\kappa>0\). For a member of this family, given a point $p_{0}$ in the corresponding static spacetime. Then let $y=y(p_{0})>0$ be the distance between $p_{0}$ and the center of symmetry. Let $\eta(y,t)$ be the radial position of the fluid particle at time $t$ so that 
	\begin{align}\label{def Lag coord}
	\partial_{t}\eta=\frac{u}{u^{0}}\circ \eta.
	\end{align} 
	Then for a steady state we have $\eta(y,t)\equiv y$ for all $t$. Now we consider a perturbation around a steady state such that 
	\begin{align}\label{def perturbation}
	\eta(y,t)=y(1+\zeta(y,t)).
	\end{align}
	In Proposition \ref{prop: linearized operator}, we derive a linearized equation for $\zeta$ of the form (see \eqref{linear eq})
	\begin{align}\label{eq:linear pre}
	\ddot{\zeta}+L\zeta=0,
	\end{align}
	with a Robin boundary condition, where $L$ is a Sch\"odinger type operator. Therefore the stability analysis on the linearized system is reduced to the analysis on the eigenvalues of the operator \(L\). According to the functional tools introduced in Lemma \ref{4.1..}, the smallest eigenvalue of the operator \(L\) corresponds to the minimum of the functional \(\left\langle L\chi,\chi\right\rangle\), where \(\chi\) is a function of the radial variable \(y\). For \(\kappa>0\) sufficiently small, we can prove \(\left\langle L\chi,\chi\right\rangle\ge 0\) for any \(\chi\), so there is no growing mode for the linearized system \eqref{eq:linear pre} and it is spectrally stable. To prove the instability for large values of \(\kappa>0\) it is therefore natural to construct an explicit test function \(\chi_{\kappa}\) such that \(\left\langle L\chi_{\kappa},\chi_{\kappa}\right\rangle<0\). The key to the construction is a precise understanding of the limiting behavior of the sequence of steady states \((\rho_{\kappa},p_{\kappa},\lambda_{\kappa},\mu_{\kappa})\) in the singular limit \(\kappa \to \infty\). We
	show in Lemma \ref{2.9..} (see also \cite{HLR}) that in a suitably rescaled annulus around the center \(r=0\) the behavior of the steady states is asymptotic to the equilibrium point of a autonomous planar dynamical system . Therefore we can make a judicious choice of a test function \(\chi_{\kappa}\) such that \(\left\langle L\chi_{\kappa},\chi_{\kappa}\right\rangle<0\), which implies the linear instability.
	
	\subsubsection{Nonlinear instability} The passage from linear instability to nonlinear instability requires a sharp nonlinear energy estimate that allows us to control the growth of high energy norm in terms of the fastest linear growth rate \(\sqrt{-\nu_{\ast}}\) defined in Section \ref{sec4}. To obtain such an estimate, we follow the approach introduced in \cite{MSW1}. We start by deriving a quasilinear system for fluid variables from Einstein-Euler system \eqref{1field eq00}-\eqref{1momentum eq SS}. By introducing the renormalized fluid velocity field \(\vec{V}\) and the enthalpy \(\sigma\) (see \eqref{vecV}), we obtain a coupled quasilinear system for fluid velocity \(V\) and enthalpy perturbation variable \(\varepsilon\) (see \eqref{fk}, \eqref{Fk} and \eqref{Hk}):
	\begin{equation}\label{1.25}
	\square D_{\vec{V}}^{k}V=F_{k}+\frac{1}{r}F_{k-1}\quad \textrm{in}\ \calB(t),\quad\quad
	\left(D_{\vec{V}}^{2}+\frac{1}{2}aD_{n}\right)D_{\vec{V}}^{k}V=f_{k}\quad \textrm{on}\ \partial\calB(t),
	\end{equation}
	and
	\begin{equation}\label{1.26}
	\square D_{\vec{V}}^{k+1}\varepsilon=-12e^{-2\lambda}\mu'\partial_{r}D_{\vec{V}}^{k+1}\varepsilon+H_{k}+\frac{1}{r}H_{k-1}\quad \textrm{in}\ \calB(t),\quad\quad
    D_{\vec{V}}^{k+1}\varepsilon=0 \quad \textrm{on}\ \partial\calB(t),
	\end{equation}
	for any \(k\ge 0\), where \(f_{k},F_{k}\) and \(H_{k}\) are described in Lemma \ref{2.3.}-\ref{2.5.}. In \cite{MS} a similar quasilinear system was shown to be hyperbolic type, and a local-wellposedness result of free boundary hard phase fluids was obtained. For the fluid velocity equations \eqref{1.25}, We multiply the boundary equation and interior equation by \(\frac{1}{a}(D_{\vec{V}}^{k+1}V),D_{\vec{V}}^{k+1}V\) respectively and integrate. By observing the signs of
	the boundary terms, we obtain the following energy in Lemma \ref{3.8.}
	\begin{align*}
	\int_{\calB(t)}|\partial_{t,r}D_{\vec{V}}^{k}V|^{2}dx+\int_{\partial\calB(t)}|D_{\vec{V}}^{k+1}V|^{2}dS.
	\end{align*}
	For the wave equation for \(D_{\vec{V}}^{k+1}\varepsilon\) with Dirichlet boundary conditions, we choose a suitable multiplier consisting of an appropriate linear combination of \(\vec{V}\) and the normal \(n\) to \(\partial\calB(t)\), and apply integration by parts in Lemma \ref{3.9.} and \ref{3.10.}. The energy functional for \(\eqref{1.26}\) controls
	\begin{align*}
	\int_{\calB(t)}|\partial_{t,r}D_{\vec{V}}^{k+1}\varepsilon|^{2}dx+\int_{0}^{t}\int_{\partial\calB(\tau)}|\partial_{t,r}D_{\vec{V}}^{k+1}\varepsilon|^{2}dSd\tau.
	\end{align*}
	Then we apply elliptic estimates to \eqref{1.25} and \eqref{1.26} to control high energy norm \(\scE_{l}(t)\) defined in Section \ref{sec6} (see details in Proposition \ref{3.1.}). To close the energy estimate and prove nonlinear instability, we consider the energy estimate in Lagrangian coordinate system that is equivalent to the coordinate system $(t,r,\theta,\varphi)$ under the assumptions \eqref{bootstrap}. Applying Sobolev interpolation inequality, we get the following sharp energy estimate
	\begin{align*}
	\bar{\scE_{l}}(t)\le C_{0}\bar{\scE_{l}}(0)+\int_{0}^{t}\varrho\bar{\scE_{l}}(s)+C_{2}\bar{\scE_{l}}^{\frac{3}{2}}(s)+C_{\varrho}\Arrowvert\bar{V},\bar{\varepsilon}\Arrowvert_{L^{2}(\Omega)}ds,
	\end{align*}
	for small enough \(\varrho>0\) and constants \(C_{0}, C_{2}, C_{\varrho}>0\). By a standard bootstrap argument (see \cite{GHS}), we have \(\Arrowvert\bar{V},\bar{\varepsilon}\Arrowvert_{L^{2}(\Omega)}\lesssim \frac{1}{2}\delta e^{\sqrt{-\nu_{\ast}}t}\), where \(\delta\) is the magnitude of the initial perturbation. Choosing the appropriate initial perturbation \(\delta\bar{V}_{0},\delta\bar{\varepsilon}_{0}\)
	, we have \(\Arrowvert e^{tL}(\delta\bar{V}_{0},\delta\bar{\varepsilon}_{0})\Arrowvert_{L^{2}(\Omega)}\sim \delta e^{\sqrt{-\nu_{\ast}}t}\) due to linear instability. This means linear solution dominate the nonlinear dominate correction. Therefore at the escape time \(t=T^{\delta}\) defined by \eqref{T delta}, we shall see
	\begin{align*}
	\Arrowvert\bar{V}(T^{\delta}),\bar{\varepsilon}(T^{\delta})\Arrowvert_{L^{2}(\Omega)}\gtrsim \frac{1}{2}\delta e^{\sqrt{-\nu_{\ast}}T^{\delta}}=\frac{1}{2}\theta_{0},
	\end{align*}
	where \(\theta_{0}\) is independent of \(\delta\), and instability happens (see details in Section \ref{sec7}).
	\subsection{Outline of the paper}
	In Section \ref{sec2} we prove the existence of a 1-parameter family of steady states. In Section \ref{sec3} we derive the linearized equation for the perturbation \(\zeta\) and investigate the structure of the linearized operator. In Section \ref{sec4} we complete the proof of linear stability and instability, which proves the part \eqref{main result1} of Theorem \ref{1.1.}. In Section \ref{sec5} we derive the quasilinear equations for renormalized fluid velocity \(V\) and enthalpy perturbation variable \(\varepsilon\). In Section \ref{sec6} we prove a sharp nonlinear energy estimate (see Proposition \ref{3.1.}). Finally in Section \ref{sec7} we close all bootstrap assumption and prove the nonlinear instability, which completes the proof of the part \eqref{main result2} of Theorem \ref{1.1.}.
	\subsection*{Acknowledgment}
This work was supported by National Key R \& D Program of China 2021YFA1001700,  NSFC grants 12071360, 12221001, \& 12326344.

		\section{Steady states}\label{sec2}
For a steady state, all the physical quantities are time-independent and the spatial components of the velocity field necessarily vanishes. Notice that equations \(\eqref{1field eq01}\) and \(\eqref{1mass eq SS}\) are satisfied identically. The Einstein-Euler system becomes \eqref{1steady field eq00}-\eqref{1steady momentum eq}. We reduce the above equations as an ODE. Define
	\begin{align*}
	Q(\rho):=\int_{1}^{\rho}\frac{p'(s)}{s+p(s)}ds, \ \rho\ge 1.
	\end{align*}
	Then \(\eqref{1steady momentum eq}\) can be written as
	\begin{align*}
	\frac{d}{dr}(Q(\rho)+\mu)=0, \ \Longrightarrow \ Q(\rho(r))+\mu(r)=const.
	\end{align*}
	We introduce \(\ybar(r)=const-\mu(r)\) and find that \(\rho\) is given in terms of \(\ybar\)
	\begin{align}\label{rho y}
	\rho=g(\ybar):=\left\{
	\begin{aligned}
	Q^{-1}(\ybar)&, \ \ybar\ge0,\\
	0\quad &,\ \ybar<0.
	\end{aligned}
	\right.
	\end{align}
	Taking into account the equation of state \(\eqref{state eq}\) it follows that
	\begin{align}\label{p y}
	p=h(\ybar):=g(\ybar)-1.
	\end{align}
	Then we eliminate the metric coefficient \(\lambda\) in the system. By integrating the field equation \(\eqref{1steady field eq00}\) and using the center condition \(\eqref{center condition}\), we get
	\begin{align}\label{lamda eliminate}
	e^{-2\lambda(r)}=1-\frac{2m(r)}{r},
	\end{align}
	where the mass function \(m\) is defined by
	\begin{align}\label{m def}
	m(r)=4\pi\int_{0}^{r}s^{2}\rho(s)ds=4\pi\int_{0}^{r}s^{2}g(y(s))ds.
	\end{align}
	Finally, using \(\eqref{lamda eliminate}\) to substitute for the term \(e^{-2\lambda}\) together with \(\eqref{1steady field eq11}\), we have
	\begin{align}\label{ode}
	\ybar^{\prime}(r)=-\frac{1}{1-2m(r)/r}\left( \frac{m(r)}{r^{2}}+4\pi rp(r)\right) ,
	\end{align}
	where \(m\) and \(p\) are given in terms of \(\ybar\) by \(\eqref{p y}\) and \(\eqref{m def}\). For any central value
	\begin{align}\label{central value}
	\ybar(0)=\kappa>0,
	\end{align}
	we can solve the ODE \(\eqref{ode}\) to obtain the properties of the steady-state solution which are recorded in the following several lemmas.
	\begin{lemma}\label{2.1..}
		Let \(m(r)\) and \(p(r)\) be defined by \(\eqref{p y}\) and \(\eqref{m def}\) respectively. Then there exists a unique solution \(\ybar:[0,\tilde{\delta}]\to \mathbb{R} \) for sufficiently small \(\tilde{\delta}>0\) satisfying the equation \(\eqref{ode}\) and initial condition \(\eqref{central value}\).
	\end{lemma}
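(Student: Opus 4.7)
The plan is to convert the singular ODE into an integral equation and apply the Banach fixed point theorem on a small interval $[0,\tilde{\delta}]$. Writing $T[\bar{y}]$ for the candidate right-hand side, namely
\begin{equation*}
T[\bar{y}](r) := \kappa - \int_{0}^{r}\frac{1}{1-2m[\bar{y}](s)/s}\left(\frac{m[\bar{y}](s)}{s^{2}}+4\pi s\, h(\bar{y}(s))\right)ds,
\end{equation*}
where $m[\bar{y}](r)=4\pi\int_0^r s^2 g(\bar{y}(s))\,ds$, I would work in the closed ball
\begin{equation*}
X_{\tilde{\delta}}:=\bigl\{\bar{y}\in C([0,\tilde{\delta}]):\ \bar{y}(0)=\kappa,\ |\bar{y}(r)-\kappa|\le \kappa/2\bigr\},
\end{equation*}
equipped with the sup norm, so that $\bar{y}\ge \kappa/2>0$ on the interval and $g,h$ act in the smooth branch of \eqref{rho y}--\eqref{p y}.

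First I would check that the integrand in $T[\bar{y}]$ is continuous up to $s=0$: for $\bar{y}\in X_{\tilde{\delta}}$ the density $g(\bar{y})$ is bounded by some constant $M_\kappa$, hence $m[\bar{y}](s)\le \tfrac{4\pi}{3}M_\kappa s^{3}$, so $m[\bar{y}](s)/s^{2}=O(s)$ and the pre-factor $(1-2m[\bar{y}](s)/s)^{-1}$ can be taken in $[1,2]$ for $\tilde{\delta}$ small. This gives the uniform bound $\|T[\bar{y}]-\kappa\|_{\infty}\le C_\kappa \tilde{\delta}^{\,2}$, which for $\tilde{\delta}$ small enough places $T$ into $X_{\tilde{\delta}}$.

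Next, I would establish contraction. Since $Q'(\rho)=\tfrac{1}{2\rho-1}$ never vanishes on $\rho\ge 1$ (indeed, for the hard phase $p=\rho-1$, $Q$ is explicit), the inverse $g=Q^{-1}$ is $C^{1}$, so $g$ and $h$ are Lipschitz on the compact range $[\kappa/2,3\kappa/2]$ with some constant $L_\kappa$. Hence
\begin{equation*}
|m[\bar{y}_{1}](s)-m[\bar{y}_{2}](s)|\le 4\pi L_\kappa\, \tfrac{s^{3}}{3}\|\bar{y}_{1}-\bar{y}_{2}\|_{\infty},
\end{equation*}
and combining with the uniform bounds on the denominator and on $p$ one obtains
\begin{equation*}
\|T[\bar{y}_{1}]-T[\bar{y}_{2}]\|_{\infty}\le C_\kappa'\,\tilde{\delta}^{\,2}\|\bar{y}_{1}-\bar{y}_{2}\|_{\infty}.
\end{equation*}
Shrinking $\tilde{\delta}$ so that $C_\kappa'\tilde{\delta}^{\,2}<1$ yields a unique fixed point $\bar{y}\in X_{\tilde{\delta}}$, and a standard bootstrap shows $\bar{y}\in C^{1}([0,\tilde{\delta}])$ and solves \eqref{ode} classically with $\bar{y}(0)=\kappa$.

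The main obstacle is the apparent singularity at $r=0$: both the $m(r)/r$ term inside the denominator and the $m(r)/r^{2}$ term in the numerator must be handled by exploiting the cubic vanishing of $m[\bar{y}]$. Once that is done (which requires staying in the regime $\bar{y}\ge\kappa/2$ so that $g$ is bounded and smooth), the remainder of the argument is the standard Picard scheme. This framework also readily gives the continuous dependence on $\kappa$ that will be needed in the subsequent analysis of the family $(\rho_\kappa,p_\kappa,\lambda_\kappa,\mu_\kappa)$.
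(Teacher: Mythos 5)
Your proposal is correct and follows essentially the same strategy as the paper: convert the ODE into a Picard integral equation and apply the Banach fixed point theorem on a small interval, with the cubic vanishing of $m$ taming the apparent singularities $m/s^2$ and $m/s$ at $s=0$. The only cosmetic difference is that the paper rewrites the equation in terms of $\mu=\text{const}-\bar{y}$ before setting up the contraction, whereas you work with $\bar{y}$ directly; your version is if anything a bit more explicit about staying in the regime where $g=Q^{-1}$ is smooth and about bootstrapping the fixed point to a classical $C^1$ solution.
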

	\begin{proof}
		Due to the relation \(\ybar(r)=const-\mu(r)\), we can consider the equation equivalently
		\begin{align}\label{mu ode}
		\mu'(r)=\frac{1}{1-2m(r)/r}\left( \frac{m(r)}{r^{2}}+4\pi rp(r)\right)\quad \textrm{with} \quad \mu(0)=\mu_{0}.
		\end{align}
		Defining
		\begin{align*}
		(T\mu)(r):=\mu_{0}+\int_{0}^{r}\frac{1}{1-2m(s)/s}\left( \frac{m(s)}{s^{2}}+4\pi sp(s)\right)ds.
		\end{align*} 
		we obtain the following fixed point problem for \(\mu\):
		\begin{align*}
		\mu(r)=(T\mu)(r).
		\end{align*}
		It is straightforward to verify that the set
		\begin{align*}
		M:=\left\lbrace \mu\in C([0,\delta])\ \ \rvert\ \ \mu(0)=\mu_{0},\ \mu_{0}\le\mu(r)\le\mu_{0}+1, \ \frac{2m}{r}\le\frac{1}{2},\ r\in[0,\tilde{\delta}]   \right\rbrace 
		\end{align*}
		is complete with respect to the norm \(\Arrowvert\cdot\Arrowvert_{\infty}\). We show that \(T\) is a contraction map on the set \(M\). For any \(\mu_{1},\mu_{2}\in M\), we have
		\begin{align*}
		\begin{split}
		&(T\mu_{1})(r)-(T\mu_{2})(r)\\
		=&\int_{0}^{r}\frac{1}{1-2m_{1}(s)/s}\left( \frac{m_{1}(s)}{s^{2}}+4\pi sp_{1}(s)\right)-\frac{1}{1-2m_{2}(s)/s}\left( \frac{m_{2}(s)}{s^{2}}+4\pi sp_{2}(s)\right)  ds\\
		=&\int_{0}^{r}\frac{2(m_{1}(s)-m_{2}(s))}{(1-2m_{1}(s)/s)(1-2m_{2}(s)/s)}\left( \frac{m_{1}(s)}{s^{3}}+4\pi p_{1}(s)\right)ds\\
		&+\int_{0}^{r}\frac{1}{1-2m_{2}(s)/s}\left( \frac{m_{1}(s)-m_{2}(s)}{s^{2}}+4\pi s(p_{1}(s)-p_{2}(s))\right)ds.
		\end{split}
		\end{align*}
		Then we compute
		\begin{align*}
		m_{1}(s)-m_{2}(s)
		\le 4\pi\int_{0}^{s}\sigma^{2}\max_{\ybar\ge 0}|g'(\ybar)||\ybar_{1}(\sigma)-\ybar_{2}(\sigma)|d\sigma\le C\Arrowvert\mu_{1}-\mu_{2}\Arrowvert_{\infty}s^{3},
		\end{align*}
		and
		\begin{align*}
		p_{1}(s)-p_{2}(s)\le C\Arrowvert\mu_{1}-\mu_{2}\Arrowvert_{\infty}.
		\end{align*}
		Choosing small enough \(\tilde{\delta}>0\), it is proved that \(T\) maps the set \(M\) acts as a contraction by
		\begin{align*}
		\Arrowvert T\mu_{1}-T\mu_{2}\Arrowvert_{\infty}\le C\Arrowvert\mu_{1}-\mu_{2}\Arrowvert_{\infty}\int_{0}^{r}(s^{3}+s)ds\le C(\tilde{\delta}^{4}+\tilde{\delta}^{2})\Arrowvert\mu_{1}-\mu_{2}\Arrowvert_{\infty}\le \frac{1}{2}\Arrowvert\mu_{1}-\mu_{2}\Arrowvert_{\infty}.
		\end{align*}
		Fixed point theorem gives the existence and uniqueness of the solution about \(\eqref{mu ode}\) and the proof is complete.
	\end{proof}
	In order to show that the above solutions actually extend to \(r=\infty\) we give an important relation which is known as the Tolman-Oppenheimer-Volkov equation.
	\begin{lemma}
		Let \(\lambda\), \(\mu\), \(\rho\) and \(p\) be steady-state solutions of Einstein-Euler system. Then the following identity holds:
		\begin{align}\label{tov}
		\lambda'+\mu'=4\pi r e^{2\lambda}(\rho+p).
		\end{align}
	\end{lemma}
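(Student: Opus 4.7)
The identity in question, $\lambda'+\mu'=4\pi r e^{2\lambda}(\rho+p)$, is an algebraic consequence of the two Hamiltonian constraint equations \eqref{1steady field eq00} and \eqref{1steady field eq11}, which hold for any steady state. My plan is therefore simply to add these two equations and rearrange.

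More precisely, I would start by writing down \eqref{1steady field eq00} and \eqref{1steady field eq11} side by side:
\begin{align*}
e^{-2\lambda}(2r\lambda'-1)+1 &= 8\pi r^{2}\rho,\\
e^{-2\lambda}(2r\mu'+1)-1 &= 8\pi r^{2}p.
\end{align*}
Adding them produces the cancellation of the $\pm 1$ constants and combines the $\lambda'$ and $\mu'$ terms:
\begin{align*}
2r e^{-2\lambda}(\lambda'+\mu') = 8\pi r^{2}(\rho+p).
\end{align*}
Dividing through by $2r$ (valid for $r>0$) and multiplying by $e^{2\lambda}$ yields the claimed identity.

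There is no genuine obstacle here; the identity is purely algebraic and requires neither the momentum equation \eqref{1steady momentum eq} nor the field equation \eqref{steady field eq k}. The only minor point worth noting is that the relation is derived at $r>0$; by the regularity of $\lambda$ and $\mu$ at the center $r=0$, together with the condition \eqref{center condition}, one extends the identity continuously to $r=0$, where both sides vanish.
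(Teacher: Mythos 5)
Your proof is correct and matches the paper's own argument, which likewise obtains \eqref{tov} by adding \eqref{1steady field eq00} to \eqref{1steady field eq11} and simplifying. The extra remark about extending to $r=0$ is a harmless refinement the paper leaves implicit.
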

	\begin{proof}
		It is straightforward to be proved by adding \(\eqref{1steady field eq00}\) to \(\eqref{1steady field eq11}\).
	\end{proof}
	\begin{lemma}
		Let \(m(r)\) and \(p(r)\) be defined by \(\eqref{p y}\) and \(\eqref{m def}\) respectively. Then there exists a unique solution \(\ybar: [0,\infty)\to \mathbb{R}\) satisfying the equation \(\eqref{ode}\) and initial condition \(\eqref{central value}\).
	\end{lemma}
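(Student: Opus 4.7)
The plan is to extend the local solution from \lem{2.1..} to $[0,\infty)$ by a continuation argument, carefully handling the transition from the fluid interior $\{\ybar>0\}$ to the vacuum exterior $\{\ybar\le 0\}$ while controlling the potential degeneracy $2m(r)/r\to 1$.

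First I would establish an a priori monotonicity: because $\rho=g(\ybar)\ge 0$, $p=h(\ybar)\ge 0$ (with the convention that both vanish for $\ybar<0$), and $m(r)\ge 0$ throughout, the right-hand side of \eqref{ode} is non-positive wherever $1-2m/r>0$, so $\ybar$ is non-increasing on its interval of existence. This yields the uniform bounds $\ybar(r)\le\kappa$, $\rho(r)\le g(\kappa)$, and $p(r)\le g(\kappa)-1$. On the sub-interval $\{\ybar>0\}$, moreover, $\rho\ge g(0)=1$ gives $m(r)\ge \tfrac{4\pi r^{3}}{3}$, and the ODE forces $\ybar'(r)\le -m(r)/r^{2}\le -\tfrac{4\pi r}{3}$; integrating, $\ybar(r)\le\kappa-\tfrac{2\pi r^{2}}{3}$, so $\ybar$ hits zero at some finite $R_{\kappa}\le\sqrt{3\kappa/(2\pi)}$, which I identify with the stellar radius. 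Beyond $R_{\kappa}$ one has $\rho=p=0$ and $m(r)\equiv M_{\kappa}:=m(R_{\kappa})$, whence the ODE collapses to $\ybar'(r)=-M_{\kappa}/(r^{2}-2M_{\kappa}r)$, which is explicitly integrable on $[R_{\kappa},\infty)$ provided $R_{\kappa}>2M_{\kappa}$. Uniqueness on each of $[0,R_{\kappa}]$ and $[R_{\kappa},\infty)$ comes from Picard--Lindel\"of, with the two pieces matched via continuity of $\ybar$ at $R_{\kappa}$.

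The main obstacle is the non-degeneracy $2m(r)/r<1$ on $(0,R_{\kappa}]$, without which the continuation inside the fluid breaks and the Schwarzschild-exterior step cannot even be initialized. The naive estimate $m(r)\le\tfrac{4\pi g(\kappa)}{3}r^{3}$ only secures this on a small neighborhood of the center and fails for large $\kappa$. To globalize I would use the TOV relation \eqref{1steady momentum eq}: combining $p'=-(\rho+p)\mu'$ with $\mu'\ge 0$ on the fluid (immediate from \eqref{1steady field eq11}) shows that $p$, and hence $\rho=p+1$, is non-increasing in $r$. With this monotone density profile in hand, Buchdahl's classical inequality for static spherically symmetric perfect fluids yields $2m(r)/r\le 8/9$ throughout $[0,R_{\kappa}]$, closing the continuation. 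A self-contained alternative is a bootstrap on the coupled pair $(\ybar,\lambda)$ via \eqref{tov}: as long as $2m/r$ stays strictly below $1$, the a priori bound on $\rho+p$ gives a uniform bound on $(\lambda+\mu)'$, which prevents runaway of $\lambda$ and hence of $2m/r$, sustaining the continuation up to $R_{\kappa}$ and through it into the vacuum region.
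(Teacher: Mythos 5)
Your primary route and the paper's are genuinely different. The paper does not invoke Buchdahl's inequality here; instead it introduces $w(r)=r^{-3}\int_{0}^{r}s^{2}\rho(s)\,ds$ and shows, using the monotonicity of $\rho$ together with the TOV identity \eqref{tov} and $\mu'=4\pi r e^{2\lambda}(p+w)$, that
\begin{align*}
\bigl(e^{\lambda+\mu}(p+w)\bigr)' \;=\; e^{\lambda+\mu}\bigl[(\lambda'+\mu')(p+w)+p'+w'\bigr]\;=\;e^{\lambda+\mu}\,w'\le 0,
\end{align*}
because the terms $(\lambda'+\mu')(p+w)=4\pi r e^{2\lambda}(\rho+p)(p+w)$ and $p'=-\mu'(\rho+p)=-4\pi r e^{2\lambda}(\rho+p)(p+w)$ cancel \emph{exactly}. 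Thus $e^{\lambda+\mu}(p+w)\le C$ for all $r$ on the maximal interval $[0,\tilR)$. If $\tilR<\infty$ then $w(\tilR)>0$, so $e^{\lambda+\mu}\le C/w(\tilR)$; since $\lambda\ge 0$ and $\mu\ge\mu_{0}$ this bounds both $\lambda$ and $\mu$, keeps $2m/r$ away from $1$, and forces the solution to extend past $\tilR$ — contradiction. This is self-contained and needs no external input. Your reliance on the interior Buchdahl estimate $2m(r)/r\le 8/9$ is legitimate (the paper itself cites Andreasson for exactly that bound, but only later in Lemma~\ref{2.6..}), so your main approach does work, at the cost of importing a substantially deeper theorem whose proof the argument would then tacitly depend upon.

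Your ``self-contained alternative,'' however, does not close. From \eqref{tov} one has $(\lambda+\mu)'=4\pi r e^{2\lambda}(\rho+p)$, and an a priori bound on $\rho+p$ gives only $(\lambda+\mu)'\lesssim r\,e^{2\lambda}$; the factor $e^{2\lambda}$ is precisely the quantity you are trying to control, and ``as long as $2m/r$ stays strictly below $1$'' does not supply a bound away from $1$, so the Gr\"onwall-type bootstrap is circular and yields at best a local estimate on a $\kappa$-dependent short interval, not continuation up to $R_{\kappa}$. The cancellation in the displayed identity above is exactly what eliminates the dangerous $e^{2\lambda}$ term, and it is not captured by the crude bound on $(\lambda+\mu)'$. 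If you want a self-contained proof, you essentially have to rediscover the paper's monotone quantity $e^{\lambda+\mu}(p+w)$.
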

	\begin{proof}
		As discussed in Lemma \(\ref{2.1..}\), we take into account the corresponding solution \(\mu\). Let \(\mu :[0,\tilR)\to\mathbb{R}\) be the maximal solution to \(\eqref{mu ode}\). Inserting \(\eqref{lamda eliminate}\) into \(\eqref{1steady field eq11}\) yields
		\begin{align}\label{mu p w}
		\mu'(r)=4\pi re^{2\lambda(r)}(p(r)+w(r))
		\end{align}
		where
		\begin{align}\label{w def}
		w(r):=\frac{\int_{0}^{r}s^{2}\rho(s)ds}{r^{3}}.
		\end{align}
		Because the function \(\rho(r)\) is decreasing, we have
		\begin{align}
		w'(r)=\frac{\rho(r)}{r}-\frac{3\int_{0}^{r}s^{2}\rho(s)ds}{r^{4}}\le \frac{\rho(r)}{r}-\frac{\rho(r)r^{3}}{r^{4}}\le 0,
		\end{align}
		and
		\begin{align*}
		\left( e^{\lambda+\mu}(p+w)\right)'(r)=&e^{\lambda+\mu}  \left[ (\lambda'+\mu')(p+w)+p'+w' \right] \\
		=&e^{\lambda+\mu}\left[ 4\pi r e^{2\lambda}(\rho+p)(p+w)-4\pi r e^{2\lambda}(\rho+p)(p+w)+w'\right] \\
		=&e^{\lambda+\mu}w'\le 0,
		\end{align*}
		where we have used the \(\eqref{1steady momentum eq}\), \(\eqref{tov}\) and \(\eqref{mu p w}\) in the second line. This implies that
		\begin{align}\label{e lamda mu p w}
		e^{\lambda(r)+\mu(r)}(p(r)+w(r))\le e^{\lambda(0)+\mu(0)}(p(0)+w(0))=C>0.
		\end{align}
		Now assume that \(\tilR<\infty\). Then we have
		\begin{align*}
		w(r)\ge w(\tilR)=\frac{\int_{0}^{\tilR}s^{2}\rho(s)ds}{\tilR^{3}}=C>0.
		\end{align*}
		Combined with \(\eqref{e lamda mu p w}\), it leads to
		\begin{align*}
		e^{\lambda(r)+\mu(r)}\le C\ , \ 0\le r <\tilR.
		\end{align*} 
		Since \(\lambda(r)\ge0\) by \(\eqref{lamda eliminate}\) and \(\mu(r)\ge\mu_{0}\) by monotonicity, this implies that \(\mu\) is bounded on \([0,\tilR)\) which means that the solution $\mu(r)$ extends beyond $\tilR$ and therefore \(\tilR=\infty\).
	\end{proof}
	The following, a crucial step is to show that the steady state \(\rho\) and \(p\) have compact support, which is to prove the solution \(\ybar(r)\) has a unique zero at some finite radius \(R>0\).
	\begin{lemma}\label{2.4..}
		Let \(\ybar(r)\) satisfy the equation \(\eqref{ode}\) with \(\ybar(0)=\kappa>0\). Then the limit \(\ybar_{\infty}:=\lim_{r\to\infty}\ybar(r)\) exists and \(\ybar_{\infty}<0\), which implies that the function \(\ybar(r)\) has a unique zero.
	\end{lemma}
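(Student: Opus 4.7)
My approach would be a proof by contradiction. The plan is to assume that $\bar{y}(r)\ge 0$ for all $r\ge 0$ and derive a contradiction with the previous lemma, which guarantees that the solution to \eqref{ode} persists on $[0,\infty)$ while preserving $2m(r)/r<1$ (this bound is forced on us because $e^{-2\lambda(r)}=1-2m(r)/r$ must stay positive for $\lambda$ to be real on the entire half-line).

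First I would establish strict monotonicity of $\bar{y}$ wherever it is non-negative. Since $Q$ is strictly increasing with $Q(1)=0$, we have $g(0)=Q^{-1}(0)=1$, so $\rho(r)=g(\bar{y}(r))\ge 1$ and $p(r)=\rho(r)-1\ge 0$ whenever $\bar{y}(r)\ge 0$. Together with $m(r)>0$ for $r>0$ and $1-2m(r)/r>0$, the ODE \eqref{ode} gives $\bar{y}'(r)<0$ for all such $r>0$, so $\bar{y}$ is strictly decreasing on $\{\bar{y}\ge 0\}$. Next, under the contradiction hypothesis we obtain the pointwise lower bound
\[
m(r)=4\pi\int_{0}^{r}s^{2}\rho(s)\,ds\ge 4\pi\int_{0}^{r}s^{2}\,ds=\frac{4\pi r^{3}}{3},
\]
and consequently $2m(r)/r\ge 8\pi r^{2}/3$. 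For $r>\sqrt{3/(8\pi)}$ this exceeds $1$, violating $2m(r)/r<1$ and producing the desired contradiction. Hence $\bar{y}$ must vanish at some finite $R>0$, and strict monotonicity on $[0,R]$ makes this zero unique in that interval.

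Finally, for $r>R$ we are in vacuum so, by the convention that $p=\rho=0$ outside the fluid domain, $m(r)\equiv m(R)$ is constant and \eqref{ode} reduces to
\[
\bar{y}'(r)=-\frac{m(R)/r^{2}}{1-2m(R)/r},
\]
which remains strictly negative and is integrable at infinity (behaving like $-m(R)/r^{2}$). Therefore $\bar{y}_{\infty}:=\lim_{r\to\infty}\bar{y}(r)$ exists as a finite number, and because $\bar{y}(R)=0$ with $\bar{y}'(r)<0$ for $r>R$, we conclude $\bar{y}_{\infty}<0$. Combining this with the global monotonicity of $\bar{y}$ yields uniqueness of the zero on the whole half-line.

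The main obstacle I anticipate is the transition across $r=R$: the formula $p=h(\bar{y})=g(\bar{y})-1$ from \eqref{p y} would give the unphysical value $p=-1$ for $\bar{y}<0$, so one must carefully invoke the paper's convention that $p=\rho=0$ outside the fluid and check that the ODE extends consistently through $R$ (with $m$ frozen at $m(R)$ and $\bar{y}'$ continuous). Aside from this bookkeeping at the boundary, the proof is essentially a monotonicity-plus-Gronwall-style bound on $m(r)$, with the positivity of $1-2m/r$ playing the role of the obstruction that forces $\bar{y}$ to dip below zero in finite radius.
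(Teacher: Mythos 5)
Your proof is correct, and it takes a genuinely different route at the key step. Both your argument and the paper's rest on the same lower bound $m(r)\ge \frac{4\pi}{3}r^{3}$ obtained from $\rho\ge 1$ under the hypothesis that $\ybar$ stays non-negative, but the way the contradiction is closed differs. The paper assumes $\ybar_{\infty}\ge 0$ and \emph{integrates} the ODE, using $\ybar'(r)\le -m(r)/r^{2}\le -\frac{4\pi}{3}r$ to force $\ybar(r)\to -\infty$, which contradicts the assumption. You instead observe that $m(r)\ge\frac{4\pi}{3}r^{3}$ forces $2m(r)/r\ge \frac{8\pi}{3}r^{2}$ to exceed $1$ at finite radius, which is incompatible with $e^{-2\lambda}=1-2m/r>0$, i.e.\ with the existence of the Schwarzschild-coordinate solution on all of $[0,\infty)$ that the preceding lemma guarantees. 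This turns the Buchdahl/Schwarzschild constraint into the explicit punchline rather than an implicit prerequisite (the paper's inequality $\frac{1}{1-2m/r}\ge 1$ silently uses the same constraint), and it skips the integration entirely — a slightly cleaner mechanism. You also reorganize the logic: you first prove $\ybar$ vanishes at finite $R$ and \emph{then} analyze the vacuum region to get finiteness and negativity of $\ybar_{\infty}$, whereas the paper first rules out $\ybar_{\infty}=-\infty$ via the vacuum ODE and then rules out $\ybar_{\infty}\ge 0$. Finally, the bookkeeping issue you flag — that \eqref{p y} literally gives $h(\ybar)=-1$ for $\ybar<0$ — is real; the paper silently adopts the convention $p\equiv 0$ in vacuum (it even writes $h(\ybar(r))=0$ for $\ybar<0$ in its own proof), so your caution is well placed but resolved exactly as you suggest.
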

	\begin{proof}
		Since \(\ybar(r)\) is decreasing, the limit \(\ybar_{\infty}=C\) or \(\ybar_{\infty}=-\infty\). Assume that \(\ybar_{\infty}=-\infty\). Then there exists \(r_{0}<\infty\), when \(r>r_{0}\) we have \(\ybar(r)<0\) and \(g(\ybar(r))=h(\ybar(r))=0\). Now we consider the equation
		\begin{align}\label{r0 ode}
		\ybar^{\prime}(r)=-\frac{1}{1-2m(r)/r}\left( \frac{m(r)}{r^{2}}\right)\quad \textrm{with} \quad \ybar(r_{0})=0,
		\end{align}
		where \(m(r)\equiv C\) and \(r_{0}-2C>0\) due to \(\eqref{lamda eliminate}\) and \(\eqref{m def}\). Integrating \(\eqref{r0 ode}\) from \(r_{0}\) to \(r\), we get
		\begin{align*}
		\ybar(r)=-\int_{r_{0}}^{r}\frac{C}{s(s-2C)}ds=\frac{1}{2}\left(\ln\frac{r_{0}-2C}{r_{0}}-\ln\frac{r-2C}{r} \right) .
		\end{align*}
		Thus \(\ybar_{\infty}=\frac{1}{2}\ln\frac{r_{0}-2C}{r_{0}}=C\), which is a contradiction to \(\ybar_{\infty}=-\infty\). The following we need to show that \(\ybar_{\infty}<0\). Assume that \(\ybar_{\infty}\ge 0\). Then \(\ybar(r)\ge \ybar_{\infty}\) on \([0,\infty)\), and by the monotonicity of \(g\), 
		\begin{align*}
		m(r)=4\pi\int_{0}^{r}s^{2}g(y(s))ds\ge 4\pi g(\ybar_{\infty})\int_{0}^{r}s^{2}ds\ge\frac{4\pi}{3}r^{3}.
		\end{align*}
		According to the equation \(\eqref{ode}\),
		\begin{align}\label{decay estimate}
		\ybar'(r)\le-\frac{m(r)}{r^{2}}\le-\frac{4\pi}{3}r.
		\end{align}  
		Integrating this estimate we obtain a contradiction
		\begin{align*}
		\ybar(r)\le \ybar_{0}-\frac{2\pi}{3}r^{2}\to -\infty \ \textrm{as}\ r\to \infty,
		\end{align*}
		which completes the proof.
	\end{proof}
	Now we have proved that for every central value \(\ybar(0)=\kappa>0\) there exists a unique solution \(\ybar=\ybar_{\kappa}\) to \(\eqref{ode}\), which is defined on \([0,\infty)\), and the corresponding quantities \(\rho_{\kappa}\), \(p_{\kappa}\) are supported on the interval \([0,R_{\kappa}]\). In the literature \(\ybar(0)=\kappa\) is called the central redshift, which is used to parameterize the steady state solutions. It is worth noting that the central redshift \(\kappa\) and the central density\(\rho_{c}=\rho(0)\) are in a 1-1 relationship by \(\eqref{rho y}\) thus \(\rho_{c}\) is an equivalent parameterization. In order to show linear instability, we need to describe the asymptotic properties of the steady state solution. we consider the equation
	\begin{align}\label{pp}
	p'=-\frac{S(p)+p}{1-\frac{8\pi}{r}\int_{0}^{r}s^{2}S(p)ds}\left( \frac{4\pi}{r^{2}}\int_{0}^{r}s^{2}S(p)ds+4\pi rp\right) ,
	\end{align}
	where \(S(p):=\rho\), and its massless counterpart in terms of \(S^{\ast}(p)=p\):
	\begin{align}\label{p ast}
	p'=-\frac{2p}{1-\frac{8\pi}{r}\int_{0}^{r}s^{2}pds}\left( \frac{4\pi}{r^{2}}\int_{0}^{r}s^{2}pds+4\pi rp\right) .
	\end{align}
	Let \(p_{\kappa}\) and \(p^{\ast}_{\kappa}\) denote the solutions to \(\eqref{pp}\) and \(\eqref{p ast}\) respectively, and satisfy the boundary condition
	\begin{align*}
	p_{\kappa}(0)=e^{4\kappa}=p^{\ast}_{\kappa}(0).
	\end{align*}
	\begin{remark}
		We reparametrize our steady state family here and use the central pressure as the new parameter. However, the quantities \(\ybar\) and \(p\) are in a strictly increasing, one-to-one correspondence in such a way that \(\ybar\to\infty\) iff \(p\to\infty\), that is, \(p(0)\to\infty\) is equivalent to \(\ybar(0)\to\infty\).
	\end{remark}
	We now show that near the origin and for large \(\kappa\) the behavior of \(p_{\kappa}\) is captured by \(p_{\kappa}^{\ast}\).
	\begin{lemma}\label{2.6..}
		There exists a constant \(C>0\) such that for all \(\kappa>0\) and \(r\ge 0\),
		\begin{align*}
		|p_{\kappa}(r)-p_{\kappa}^{\ast}(r)|\le Ce^{6\kappa}\left( r^{2}+e^{4\kappa}r^{4}\right) \exp\left( C\left( e^{4\kappa}r^{2}+e^{8\kappa}r^{4}\right) \right) .
		\end{align*}
	\end{lemma}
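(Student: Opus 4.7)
The plan is to derive an integral inequality for the difference $q(r) := p_\kappa(r) - p_\kappa^\ast(r)$ (which satisfies $q(0)=0$) and close it via Gronwall's inequality. First I would record a priori bounds: the right-hand sides of \eqref{pp} and \eqref{p ast} are nonpositive, so $p_\kappa$ and $p_\kappa^\ast$ are monotonically decreasing from the common initial value $e^{4\kappa}$, giving $0 \le p_\kappa(r), p_\kappa^\ast(r) \le e^{4\kappa}$ and hence $\int_0^r s^2 p_\kappa(s)\,ds,\ \int_0^r s^2 p_\kappa^\ast(s)\,ds \le \tfrac{1}{3}r^3 e^{4\kappa}$. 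Via the geometric identity $e^{-2\lambda_\kappa}=1-8\pi r^{-1}\int_0^r s^2 S(p_\kappa)\,ds$ (and its analogue for $p_\kappa^\ast$), both denominators in \eqref{pp} and \eqref{p ast} stay strictly positive on $[0,\infty)$, and when $e^{4\kappa}r^2$ is small they are bounded below by, say, $1/2$.

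Denote the right-hand sides of \eqref{pp} and \eqref{p ast} by $F[\,p\,]$ and $F^\ast[\,p\,]$, viewed as functionals of the pressure profile, and split
\begin{align*}
q(r) = \int_0^r\!\bigl(F[p_\kappa] - F^\ast[p_\kappa]\bigr)\,ds + \int_0^r\!\bigl(F^\ast[p_\kappa] - F^\ast[p_\kappa^\ast]\bigr)\,ds =: q_1(r)+q_2(r).
\end{align*}
Using $S(p)=p+1=S^\ast(p)+1$ and $\int_0^r s^2\,ds=r^3/3$, elementary algebra rewrites $F[p_\kappa]-F^\ast[p_\kappa]$ as a sum of two rational expressions in the common quantities $G^\ast_\kappa := 4\pi r^{-2}\int_0^r s^2 p_\kappa\,ds + 4\pi r p_\kappa$ and $D^\ast_\kappa := 1 - 8\pi r^{-1}\int_0^r s^2 p_\kappa\,ds$, each term carrying at most two factors of $e^{4\kappa}$ from the pressure together with explicit polynomial weight in $r$. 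Invoking the a priori bounds yields $|F[p_\kappa](s) - F^\ast[p_\kappa](s)| \le C(s\,e^{4\kappa} + s^3\,e^{8\kappa})$, and integrating in $s$ produces $|q_1(r)| \le Ce^{6\kappa}(r^2 + e^{4\kappa}r^4)$, which is precisely the polynomial prefactor of the claimed estimate.

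For the Lipschitz piece $q_2$, I would write $F^\ast[\,p\,](r) = f(r, p(r), I[p](r))$ with $I[p](r) := \int_0^r s^2 p(s)\,ds$, and differentiate $f$ in its pointwise and integral arguments on the relevant range to get
\begin{align*}
|F^\ast[p_\kappa](r) - F^\ast[p_\kappa^\ast](r)| \le Cr\,e^{4\kappa}|q(r)| + C(e^{4\kappa}r^{-2} + e^{8\kappa})\int_0^r s^2 |q(s)|\,ds.
\end{align*}
After integrating in $r$ and applying Fubini to collapse the iterated integral to a single one, the bound becomes $|q(r)| \le |q_1(r)| + \int_0^r C(e^{4\kappa}s + e^{8\kappa}r\,s^2)\,|q(s)|\,ds$, to which Gronwall's inequality yields $|q(r)| \le |q_1(r)|\exp\bigl(C(e^{4\kappa}r^2 + e^{8\kappa}r^4)\bigr)$, and combining with the estimate on $q_1$ gives the statement of the lemma. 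In the complementary regime where $e^{4\kappa}r^2$ is not small, the exponential factor on the right-hand side of the claimed estimate already dominates the elementary bound $|q(r)|\le 2e^{4\kappa}$, so the inequality holds universally. The main technical obstacle is the bookkeeping of $e^{4\kappa}$ factors in the algebraic identity for $F[p_\kappa]-F^\ast[p_\kappa]$: the cancellations induced by the "$+1$" in $S(p)=p+1$ interact nonlinearly with $p_\kappa$ and with the denominators $D^\ast_\kappa, D^\ast_\kappa - 8\pi r^2/3$, and must be tracked carefully to recover the precise exponents stated in the lemma.
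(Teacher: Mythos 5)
Your approach is structurally the same as the paper's: compare the two ODEs for $p_\kappa$ and $p_\kappa^\ast$, isolate the ``model error'' coming from $S(p)=p+1$ versus $S^\ast(p)=p$, establish a Lipschitz bound, and close with Gronwall. The paper performs the same comparison but first passes to the rescaled variables $\sigma(\tau)=\alpha^2 p(\alpha\tau)$, $\alpha=e^{-2\kappa}$, which normalises the initial data to $\sigma(0)=1$ and makes all a priori constants $\kappa$-independent; you work in the original variables and track the powers of $e^{4\kappa}$ by hand. Those are equivalent bookkeeping choices, and your estimates for the model error $q_1$ and for the Lipschitz constants of $F^\ast$ in the pointwise and integral arguments are correct (indeed your $q_1$ bound has prefactor $e^{4\kappa}$, which is one power of $e^{2\kappa}$ sharper than the $e^{6\kappa}$ in the statement).

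However, there is a genuine gap in how you control the denominators. You assert a quantitative lower bound (say $\ge 1/2$) on $1 - \frac{8\pi}{r}\int_0^r s^2 S(p_\kappa)\,ds$ and its massless analogue only in the regime $e^{4\kappa}r^2 \le c_0$, and in the complementary regime you appeal to the trivial bound $|q|\le 2e^{4\kappa}$ plus the exponential in the target inequality. That fallback does not close: setting $z := e^{4\kappa}r^2$, the right-hand side of the lemma equals $C e^{-2\kappa}(z+z^2)\exp(C(z+z^2))$, so in order for it to dominate $2e^{4\kappa}$ one needs $(z+z^2)\exp(C(z+z^2)) \gtrsim e^{6\kappa}$, i.e.\ $z \gtrsim \sqrt{\kappa}$. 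For $c_0 \le z \lesssim \sqrt{\kappa}$ and $\kappa$ large, neither your Gronwall argument (which requires the denominator bound) nor the trivial-bound argument applies, and the lemma's conclusion is not reached. The paper avoids the regime splitting altogether by invoking Buchdahl's inequality (citing \cite{Andreasson}), which gives the \emph{uniform} bound $\bigl(1-\tfrac{8\pi}{r}\int_0^r s^2 S(p)\,ds\bigr)^{-1} < 9$ for all $r\ge 0$ for both the physical and massless models; with this global denominator control in hand your Gronwall step then closes for all $r$. You should replace the ad hoc regime argument by this global Buchdahl bound, which is the one ingredient missing from your proof.
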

	\begin{proof}
		The proof is inspired by \cite{HLR}. For simplicity we drop the subscript \(\kappa\) when there is no confusion and write \(p\) and \(p^{\ast}\) for the two solutions which we want to compare.  We first introduce re-scaled variables as follows:
		\begin{align*}
		p(r)=\alpha^{-2}\sigma(\tau),\quad p^{\ast}(r)=\alpha^{-2}\sigma^{\ast}(\tau), \quad r=\alpha\tau,
		\end{align*}
		where \(\alpha=e^{-2\kappa}\). A direct calculation gives 
		\begin{align}
		\sigma'(\tau)=&-\frac{\sigma(\tau)+\alpha^{2}S(\alpha^{-2}\sigma(\tau))}{1-\frac{8\pi}{\tau}\int_{0}^{\tau}s^{2}\alpha^{2}S(\alpha^{-2}\sigma(s))ds}\left( \frac{4\pi}{\tau^{2}}\int_{0}^{\tau}s^{2}\alpha^{2}S(\alpha^{-2}\sigma(s))ds+4\pi \tau \sigma(\tau)\right) ,\\	\label{2.22}
		(\sigma^{\ast})'(\tau)=&-\frac{2\sigma^{\ast}(\tau)}{1-\frac{8\pi}{\tau}\int_{0}^{\tau}s^{2}\sigma^{\ast}(s)ds}	
		\left( \frac{4\pi}{\tau^{2}}\int_{0}^{\tau}s^{2}\sigma^{\ast}(s)ds+4\pi \tau \sigma^{\ast}(\tau)\right) ,
		\end{align}
		and
		\begin{align*}
		\sigma(0)=1=\sigma^{\ast}(0).
		\end{align*}
		In order to estimate the difference \(\sigma'-(\sigma^{\ast})'\) and apply a Gronwall argument we
		need some preliminary estimates. First we
		note that \(\sigma\) and \(\sigma^{\ast}\) are decreasing, and hence, for \(\tau\ge 0\),
		\begin{align*}
		0\le \sigma(\tau), \sigma^{\ast}(\tau)\le 1,
		\end{align*}
		and
		\begin{align*}
		\alpha^{2}S(\alpha^{-2}\sigma(\tau))=&\sigma(\tau)+\alpha^{2}\le C,\\
		\left| \alpha^{2}S(\alpha^{-2}\sigma(\tau))-\sigma^{\ast}(\tau)\right| \le& C e^{-2\kappa}+\left| \sigma(\tau)-\sigma^{\ast}(\tau)\right| .
		\end{align*}
		By Buchdahl’s inequality (see \cite{Andreasson}), we have
		\begin{align*}
		\frac{1}{1-\frac{8\pi}{\tau}\int_{0}^{\tau}s^{2}\alpha^{2}S(\alpha^{-2}\sigma(s))ds},\quad \frac{1}{1-\frac{8\pi}{\tau}\int_{0}^{\tau}s^{2}\sigma^{\ast}(s)ds}< 9,\quad \tau\ge 0.
		\end{align*}
		Let us abbreviate now
		\begin{align*}
		x(\tau):=\max_{0\le s \le \tau}\left| \sigma(s)-\sigma^{\ast}(s)\right| .
		\end{align*}
		Combined with previous estimates, we get
		\begin{align*}
		&\left| \sigma'(\tau)-(\sigma^{\ast})'(\tau)\right|
		\le\left|\sigma(\tau)+\alpha^{2}S(\alpha^{-2}\sigma(\tau))-2\sigma^{\ast}(\tau) \right|\\ &\quad\quad\frac{1}{1-\frac{8\pi}{\tau}\int_{0}^{\tau}s^{2}\alpha^{2}S(\alpha^{-2}\sigma(s))ds}\left| \frac{4\pi}{\tau^{2}}\int_{0}^{\tau}s^{2}\alpha^{2}S(\alpha^{-2}\sigma(s))ds+4\pi \tau \sigma(\tau)\right| \\
		&\quad\quad+2\sigma^{\ast}(\tau)\left|  \frac{1}{1-\frac{8\pi}{\tau}\int_{0}^{\tau}s^{2}\alpha^{2}S(\alpha^{-2}\sigma(s))ds}- \frac{1}{1-\frac{8\pi}{\tau}\int_{0}^{\tau}s^{2}\sigma^{\ast}(s)ds}        \right| \\
		&\quad\quad\left| \frac{4\pi}{\tau^{2}}\int_{0}^{\tau}s^{2}\alpha^{2}S(\alpha^{-2}\sigma(s))ds+4\pi \tau \sigma(\tau)\right|+\frac{2\sigma^{\ast}(\tau)}{1-\frac{8\pi}{\tau}\int_{0}^{\tau}s^{2}\sigma^{\ast}(s)ds}\\
		&\quad\quad\left| \frac{4\pi}{\tau^{2}}\int_{0}^{\tau}s^{2}\alpha^{2}S(\alpha^{-2}\sigma(s))ds+4\pi \tau \sigma(\tau)-\frac{4\pi}{\tau^{2}}\int_{0}^{\tau}s^{2}\sigma^{\ast}(s)ds+4\pi \tau \sigma^{\ast}(\tau)\right| \\
		&\quad\quad\le C(e^{-2\kappa}+x(\tau))\tau+C(e^{-2\kappa}+x(\tau))\tau^{3}+C(e^{-2\kappa}+x(\tau))\tau\\
		&\quad\quad \le C(\tau+\tau^{3})(e^{-2\kappa}+x(\tau)).
		\end{align*}
		Gronwall’s lemma implies that
		\begin{align*}
		x(\tau)\le C(\tau^{2}+\tau^{4})\exp \left(C\left(\tau^{2}+\tau^{4} \right)  \right) e^{-2\kappa}.
		\end{align*}
		Recalling the original variable, we finally obtain
		\begin{align*}
		|p(r)-p^{\ast}(r)|\le Ce^{6\kappa}\left( r^{2}+e^{4\kappa}r^{4}\right) \exp\left( C\left( e^{4\kappa}r^{2}+e^{8\kappa}r^{4}\right) \right),
		\end{align*}
		which completes the proof.
	\end{proof}
	\begin{corollary}\label{2.7..}
		Let \(p^{\ast}_{\kappa}\) denote the solution of \(\eqref{p ast}\) with initial data \(p^{\ast}_{\kappa}(0)=e^{4\kappa}\). Then for all \(\kappa\ge 0\),
		\begin{align*}
		p^{\ast}_{\kappa}(r)=e^{4\kappa}p^{\ast}_{0}(e^{2\kappa}r),\quad r\ge 0.
		\end{align*}
	\end{corollary}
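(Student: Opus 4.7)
The plan is to verify the scaling property by direct substitution and then invoke uniqueness of solutions to the ODE \eqref{p ast}. Concretely, I would define the candidate function $q(r) := e^{4\kappa} p^{\ast}_{0}(e^{2\kappa} r)$ and show that $q$ satisfies \eqref{p ast} with $q(0) = e^{4\kappa}$, which by uniqueness forces $q = p^{\ast}_{\kappa}$.

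First I would check the initial condition: since $p^{\ast}_{0}(0) = e^{4\cdot 0} = 1$, we immediately get $q(0) = e^{4\kappa} = p^{\ast}_{\kappa}(0)$. Next I would substitute $\tau = e^{2\kappa} r$ in the integral term and use the change of variables $\sigma = e^{2\kappa} s$ to compute
\begin{align*}
\int_{0}^{r} s^{2}\, q(s)\, ds = e^{4\kappa}\int_{0}^{r} s^{2}\, p^{\ast}_{0}(e^{2\kappa}s)\, ds = e^{-2\kappa}\int_{0}^{\tau}\sigma^{2}\, p^{\ast}_{0}(\sigma)\, d\sigma.
\end{align*}
Using this identity, the denominator $1 - \frac{8\pi}{r}\int_{0}^{r} s^{2}\, q\, ds$ is exactly $1 - \frac{8\pi}{\tau}\int_{0}^{\tau}\sigma^{2}\, p^{\ast}_{0}\, d\sigma$, so it is scale-invariant. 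Meanwhile the factor $\frac{4\pi}{r^{2}}\int_{0}^{r} s^{2}\, q\, ds + 4\pi r q(r)$ picks up a factor $e^{2\kappa}$, and the prefactor $2 q(r) = 2 e^{4\kappa} p^{\ast}_{0}(\tau)$ supplies another $e^{4\kappa}$. Combining these with the chain-rule factor $q'(r) = e^{6\kappa}(p^{\ast}_{0})'(\tau)$ shows that both sides of \eqref{p ast} transform with the same weight $e^{6\kappa}$, so $q$ satisfies \eqref{p ast} on $[0,\infty)$.

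Finally I would appeal to the standard uniqueness theorem for ODEs: on any interval where the denominator in \eqref{p ast} stays bounded away from zero (which holds by Buchdahl's inequality as used in the proof of Lemma \ref{2.6..}), the right-hand side is locally Lipschitz in $p$, so the initial value problem with datum $e^{4\kappa}$ at $r = 0$ has a unique solution. Hence $p^{\ast}_{\kappa}(r) = q(r) = e^{4\kappa} p^{\ast}_{0}(e^{2\kappa} r)$ for all $r \geq 0$. There is no real obstacle here; the only mild subtlety is the apparent singularity of \eqref{p ast} at $r = 0$, which I would handle as in Lemma \ref{2.1..} by rewriting the ODE as an equivalent fixed point problem whose integrand is smooth at the origin, so that both $p^{\ast}_{\kappa}$ and $q$ are genuinely characterized by their values at $r = 0$.
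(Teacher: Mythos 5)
Your proof is correct and takes essentially the same approach as the paper: the paper observes that the rescaling $\sigma^{\ast}(\tau)=e^{-4\kappa}p^{\ast}_{\kappa}(e^{-2\kappa}\tau)$ turns \eqref{p ast} into the $\kappa$-independent equation \eqref{2.22} with $\kappa$-independent datum, while you apply the inverse rescaling to $p^{\ast}_{0}$ and then invoke uniqueness — the same scaling, same weights, same logical content. Your extra remark about handling the apparent singularity at $r=0$ via the fixed-point reformulation is a reasonable way to make the uniqueness step fully rigorous, matching the spirit of Lemma \ref{2.1..}.
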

	\begin{proof}
		The proof is straightforward using the fact that the rescaled function \(\sigma^{\ast}(\tau)=e^{-4\kappa}p^{\ast}_{\kappa}(r)\) solves \(\eqref{2.22}\).
	\end{proof}
	The following result gives detailed estimate for large \(r\) for the behavior \(p_{0}^{\ast}(r)\).
	\begin{lemma}\label{2.8..}
		Let \(\rho_{0}^{\ast}\) and \(m_{0}^{\ast}\) denote the quantities induced by \(p_{0}^{\ast}\). Then for any \(\gamma\in (0,1)\),
		\begin{align*}
		\left|r^{2} \rho_{0}^{\ast}(r)-\frac{1}{16\pi}\right| +\left|\frac{m_{0}^{\ast}(r)}{r}-\frac{1}{4} \right| \le Cr^{-\gamma},\ r>0.
		\end{align*}
	\end{lemma}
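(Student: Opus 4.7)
My plan is to convert the ODE for $p_0^{\ast}$ into an autonomous planar dynamical system and analyze the relevant equilibrium point. Because the massless equation \eqref{p ast} is scale-invariant under $p(r)\mapsto \lambda^{2}p(\lambda r)$, it is natural to introduce the dimensionless unknowns
\begin{equation*}
u(t):=4\pi r^{2}p_{0}^{\ast}(r),\qquad v(t):=\frac{m_{0}^{\ast}(r)}{r},\qquad t:=\log r.
\end{equation*}
Using $(m_{0}^{\ast})'=4\pi r^{2}\rho_{0}^{\ast}=4\pi r^{2}p_{0}^{\ast}$ together with \eqref{p ast}, a direct computation gives the autonomous system
\begin{equation*}
\dot u=\frac{2u(1-3v-u)}{1-2v},\qquad \dot v=u-v,
\end{equation*}
where the dot denotes $d/dt$. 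The target asymptotic state $\rho_{0}^{\ast}=\tfrac{1}{16\pi r^{2}}$, $m_{0}^{\ast}/r=\tfrac{1}{4}$ corresponds exactly to the nontrivial equilibrium $(u_{\ast},v_{\ast})=(\tfrac{1}{4},\tfrac{1}{4})$, which is straightforward to verify by direct substitution into the original singular ODE.

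Next, I would linearize the system at $(u_{\ast},v_{\ast})$. A short computation of the Jacobian yields
\begin{equation*}
J(u_{\ast},v_{\ast})=\begin{pmatrix} -1 & -3 \\ 1 & -1 \end{pmatrix},
\end{equation*}
whose eigenvalues are $-1\pm i\sqrt{3}$. Hence $(\tfrac{1}{4},\tfrac{1}{4})$ is an asymptotically stable spiral with spectral abscissa $-1$. By the Hartman–Grobman theorem (or, equivalently, by a quadratic Lyapunov function tailored to $J$), any trajectory that eventually enters a small neighborhood of the equilibrium satisfies
\begin{equation*}
|u(t)-\tfrac{1}{4}|+|v(t)-\tfrac{1}{4}|\le C\,e^{-t}= \frac{C}{r},
\end{equation*}
which translates into $|r^{2}\rho_{0}^{\ast}(r)-\tfrac{1}{16\pi}|+|m_{0}^{\ast}(r)/r-\tfrac{1}{4}|\le C/r$. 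This bound is stronger than the claim $Cr^{-\gamma}$ for every $\gamma\in(0,1)$.

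The one step that requires care—and that I expect to be the main obstacle—is showing that the particular trajectory starting from $p_{0}^{\ast}(0)=1$ actually enters the basin of attraction of $(\tfrac14,\tfrac14)$, rather than escaping to the other equilibrium $(0,0)$ or drifting away. For this I would combine three ingredients: (i) a priori bounds $0<u(t)\le C$ (decreasing of $\sigma^{\ast}$ from the proof of Lemma \ref{2.6..}) and $0\le v(t)<\tfrac{4}{9}$ from Buchdahl's inequality, which keep the denominator $1-2v$ bounded away from $0$ and confine the orbit to a compact subset of $\{v<1/2\}$; (ii) inspection of the nullclines $\dot u=0$ and $\dot v=0$ together with the sign of $\dot v = u-v$ and $\dot u$ to exclude convergence to the origin and to forbid the orbit from leaving the invariant region around $(\tfrac14,\tfrac14)$; (iii) a monotone quantity such as $r^{2}p_{0}^{\ast}(r)$ being eventually bounded below by a positive constant, which follows from the massless structure of $S^{\ast}(p)=p$ and the integral expression for $m_{0}^{\ast}$. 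Once the trajectory is trapped in a compact set not meeting the other equilibrium, the $\omega$-limit set must be a single point by Poincaré–Bendixson and the stability of the spiral, so it converges to $(\tfrac14,\tfrac14)$. At that stage the linearization argument yields the exponential $e^{-t}$ convergence, concluding the proof. For brevity I would allow any $\gamma\in(0,1)$ rather than $\gamma=1$, which accommodates a slightly weaker but cleaner Lyapunov estimate should the full $1/r$ rate require additional work (as is the case in the related analysis in \cite{HLR}).
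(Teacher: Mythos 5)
Your approach is essentially the paper's: you rewrite \eqref{p ast} as an autonomous planar system in the variables $(r^{2}p_{0}^{\ast},\,m_{0}^{\ast}/r)$ with logarithmic time, identify the two equilibria, compute the linearization at the nontrivial one to find a stable focus with eigenvalues $-1\pm i\sqrt{3}$, trap the solution trajectory, and deduce exponential convergence. (The paper uses $w_{1}=r^{2}p$, $w_{2}=m/r$ rather than your $u=4\pi r^{2}p$, $v=m/r$; this is a trivial diagonal rescaling and yields the same spectrum.)

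However, there is a genuine gap in the passage from Poincar\'e--Bendixson to convergence to the focus. You write that ``the $\omega$-limit set must be a single point by Poincar\'e--Bendixson and the stability of the spiral.'' This does not follow: local asymptotic stability of a focus is entirely compatible with that focus being encircled by a stable limit cycle, in which case the trajectory coming out of the origin would spiral onto the cycle rather than onto the equilibrium. Poincar\'e--Bendixson only narrows the $\omega$-limit set down to either the fixed point or a periodic orbit; you still need a separate argument to kill the periodic-orbit alternative. The paper does this with Dulac's negative criterion, computing $\operatorname{div}(w_{1}^{-1}F(w))=-\frac{8\pi}{1-2w_{2}}-\frac{1}{w_{1}}<0$ in the trapping region, which forbids closed orbits. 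You should add such a Dulac (or Bendixson--Dulac) step, or some equivalent argument, before invoking the linearization for the rate. A smaller remark: the decay rate $e^{-t}$ exactly ($\gamma=1$) is not delivered by Hartman--Grobman, which gives only a topological conjugacy; the standard conclusion is $e^{-\gamma t}$ for any $\gamma<1$, which is precisely what the lemma asserts and what your closing hedge already acknowledges.
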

	\begin{proof}
		The proof relies on transferring the steady state equations \(\eqref{p ast}\) into a planar, autonomous dynamical system analogous to that in \cite{HLR}. According to the equation of state \(p=\rho\), we have
		\begin{align*}
		\frac{d\rho}{dr}=&-\frac{2\rho}{1-\frac{2m}{r}}\left( \frac{m}{r^{2}}+4\pi r\rho\right), \\
		\frac{dm}{dr}=&4\pi r^{2}\rho.
		\end{align*}
		This can be written in terms of \(u_{1}(r)=r^{2}\rho(r)\), \(u_{2}(r)=m(r)/r\):
		\begin{align*}
		\frac{du_{1}}{dr}=&\frac{2}{r}u_{1}-\frac{2u_{1}}{1-2u_{2}}\left( \frac{1}{r}u_{2}+4\pi\frac{1}{r}u_{1}\right), \\
		\frac{du_{2}}{dr}=&4\pi \frac{1}{r}u_{1}-\frac{1}{r}u_{2}.
		\end{align*}
		Multiplying both equations with \(r\) and introducing \(w_{1}(\tau)=u_{1}(r)\), \(w_{2}(\tau)=u_{2}(r)\) with \(\tau=\ln r\), we obtain
		\begin{align}\label{f1}
		\frac{dw_{1}}{d\tau}=&\frac{w_{1}}{1-2w_{2}}\left( 2-6w_{2}-8\pi w_{1}\right) ,\\ \label{f2}
		\frac{dw_{2}}{d\tau}=&4\pi w_{1}-w_{2}.
		\end{align}
		We denote the right hand side of the system \(\eqref{f1},\eqref{f2}\) by \(F(w)\), which is defined
		and smooth for \(w_{1}\in \mathbb{R}\) and \(w_{2}\in (-\infty,1/2)\). The system has two steady states:
		\begin{align*}
		F(w)=0\quad \Leftrightarrow w=(0,0) \ \textrm{or}\ w=Z:=\left( \frac{1}{16\pi},\frac{1}{4}\right) .
		\end{align*}
		Firstly we observe that
		\begin{align*}
		DF(Z)=\left( \begin{matrix}
		-1&-\frac{3}{4\pi}\\
		4\pi&-1
		\end{matrix}
		\right) 
		\end{align*}
		has eigenvalues
		\begin{align*}
		\lambda_{1,2}=-1\pm \sqrt{3}i
		\end{align*}
		so that \(Z\) is an exponential sink. On the other hand,
		\begin{align*}
		DF(0,0)=\left( \begin{matrix}
		2&0\\
		4\pi&-1
		\end{matrix}
		\right)
		\end{align*}
		with eigenvalues \(-1\) and \(2\), stable direction \((0,1)\) and unstable direction \((3,4\pi)\). For the solution induced by \(p_{0}^{\ast}\) it holds that \(w(\tau)\to(0,0)\) for \(\tau\to-\infty\) which corresponds to \(r\to 0\). Since the corresponding trajectory lies in the first quadrant \([w_{1}>0,\ w_{2}>0]\), it must coincide with the corresponding branch \(T\) of the unstable manifold of \((0,0)\). Let \(D\) denote the triangular region bounded by the lines
		\begin{align*}
		[w_{1}=0],\quad[w_{2}=4/9],\quad [w_{1}=w_{2}].
		\end{align*}
		It is clear that \(Z\in D\). Now we show that the trajectory \(T\) cannot leave this domain. Firstly the unstable direction \((3,4\pi)\) points into the interior of \(D\), so the part of \(T\) close to the origin must lie in the domain \(D\). Then by the fact Buchdahl's inequality and \(w_{1}>0\), we can see that the trajectory \(T\) can not cross the boundary \([w_{1}=0]\) and \([w_{2}=4/9]\). Finally in the line \([w_{1}=w_{2}]\), we have
		\begin{align*}
		(-1,1)\cdot F(w_{1},w_{1})=\frac{1}{1-2w_{1}}\left((4\pi-3)w_{1}+8w_{1}^{2} \right) >0.
		\end{align*}
		This leads to the trajectory \(T\) can not cross the boundary \([w_{1}=w_{2}]\), hence \(T\) cannot leave the domain \(D\). According to Poincar\'e-Bendixson Theorem, the \(\omega-\)limit set of \(T\) must either coincide with \(Z\), or with a periodic orbit. However, according to Dulac’s negative criterion, the set does not contain a periodic orbit, since
		\begin{align*}
		\div \left( \frac{1}{w_{1}}F(w)\right) =-\frac{8\pi}{1-2w_{2}}-\frac{1}{w_{1}}<0.
		\end{align*}
		In view of the real part of the eigenvalues \(\lambda_{1,2}\) it follows that, for any \(0<\gamma<1\) and all \(\tau\) sufficiently large,
		\begin{align*}
		\left|w(\tau)-Z \right|\le C e^{-\gamma\tau} .
		\end{align*}
		The proof is completed by rewriting in terms of the original variables.
	\end{proof}
	We now combine the previous three lemmas to show that the crucial asymptotic properties of the steady state solution
	for \(\kappa\) large. 
	\begin{proposition}\label{2.9..}
		There exist parameters \(0<\alpha_{1}<\alpha_{2}<\frac{1}{4}\), \(\kappa_{0}>0\) sufficiently large, and constants $\bar{\delta}>0, C>0$ such that on the interval
		\begin{align*}
		[r_{\kappa}^{1},r_{\kappa}^{2}]=[\kappa^{\alpha_{1}}e^{-2\kappa},\kappa^{\alpha_{2}}e^{-2\kappa}]
		\end{align*}
		and for any \(\kappa\ge\kappa_{0}\) the following estimates hold:
		\begin{align}\label{asy 1}
		\left| r^{2}\rho_{\kappa}(r)-\frac{1}{16\pi}\right|,\left|r^{2}p_{\kappa}(r)-\frac{1}{16\pi} \right| ,\left|\frac{m_{\kappa}(r)}{r}-\frac{1}{4} \right|,\left| r\mu'_{\kappa}-1\right|,\left| r^{2}\mu''_{\kappa}+1\right|,\left| e^{2\lambda_{\kappa}}-2\right|,\left|r\lambda'_{\kappa},r^{2}\lambda_{\kappa}'' \right| \le C\kappa^{-\bar{\delta}},
		\end{align}
		and
		\begin{align}\label{asy 2}
		C_{\kappa}\exp(-C\kappa^{-\bar{\delta}}\ln\kappa)\le\frac{e^{\mu_{\kappa}(r)}}{r}\le C_{\kappa}\exp(C\kappa^{-\bar{\delta}}\ln\kappa),
		\end{align}
		where \(C_{\kappa}>0\) does depend on \(\kappa\), but \(C>0\) does not. Notice that in this estimate the exponential terms on both sides converge to 1 as \(\kappa\to\infty\).
	\end{proposition}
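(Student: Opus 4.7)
The strategy is to reduce the problem to the massless companion model $p_\kappa^\ast$ (for which the scaling identity of Corollary \ref{2.7..} and the asymptotic decay of Lemma \ref{2.8..} are available) and then to transfer the resulting bounds to the true steady state via the difference estimate of Lemma \ref{2.6..}. Introduce the rescaled variable $\tau=e^{2\kappa}r$, under which the interval $[r_\kappa^1,r_\kappa^2]$ maps onto $[\kappa^{\alpha_1},\kappa^{\alpha_2}]$, tending to $[\infty,\infty)$ as $\kappa\to\infty$. By Corollary \ref{2.7..}, $r^2 p_\kappa^\ast(r)=\tau^2 p_0^\ast(\tau)$, and a change of variable in the mass integral gives $m_\kappa^\ast(r)/r=m_0^\ast(\tau)/\tau$. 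Applying Lemma \ref{2.8..} (with some $\gamma\in(0,1)$) and using $\rho_\kappa^\ast=p_\kappa^\ast$ in the massless case, all three quantities $r^2\rho_\kappa^\ast,\ r^2 p_\kappa^\ast,\ m_\kappa^\ast/r$ differ from $1/(16\pi),1/(16\pi),1/4$ respectively by at most $C\tau^{-\gamma}\le C\kappa^{-\gamma\alpha_1}$. I take $\bar\delta:=\gamma\alpha_1$.

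Next I transfer these estimates to the true solution. The choice $\alpha_2<1/4$ is made so that $4\alpha_2<1$; on $[r_\kappa^1,r_\kappa^2]$ we have $e^{4\kappa}r^2=\tau^2\le\kappa^{2\alpha_2}$ and $e^{8\kappa}r^4=\tau^4\le\kappa^{4\alpha_2}$, so the exponential in Lemma \ref{2.6..} is at most $\exp(C\kappa^{4\alpha_2})$. Multiplying the bound of Lemma \ref{2.6..} by $r^2$ yields $r^2|p_\kappa-p_\kappa^\ast|\le Ce^{-2\kappa}\kappa^{6\alpha_2}\exp(C\kappa^{4\alpha_2})$, which decays faster than any polynomial in $\kappa^{-1}$ since $4\alpha_2<1$. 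Because $\rho_\kappa=p_\kappa+1$ while $\rho_\kappa^\ast=p_\kappa^\ast$, the discrepancy in the density adds only $r^2\le\kappa^{2\alpha_2}e^{-4\kappa}$, again super-polynomially small. The same pointwise estimate integrated against $s^2$ on $[0,r]$ and divided by $r$ yields $|m_\kappa(r)/r-m_\kappa^\ast(r)/r|\le Ce^{-\kappa}$, so by the triangle inequality all three quantities $r^2\rho_\kappa,\ r^2 p_\kappa,\ m_\kappa/r$ differ from their targets by at most $C\kappa^{-\bar\delta}$.

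Once these fluid/mass estimates are in hand, the metric estimates follow algebraically. From \eqref{lamda eliminate} we get $e^{2\lambda_\kappa}=(1-2m_\kappa/r)^{-1}=2+O(\kappa^{-\bar\delta})$. Substituting into \eqref{mu ode} gives
\begin{align*}
r\mu_\kappa'=\frac{m_\kappa/r+4\pi r^2 p_\kappa}{1-2m_\kappa/r}=\frac{1/4+1/4}{1/2}+O(\kappa^{-\bar\delta})=1+O(\kappa^{-\bar\delta}).
\end{align*}
Solving \eqref{1steady field eq00} for $r\lambda_\kappa'$ produces $r\lambda_\kappa'=\tfrac12\big(1+e^{2\lambda_\kappa}(8\pi r^2\rho_\kappa-1)\big)$; substituting the leading values $e^{2\lambda_\kappa}\approx 2$ and $8\pi r^2\rho_\kappa\approx 1/2$ produces an exact cancellation that yields $r\lambda_\kappa'=O(\kappa^{-\bar\delta})$. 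Differentiating these first-order identities once more and reinserting the already-proven estimates (together with the analogous transfer bound for $r^2(p_\kappa-p_\kappa^\ast)'$, obtained by differentiating the ODE and using the bounds already established) produces the analogous bounds $|r^2\mu_\kappa''+1|,\ |r^2\lambda_\kappa''|\le C\kappa^{-\bar\delta}$.

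Finally, for \eqref{asy 2} I integrate the estimate $\mu_\kappa'(s)=1/s+O(\kappa^{-\bar\delta}/s)$ from $r_\kappa^1$ to $r\in[r_\kappa^1,r_\kappa^2]$. Because $\ln(r/r_\kappa^1)\le\ln(r_\kappa^2/r_\kappa^1)=(\alpha_2-\alpha_1)\ln\kappa$, this gives $\mu_\kappa(r)=\mu_\kappa(r_\kappa^1)+\ln(r/r_\kappa^1)+O(\kappa^{-\bar\delta}\ln\kappa)$; exponentiating and setting $C_\kappa:=e^{\mu_\kappa(r_\kappa^1)}/r_\kappa^1$ yields $e^{\mu_\kappa(r)}/r=C_\kappa\exp\bigl(O(\kappa^{-\bar\delta}\ln\kappa)\bigr)$, which is exactly \eqref{asy 2}. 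The main obstacle is reconciling the two distinct sources of error: Lemma \ref{2.8..} gives only polynomial decay in $\tau$ (forcing $\bar\delta\le\gamma\alpha_1<1$), while Lemma \ref{2.6..} gives super-polynomial decay but carries a potentially harmful exponential $\exp(C\kappa^{4\alpha_2})$; the condition $\alpha_2<1/4$ is precisely what makes the latter harmless, leaving both error sources uniformly $O(\kappa^{-\bar\delta})$ on the annulus.
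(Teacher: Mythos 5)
Your proof follows essentially the same route as the paper: rescale to $\tau=e^{2\kappa}r$, invoke Corollary~\ref{2.7..} and Lemma~\ref{2.8..} to estimate the massless companion, transfer to the true solution via Lemma~\ref{2.6..} (using $\alpha_2<1/4$ to neutralize the exponential), set $\bar\delta=\gamma\alpha_1$, and then derive the metric bounds algebraically from the field equations and obtain \eqref{asy 2} by integrating $\mu_\kappa'(s)-1/s$ over $[r_\kappa^1,r]$. The only cosmetic difference is that you spell out the algebraic cancellations for $r\mu_\kappa'$, $r\lambda_\kappa'$ and the second derivatives where the paper writes ``we omit the lengthy but straightforward argument''; in fact those can be closed even more cheaply than you indicate, by replacing the transfer bound for $r^2(p_\kappa-p_\kappa^\ast)'$ with the Tolman--Oppenheimer--Volkov relation $p_\kappa'=-(\rho_\kappa+p_\kappa)\mu_\kappa'$, which reduces $r^3 p_\kappa'$ to already-estimated quantities.
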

	\begin{proof}
		First we note that by Corollary \ref{2.7..} and Lemma \(\ref{2.8..}\),
		\begin{align*}
		\left| r^{2}p_{\kappa}^{\ast}(r)-\frac{1}{16\pi}\right| =\left| r^{2}\rho_{\kappa}^{\ast}(r)-\frac{1}{16\pi}\right|=\left|(e^{2\kappa}r)^{2}\rho_{0}^{\ast}(e^{2\kappa}r) -\frac{1}{16\pi}\right| \le C e^{-2\kappa\gamma}r^{-\gamma},
		\end{align*}
		so that together with Lemma \(\ref{2.6..}\),
		\begin{align*}
		\left| r^{2}p_{\kappa}(r)-\frac{1}{16\pi}\right| \le& Ce^{6\kappa}\left( r^{4}+e^{4\kappa}r^{6}\right) \exp\left( C\left( e^{4\kappa}r^{2}+e^{8\kappa}r^{4}\right) \right) +e^{-2\kappa\gamma}r^{-\gamma}\\
		\le& C(\kappa^{4\alpha_{2}}+\kappa^{6\alpha_{2}})\exp\left( C(\kappa^{2\alpha_{2}}+\kappa^{4\alpha_{2}}) -2\kappa\right)+C \kappa^{-\gamma\alpha_{1}}.
		\end{align*}
		Selecting \(0<\alpha_{1}<\alpha_{2}<\frac{1}{4}\), the first term on the right side of above inequality has a faster decay with respect to \(\kappa\) than the second term. Let \(\bar{\delta}=\gamma\alpha_{1}\). Without loss of generality, there exist \(\kappa_{0}>0\) such that on the interval \([r_{\kappa}^{1},r_{\kappa}^{2}]\) for any \(\kappa\ge\kappa_{0}\) we have
		\begin{align*}
		\left| r^{2}p_{\kappa}(r)-\frac{1}{16\pi}\right| \le C\kappa^{-\bar{\delta}}.
		\end{align*}
		By the equation of state \(\eqref{state eq}\), it follows that
		\begin{align*}
		\left| r^{2}\rho_{\kappa}(r)-\frac{1}{16\pi}\right|=\left| r^{2}p_{\kappa}(r)-\frac{1}{16\pi}\right|+\left|r^{2} \right| \le C\kappa^{-\bar{\delta}}.
		\end{align*}
		The scaling property in Lemma \ref{2.6..} implies that
		\begin{align*}
		\left| \frac{m_{\kappa}^{\ast}(r)}{r}-\frac{1}{4}\right| =	\left| \frac{m_{0}^{\ast}(e^{2\kappa}r)}{e^{2\kappa}r}-\frac{1}{4}\right|\le Ce^{-2\kappa\gamma}r^{-\gamma}\le C\kappa^{-\bar{\delta}}.
		\end{align*}
		Hence integrating the \(\rho-\rho^{\ast}\) estimate directly yields
		\begin{align*}
		\left| \frac{m_{\kappa}(r)}{r}-\frac{1}{4}\right| \le\left| \frac{m_{\kappa}(r)}{r}-\frac{m_{\kappa}^{\ast}(r)}{r}\right|+\left| \frac{m_{\kappa}^{\ast}(r)}{r}-\frac{1}{4}\right|\le C\kappa^{-\bar{\delta}}.
		\end{align*}
		Using the steady state equation \(\eqref{1steady field eq00},\eqref{1steady field eq11}\) and \(\eqref{lamda eliminate}\), we obtain 
		\begin{align*}
		\left| r\mu'_{\kappa}-1\right|,\left| r^{2}\mu''_{\kappa}+1\right|,\left| e^{2\lambda_{\kappa}}-2\right|,\left|r\lambda'_{\kappa},r^{2}\lambda_{\kappa}'' \right| \le C\kappa^{-\bar{\delta}},
		\end{align*}
		where we omit the lengthy but straightforward argument. In order to prove the asymptotical property \(\eqref{asy 2}\), we need some special preparation. Clearly,
		\begin{align*}
		\mu_{\kappa}(r)=&\mu_{\kappa}(r_{\kappa}^{1})+\int_{r_{\kappa}^{1}}^{r}\frac{1}{s}ds+\int_{r_{\kappa}^{1}}^{r}\left(\mu_{\kappa}'(s)-\frac{1}{s} \right) ds\\
		=&\mu_{\kappa}(r_{\kappa}^{1})+\ln\left(\frac{r}{r_{\kappa}^{1}} \right) +\int_{r_{\kappa}^{1}}^{r}\left(\mu_{\kappa}'(s)-\frac{1}{s} \right) ds.
		\end{align*}
		Therefore
		\begin{align*}
		e^{\mu_{\kappa}(r)}=e^{\mu_{\kappa}(r_{\kappa}^{1})}\frac{r}{r_{\kappa}^{1}}\exp\left(\int_{r_{\kappa}^{1}}^{r}\left(\mu_{\kappa}'(s)-\frac{1}{s} \right) ds \right) ,
		\end{align*}
		and using the previous results it follows that
		\begin{align*}
		\exp\left(\int_{r_{\kappa}^{1}}^{r}\left(\mu_{\kappa}'(s)-\frac{1}{s} \right) ds \right)\le\exp\left( \int_{r_{\kappa}^{1}}^{r_{\kappa}^{2}}\left(C\kappa^{-\bar{\delta}}\frac{1}{s} \right) ds\right) =\left(\frac{r_{\kappa}^{2}}{r_{\kappa}^{1}} \right) ^{C\kappa^{-\bar{\delta}}}=e^{C(\alpha_{2}-\alpha_{1})\kappa^{-\bar{\delta}}\ln\kappa}.
		\end{align*}
		The lower estimate is completely analogous and this finishes the proof of Proposition \(\ref{2.9..}\).
	\end{proof}

	\section{Linearized equation}\label{sec3}
	We start by introducing an additional quantity \(n=N(\rho)\) called number density:
	\begin{align*}
	N(\rho):=\exp\left( \int_{1}^{\rho}\frac{ds}{s+p(s)}\right).
	\end{align*}
	Clearly, we have
	\begin{align}\label{dndrho}
	\frac{dN}{d\rho}=\frac{N}{\rho+p(\rho)}.
	\end{align}
	Using \(\eqref{dndrho}\) and the equation \(\eqref{1steady momentum eq}\) it follows that
	\begin{align*}
	\left( \frac{dN}{d\rho}(\rho_{\kappa})\right)'=-\frac{N(\rho_{\kappa})}{(\rho_{\kappa}+p_{\kappa})^{2}}p_{\kappa}'= \frac{N(\rho_{\kappa})}{\rho_{\kappa}+p_{\kappa}}\mu_{\kappa}'=\frac{dN}{d\rho}(\rho_{\kappa})\mu_{\kappa}',
	\end{align*}
	on the interval \([0,R_{\kappa}]\), where \([0,R_{\kappa}]\) is  the support of the steady state. We integrate this differential equation to obtain
	\begin{align*}
	\frac{dN}{d\rho}(\rho_{\kappa}(r))=\frac{dN}{d\rho}(1)e^{\mu_{\kappa}(r)-\mu_{\kappa}(R_{\kappa})}.
	\end{align*}
	Let the function \(N_{\kappa}:=c_{\kappa}N\) and choose the normalization constant \(c_{\kappa}\) such that \(\frac{dN_{\kappa}}{d\rho}(1)=e^{\mu_{\kappa}(R_{\kappa})}\). We can get the simplified identity
	\begin{align}\label{n mu}
	\frac{n_{\kappa}}{\rho_{\kappa}+p_{\kappa}}=\frac{dN_{\kappa}}{d\rho}(\rho_{\kappa})=e^{\mu_{\kappa}}
	\end{align}
	on the interval \([0,R_{\kappa}]\). We introduce the following variable
	\begin{align}\label{n Psi}
	\Psi_{\kappa}(r):=\frac{1}{n_{\kappa}(r)}\frac{dp}{d\rho}(\rho_{\kappa}(r)),\ 0\le r\le R_{\kappa},
	\end{align}
	which is known as enthalpy of the steady state, and it is obvious that \(\Psi_{\kappa}n'_{\kappa}=-\mu_{\kappa}'\). In order to linearize the Einstein-Euler system \(\eqref{1field eq00}-\eqref{1momentum eq SS}\), we need to write them in Lagrangian coordinates, since there is difference with the gaseous case corresponding to a Cauchy problem. Let \(\eta(y,t)\) be the radial position of the fluid particle at time \(t\) so that
	\begin{align}\label{eta def1}
	\partial_{t}\eta=\frac{u}{u^{0}}\circ\eta \ \ \ \  \text{with} \ \ \ \ \eta(y,0)=\eta_{0}(y).
	\end{align}
	Here \(\eta_{0}\) is not necessarily the identity map but depend on the initial density profile. By a slight abuse of notation, we still write the Lagrangian quantities \(u=u(t,y),\rho=\rho(t,y),p=p(t,y),\lambda=\lambda(t,y)\) and \(\mu=\mu(t,y)\) instead of the corresponding Euler variables, and \('\) denotes the derivative with respect to \(y\). Then we get the Einstein-Euler equation in Lagrangian coordinates
	\begin{align}\label{field eq00 LC}
	&e^{-2\lambda}(\frac{2\eta\lambda'}{\eta'}-1)+1=8\pi \eta^{2}\left(\rho+e^{2\lambda}(\rho+p)u^{2} \right),\\ \label{field eq11 LC}
	&e^{-2\lambda}(\frac{2\eta\mu'}{\eta'}+1)-1=8\pi \eta^{2}\left(p+e^{2\lambda}(\rho+p)u^{2} \right),\\ \label{field eq01 LC}
	&\dot{\lambda}-e^{\mu}\frac{u\lambda'}{\left\langle u \right\rangle \eta'}=-4\pi \eta e^{\mu+2\lambda}\left\langle u\right\rangle u(\rho+p),\\ \label{mass eq LC}
	&\dot{\rho}+(\rho+p)\left[\dot{\lambda}+e^{\mu}\frac{u}{\left\langle u\right\rangle}\left( \frac{\mu'}{\eta'}+\frac{2}{\eta}\right)+e^{\mu}\frac{u'}{\left\langle u\right\rangle\eta'} +e^{2\lambda}\frac{u}{\left\langle u\right\rangle} \frac{\dot{\lambda}u+\dot{u}}{\left\langle u\right\rangle}  -e^{\mu+2\lambda}\frac{u^{2}}{\left\langle u\right\rangle^{2}}\frac{\lambda'u+u'}{\left\langle u\right\rangle\eta'} \right] =0,\\ \label{momentum eq LC}
	&(\rho+p)\left[ e^{2\lambda}(\dot{u}+2\dot{\lambda}u)+e^{\mu}\left\langle u\right\rangle\frac{\mu'}{\eta'}-e^{\mu+2\lambda}\frac{u}{\left\langle u\right\rangle}\frac{\lambda'u}{\eta'}\right] +e^{\mu}\left\langle u\right\rangle\frac{ p'}{\eta'}+e^{2\lambda}u \dot{p}-e^{\mu+2\lambda}\frac{u}{\left\langle u\right\rangle}\frac{p'u}{\eta'}=0.
	\end{align}
	 Let us now linearize the Einstein-Euler system \(\eqref{field eq00 LC}-\eqref{momentum eq LC}\) around a given steady state. By a slight abuse of notation again, we write \(n,u,\rho,p,\lambda,\mu\) for the Lagrangian perturbations, that is \(\rho_{\kappa}+\rho,p_{\kappa}+p,n_{\kappa}+n\) and so on correspond to the solution of the original nonlinear system. First we linearize the equation \(\rho_{\kappa}+\rho=N_{\kappa}^{-1}(n_{\kappa}+n)\) and \(p_{\kappa}+p=p(N_{\kappa}^{-1}(n_{\kappa}+n))\)
	\begin{align}\label{linear rho}
	&\rho=\frac{dN_{\kappa}^{-1}}{dn}(n_{\kappa})n=\frac{1}{\frac{dN_{\kappa}}{d\rho}(\rho_{\kappa})}n=e^{-\mu_{\kappa}}n,\\ \label{linear p}
	&p=\frac{dp}{d\rho}(\rho_{\kappa})\frac{dN_{\kappa}^{-1}}{dn}(n_{\kappa})n=\Psi_{\kappa}(\rho_{\kappa}+p_{\kappa})n,
	\end{align}
	where we have used \(\eqref{n mu}\) and \(\eqref{n Psi}\). Let \(\eta(t,y)=y(1+\zeta(t,y))\), so
	\begin{align}\label{d eta}
	\eta'=1+\zeta+y\zeta'.
	\end{align}
	Substituting \(\eqref{d eta}\) into \(\eqref{field eq00 LC}\), we have
	\begin{align}\label{field eq00 LC1}
	e^{-2(\lambda_{\kappa}+\lambda)}\left( \frac{2y(1+\zeta)(\lambda_{\kappa}'+\lambda')}{1+\zeta+y\zeta' }-1\right)+1=8\pi y^{2}(1+\zeta)^{2}\left( \rho_{\kappa}+\rho+e^{2(\lambda_{\kappa}+\lambda)}(\rho_{\kappa}+p_{\kappa}+\rho+p)u^{2}\right).
	\end{align}
	Since
	\begin{align*}
	(&1+\zeta+y\zeta')^{-1}=1-\zeta-y\zeta'+o(|\zeta|+|\zeta'|)\\
	&e^{-2\lambda}=1-2\lambda+o(\lambda)
	\end{align*}
	we simplify \(\eqref{field eq00 LC1}\) by discarding non-linear terms
	\begin{align*}
	e^{-2\lambda_{\kappa}}y(-2\lambda\lambda_{\kappa}'-y\lambda_{\kappa}'\zeta'+\lambda')+e^{-2\lambda_{\kappa}}\lambda=&4\pi y^{2}(\rho+2\rho_{\kappa}\zeta)\\
	e^{-2\lambda_{\kappa}}(-2y\lambda\lambda_{\kappa}'+y\lambda'+\lambda)=&4\pi y^{2}(\rho+2\rho_{\kappa}\zeta)+e^{-2\lambda_{\kappa}}y^{2}\lambda_{\kappa}'\zeta'\\
	(y\lambda e^{-2\lambda_{\kappa}})'=&y^{2}(4\pi\rho+8\pi\rho_{\kappa}\zeta+e^{-2\lambda_{\kappa}}\lambda_{\kappa}'\zeta').
	\end{align*}
	Intergating the above equation we obtain the linearization of \(\eqref{field eq00 LC}\)
	\begin{align}\label{linear 1}
	\lambda=\frac{e^{2\lambda_{\kappa}}}{y}\int_{0}^{y}s^{2}(4\pi\rho+8\pi\rho_{\kappa}\zeta+e^{-2\lambda_{\kappa}}\lambda_{\kappa}'\zeta')ds=:\frac{e^{2\lambda_{\kappa}}}{y}\int_{0}^{y}s^{2}f(s)ds.
	\end{align}
	Similarly linearizing \(\eqref{field eq11 LC}\) we arrive at
	\begin{align}\label{linear 2}
	\begin{split}
	y\mu'=&(2y\mu_{\kappa}'+1)\lambda+4\pi y^{2}e^{2\lambda_{\kappa}}p+y^{2}(\mu_{\kappa}'\zeta'+8\pi p_{\kappa}\zeta e^{2\lambda_{\kappa}})\\
	=&(2y\mu_{\kappa}'+1)\lambda+y\Psi_{\kappa}(\lambda_{\kappa}'+\mu_{\kappa}')e^{\mu_{\kappa}}\rho+y^{2}(\mu_{\kappa}'\zeta'+8\pi p_{\kappa}\zeta e^{2\lambda_{\kappa}}),
	\end{split}
	\end{align}
	where we have used \(\eqref{tov}\), \(\eqref{linear rho}\) and \(\eqref{linear p}\). Linearizing \(\eqref{momentum eq LC}\) we obtain the equation
	\begin{align*}
	0=&e^{2\lambda_{\kappa}}\dot{u}+\frac{\rho+p}{\rho_{\kappa}+p_{\kappa}}e^{\mu_{\kappa}}\mu_{\kappa}'+e^{\mu_{\kappa}}\mu'+\frac{e^{\mu_{\kappa}}}{\rho_{\kappa}+p_{\kappa}}p' \\
	=&e^{2\lambda_{\kappa}}\dot{u}+\frac{e^{\mu_{\kappa}}}{\rho_{\kappa}+p_{\kappa}}\mu_{\kappa}'\rho+\Psi_{\kappa}e^{\mu_{\kappa}}e^{\mu_{\kappa}}\mu_{\kappa}'\rho+e^{\mu_{\kappa}}\mu' +\frac{e^{\mu_{\kappa}}}{\rho_{\kappa}+p_{\kappa}}\Psi_{\kappa}'(\rho_{\kappa}+p_{\kappa})e^{\mu_{\kappa}}\rho  \\  &+\frac{e^{\mu_{\kappa}}}{\rho_{\kappa}+p_{\kappa}}\left[ \Psi_{\kappa}(\rho_{\kappa}'+p_{\kappa}')e^{\mu_{\kappa}}\rho+\Psi_{\kappa}(\rho_{\kappa}+p_{\kappa})e^{\mu_{\kappa}}\mu_{\kappa}'\rho+\Psi_{\kappa}(\rho_{\kappa}+p_{\kappa})e^{\mu_{\kappa}}\rho'     \right] \\
	=&e^{2\lambda_{\kappa}}\dot{u}+\Psi_{\kappa}e^{\mu_{\kappa}}e^{\mu_{\kappa}}\mu_{\kappa}'\rho+e^{\mu_{\kappa}}\mu'+\Psi_{\kappa}'e^{\mu_{\kappa}}e^{\mu_{\kappa}}\rho+\Psi_{\kappa}e^{\mu_{\kappa}}e^{\mu_{\kappa}}\rho'\\
	=&e^{2\lambda_{\kappa}}\dot{u}+e^{\mu_{\kappa}}\mu'+e^{\mu_{\kappa}}(e^{\mu_{\kappa}}\Psi_{\kappa}\rho)'.
	\end{align*}
	Multiplying by \(e^{\lambda_{\kappa}}\) we find that
	\begin{align}\label{linear 5}
	\begin{split}
	0=&e^{3\lambda_{\kappa}}\dot{u}+e^{\lambda_{\kappa}+\mu_{\kappa}}\mu'+e^{\lambda_{\kappa}+\mu_{\kappa}}(e^{\mu_{\kappa}}\Psi_{\kappa}\rho)'\\
	=&e^{3\lambda_{\kappa}}\dot{u}+\frac{1}{y}e^{\lambda_{\kappa}+\mu_{\kappa}}(2y\mu_{\kappa}'+1)\lambda+(e^{\lambda_{\kappa}+2\mu_{\kappa}}\Psi_{\kappa}\rho)'+ye^{3\lambda_{\kappa}+\mu_{\kappa}}(8\pi p_{\kappa}\zeta+e^{-2\lambda_{\kappa}}\mu_{\kappa}'\zeta'),
	\end{split}
	\end{align}
	where we have use the linearized equation \(\eqref{linear 2}\). Linearizing \(\eqref{field eq01 LC}\) and using \(\eqref{tov}\) we arrive at
	\begin{align}\label{linear 3}
	\begin{split}
	\dot{\lambda}=&e^{\mu_{\kappa}}\lambda_{\kappa}'u-4\pi y e^{\mu_{\kappa}+2\lambda_{\kappa}}u(\rho_{\kappa}+p_{\kappa})\\
	=&e^{\mu_{\kappa}}\lambda_{\kappa}'u-e^{\mu_{\kappa}}(\lambda_{\kappa}'+\mu_{\kappa}')u\\
	=&-e^{\mu_{\kappa}}\mu_{\kappa}'u.
	\end{split}
	\end{align}
	From \(\eqref{linear 3}\) it follows that linearizing \(\eqref{mass eq LC}\) we get
	\begin{align}\label{linear 4}
	\begin{split}
	0=&\dot{\rho}+(\rho_{\kappa}+p_{\kappa})\left[ \dot{\lambda}+e^{\mu_{\kappa}}u\left( \mu_{\kappa}'+\frac{2}{y}\right) +e^{\mu_{\kappa}}u'\right] \\
	=&\dot{\rho}+(\rho_{\kappa}+p_{\kappa})\left[e^{\mu_{\kappa}}\left( \frac{2u}{y}+u'\right)  \right] \\
	=&\dot{\rho}+\frac{n_{\kappa}}{y^{2}}(y^{2}u)'.
	\end{split}
	\end{align}
	With respect to the linearized equation of the free boundary problem, the usual approach is to transform the density perturbation into perturbation of the Lagrangian parameter. Linearizing \(\eqref{eta def1}\) we arrive at
	\begin{align}\label{linear eta}
	y\dot{\zeta}=e^{\mu_{\kappa}}u.
	\end{align}
	Together with the relation \(\eqref{linear 4}\) this yields
	\begin{align*}                                                     
	\dot{\rho}+\frac{n_{\kappa}}{y^{2}}(e^{-\mu_{\kappa}}y^{3}\dot{\zeta})'=0\quad \Longrightarrow \quad \rho+\frac{n_{\kappa}}{y^{2}}(e^{-\mu_{\kappa}}y^{3}\zeta)'=\rho_{0}+\frac{n_{\kappa}}{y^{2}}(e^{-\mu_{\kappa}}y^{3}\zeta)'| _{t=0}.
	\end{align*}
	For a small perturbation \(\rho_{0}\), we can pick \(\zeta_{0}\) such that
	\begin{align}\label{rho zeta}
	\rho_{0}+\frac{n_{\kappa}}{y^{2}}(e^{-\mu_{\kappa}}y^{3}\zeta)'| _{t=0}=0 \quad \Longrightarrow \quad \rho+\frac{n_{\kappa}}{y^{2}}(e^{-\mu_{\kappa}}y^{3}\zeta)'=0.
	\end{align}
	Since \(\rho(R_{\kappa})=0\), we have the following boundary condition by \(\eqref{rho zeta}\)
	\begin{align}
	(3-R_{\kappa}\mu_{\kappa}'(R_{\kappa}))\zeta(R_{\kappa})+R_{\kappa}\zeta'(R_{\kappa})=0.
	\end{align}
	Combined with the previous results, we give the linearized equation about the perturbation variable \(\zeta(t,y)\).
	\begin{proposition}\label{prop: linearized operator}
		For the given steady state solution \(n_{\kappa},\rho_{\kappa},p_{\kappa},\lambda_{\kappa},\mu_{\kappa},\Psi_{\kappa}\) induced by \(y_{\kappa}\), we arrive at the linearized equation of the Einstein-Euler system \(\eqref{1field eq00}-\eqref{1momentum eq SS}\) through the perturbation variable \(\zeta(t,y)\) with a Robin type boundary condition
		\begin{align}\label{linear eq}
		ye^{3\lambda_{\kappa}-\mu_{\kappa}}\ddot{\zeta}+e^{3\lambda_{\kappa}+\mu_{\kappa}}\frac{2y\mu_{\kappa}'+1}{y^{2}}\int_{0}^{y}s^{2}f(s)ds+\left( e^{2\mu_{\kappa}+\lambda_{\kappa}}\Psi_{\kappa}\rho\right)' +ye^{3\lambda_{\kappa}+\mu_{\kappa}}(8\pi p_{\kappa}\zeta+e^{-2\lambda_{\kappa}}\mu_{\kappa}'\zeta')=&0\\ \label{linear boundary condition}
		(3-R_{\kappa}\mu_{\kappa}'(R_{\kappa}))\zeta(R_{\kappa})+R_{\kappa}\zeta'(R_{\kappa})=&0,
		\end{align}
		where \(f\) and \(\rho\) given by \(\eqref{linear 1}\) and \(\eqref{rho zeta}\) respectively.
	\end{proposition}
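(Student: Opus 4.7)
The plan is to assemble the individual linearizations already carried out in equations \eqref{linear rho}--\eqref{rho zeta} into a single closed PDE for $\zeta$, together with the free boundary condition at $y=R_{\kappa}$. The derivation breaks naturally into an interior calculation (producing \eqref{linear eq}) and a boundary calculation (producing \eqref{linear boundary condition}).

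For the interior, I would start from the linearized momentum equation \eqref{linear 5}, which already contains all the quantities appearing in \eqref{linear eq} except that the inertial contribution is written in terms of $\dot u$ rather than $\ddot\zeta$, and the $\lambda$-term is not yet expanded. The kinematic identity \eqref{linear eta} gives $u = e^{-\mu_{\kappa}} y \dot\zeta$; since $\mu_{\kappa}$ is time-independent, differentiating yields $\dot u = e^{-\mu_{\kappa}} y \ddot\zeta$, and substituting into the first term of \eqref{linear 5} produces the leading inertial term $y e^{3\lambda_{\kappa}-\mu_{\kappa}}\ddot\zeta$ of \eqref{linear eq}. Next I use \eqref{linear 1} to replace $\lambda$ by $(e^{2\lambda_{\kappa}}/y)\int_{0}^{y} s^{2} f(s)\,ds$ in the second term of \eqref{linear 5}; this reproduces the integral term in \eqref{linear eq}. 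The remaining two terms in \eqref{linear 5} already coincide with the last two terms of \eqref{linear eq}, so no further manipulation is needed inside the fluid.

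For the boundary condition, the free boundary constraint is that the total pressure vanish on $\partial \mathcal{B}(t)$; in Lagrangian coordinates this is a condition at the fixed point $y=R_{\kappa}$. Linearizing and using $p_{\kappa}(R_{\kappa}) = 0$ together with \eqref{linear p}, \eqref{linear rho}, and the identity \eqref{n mu}, the pressure perturbation at the boundary reduces to $\Psi_{\kappa}(R_{\kappa}) e^{\mu_{\kappa}(R_{\kappa})} \rho(R_{\kappa})$, so vanishing of $p$ at $y=R_{\kappa}$ is equivalent to $\rho(R_{\kappa})=0$. Now I insert this into \eqref{rho zeta}: expanding
\begin{equation*}
(e^{-\mu_{\kappa}} y^{3} \zeta)' = e^{-\mu_{\kappa}} y^{2}\bigl[(3 - y\mu_{\kappa}')\zeta + y\zeta'\bigr]
\end{equation*}
and evaluating at $y=R_{\kappa}$ yields exactly \eqref{linear boundary condition} after dividing through by the nonvanishing factor $n_{\kappa}(R_{\kappa}) e^{-\mu_{\kappa}(R_{\kappa})}$.

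The most delicate step is the boundary analysis, because the hard phase density $\rho_{\kappa}$ is discontinuous across $r=R_{\kappa}$ (it jumps from $1$ to $0$), so it is not immediately obvious that the naive linearization of the moving-boundary condition $p=0$ collapses cleanly to $\rho(R_{\kappa})=0$ on the Lagrangian side. The justification above hinges on the specific structure $p=\rho-1$ together with $p_{\kappa}(R_{\kappa})=0$ and $\Psi_{\kappa}(R_{\kappa})\neq 0$, which ensures that the linearized pressure on the Lagrangian boundary is a nonvanishing multiple of $\rho(R_{\kappa})$; the interior substitutions are then essentially bookkeeping.
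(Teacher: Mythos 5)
Your proposal is correct and follows essentially the same route as the paper, which derives the proposition in the running text of Section~3 rather than in a formal proof environment. You correctly assemble \eqref{linear 5} with \eqref{linear eta} (giving $\dot u=e^{-\mu_\kappa}y\ddot\zeta$) and \eqref{linear 1} for the interior equation, and you deduce the Robin condition from $\rho(R_\kappa)=0$ inserted into \eqref{rho zeta} — precisely the paper's steps. Your added remark justifying $\rho(R_\kappa)=0$ via $p=0$ on the free boundary, $p_\kappa(R_\kappa)=0$, and the hard-phase relation $\delta p=\delta\rho$ (equivalently $\Psi_\kappa(\rho_\kappa+p_\kappa)e^{\mu_\kappa}=1$ by \eqref{n mu}--\eqref{n Psi}) fills in a step the paper states without comment; one small imprecision is that you drop the factor $(\rho_\kappa+p_\kappa)$ when quoting \eqref{linear p}, which happens to equal $1$ at $y=R_\kappa$ so the conclusion is unaffected.
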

	In general stability analysis, we usually hope that the linearized equation have great symmetry, however, the existence of the integral term in \(\eqref{linear eq}\) violates this property. We need a more precise treatment of the linearized equation. According to the definition of the function \(f\), we have
	\begin{align*}
	s^{2}f(s)=&4\pi s^{2}\rho+8\pi s^{2}\rho_{\kappa}\zeta+e^{-2\lambda_{\kappa}}s^{2}\lambda_{\kappa}'\zeta'\\
	=&-4\pi n_{\kappa}(e^{-\mu_{\kappa}}s^{3}\zeta)'+8\pi s^{2}\rho_{\kappa}\zeta+e^{-2\lambda_{\kappa}}s^{2}\lambda_{\kappa}'\zeta'\\
	=&-4\pi e^{\mu_{\kappa}}(\rho_{\kappa}+p_{\kappa})(-e^{-\mu_{\kappa}}\mu_{\kappa}'s^{3}\zeta+3e^{-\mu_{\kappa}}s^{2}\zeta+e^{-\mu_{\kappa}}s^{3}\zeta')+8\pi s^{2}\rho_{\kappa}\zeta+e^{-2\lambda_{\kappa}}s^{2}\lambda_{\kappa}'\zeta'\\
	=&-4\pi p_{\kappa}'s^{3}\zeta-12\pi p_{\kappa}s^{2}\zeta-4\pi\rho_{\kappa}s^{2}\zeta-4\pi (\rho_{\kappa}+p_{\kappa})s^{3}\zeta'+e^{-2\lambda_{\kappa}}s^{2}\lambda_{\kappa}'\zeta'\\
	=&-4\pi(p_{\kappa}s^{3})'\zeta-4\pi\rho_{\kappa}s^{2}\zeta-e^{-2\lambda_{\kappa}}\mu_{\kappa}'s^{2}\zeta'\\
	=&-4\pi(p_{\kappa}s^{3})'\zeta-4\pi\rho_{\kappa}s^{2}\zeta-4\pi s^{3}(p_{\kappa}+w_{\kappa})\zeta',
	\end{align*}
	where we have used \(\eqref{1steady momentum eq},\eqref{tov},\eqref{mu p w},\eqref{n mu}\) and \(\eqref{rho zeta}\). By integrating the above identity we get
	\begin{align}\label{key eq1}
	\begin{split}
	\int_{0}^{y}s^{2}f(s)ds=&\int_{0}^{y}[-4\pi(p_{\kappa}s^{3})'\zeta-4\pi\rho_{\kappa}s^{2}\zeta-4\pi s^{3}(p_{\kappa}+w_{\kappa})\zeta']ds\\
	=&\int_{0}^{y}[-4\pi\rho_{\kappa}s^{2}\zeta-4\pi s^{3}w_{\kappa}\zeta']ds-4\pi p_{\kappa}y^{3}\zeta\\
	=&-m_{\kappa}\zeta-4\pi p_{\kappa}y^{3}\zeta,
	\end{split}
	\end{align}
	where we used integration by parts twice and the integral term is eliminated. In order to get the desired symmetry, we deal with the terms about \(\zeta'\) and \(\zeta''\) in the linearized equation \(\eqref{linear eq}\)
	\begin{align}\label{key eq2}
	\begin{split}
	&\left( e^{2\mu_{\kappa}+\lambda_{\kappa}}\Psi_{\kappa}\rho\right)'+ye^{\lambda_{\kappa}+\mu_{\kappa}}\mu_{\kappa}'\zeta'\\
	=&\left(e^{2\mu_{\kappa}+\lambda_{\kappa}}y^{-2}(-e^{-\mu_{\kappa}}y^{3}\zeta)' \right)'+ye^{\lambda_{\kappa}+\mu_{\kappa}}\mu_{\kappa}'\zeta'\\
	=&\left( e^{\lambda_{\kappa}+\mu_{\kappa}}\left(\mu_{\kappa}'y\zeta-3\zeta-y\zeta' \right) \right) '+ye^{\lambda_{\kappa}+\mu_{\kappa}}\mu_{\kappa}'\zeta'\\
	=&e^{\lambda_{\kappa}+\mu_{\kappa}}[(\lambda_{\kappa}'+\mu_{\kappa}')(\mu_{\kappa}'y-3)+(\mu_{\kappa}''y+\mu_{\kappa}')]\zeta+e^{\lambda_{\kappa}+\mu_{\kappa}}[y\mu_{\kappa}'-y\lambda_{\kappa}'-4]\zeta'-ye^{\lambda_{\kappa}+\mu_{\kappa}}\zeta''\\
	=&-e^{\lambda_{\kappa}+\mu_{\kappa}}[e^{\mu_{\kappa}-\lambda_{\kappa}}y^{-2}(e^{\lambda_{\kappa}-\mu_{\kappa}}y^{3}\zeta)']'+e^{\lambda_{\kappa}+\mu_{\kappa}}[(\lambda_{\kappa}'+\mu_{\kappa}')(\mu_{\kappa}'y-3)+(\mu_{\kappa}''y+\mu_{\kappa}')]\zeta\\
	&+e^{\lambda_{\kappa}+\mu_{\kappa}}y(\lambda_{\kappa}''-\mu_{\kappa}'')\zeta+e^{\lambda_{\kappa}+\mu_{\kappa}}(\lambda_{\kappa}'-\mu_{\kappa}')\zeta.
	\end{split}
	\end{align}

	In this paper, we say the system is linearly unstable to mean that the linearised equation admits an growing mode solution of the form \(\zeta(y,t)=e^{at}\chi(y)\) with \(a>0\). Otherwise we call the system linearly stable. By \(\eqref{linear eq},\eqref{linear boundary condition},\eqref{key eq1}\) and \(\eqref{key eq2}\), we give the linearized equation about the perturbation variable \(\chi(y)\), which is described in the following proposition.
	\begin{proposition}	\label{3.2..}
		Given steady state solution \(n_{\kappa},\rho_{\kappa},p_{\kappa},\lambda_{\kappa},\mu_{\kappa},\Psi_{\kappa}\) induced by \(y_{\kappa}\). If the Lagrangian parameter perturbation has the form \(\zeta(y,t)=e^{at}\chi(y)\), then we have the linearized equation of the Einstein-Euler system \(\eqref{1field eq00}-\eqref{1momentum eq SS}\) through the perturbation variable \(\chi(y)\) with a Robin type boundary condition
		\begin{align}\label{linear eq chi}
		L\chi:=-\left[e^{\mu_{\kappa}-\lambda_{\kappa}}y^{-2}\left(e^{\lambda_{\kappa}-\mu_{\kappa}}y^{3}\chi \right) ' \right]'+ A_{1}\chi+A_{2}\chi+A_{3}\chi+A_{4}\chi+A_{5}\chi+A_{6}\chi=&-a^{2}ye^{2\lambda_{\kappa}-2\mu_{\kappa}}\chi\\ \label{linear boundary condition chi}
		(3-R_{\kappa}\mu_{\kappa}'(R_{\kappa}))\chi(R_{\kappa})+R_{\kappa}\chi'(R_{\kappa})=&0,
		\end{align}
		where
		\begin{align*}
		A_{1}\chi:=&y(\lambda_{\kappa}''-\mu_{\kappa}'')\chi\quad \quad\quad\quad\quad\quad\quad\quad\quad\quad
		A_{4}\chi:=-4\pi yp_{\kappa}e^{2\lambda_{\kappa}}(2y\mu_{\kappa}'+1)\chi\\
		A_{2}\chi:=&(\lambda_{\kappa}'-\mu_{\kappa}')\chi\quad\quad\quad\quad\quad\quad\quad\quad\quad\quad\quad A_{5}\chi:=-\frac{2y\mu_{\kappa}'+1}{y^{2}}e^{2\lambda_{\kappa}}m_{\kappa}\chi
		\\
		A_{3}\chi:=&[(\lambda_{\kappa}'+\mu_{\kappa}')(\mu_{\kappa}'y-3)+(\mu_{\kappa}''y+\mu_{\kappa}')]\chi\quad\quad
		A_{6}\chi:=8\pi yp_{\kappa}e^{2\lambda_{\kappa}}\chi.
		\end{align*}
	\end{proposition}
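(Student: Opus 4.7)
The plan is to substitute the separated Ansatz $\zeta(y,t)=e^{at}\chi(y)$ directly into the linearized evolution equation \eqref{linear eq}, reorganize using the two algebraic identities \eqref{key eq1} and \eqref{key eq2} established above, and then normalize by dividing out a common positive exponential prefactor. Since $\ddot\zeta=a^{2}e^{at}\chi$, and every other quantity appearing in \eqref{linear eq} --- including $\rho$ via \eqref{rho zeta} and $f$ via \eqref{linear 1} --- depends linearly on $\zeta$ through time-independent spatial operators, the factor $e^{at}$ cancels uniformly and the resulting relation is a pure ODE in $y$ for $\chi$.

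First I would apply \eqref{key eq1} to rewrite $\int_{0}^{y}s^{2}f(s)\,ds=-m_{\kappa}\chi-4\pi p_{\kappa}y^{3}\chi$, so that the integral contribution becomes
$$e^{3\lambda_{\kappa}+\mu_{\kappa}}\frac{2y\mu_{\kappa}'+1}{y^{2}}\int_{0}^{y}s^{2}f(s)\,ds=e^{\lambda_{\kappa}+\mu_{\kappa}}\bigl(A_{5}\chi+A_{4}\chi\bigr)$$
after extracting the factor $e^{\lambda_{\kappa}+\mu_{\kappa}}$. Next I would group the two $\zeta'$-bearing terms, noting $ye^{3\lambda_{\kappa}+\mu_{\kappa}}e^{-2\lambda_{\kappa}}\mu_{\kappa}'\chi'=ye^{\lambda_{\kappa}+\mu_{\kappa}}\mu_{\kappa}'\chi'$, which is precisely the combination handled by \eqref{key eq2}. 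That identity yields the principal divergence-form part $-e^{\lambda_{\kappa}+\mu_{\kappa}}\bigl[e^{\mu_{\kappa}-\lambda_{\kappa}}y^{-2}(e^{\lambda_{\kappa}-\mu_{\kappa}}y^{3}\chi)'\bigr]'$ together with three zero-order contributions which I read off as $e^{\lambda_{\kappa}+\mu_{\kappa}}(A_{1}+A_{2}+A_{3})\chi$. The remaining algebraic term $8\pi y p_{\kappa}e^{3\lambda_{\kappa}+\mu_{\kappa}}\chi$ is $e^{\lambda_{\kappa}+\mu_{\kappa}}A_{6}\chi$, while the inertial term becomes $ya^{2}e^{3\lambda_{\kappa}-\mu_{\kappa}}\chi=e^{\lambda_{\kappa}+\mu_{\kappa}}\cdot a^{2}ye^{2\lambda_{\kappa}-2\mu_{\kappa}}\chi$.

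At this stage every summand carries the common prefactor $e^{\lambda_{\kappa}+\mu_{\kappa}}$, which is strictly positive on $[0,R_{\kappa}]$; dividing through and rearranging produces exactly \eqref{linear eq chi}. The boundary condition \eqref{linear boundary condition chi} is inherited immediately from \eqref{linear boundary condition}: because the Robin relation is time-independent and linear in $\zeta$, substituting $\zeta=e^{at}\chi$ and cancelling the nonzero factor $e^{at}$ at $y=R_{\kappa}$ yields the stated condition on $\chi$.

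The only real obstacle is careful bookkeeping of the exponential factors $e^{\lambda_{\kappa}}$ and $e^{\mu_{\kappa}}$, together with the verification that all seven resulting terms share the common prefactor $e^{\lambda_{\kappa}+\mu_{\kappa}}$ so that it may be cleanly divided out; no new analytic input beyond \eqref{key eq1} and \eqref{key eq2} is required. The substantive content of the proposition is the recognition that the integro-differential form \eqref{linear eq} is equivalent to the self-adjoint Sturm--Liouville eigenvalue problem $L\chi=-a^{2}ye^{2\lambda_{\kappa}-2\mu_{\kappa}}\chi$ with strictly positive weight $ye^{2\lambda_{\kappa}-2\mu_{\kappa}}$, which is the form needed for the variational spectral analysis in Section \ref{sec4}.
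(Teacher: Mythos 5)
Your proposal is correct and follows essentially the same route as the paper: the paper derives \eqref{key eq1} and \eqref{key eq2} precisely so that the integral term and the $\zeta',\zeta''$ terms in \eqref{linear eq} collapse into $e^{\lambda_\kappa+\mu_\kappa}(A_4+A_5)\chi$ and $e^{\lambda_\kappa+\mu_\kappa}\bigl(-\bigl[e^{\mu_\kappa-\lambda_\kappa}y^{-2}(e^{\lambda_\kappa-\mu_\kappa}y^3\chi)'\bigr]'+(A_1+A_2+A_3)\chi\bigr)$ respectively, after which the separated Ansatz and division by the common positive factor $e^{\lambda_\kappa+\mu_\kappa}$ yield \eqref{linear eq chi}, with the boundary condition carrying over directly from \eqref{linear boundary condition}. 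Your bookkeeping of the exponential prefactors checks out against the definitions of $A_1,\dots,A_6$, so no gaps.
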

	According to the above result, the stability problem of the linearized system is reduced to the problem of the eigenvalues of the operator \(L\). If the operator \(L\) has a negative eigenvalue \(\nu_{\ast}<0\), then we can find a growing mode of the original linearized problem, so the system is unstable. Conversely, the system is stable. In the next section, we will give stability analysis by developing the necessary functional tools.
	\section{Linear stability analysis}\label{sec4}
	Given \(\chi_{1},\chi_{2}\in C^{2}([0,R_{\kappa}])\) satisfying the boundary condition \(\eqref{linear boundary condition chi}\) and \(\left\langle \cdot\right\rangle \) represent the \(L^{2}\) inner product with weight \(e^{\lambda_{\kappa}-\mu_{\kappa}}y^{3}\), we have using integration by parts
	\begin{align*}
	\left\langle L\chi_{1},\chi_{2}\right\rangle =&\int_{0}^{R_{\kappa}}e^{\mu_{\kappa}-\lambda_{\kappa}}y^{-2}\left(e^{\lambda_{\kappa}-\mu_{\kappa}}y^{3}\chi_{1} \right) '\left(e^{\lambda_{\kappa}-\mu_{\kappa}}y^{3}\chi_{2} \right) '+(A_{1}+\cdots+A_{6})\chi_{1}e^{\lambda_{\kappa}-\mu_{\kappa}}y^{3}\chi_{2}dy\\
	&-\left[ y\left(e^{\lambda_{\kappa}-\mu_{\kappa}}y^{3}\chi_{1} \right) '\chi_{2}\right] (R_{\kappa})\\
	=&-\int_{0}^{R_{\kappa}}\left[e^{\mu_{\kappa}-\lambda_{\kappa}}y^{-2}\left(e^{\lambda_{\kappa}-\mu_{\kappa}}y^{3}\chi_{2} \right) ' \right]'e^{\lambda_{\kappa}-\mu_{\kappa}}y^{3}\chi_{1}+(A_{1}+\cdots+A_{6})\chi_{2}e^{\lambda_{\kappa}-\mu_{\kappa}}y^{3}\chi_{1}dy\\&-\left[ y\left(e^{\lambda_{\kappa}-\mu_{\kappa}}y^{3}\chi_{1} \right) '\chi_{2}\right] (R_{\kappa})+\left[ y\left(e^{\lambda_{\kappa}-\mu_{\kappa}}y^{3}\chi_{2} \right) '\chi_{1}\right] (R_{\kappa})\\
	=&\left\langle L\chi_{2},\chi_{1}\right\rangle+R_{\kappa}^{4}e^{\lambda_{\kappa}-\mu_{\kappa}}\left( \chi_{2}'(R_{\kappa})\chi_{1}(R_{\kappa})-\chi_{1}'(R_{\kappa})\chi_{2}(R_{\kappa})\right) \\
	=&\left\langle L\chi_{2},\chi_{1}\right\rangle
	\end{align*}
	where have used the boundary condition \(\eqref{linear boundary condition chi}\). Therefore the operator \(L\) is symmetric under the inner product \(\left\langle \cdot\right\rangle\) and we note that in particular
	\begin{align}\label{Lchi,chi}
	\left\langle L\chi,\chi\right\rangle=\int_{0}^{R_{\kappa}}e^{\mu_{\kappa}-\lambda_{\kappa}}y^{-2}\left( \left(e^{\lambda_{\kappa}-\mu_{\kappa}}y^{3}\chi \right) '\right) ^{2}+(A_{1}+\cdots+A_{6})e^{\lambda_{\kappa}-\mu_{\kappa}}y^{3}\chi^{2}dy-\left[ y\left(e^{\lambda_{\kappa}-\mu_{\kappa}}y^{3}\chi \right) '\chi\right] (R_{\kappa}).
	\end{align}
	Making use of the symmetry of the operator \(L\), we give a crucial criterion for finding the smallest eigenvalue \(\nu\), which is described in the following lemma.
	\begin{lemma}\label{4.1..}
		Let \(H^{1}_{r}(B_{R}(\mathbb{R}^{5}))\) denote the subspace of spherically symmetric functions in \(H^{1}_{0}(B_{R}(\mathbb{R}^{5}))\). We consider functions in \(H^{1}_{r}(B_{R}(\mathbb{R}^{5}))\) to be functions defined by one radial variable \(y\in[0,R_{\kappa}]\) and supported in the interval \([0,R_{\kappa})\). In this space we have
		\begin{align*}
		\inf_{\Arrowvert\chi\Arrowvert_{y^{4}e^{3\lambda_{\kappa}-3\mu_{\kappa}}}=1}\left\langle L\chi,\chi\right\rangle=:\nu_{\ast}=\inf[\nu:\exists\chi\ne 0 s.t. L\chi=\nu e^{2\lambda_{\kappa}-2\mu_{\kappa}}y\chi]
		\end{align*}
		where \(\Arrowvert\chi\Arrowvert^{2}_{\omega}=\left\langle\chi,\omega\chi \right\rangle_{L^{2}([0,R_{\kappa}])} \). Moreover, the infimum is attained by some \(\chi_{\ast}\), which is an eigenfunction of \(L\) with eigenvalue \(\nu_{\ast}\).
	\end{lemma}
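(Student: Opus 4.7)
The plan is to prove \lem{4.1..} by the direct method of the calculus of variations, viewing $L$ as a self-adjoint operator on a weighted Hilbert space and identifying its bottom eigenvalue with the Rayleigh quotient infimum. Let $H$ denote the weighted $L^{2}$-space with inner product $\langle \chi_{1},\chi_{2}\rangle_{H} := \int_{0}^{R_{\kappa}} y^{4} e^{3\lambda_{\kappa} - 3\mu_{\kappa}}\chi_{1}\chi_{2}\,dy$, so that the constraint $\|\chi\|_{y^{4} e^{3\lambda_{\kappa}-3\mu_{\kappa}}}=1$ is just $\|\chi\|_{H}=1$. Let $V:=H^{1}_{r}(B_{R_{\kappa}}(\mathbb{R}^{5}))$, identified in radial coordinates with functions on $[0,R_{\kappa}]$ whose norm $\int_{0}^{R_{\kappa}}y^{4}((\chi')^{2}+\chi^{2})\,dy$ is finite and which vanish at $y=R_{\kappa}$ in the trace sense. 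Since the five-ball is bounded and the weights $e^{k(\lambda_{\kappa}-\mu_{\kappa})}$ are smooth and bounded above and below by positive constants on $[0,R_{\kappa}]$, Rellich--Kondrachov gives a compact embedding $V\hookrightarrow H$, which is the only non-algebraic input I will need.

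The first real step is a G\aa{}rding-type coercivity bound
\[
Q(\chi):=\langle L\chi,\chi\rangle \geq c_{0}\|\chi\|_{V}^{2} - C_{0}\|\chi\|_{H}^{2},\qquad \chi\in V,
\]
for positive constants $c_{0},C_{0}$. Expanding $((e^{\lambda_{\kappa}-\mu_{\kappa}}y^{3}\chi)')^{2}$ produces a principal term $e^{2(\lambda_{\kappa}-\mu_{\kappa})}y^{6}(\chi')^{2}$ together with cross terms of type $y^{5}\chi\chi'$ and $y^{4}\chi^{2}$; Young's inequality absorbs the cross terms into a fraction of the principal part plus $\|\chi\|_{H}^{2}$, so the first integrand in \eqref{Lchi,chi} is bounded below by $c\,y^{4}(\chi')^{2}$ modulo an error dominated by $\|\chi\|_{H}^{2}$ after integration. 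The zeroth-order contributions $\sum_{i=1}^{6} A_{i}\chi\cdot e^{\lambda_{\kappa}-\mu_{\kappa}}y^{3}\chi$ are controlled by $\|\chi\|_{H}^{2}$ using boundedness of the steady-state coefficients on $[0,R_{\kappa}]$, while the boundary contribution at $y=R_{\kappa}$ in \eqref{Lchi,chi} vanishes thanks to the $H^{1}_{0}$ trace condition.

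With coercivity in hand, any minimizing sequence $\{\chi_{n}\}\subset V$ with $\|\chi_{n}\|_{H}=1$ and $Q(\chi_{n})\to\nu_{\ast}$ is uniformly bounded in $V$, hence by compactness admits a subsequence with $\chi_{n}\rightharpoonup\chi_{\ast}$ weakly in $V$ and $\chi_{n}\to\chi_{\ast}$ strongly in $H$, so $\|\chi_{\ast}\|_{H}=1$ and in particular $\chi_{\ast}\not\equiv 0$. Weak lower semicontinuity of the convex principal part of $Q$ together with strong $H$-convergence of the zeroth-order part yields $Q(\chi_{\ast})\leq\liminf_{n} Q(\chi_{n})=\nu_{\ast}$, and by the definition of the infimum equality holds. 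Computing the first variation of the Lagrangian $Q(\chi)-\nu(\|\chi\|_{H}^{2}-1)$ at $\chi_{\ast}$ shows that for every $\varphi\in V$,
\[
\int_{0}^{R_{\kappa}} y^{3} e^{\lambda_{\kappa}-\mu_{\kappa}}\,L\chi_{\ast}\cdot\varphi\,dy = \nu_{\ast}\int_{0}^{R_{\kappa}}y^{4} e^{3\lambda_{\kappa}-3\mu_{\kappa}}\chi_{\ast}\varphi\,dy,
\]
which after cancellation of the common positive factor $y^{3}e^{\lambda_{\kappa}-\mu_{\kappa}}$ is the weak form of $L\chi_{\ast} = \nu_{\ast} y e^{2\lambda_{\kappa}-2\mu_{\kappa}}\chi_{\ast}$; elliptic regularity promotes this to a classical solution. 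Conversely, any eigenpair $(\nu,\chi)$ with $\chi\neq 0$ satisfies $\nu = Q(\chi)/\|\chi\|_{H}^{2} \geq \nu_{\ast}$, identifying $\nu_{\ast}$ as both the variational infimum and the smallest eigenvalue.

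The main obstacle I anticipate is the clean verification of the G\aa{}rding inequality in the presence of the degenerate weight $y^{4}$ at the origin: one must ensure that expanding the principal quadratic form does not force a Hardy-type control of $\chi^{2}/y^{2}$ whose constant could blow up with $\kappa$. Fortunately, the ambient dimension $5$ is well above the Hardy-critical threshold, so the bookkeeping closes with dimension-independent constants; the remainder of the argument is standard spectral theory for self-adjoint forms with compact resolvent.
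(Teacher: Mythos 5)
Your proof is correct and follows essentially the same variational route as the paper: establish a G\aa rding-type coercivity bound for the quadratic form $\langle L\chi,\chi\rangle$, extract a weak $H^{1}$-limit of a minimizing sequence via Rellich--Kondrachov compactness, pass to the limit by weak lower semicontinuity of the derivative term combined with strong $L^{2}$ convergence of the zeroth-order terms, and then identify the minimizer as an eigenfunction by computing the first variation and invoking elliptic regularity. The paper carries out the coercivity step by an explicit expansion of $\bigl((e^{\lambda_{\kappa}-\mu_{\kappa}}y^{3}\chi)'\bigr)^{2}$ followed by integration by parts and Young's inequality to absorb the cross terms, which is exactly the content of your abstractly stated G\aa rding estimate, so the two arguments coincide in substance.
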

	\begin{proof}
		It is clear that
		\begin{align*}
		\nu_{\ast}=\inf_{\Arrowvert\chi\Arrowvert_{y^{4}e^{3\lambda_{\kappa}-3\mu_{\kappa}}}=1}\left\langle L\chi,\chi\right\rangle\le\inf[\nu:\exists\chi\ne 0 s.t. L\chi=\nu e^{2\lambda_{\kappa}-2\mu_{\kappa}}y\chi].
		\end{align*}
		In order to prove equality, it suffice then to prove that \(\nu_{\ast}\) is an eigenvalue of \(L\). Pick \(\chi_{n}\) with \(\Arrowvert\chi_{n}\Arrowvert_{y^{4}e^{3\lambda_{\kappa}-3\mu_{\kappa}}}=1\) such that \(\left\langle L\chi_{n},\chi_{n}\right\rangle\to\inf_{\Arrowvert\chi\Arrowvert_{y^{4}e^{3\lambda_{\kappa}-3\mu_{\kappa}}}=1}\left\langle L\chi,\chi\right\rangle\). It is straightforward to compute
		\begin{align}\label{lamda mu}
		\begin{split}
		&\lambda_{\kappa}'=e^{2\lambda_{\kappa}}\left( 4\pi y\rho_{\kappa}-\frac{m_{\kappa}}{y^{2}}\right)\quad  \lambda_{\kappa}''=2e^{4\lambda_{\kappa}}\left( 4\pi y\rho_{\kappa}-\frac{m_{\kappa}}{y^{2}}\right) ^{2}-12\pi e^{4\lambda_{\kappa}}\left( \frac{m_{\kappa}}{y}+4\pi y^{2}p_{\kappa}\right)\left( \rho_{\kappa}+p_{\kappa} \right) +2e^{2\lambda_{\kappa}}\frac{m_{\kappa}}{y^{3}}\\
		&\mu_{\kappa}'=e^{2\lambda_{\kappa}}\left( 4\pi yp_{\kappa}+\frac{m_{\kappa}}{y^{2}}\right)\quad  \mu_{\kappa}''=8\pi p_{\kappa}e^{2\lambda_{\kappa}}-e^{2\lambda_{\kappa}}\left(4\pi y(p_{\kappa}-\rho_{\kappa})+\frac{2m_{\kappa}}{y^{2}} \right) \left(e^{2\lambda_{\kappa}}\left( 4\pi yp_{\kappa}+\frac{m_{\kappa}}{y^{2}}\right) +\frac{1}{y} \right) .
		\end{split}
		\end{align}
		Therefore we have
		\begin{align}\label{bound}
		\left| \rho_{\kappa},p_{\kappa},\lambda_{\kappa},\mu_{\kappa},m_{\kappa}/y^{3}\right| \le C\quad \Longrightarrow \quad \left| \lambda_{\kappa}'',\mu_{\kappa}'',\lambda_{\kappa}'/y,\mu_{\kappa}'/y\right| \le C, \quad y\in[0,R_{\kappa}].
		\end{align} 
		This leads to
		\begin{align}\label{11111}
		\int_{0}^{R_{\kappa}}(A_{1}+\cdots+A_{6})e^{\lambda_{\kappa}-\mu_{\kappa}}y^{3}\chi_{n}^{2}dy\lesssim\int_{0}^{R_{\kappa}}y^{4}\chi_{n}^{2}dy\lesssim\Arrowvert\chi_{n}\Arrowvert_{y^{4}e^{3\lambda_{\kappa}-3\mu_{\kappa}}}=1.
		\end{align}
		It is obvious that the last term in \(\eqref{Lchi,chi}\) is free by the fact that \(\chi_{n}\) are supported in the interval \([0,R_{\kappa})\). Then since \(\eqref{11111}\) holds and the first term in \(\eqref{Lchi,chi}\) are nonnegative, \(\inf_{\Arrowvert\chi\Arrowvert_{y^{4}e^{3\lambda_{\kappa}-3\mu_{\kappa}}}=1}\left\langle L\chi,\chi\right\rangle\) is finite. Then we consider the main part of the functional \(\left\langle L\chi,\chi\right\rangle\), that is the first term
		\begin{align}\label{1122}
		\begin{split}
		&\int_{0}^{R_{\kappa}}e^{\mu_{\kappa}-\lambda_{\kappa}}y^{-2}\left( \left(e^{\lambda_{\kappa}-\mu_{\kappa}}y^{3}\chi \right) '\right) ^{2}dy\\
		=&\int_{0}^{R_{\kappa}}e^{\mu_{\kappa}-\lambda_{\kappa}}y^{-2}\left(\left( e^{\lambda_{\kappa}-\mu_{\kappa}}y^{3}\right)'\chi+ e^{\lambda_{\kappa}-\mu_{\kappa}}y^{3}\chi'  \right) ^{2}dy\\
		=&\int_{0}^{R_{\kappa}}e^{\mu_{\kappa}-\lambda_{\kappa}}y^{-2}\left( \left( \left( e^{\lambda_{\kappa}-\mu_{\kappa}}y^{3}\right)'\right) ^{2}  \chi^{2}+2\left( e^{\lambda_{\kappa}-\mu_{\kappa}}y^{3}\right)' e^{\lambda_{\kappa}-\mu_{\kappa}}y^{3}\chi\chi'+\left(e^{\lambda_{\kappa}-\mu_{\kappa}}y^{3} \right) ^{2}\left( \chi'\right) ^{2}           \right) dy\\
		=&\int_{0}^{R_{\kappa}}e^{\mu_{\kappa}-\lambda_{\kappa}}y^{-2}\left( \left(e^{\lambda_{\kappa}-\mu_{\kappa}}\left(\lambda_{\kappa}'-\mu_{\kappa}' \right) y^{3} \right) ^{2}   +\left(3e^{\lambda_{\kappa}-\mu_{\kappa}}y^{2} \right)^{2}   +6\left( e^{\lambda_{\kappa}-\mu_{\kappa}}\right)  ^{2}  \left(\lambda_{\kappa}'-\mu_{\kappa}' \right)y^{5}        \right) \chi^{2}dy\\
		&+\int_{0}^{R_{\kappa}}2\left(  e^{\lambda_{\kappa}-\mu_{\kappa}}\left(\lambda_{\kappa}'-\mu_{\kappa}' \right) y^{3}+3 e^{\lambda_{\kappa}-\mu_{\kappa}}y^{2}       \right) y\chi\chi'dy  +\int_{0}^{R_{\kappa}}e^{\lambda_{\kappa}-\mu_{\kappa}}y^{4}(\chi')^{2}dy.
		\end{split}
		\end{align}
		Using integration by parts, we have
		\begin{align*}
		\int_{0}^{R_{\kappa}}2e^{\lambda_{\kappa}-\mu_{\kappa}}y^{3}\chi\chi'dy=e^{\lambda_{\kappa}-\mu_{\kappa}}R_{\kappa}^{3}\chi^{2}(R_{\kappa})-\int_{0}^{R_{\kappa}}e^{\lambda_{\kappa}-\mu_{\kappa}}\left(\lambda_{\kappa}'-\mu_{\kappa}' \right) y^{3}\chi^{2}dy
		-\int_{0}^{R_{\kappa}}3e^{\lambda_{\kappa}-\mu_{\kappa}}y^{2}\chi^{2}dy.
		\end{align*}
		Substituting the above identity into \(\eqref{1122}\), we get
		\begin{align}\label{2323}
		\begin{split}
		&\int_{0}^{R_{\kappa}}e^{\mu_{\kappa}-\lambda_{\kappa}}y^{-2}\left( \left(e^{\lambda_{\kappa}-\mu_{\kappa}}y^{3}\chi \right) '\right) ^{2}dy\\
		=&\int_{0}^{R_{\kappa}}e^{\lambda_{\kappa}-\mu_{\kappa}}y^{4}(\chi')^{2}dy+\int_{0}^{R_{\kappa}}2e^{\lambda_{\kappa}-\mu_{\kappa}}\left( \lambda_{\kappa}'-\mu_{\kappa}'\right)y^{4}\chi\chi'dy+3e^{\lambda_{\kappa}-\mu_{\kappa}}R_{\kappa}^{3}\chi^{2}(R_{\kappa})\\
		&+\int_{0}^{R_{\kappa}}e^{\lambda_{\kappa}-\mu_{\kappa}}\left(     \left( \lambda_{\kappa}'-\mu_{\kappa}'\right)^{2}y^{4}+3 \left( \lambda_{\kappa}'-\mu_{\kappa}'\right)y^{3}                   \right) \chi^{2}dy.\\
		=&\int_{0}^{R_{\kappa}}e^{\lambda_{\kappa}-\mu_{\kappa}}y^{4}(\chi')^{2}dy+\int_{0}^{R_{\kappa}}2e^{\lambda_{\kappa}-\mu_{\kappa}}\left( \lambda_{\kappa}'-\mu_{\kappa}'\right)y^{4}\chi\chi'dy+\int_{0}^{R_{\kappa}}e^{\lambda_{\kappa}-\mu_{\kappa}}\left(     \left( \lambda_{\kappa}'-\mu_{\kappa}'\right)^{2}y^{4}+3 \left( \lambda_{\kappa}'-\mu_{\kappa}'\right)y^{3}                   \right) \chi^{2}dy
		\end{split}
		\end{align}
		Now
		\begin{align*}
		\Arrowvert\chi_{n}'\Arrowvert^{2}_{L^{2}(B_{R}(\mathbb{R}^{5}))}\lesssim&\int_{0}^{R_{\kappa}}e^{\lambda_{\kappa}-\mu_{\kappa}}y^{4}(\chi_{n}')^{2}dy\\
		=&\left| \left\langle L\chi_{n},\chi_{n}\right\rangle\right| -\int_{0}^{R_{\kappa}}(A_{1}+\cdots+A_{6})e^{\lambda_{\kappa}-\mu_{\kappa}}y^{3}\chi_{n}^{2}dy-\int_{0}^{R_{\kappa}}2e^{\lambda_{\kappa}-\mu_{\kappa}}\left( \lambda_{\kappa}'-\mu_{\kappa}'\right)y^{4}\chi_{n}\chi_{n}'dy\\
		&-\int_{0}^{R_{\kappa}}e^{\lambda_{\kappa}-\mu_{\kappa}}\left(     \left( \lambda_{\kappa}'-\mu_{\kappa}'\right)^{2}y^{4}+3 \left( \lambda_{\kappa}'-\mu_{\kappa}'\right)y^{3}                   \right) \chi_{n}^{2}dy\\
		\lesssim&\left| \left\langle L\chi_{n},\chi_{n}\right\rangle\right|+\Arrowvert\chi_{n}\Arrowvert_{y^{4}e^{3\lambda_{\kappa}-3\mu_{\kappa}}}+C(\delta)\int_{0}^{R_{\kappa}}y^{4}\chi_{n}^{2}dy+\delta\int_{0}^{R_{\kappa}}y^{4}(\chi_{n}')^{2}dy.
		\end{align*}
		The last term in the above inequality can be absorbed on the left-hand side if \(\delta\) is chosen sufficiently small (an absolute constant). Then we have
		\begin{align*}
		\Arrowvert\chi_{n}'\Arrowvert^{2}_{L^{2}(B_{R}(\mathbb{R}^{5}))}\lesssim\int_{0}^{R_{\kappa}}e^{\lambda_{\kappa}-\mu_{\kappa}}y^{4}(\chi_{n}')^{2}dy\lesssim\left| \left\langle L\chi_{n},\chi_{n}\right\rangle\right|+\Arrowvert\chi_{n}\Arrowvert_{y^{4}e^{3\lambda_{\kappa}-3\mu_{\kappa}}}\le C,
		\end{align*}
		and obviously
		\begin{align*}
		\Arrowvert\chi_{n}\Arrowvert_{L^{2}(B_{R}(\mathbb{R}^{5}))}\lesssim\Arrowvert\chi_{n}\Arrowvert_{y^{4}e^{3\lambda_{\kappa}-3\mu_{\kappa}}}\le C.
		\end{align*}
		Hence \(\chi_{n}\) is bounded in \(H^{1}(B_{R}(\mathbb{R}^{5}))\) and there exists an appropriate subsequence \(\chi_{n'}\) that converge weakly to some \(\chi_{\ast}\in H^{1}(B_{R}(\mathbb{R}^{5}))\). By the Rellich-Kondrachov theorem, \(\chi_{n'}\to\chi_{\ast}\) in \(L^{2}(B_{R}(\mathbb{R}^{5}))\). It follows that \(\Arrowvert\chi_{\ast}\Arrowvert_{y^{4}e^{3\lambda_{\kappa}-3\mu_{\kappa}}}=1\). By the lower semi-continuity of weak convergence, we have \(\lim\inf\Arrowvert\chi_{n'}\Arrowvert_{H^{1}(B_{R}(\mathbb{R}^{5}))}\ge\Arrowvert\chi_{\ast}\Arrowvert_{H^{1}(B_{R}(\mathbb{R}^{5}))}\). Since \(\Arrowvert\chi_{n'}\Arrowvert_{L^{2}(B_{R}(\mathbb{R}^{5}))}\to\Arrowvert\chi_{\ast}\Arrowvert_{L^{2}(B_{R}(\mathbb{R}^{5}))}\), we must have
		\begin{align*}
		\lim\inf\Arrowvert\chi_{n'}'\Arrowvert^{2}_{L^{2}(B_{R}(\mathbb{R}^{5}))}\ge \Arrowvert\chi_{\ast}'\Arrowvert^{2}_{L^{2}(B_{R}(\mathbb{R}^{5}))}.
		\end{align*}
		Since \(\Arrowvert\cdot\Arrowvert_{y^{4}e^{\lambda_{\kappa}-\mu_{\kappa}}}\) is an equivalent norm for \(L^{2}(B_{R}(\mathbb{R}^{5}))\), we have
		\begin{align*}
		\lim\inf\int_{0}^{R_{\kappa}}y^{4}e^{\lambda_{\kappa}-\mu_{\kappa}}(\chi_{n'}')^{2}dy\ge\int_{0}^{R_{\kappa}}y^{4}e^{\lambda_{\kappa}-\mu_{\kappa}}(\chi_{\ast}')^{2}dy
		\end{align*}
		It follows that \(\left\langle L\chi_{\ast},\chi_{\ast}\right\rangle\le \inf_{\Arrowvert\chi\Arrowvert_{y^{4}e^{3\lambda_{\kappa}-3\mu_{\kappa}}}=1}\left\langle L\chi,\chi\right\rangle\), and that means we must have equality and the infimum is attained. Finally we show \(\chi_{\ast}\) is in fact an eigenfunction of \(L\). For any function \(h\in H^{1}_{r}(B_{R}(\mathbb{R}^{5}))\), we have
		\begin{align*}
		0=& \frac{d}{d\epsilon}\left(\frac{\left\langle L(\chi_{\ast}+\epsilon h),\chi_{\ast}+\epsilon h\right\rangle}{\left\langle\chi_{\ast}+\epsilon h,\chi_{\ast}+\epsilon h \right\rangle_{y^{4}e^{3\lambda_{\kappa}-3\mu_{\kappa}}} } \right)_{\epsilon=0}=\frac{d}{d\epsilon}\left( \frac{\left\langle L\chi_{\ast},\chi
			\right\rangle +2\epsilon\left\langle L\chi_{\ast},h \right\rangle+\epsilon^{2} \left\langle Lh,h \right\rangle }{\left\langle\chi_{\ast},\chi_{\ast} \right\rangle_{y^{4}e^{3\lambda_{\kappa}-3\mu_{\kappa}}}+2\epsilon\left\langle \chi_{\ast},h\right\rangle _{y^{4}e^{3\lambda_{\kappa}-3\mu_{\kappa}}}+\epsilon^{2}\left\langle h,h\right\rangle _{y^{4}e^{3\lambda_{\kappa}-3\mu_{\kappa}}} }\right) _{\epsilon=0}\\
		=&\frac{2\left\langle L\chi_{\ast},h \right\rangle }{\left\langle \chi_{\ast},\chi_{\ast}\right\rangle_{y^{4}e^{3\lambda_{\kappa}-3\mu_{\kappa}}} }-\frac{2\left\langle L\chi_{\ast},\chi_{\ast}\right\rangle\left\langle\chi_{\ast},h \right\rangle_{y^{4}e^{3\lambda_{\kappa}-3\mu_{\kappa}}}  }{\left\langle\chi_{\ast},\chi_{\ast} \right\rangle _{y^{4}e^{3\lambda_{\kappa}-3\mu_{\kappa}}}^{2}},
		\end{align*}
		so \(\left\langle L\chi_{\ast},h \right\rangle=\nu_{\ast}\left\langle\chi_{\ast},h \right\rangle_{y^{4}e^{3\lambda_{\kappa}-3\mu_{\kappa}}}\). Hence \(\chi_{\ast}\) is a weak solution to \(L\chi=\nu_{\ast}e^{2\lambda_{\kappa}-2\mu_{\kappa}}y\chi\). By elliptic regularity, \(\chi_{\ast}\) is smooth on \((0,R_{\kappa}]\), and so the weak solution is in fact a classical solution. Therefore \(\chi_{\ast}\) is in fact an eigenfunction of \(L\) with eigenvalue \(\nu_{\ast}\), which completes the proof of Lemma \(\ref{4.1..}\).
	\end{proof}
	\begin{corollary}\label{4.2..}
		If \(\left\langle L\chi,\chi \right\rangle\ge 0 \) for any \(\chi\) satisfying the boundary condition \(\eqref{linear boundary condition chi}\), then the corresponding Einstein-Euler system is linearly stable under radial perturbations. Conversely, if there exist \(\chi\in H^{1}_{r}(B_{R}(\mathbb{R}^{5}))\) such that \(\left\langle L\chi,\chi \right\rangle< 0\), then it must be linearly unstable.
	\end{corollary}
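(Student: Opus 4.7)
The plan is to read Corollary~\ref{4.2..} off as a direct consequence of Lemma~\ref{4.1..}, which has already done the heavy analytic lifting by identifying the infimum of the Rayleigh quotient with the smallest eigenvalue of the pencil $L\chi=\nu\,y\,e^{2\lambda_\kappa-2\mu_\kappa}\chi$. First I would make explicit the equivalence between growing modes and negative eigenvalues. Substituting the ansatz $\zeta(y,t)=e^{at}\chi(y)$ with $a>0$ into the linearised equation~\eqref{linear eq chi} of Proposition~\ref{3.2..} produces exactly
\[
L\chi \;=\; -a^{2}\,y\,e^{2\lambda_\kappa-2\mu_\kappa}\,\chi,
\]
together with the Robin condition~\eqref{linear boundary condition chi}. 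Thus growing modes are in bijection with strictly negative eigenvalues $\nu=-a^{2}$ of the eigenvalue problem appearing in Lemma~\ref{4.1..}.

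For the stability half of the corollary, assume $\langle L\chi,\chi\rangle\ge 0$ for every admissible $\chi$. Taking the infimum over the unit sphere of $\|\cdot\|_{y^{4}e^{3\lambda_\kappa-3\mu_\kappa}}$ gives $\nu_{\ast}\ge 0$ by the variational formula in Lemma~\ref{4.1..}, so no strictly negative eigenvalue exists and no growing mode is possible. Equivalently, a hypothetical growing mode with profile $\chi$ would, after pairing its eigenrelation against the weight $e^{\lambda_\kappa-\mu_\kappa}y^{3}\chi$ and exploiting the boundary-term cancellation in~\eqref{Lchi,chi} via~\eqref{linear boundary condition chi}, satisfy $\langle L\chi,\chi\rangle=-a^{2}\|\chi\|^{2}_{y^{4}e^{3\lambda_\kappa-3\mu_\kappa}}<0$, contradicting the hypothesis.

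For the instability half, if some $\chi_{0}$ realises $\langle L\chi_{0},\chi_{0}\rangle<0$, then the Rayleigh quotient immediately yields $\nu_{\ast}\le\langle L\chi_{0},\chi_{0}\rangle/\|\chi_{0}\|^{2}_{y^{4}e^{3\lambda_\kappa-3\mu_\kappa}}<0$. Lemma~\ref{4.1..} further supplies an eigenfunction $\chi_{\ast}$ attaining $\nu_{\ast}$, so setting $a=\sqrt{-\nu_{\ast}}>0$ produces the desired exponentially growing solution $\zeta(y,t)=e^{at}\chi_{\ast}(y)$ of the linearised problem.

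The main delicate point, which is essentially absorbed into Lemma~\ref{4.1..} but worth flagging, is ensuring that the variational minimiser $\chi_{\ast}$ satisfies the Robin boundary condition~\eqref{linear boundary condition chi} rather than merely a Dirichlet condition inherited from the ambient space $H^{1}_{r}(B_{R}(\mathbb{R}^{5}))$, so that the resulting growing mode is a genuine solution of the linearised free-boundary problem of Proposition~\ref{prop: linearized operator}. This is the natural boundary condition for the quadratic form: it emerges from requiring that the boundary term $[y(e^{\lambda_\kappa-\mu_\kappa}y^{3}\chi_{\ast})'\chi_{\ast}](R_\kappa)$ in~\eqref{Lchi,chi} vanish against all admissible variations, and after the simplification carried out in~\eqref{key eq2} one recovers precisely~\eqref{linear boundary condition chi}.
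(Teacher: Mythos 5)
Your argument is essentially the paper's: both halves of the corollary are read off directly from Lemma~\ref{4.1..} (the variational characterisation of $\nu_{\ast}$ and the existence of a minimiser $\chi_{\ast}$) and Proposition~\ref{3.2..} (which converts a solution of $L\chi=-a^{2}ye^{2\lambda_\kappa-2\mu_\kappa}\chi$ into a growing mode $\zeta=e^{at}\chi$). Nothing in your write-up deviates from that route.

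Your concluding paragraph flags a subtlety that deserves a closer look, because as stated it does not quite close the gap it identifies. Lemma~\ref{4.1..} takes $H^{1}_{r}(B_{R}(\mathbb R^{5}))$ to be a subspace of $H^{1}_{0}$, so the minimising sequence and hence the weak limit $\chi_{\ast}$ all have vanishing trace at $y=R_{\kappa}$. When the Euler--Lagrange computation is carried out there, the variations $h$ are also drawn from $H^{1}_{r}$ and vanish at $R_{\kappa}$, so the boundary term in \eqref{Lchi,chi} produces no constraint at all: the minimiser inherits the Dirichlet condition $\chi_{\ast}(R_{\kappa})=0$, not the Robin condition \eqref{linear boundary condition chi}. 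A natural boundary condition only emerges if you minimise the quadratic form over the full spherically symmetric $H^{1}$ space, retaining the boundary contribution $-R_{\kappa}^{4}e^{\lambda_\kappa-\mu_\kappa}\lambda_\kappa'\chi^{2}(R_{\kappa})$ that appears after the simplification in Proposition~\ref{prop: linear stable}; in that enlarged problem the infimum only decreases, a negative value for a test function in $H^{1}_{0}$ still forces $\nu_{\ast}<0$, and the Robin condition then does come out as the natural boundary condition of the unconstrained variation. Your instinct about what should save the argument is therefore right, but it requires reformulating Lemma~\ref{4.1..} over a larger space than the one it literally uses; as written the lemma delivers a Dirichlet eigenfunction and the step to Proposition~\ref{3.2..} is not immediate. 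This is a gap shared by the paper's own proof of the corollary, and your paragraph is valuable precisely because it surfaces it.
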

	\begin{proof}
		If there exist \(\chi\in H^{1}_{r}(B_{R}(\mathbb{R}^{5}))\) such that \(\left\langle L\chi,\chi \right\rangle< 0\), then by the Lemma \(\ref{4.1..}\) there exist \(\nu_{\ast}<0\) and \(\chi_{\ast}\in H^{1}_{r}(B_{R}(\mathbb{R}^{5}))\) such that \(L{\chi_{\ast}}=\nu_{\ast}e^{2\lambda_{\kappa}-2\mu_{\kappa}}y\chi_{\ast}\). This, by the proposition \(\ref{3.2..}\), means the linearized system admits a solution of the form \(\zeta(y,t)=e^{\sqrt{-\nu_{\ast}}t}\chi_{\ast}(y)\). This
		grows exponentially in time, and hence the corresponding Einstein-Euler system is linearly unstable. Conversely, if \(\left\langle L\chi,\chi \right\rangle\ge 0\) for any \(\chi\) satisfying the boundary condition \(\eqref{linear boundary condition chi}\), then no such growing solutions exist and hence the corresponding Einstein-Euler system is linearly stable under radial perturbations.
	\end{proof}
	Now we prove our main conclusions on the linear (in)stability results for Einstein-Euler system. This will be split into two propositions below.
	\begin{proposition}\label{prop: linear stable}
		For \(\kappa>0\) sufficiently small, the operator $L$ defined in Proposition \ref{3.2..} does not admit any negative eigenvalues. Therefore the corresponding Einstein-Euler system is linearly stable under radial perturbations.
	\end{proposition}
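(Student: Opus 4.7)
By Corollary~\ref{4.2..}, it suffices to prove $\langle L\chi,\chi\rangle\ge 0$ for every admissible $\chi$. The plan is to extract a manifestly positive quadratic form from the first integral in \eqref{Lchi,chi} and to show that, for $\kappa$ small, the potential $A_1+\cdots+A_6$ and the boundary correction in \eqref{Lchi,chi} are both negligible perturbations relative to this positive part.

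The first step is to derive quantitative small-$\kappa$ asymptotics for the steady state. For the hard phase equation of state one computes explicitly $Q(\rho)=\frac{1}{2}\log(2\rho-1)$, so that $g(\kappa)=(e^{2\kappa}+1)/2=1+\kappa+O(\kappa^2)$; in particular $\rho_{\kappa}\to 1$ and $p_{\kappa}\to 0$ uniformly on $[0,R_{\kappa}]$ as $\kappa\to 0$. Inserting this into the ODE \eqref{ode} and the TOV identities \eqref{lamda mu} yields $R_{\kappa}\sim\sqrt{3\kappa/(2\pi)}$, $m_{\kappa}(y)/y^3\to 4\pi/3$, $\lambda_{\kappa},\mu_{\kappa}=O(\kappa)$, $\lambda_{\kappa}',\mu_{\kappa}'=O(R_{\kappa})$, and $\lambda_{\kappa}'',\mu_{\kappa}''=O(1)$. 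Consequently $A_1,A_2,A_4,A_6$ are all bounded by $\kappa$ times harmless factors of $y$, while the only genuinely non-negligible negative potentials are $A_3\approx -3(\lambda_{\kappa}'+\mu_{\kappa}')\approx -12\pi y$ and $A_5\approx -m_{\kappa}/y^2\approx -\tfrac{4\pi}{3}y$, so that
\begin{align*}
\int_0^{R_{\kappa}}(A_1+\cdots+A_6)e^{\lambda_{\kappa}-\mu_{\kappa}}y^3\chi^2\,dy = -\tfrac{40\pi}{3}\int_0^{R_{\kappa}}y^4\chi^2\,dy + o(1)\text{ corrections.}
\end{align*}

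Next I would recast the first integral and boundary term in \eqref{Lchi,chi} using the factorization
\begin{align*}
(e^{\lambda_{\kappa}-\mu_{\kappa}}y^3\chi)' = e^{\lambda_{\kappa}-\mu_{\kappa}}y^3\bigl[\chi' + (\lambda_{\kappa}'-\mu_{\kappa}'+3/y)\chi\bigr].
\end{align*}
The first integral becomes $\int_0^{R_{\kappa}}e^{\lambda_{\kappa}-\mu_{\kappa}}y^4\bigl(\chi'+(\lambda_{\kappa}'-\mu_{\kappa}'+3/y)\chi\bigr)^2\,dy$; expanding the square and integrating the cross term $\int 6y^3\chi\chi'\,dy$ by parts gives, at leading order in $\kappa$, the clean lower bound $\int_0^{R_{\kappa}}y^4(\chi')^2\,dy+3R_{\kappa}^3\chi(R_{\kappa})^2$. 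Substituting the Robin condition \eqref{linear boundary condition chi} into the boundary correction in \eqref{Lchi,chi} collapses it to $-R_{\kappa}^4 e^{\lambda_{\kappa}-\mu_{\kappa}}\lambda_{\kappa}'(R_{\kappa})\chi(R_{\kappa})^2$, which is of order $R_{\kappa}^5\chi(R_{\kappa})^2=O(\kappa^{5/2}\chi(R_{\kappa})^2)$ by the asymptotics above. Finally, writing $\chi(y)=\chi(R_{\kappa})-\int_y^{R_{\kappa}}\chi'(s)\,ds$ and applying Cauchy--Schwarz with the pair $s^{-2}\cdot s^2$ produces the weighted Hardy--Poincar\'e estimate
\begin{align*}
\int_0^{R_{\kappa}}y^4\chi^2\,dy \le \frac{R_{\kappa}^2}{3}\int_0^{R_{\kappa}}y^4(\chi')^2\,dy + \frac{2R_{\kappa}^5}{5}\chi(R_{\kappa})^2.
\end{align*}
Combining these three ingredients, $\langle L\chi,\chi\rangle\ge (1-O(R_{\kappa}^2))\int_0^{R_{\kappa}}y^4(\chi')^2\,dy+(3R_{\kappa}^3-O(R_{\kappa}^5))\chi(R_{\kappa})^2$, which is nonnegative once $\kappa$ is small enough that $R_{\kappa}^2<9/(40\pi)$.

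The main obstacle I expect is the uniform bookkeeping across the six potentials $A_i$ together with the metric factor $e^{\lambda_{\kappa}-\mu_{\kappa}}$ on the degenerating interval $[0,R_{\kappa}]$, relying on the soft information derived in Section~\ref{sec2} rather than on the sharper large-$\kappa$ Proposition~\ref{2.9..}. A secondary subtlety is making sure the two $\chi(R_{\kappa})^2$ contributions---the negative one from the Robin-induced boundary term and the remainder from the Hardy--Poincar\'e estimate, both of order $R_{\kappa}^5$---are comfortably dominated by the positive $3R_{\kappa}^3\chi(R_{\kappa})^2$ generated in the integration-by-parts step, which is true precisely because $R_{\kappa}\to 0$ with $\kappa$.
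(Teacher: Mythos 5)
Your proposal is correct and follows essentially the same route as the paper: reduce to the sign of $\langle L\chi,\chi\rangle$ via Corollary~\ref{4.2..}, expand the first integral of \eqref{Lchi,chi} as in \eqref{2323} to isolate the positive part $\int y^4(\chi')^2$ plus the surviving boundary contribution $3R_{\kappa}^3\chi(R_{\kappa})^2$, use the Robin condition to collapse the boundary correction to $-R_{\kappa}^4 e^{\lambda_{\kappa}-\mu_{\kappa}}\lambda_{\kappa}'(R_{\kappa})\chi(R_{\kappa})^2 = O(R_{\kappa}^5)\chi(R_{\kappa})^2$, and finally absorb the negative zeroth-order contributions via a weighted Hardy--Poincar\'e inequality that scales like $R_{\kappa}^2$ (so that everything is dominated once $R_{\kappa}$ is small). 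The two cosmetic differences are that you hand-derive the Hardy--Poincar\'e inequality (by Cauchy--Schwarz on $\chi(y)=\chi(R_{\kappa})-\int_y^{R_{\kappa}}\chi'$) where the paper cites the rescaled version from \cite{Lam}, and that you supply the explicit small-$\kappa$ asymptotics $Q(\rho)=\tfrac12\log(2\rho-1)$, $R_{\kappa}\sim\sqrt{3\kappa/(2\pi)}$, etc., which the paper states more informally ($\lambda_{\kappa}\to0$, $\rho_{\kappa}\to1$, $p_{\kappa}\to0$) before invoking the uniform bound \eqref{bound}; neither difference changes the argument. The only minor quibble is that your coefficient $-\tfrac{40\pi}{3}$ for the potential $A_1+\cdots+A_6$ omits the $\mu_{\kappa}''y+\mu_{\kappa}'$ contribution inside $A_3$, but since all that is used is a $\kappa$-uniform bound $|A_1+\cdots+A_6|\lesssim y$, the exact constant is immaterial.
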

	\begin{proof}
		Let \(y=R_{\kappa}z\) and \(\tilde{\chi}(z)=\chi(R_{\kappa}z)\). Note that \(\lambda_{\kappa}\to 0,\rho_{\kappa}\to 1,p_{\kappa}\to 0\) as \(\kappa\to 0^{+}\). Hence the constant in \(\eqref{bound}\) is independent of \(\kappa\), when \(\kappa>0\) is sufficiently small. Then we have the following estimate
		\begin{align}\label{3}
		\int_{0}^{R_{\kappa}}(A_{1}+\cdots+A_{6})e^{\lambda_{\kappa}-\mu_{\kappa}}y^{3}\chi^{2}dy\lesssim\int_{0}^{R_{\kappa}}y^{4}\chi^{2}dy,
		\end{align}
		where the implicit constant is independent of \(\kappa\).  Using Cauchy-Schwarz inequality, we get
		\begin{align}\label{4}
		\int_{0}^{R_{\kappa}}2e^{\lambda_{\kappa}-\mu_{\kappa}}\left( \lambda_{\kappa}'-\mu_{\kappa}'\right)y^{4}\chi\chi'dy\lesssim C(\delta)\int_{0}^{R_{\kappa}}y^{4}\chi^{2}dy+\delta\int_{0}^{R_{\kappa}}y^{4}(\chi')^{2}dy,
		\end{align}
		where \(\delta\) is independent of \(\kappa\) and chosen sufficiently small. For the boundary term, we have
		\begin{align*}
		-\left[ y\left(e^{\lambda_{\kappa}-\mu_{\kappa}}y^{3}\chi \right) '\chi\right] (R_{\kappa})=&-R_{\kappa}e^{\lambda_{\kappa}-\mu_{\kappa}}\left((\lambda_{\kappa}'-\mu_{\kappa}')R_{\kappa}^{3}\chi+3R_{\kappa}^{2}\chi+R_{\kappa}^{3}\chi'(R_{\kappa}) \right)\chi(R_{\kappa}) \\
		=&-R_{\kappa}^{4}e^{\lambda_{\kappa}-\mu_{\kappa}}\lambda_{\kappa}'\chi^{2}(R_{\kappa}).
		\end{align*}
		According to \(\eqref{2323},\eqref{3}\) and \(\eqref{4}\), the functional \(\eqref{Lchi,chi}\) has a lower bound
		\begin{align*}
		\left\langle L\chi,\chi\right\rangle=&\int_{0}^{R_{\kappa}}e^{\mu_{\kappa}-\lambda_{\kappa}}y^{-2}\left( \left(e^{\lambda_{\kappa}-\mu_{\kappa}}y^{3}\chi \right) '\right) ^{2}+(A_{1}+\cdots+A_{6})e^{\lambda_{\kappa}-\mu_{\kappa}}y^{3}\chi^{2}dy-\left[ y\left(e^{\lambda_{\kappa}-\mu_{\kappa}}y^{3}\chi \right) '\chi\right] (R_{\kappa})\\
		\gtrsim&\int_{0}^{R_{\kappa}}y^{4}(\chi')^{2}dy-C_{l}\int_{0}^{R_{\kappa}}y^{4}\chi^{2}dy+R_{\kappa}^{3}\chi^{2}(R_{\kappa})-C_{l}R_{\kappa}^{5}\chi^{2}(R_{\kappa}),
		\end{align*}
		where the constants \(C_{l}>0\) and are independent of \(\kappa\). From the decay estimates \(\eqref{decay estimate}\), it is obvious that \(R_{\kappa}\to 0\) as \(\kappa\to 0^{+}\). Now we consider the Poincare-Hardy-type inequality proved in \cite{Lam}.
		\begin{align*}
		\int_{0}^{1}z^{4}\left| v(z)\right|^{2} dz\lesssim\int_{0}^{1}z^{4}\left| v'(z)\right|^{2} dz+\left|v(1) \right| ^{2}\quad\quad \textrm{for all}\quad\quad v\in C^{1}([0,1]).
		\end{align*}
		The previous estimate tells us that
		\begin{align*}
		\left\langle L\chi,\chi\right\rangle\gtrsim&\int_{0}^{R_{\kappa}}y^{4}(\chi')^{2}dy-C_{l}\int_{0}^{R_{\kappa}}y^{4}\chi^{2}dy+R_{\kappa}^{3}\chi^{2}(R_{\kappa})-C_{l}R_{\kappa}^{5}\chi^{2}(R_{\kappa})\\
		\gtrsim&R_{\kappa}^{3}\int_{0}^{1}z^{4}(\tilde{\chi}')^{2}dz-C_{l}R_{\kappa}^{5}\int_{0}^{1}z^{4}(\tilde{\chi})^{2}dz+R_{\kappa}^{3}\tilde{\chi}^{2}(1)-C_{l}R_{\kappa}^{5}\tilde{\chi}^{2}(1)\\
		=&R_{\kappa}^{3}\left( \int_{0}^{1}z^{4}(\tilde{\chi}')^{2}dz+\tilde{\chi}^{2}(1)-C_{l}R_{\kappa}^{2}\int_{0}^{1}z^{4}(\tilde{\chi})^{2}dz-C_{l}R_{\kappa}^{2}\tilde{\chi}^{2}(1)\right),
		\end{align*}
		and hence, for small enough central redshift \(\kappa>0\), we have
		\begin{align*}
		\left\langle L\chi,\chi\right\rangle\ge 0\quad\quad \forall\chi\in H^{1}_{r}(B_{R}(\bbR^{5})).
		\end{align*}
		So the linear stability follows.
	\end{proof}
	Finally, it remains to prove linear instability for Einstein-Euler system of large central redshift.
	\begin{proposition}\label{prop: linear unstable}
		For \(\kappa\) sufficiently large, the operator $L$ defined in Proposition \ref{3.2..} admits a negative eigenvalue. Therefore the corresponding Einstein-Euler system is linearly unstable.
	\end{proposition}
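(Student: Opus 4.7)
The plan is to apply Corollary \ref{4.2..}: it suffices to construct a radial test function $\chi_\kappa\in H^1_r(B_R(\bbR^5))$ supported strictly inside $[0,R_\kappa)$ --- so that the Robin boundary condition \eqref{linear boundary condition chi} holds trivially --- and to show $\langle L\chi_\kappa,\chi_\kappa\rangle < 0$ for all sufficiently large $\kappa$. The strategy is to localise $\chi_\kappa$ inside the logarithmic annulus $[r_\kappa^1,r_\kappa^2]=[\kappa^{\alpha_1}e^{-2\kappa},\kappa^{\alpha_2}e^{-2\kappa}]$ of Proposition \ref{2.9..}, where the steady state is asymptotically self-similar and the coefficients of $L$ collapse to explicit universal constants modulo relative errors of size $O(\kappa^{-\bar\delta})$.

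First I would feed the asymptotic identities of Proposition \ref{2.9..} --- $\mu_\kappa'\approx 1/y$, $\mu_\kappa''\approx -1/y^2$, $\lambda_\kappa',\lambda_\kappa''=O(\kappa^{-\bar\delta})$, $e^{2\lambda_\kappa}\approx 2$, $y^2\rho_\kappa\approx y^2 p_\kappa\approx 1/(16\pi)$, $m_\kappa/y\approx 1/4$ --- into the expressions for $A_1,\dots,A_6$ from Proposition \ref{3.2..}. A direct computation gives
\begin{align*}
A_1\approx \tfrac{1}{y},\quad A_2\approx -\tfrac{1}{y},\quad A_3\approx -\tfrac{2}{y},\quad A_4\approx -\tfrac{3}{2y},\quad A_5\approx -\tfrac{3}{2y},\quad A_6\approx \tfrac{1}{y},
\end{align*}
so that $\sum_j A_j\approx -4/y$ on the annulus. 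Combining this with $e^{\lambda_\kappa-\mu_\kappa}y^3\approx (\sqrt{2}/C_\kappa)\,y^2$ from \eqref{asy 2}, the quadratic form reduces, for $\chi$ supported in the annulus and modulo relative $O(\kappa^{-\bar\delta})$ errors, to
\begin{align*}
\langle L\chi,\chi\rangle \approx \frac{\sqrt{2}}{C_\kappa}\int_{r_\kappa^1}^{r_\kappa^2} y\bigl[(2\chi+y\chi')^2-4\chi^2\bigr]\,dy.
\end{align*}

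Noting that the bracket has characteristic exponent $\alpha=-2$ (the gradient piece $(2\chi+y\chi')^2$ vanishes exactly for $\chi=y^{-2}$), I would take the resonant test function
\begin{align*}
\chi_\kappa(y)=y^{-2}\,\psi\!\left(\frac{\ln(y/r_\kappa^1)}{\ln(r_\kappa^2/r_\kappa^1)}\right),
\end{align*}
where $\psi$ is a fixed smooth bump on $[0,1]$, supported in $(0,1)$ and equal to $1$ on, say, $[1/4,3/4]$. Because the logarithmic width $\ln(r_\kappa^2/r_\kappa^1)=(\alpha_2-\alpha_1)\ln\kappa\to\infty$, a change of variable $s=\ln(y/r_\kappa^1)/\ln(r_\kappa^2/r_\kappa^1)$ shows that the cutoff contributes at most $O((r_\kappa^1)^{-2}/\ln\kappa)$ to the positive gradient term, whereas the negative bulk term equals $-4\int y^{-3}\psi^2\,dy\approx -2(r_\kappa^1)^{-2}$. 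Hence
\begin{align*}
\langle L\chi_\kappa,\chi_\kappa\rangle \le -\frac{c}{C_\kappa(r_\kappa^1)^2}<0
\end{align*}
for some absolute $c>0$ and all sufficiently large $\kappa$, and Corollary \ref{4.2..} converts this into a negative eigenvalue $\nu_*<0$ of $L$ and a growing-mode solution $\zeta(y,t)=e^{\sqrt{-\nu_*}\,t}\chi_*(y)$ of the linearised system.

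The main obstacle will be the careful bookkeeping of three error sources: (i) the $O(\kappa^{-\bar\delta})$ relative corrections from Proposition \ref{2.9..} to each $A_j$ and each exponential factor, which must be absorbed into the leading order; (ii) the positive transition cost generated by the smooth cutoff, whose suppression by a factor $1/\ln\kappa$ relies essentially on spreading $\psi$ across the full logarithmic width of the annulus; and (iii) the $\kappa$-dependent prefactor $C_\kappa$ in \eqref{asy 2}, which is harmless since it factors out of $\langle L\chi_\kappa,\chi_\kappa\rangle$ and does not affect its sign. Once these perturbative estimates are organised, negativity of $\langle L\chi_\kappa,\chi_\kappa\rangle$ survives for $\kappa$ large and the proof is complete.
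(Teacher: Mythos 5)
Your overall strategy is the same as the paper's: invoke Corollary \ref{4.2..}, localize the test function in the self-similar annulus $[r_\kappa^1,r_\kappa^2]$ of Proposition \ref{2.9..}, use the asymptotics to collapse the coefficients, and exhibit a $\chi_\kappa$ with $\langle L\chi_\kappa,\chi_\kappa\rangle<0$. Your reduction of the quadratic form to $c\int y\bigl[(2\chi+y\chi')^2-4\chi^2\bigr]dy$ and the computation $\sum_j A_j\approx -4/y$ are correct and consistent with the paper's bracketed sums ($2-3-2=-3$ when the first-bracket constant $1$ is included).

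The gap is in your estimate of the two competing terms for the choice $\chi_\kappa=y^{-2}\psi(\ln(y/r_\kappa^1)/L)$ with $L=(\alpha_2-\alpha_1)\ln\kappa$. You assert that the cutoff contributes $O\bigl((r_\kappa^1)^{-2}/\ln\kappa\bigr)$ while the bulk equals $-4\int y^{-3}\psi^2\,dy\approx -2(r_\kappa^1)^{-2}$. The second claim is false precisely because $\psi$ is supported in the \emph{interior} of $(0,1)$, so it vanishes in a neighborhood of $s=0$, i.e.\ of $y=r_\kappa^1$, and the weight $y^{-3}$ concentrates the integral exactly there. Substituting $y=r_\kappa^1 e^{Ls}$ (so $y^{-3}dy=L(r_\kappa^1)^{-2}e^{-2Ls}ds$) and writing $a>0$ for the left edge of $\operatorname{supp}\psi$ gives, to leading order in $L$,
\begin{align*}
-4\int y^{-3}\psi^2\,dy\ \approx\ -\,\psi'(a)^2\,(r_\kappa^1)^{-2}\,\frac{e^{-2La}}{L^{2}},
\qquad
L^{-2}\int y^{-3}(\psi')^2\,dy\ \approx\ \tfrac12\,\psi'(a)^2\,(r_\kappa^1)^{-2}\,\frac{e^{-2La}}{L^{2}},
\end{align*}
so the bulk is smaller than $(r_\kappa^1)^{-2}$ by the polynomially small factor $e^{-2La}/L^{2}=\kappa^{-2(\alpha_2-\alpha_1)a}/L^2$, and it exceeds the cutoff term only by the constant factor $2$, not by $\ln\kappa$. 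The logarithmic spreading of $\psi$ gives you $L^{-2}$ out front of the gradient integral, but it costs you the same power of $L$ inside the bulk integral because the exponential weight $e^{-2Ls}$ then forces both integrals into a window of width $O(1/L)$ at $s=a$. So your stated reason for negativity does not hold; the correct sign does come out of a closer Taylor analysis at $s=a$, but that $2$-vs-$1$ margin is not established by the argument you wrote, and it is much more fragile than you present it as being.

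The paper avoids this entirely by taking $b=-1$ (i.e.\ $\chi_\kappa=y^{-1}\xi_\kappa$) with a cutoff of \emph{fixed multiplicative width} at each end ($|\xi_\kappa'|\le 4/r_\kappa^1$ on $[r_\kappa^1,2r_\kappa^1]$, etc.). For $b=-1$ the annulus integrand becomes $\propto y^{-1}$, so the bulk $B_1$ picks up the full logarithmic width $\sim(\alpha_2-\alpha_1)\ln\kappa$, whereas the cutoff $B_2$ is integrated only over two $O(1)$-multiplicative-width shells and hence is $O(1)$ in $C_\kappa^{-1}$ units. That gives a genuine $\ln\kappa$-versus-$O(1)$ separation, which is the clean mechanism your heuristic was aiming for but which your $b=-2$ ansatz does not actually produce. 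If you keep your log-spread $\psi$ but switch to $b=-1$, you get $cL\bigl[-3\int\psi^2\,ds + L^{-2}\int(\psi')^2\,ds\bigr]$, which does give the $\ln\kappa$ gain; it is the combination of $b=-2$ with the $y^{-3}$ weight that defeats your estimate. You should either switch to $b=-1$, or redo the $b=-2$ bookkeeping honestly and live with the factor-of-$2$ margin (and then also check that the $O(\kappa^{-\bar\delta})$ errors from Proposition \ref{2.9..} are actually subordinate to that margin, which they are, but this needs to be said).
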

	\begin{proof}
		Using Corollary \(\ref{4.2..}\), we just need to show that there exist \(\chi_{\kappa}\in H^{1}_{r}(B_{R}(\mathbb{R}^{5}))\) such that \(\left\langle L\chi_{\kappa},\chi_{\kappa} \right\rangle< 0\), when \(\kappa\) is sufficiently large. We localize the perturbation \(\chi_{\kappa}(y)\) on the interval \([r_{\kappa}^{1},r_{\kappa}^{2}]\) by setting \(\chi_{\kappa}(y)=y^{b}\xi_{\kappa}(y)\) for some \(b\in\mathbb{R}\) to be specified later, where the smooth cut-off function \(R(\xi_{\kappa})\subset[0,1]\) is supported in the interval \([r_{\kappa}^{1},r_{\kappa}^{2}]\) and identically equal to 1 on the interval \([2r_{\kappa}^{1},r^{2}_{\kappa}/2]\). Note that the latter interval is non-trivial for \(\kappa\) sufficiently large. In addition, we require that
		\begin{align*}
		|\xi_{\kappa}'(y)|\le\frac{4}{r_{\kappa}^{1}},\ y\in[r_{\kappa}^{1},2r_{\kappa}^{1}],\quad\quad |\xi_{\kappa}'(y)|\le\frac{4}{r_{\kappa}^{2}},\ y\in[r_{\kappa}^{2}/2,r_{\kappa}^{2}].
		\end{align*}
		Then we deal with \(\eqref{Lchi,chi}\) splited into two parts \(\left\langle L\chi_{\kappa},\chi_{\kappa} \right\rangle=B_{1}+B_{2}\), where
		\begin{align*}
		B_{1}=&\int_{r_{\kappa}^{1}}^{r_{\kappa}^{2}}e^{\mu_{\kappa}-\lambda_{\kappa}}y^{-2}\left[\left( e^{\lambda_{\kappa}-\mu_{\kappa}}y^{3+b}\right) ' \right] ^{2}\xi_{\kappa}^{2}dy+\int_{r_{\kappa}^{1}}^{r_{\kappa}^{2}}e^{\lambda_{\kappa}-\mu_{\kappa}}y^{3+2b}(A_{1}+\cdots+A_{6})\xi_{\kappa}^{2}dy\\
		B_{2}=&\left( \int_{r_{\kappa}^{1}}^{2r_{\kappa}^{1}}+\int_{r_{\kappa}^{2}/2}^{r_{\kappa}^{2}}\right) \left[e^{\lambda_{\kappa}-\mu_{\kappa}}y^{4+2b}\xi_{\kappa}'+2y^{1+b}\left(e^{\lambda_{\kappa}-\mu_{\kappa}}y^{3+b} \right)'\xi_{\kappa}  \right]\xi_{\kappa}'dy. 
		\end{align*}
		We first compute 
		\begin{align*}
		B_{1}=&\int_{r_{\kappa}^{1}}^{r_{\kappa}^{2}}e^{\lambda_{\kappa}-\mu_{\kappa}}y^{2+2b}\xi_{\kappa}^{2}\left[  y^{2}(\lambda_{\kappa}'-\mu_{\kappa}')^{2}+2(3+b)y(\lambda_{\kappa}'-\mu_{\kappa}')+(3+b)^{2}+y^{2}(\lambda_{\kappa}''-\mu_{\kappa}'') \right]dy \\
		&+\int_{r_{\kappa}^{1}}^{r_{\kappa}^{2}}e^{\lambda_{\kappa}-\mu_{\kappa}}y^{2+2b}\xi_{\kappa}^{2}\left[y(\lambda_{\kappa}'-\mu_{\kappa}')+ y\left((\lambda_{\kappa}'+\mu_{\kappa}')(\mu_{\kappa}'y-3)+(\mu_{\kappa}''y+\mu_{\kappa}') \right)\right]dy\\
		&+ \int_{r_{\kappa}^{1}}^{r_{\kappa}^{2}}e^{\lambda_{\kappa}-\mu_{\kappa}}y^{2+2b}\xi_{\kappa}^{2}\left[ -\frac{2y\mu_{\kappa}'+1}{y}e^{2\lambda_{\kappa}}m_{\kappa} +8\pi y^{2}p_{\kappa}e^{2\lambda_{\kappa}}-4\pi y^{2}p_{\kappa}e^{2\lambda_{\kappa}}(2y\mu_{\kappa}'+1) \right] dy.
		\end{align*}
		We expect that the integral becomes proportional to \(y^{-1}\) and hence choose \(b=-1\). By the asymptotical properties \(\eqref{asy 1}\) and \(\eqref{asy 2}\), when \(\kappa\) sufficiently large we have
		\begin{align*}
		B_{1}\le\int_{r_{\kappa}^{1}}^{r_{\kappa}^{2}}e^{\lambda_{\kappa}-\mu_{\kappa}}\xi_{\kappa}^{2}\left[- 3\right] dy\le-CC_{\kappa}^{-1}\int_{2r_{\kappa}^{1}}^{r_{\kappa}^{2}/2}y^{-1}dy\le-C_{1}C_{\kappa}^{-1}\ln\kappa,
		\end{align*}
		where \(C_{1}>0\) is independent of \(\kappa\) and \(C_{\kappa}>0\) is the constant introduced in Lemma \(\ref{2.9..}\). Then we turn to consider the
		magnitude of \(B_{2}\). We split this term into two parts
		\begin{align*}
		B_{2}=\left( \int_{r_{\kappa}^{1}}^{2r_{\kappa}^{1}}+\int_{r_{\kappa}^{2}/2}^{r_{\kappa}^{2}}\right)f_{\kappa,1}dy+\left( \int_{r_{\kappa}^{1}}^{2r_{\kappa}^{1}}+\int_{r_{\kappa}^{2}/2}^{r_{\kappa}^{2}}\right)f_{\kappa,2}dy,
		\end{align*}
		where
		\begin{align*}
		f_{\kappa,1}=e^{\lambda_{\kappa}-\mu_{\kappa}}y^{4+2b}(\xi_{\kappa}')^{2}\le CC_{\kappa}^{-1}y\left| \xi_{\kappa}'\right| ^{2},
		\end{align*}
		and
		\begin{align*}
		f_{\kappa,2}=e^{\lambda_{\kappa}-\mu_{\kappa}}y^{3+2b}\left[(\lambda_{\kappa}'-\mu_{\kappa}')y+(3+b) \right] \xi_{\kappa}\xi_{\kappa}'\le CC_{\kappa}^{-1}\left| \xi_{\kappa}'\right| .
		\end{align*}
		We recall that on the interval \([r_{\kappa}^{1},2r_{\kappa}^{1}]\) the estimate \(|\xi_{\kappa}'|\le4/r_{\kappa}^{1}\) holds, and \(|\xi_{\kappa}'|\le 4/r_{\kappa}^{2}\) holds on \([r_{\kappa}^{2}/2,r_{\kappa}^{2}]\). Thus
		\begin{align*}
		\left|B_{2} \right| \le C_{2}C_{\kappa}^{-1},
		\end{align*}
		where \(C_{2}>0\) is independent of \(\kappa\). Combined with the previous results, we show that there exist \(\chi_{\kappa}(y)\in H^{1}_{r}(B_{R}(\mathbb{R}^{5}))\) such that
		\begin{align*}
		\left\langle L\chi_{\kappa},\chi_{\kappa}\right\rangle\le  -C_{1}C_{\kappa}^{-1}\ln\kappa+C_{2}C_{\kappa}^{-1}.
		\end{align*}
		When \(\kappa\) is sufficiently large, we have \(\left\langle L\chi_{\kappa},\chi_{\kappa}\right\rangle< 0\), which completes the proof.
	\end{proof}

	\section{Quasilinearization}\label{sec5}
We start by introducing the renormalized fluid velocity field \(\vec{V}\) and the enthalpy \(\sigma\) as
\begin{align}\label{vecV}
\vec{V}:=\sigma u,\quad\quad\quad \sigma^{2}:=\rho+p. 
\end{align}
Since \(u\) is future directed unit timelike, we have
\begin{align*}
\vec{V}=(V^{0},V,0,0)\quad\quad V^{0}=e^{-\mu}\sqrt{\sigma^{2}+e^{2\lambda}V^{2}}=:e^{-\mu}\left\langle V\right\rangle.
\end{align*}
Then \eqref{mass conservation}-\eqref{momentum conservation} reduce to the following equations in the fluid domain \(\calB(t)\)
\begin{align}\label{re momentum eq}
\nabla_{\vec{V}}\vec{V}+\frac{1}{2}\nabla \sigma^{2}=0,\\ \label{re mass eq}
\nabla_{\alpha}V^{\alpha}=0.
\end{align}
Let \(D_{\vec{V}}:=V^{0}\partial_{t}+V\partial_{r}\). The spherically symmetric Einstein-Euler system \(\eqref{1field eq00}-\eqref{1momentum eq SS}\) can take the following form:
\begin{align}\label{field eq00}
&e^{-2\lambda}(2r\lambda'-1)+1=8\pi r^{2}\left(\rho+e^{2\lambda}V^{2} \right),\\ \label{field eq11}
&e^{-2\lambda}(2r\mu'+1)-1=8\pi r^{2}\left(p+e^{2\lambda}V^{2} \right),\\ \label{field eq01}
&\dot{\lambda}=-4\pi r e^{\mu+2\lambda}\left\langle V\right\rangle V,\\ 	\label{momentum eq SS}
&D_{\vec{V}}V+e^{-2\lambda}\mu'\left\langle V\right\rangle^{2}-8\pi r e^{2\lambda}\left\langle V\right\rangle^{2}V^{2}+\lambda'V^{2}+\frac{1}{2}e^{-2\lambda}\partial_{r}\sigma^{2}=0,\\ 	\label{mass eq SS}
&\frac{D_{\vec{V}}\left\langle V\right\rangle}{\left\langle V\right\rangle}-\frac{\partial_{r}\left\langle V\right\rangle}{\left\langle V\right\rangle}V+\partial_{r}V-4\pi r e^{2\lambda}\left\langle V\right\rangle^{2}V+\left( \mu'+\lambda'+\frac{2}{r}\right)V=0 .
\end{align}
	We introduce the Lagrangian parametrization \(\eta(\cdot,t):[0,R_{\kappa}]\to [0,R(t)]\) that represents the radial position of the fluid particle at time \(t\) so that
	\begin{align}\label{eta def}
	\partial_{t}\eta=\frac{V}{V^{0}}\circ\eta\ \ \ \  \text{with} \ \ \ \ \eta(y,0)=\eta_{0}(y).
	\end{align}
	Here \(\eta_{0}\) is not necessarily the identity map but depend on the initial density profile. We express the enthalpy as the sum of the steady state and the perturbation
	\begin{align*}
	\sigma^{2}(r,t)=\sigma_{\kappa}^{2}(\eta^{-1}(r,t))+\varepsilon(r,t).
	\end{align*}
	By a slight abuse of notation, we often write \(\sigma_{\kappa}^{2}(r,t)\) instead of \(\sigma_{\kappa}^{2}(\eta^{-1}(r,t))\), therefore we have
	\begin{align*}
	D_{\vec{V}}\sigma_{\kappa}^{2}=0.
	\end{align*}
	Then we derive the quasilinear equations for the renormalized fluid velocity \(V\) and the enthalpy perturbation variable \(\varepsilon\). Applying the covariant derivative \(\nabla_{\vec{V}}\) to the equation \(\eqref{re momentum eq}\) yields
	\begin{align}\label{2.2}
	\nabla_{\vec{V}}^{2}V^{\alpha}-\frac{1}{2}\nabla_{\beta}\sigma^{2}\nabla^{\beta}V^{\alpha}+\frac{1}{2}\nabla^{\alpha}D_{\vec{V}}\sigma^{2}=0,
	\end{align}
	where we have used the identity \(\nabla_{\alpha}V_{\beta}=\nabla_{\beta}V_{\alpha}\) in spherical symmetry. Let \(n\) be the unit outward pointing (spacetime) normal to \(\partial\calB\). Since \(\sigma^{2}\equiv 1\) on \(\partial\calB\), \(\nabla\sigma^{2}\) is normal (with respect to \(g\)) to \(\partial\calB\). Going back to \(\eqref{2.2}\) and restricting it to the boundary we get
	\begin{align}\label{2.3}
	\nabla_{\vec{V}}^{2}V^{\alpha}+\frac{a}{2}\nabla_{n}V^{\alpha}+\frac{1}{2}\nabla^{\alpha}D_{\vec{V}}\sigma^{2}=0,
	\end{align}
	where 
	\begin{align*}
	a:=\sqrt{\nabla_{\alpha}\sigma^{2}\nabla^{\alpha}\sigma^{2}}.
	\end{align*}
	Using the connection coefficient, we can express the \(V\) component equation of \(\eqref{2.3}\) as
	\begin{align}\label{2.4}
	(D_{\vec{V}}^{2}+\frac{1}{2}aD_{n})V=-\frac{1}{2}\nabla^{1}D_{\vec{V}}\sigma^{2}-D_{\vec{V}}(\Gamma^{1}_{\alpha\beta}V^{\alpha}V^{\beta})-\Gamma^{1}_{\nu\gamma}V^{\nu}(D_{\vec{V}}V^{\gamma}+\Gamma^{\gamma}_{\alpha\beta}V^{\alpha}V^{\beta})+\frac{1}{2}\Gamma^{1}_{\alpha\beta}(\nabla^{\alpha}\sigma^{2})V^{\beta}.
	\end{align}
	We next turn to the interior wave equation for \(V\). Applying \(\nabla_{\beta}\) to the vanishing divergence equation \(\nabla_{\alpha}V^{\alpha}=0\), commuting \(\nabla_{\beta}\) and \(\nabla_{\alpha}\), and using \(\nabla_{\alpha}V_{\beta}=\nabla_{\beta}V_{\alpha}\), gives
	\begin{align}\label{2.5}
	0=\nabla_{\alpha}\nabla^{\alpha}V_{\beta}-R_{\beta\lambda}V^{\lambda}.
	\end{align}
	Taking trace of \(\eqref{einstein eq}\) we get
	\begin{align*}
	\bar{R}=8\pi(V_{\alpha}V^{\alpha}+2),
	\end{align*}
	and therefore in view of \(\eqref{energy tensor}\) and \(\eqref{einstein eq}\)
	\begin{align}
	\bar{R}_{\alpha\beta}=8\pi(V_{\alpha}V_{\beta}+\frac{1}{2}g_{\alpha\beta}).
	\end{align}
	Plugging back into \(\eqref{2.5}\) gives
	\begin{align}\label{2.7}
	\nabla_{\alpha}\nabla^{\alpha}V^{\beta}=8\pi(\frac{1}{2}-\sigma^{2})V^{\beta}.
	\end{align}
	Let \(\square\) denote the wave operator of \(g\). we express the \(V\) component equation of \(\eqref{2.7}\) as
	\begin{align}\label{2.8}
	\square V=8\pi(\frac{1}{2}-\sigma^{2})V-D_{\nu}(g^{\alpha\nu}\Gamma^{1}_{\alpha\beta}V^{\beta})-g^{\alpha\gamma}\Gamma^{\nu}_{\nu\gamma}\Gamma^{1}_{\alpha\beta}V^{\beta}-\Gamma^{1}_{\nu\gamma}(D^{\nu}V^{\gamma}+g^{\alpha\nu}\Gamma^{\gamma}_{\alpha\beta}V^{\beta}).
	\end{align}
	To complete the set of fluid equations we need to derive wave equations for \(\sigma^{2}\) and \(D_{\vec{V}}\sigma^{2}\) with Dirichlet
	boundary conditions. Applying \(\nabla_{\beta}\) to \(\eqref{re momentum eq}\) and \(\eqref{2.2}\) and using \(\nabla_{\alpha}V^{\alpha}=0,\nabla_{\alpha}V_{\beta}=\nabla_{\beta}V_{\alpha}\) yield
\begin{align}\label{2.9}
\square\sigma^{2}=8\pi(\sigma^{2}-2\sigma^{4})-2(\nabla_{\alpha}V_{\beta})(\nabla_{\beta}V_{\alpha}),
\end{align}
and
\begin{align}\label{2.10}
	\square D_{\vec{V}}\sigma^{2}=16\pi(D_{\vec{V}}\sigma^{2}-3\sigma^{2}D_{\vec{V}}\sigma^{2})+6(\nabla^{\alpha}V^{\beta})(\nabla_{\alpha}\nabla_{\beta}\sigma^{2})+4(\nabla^{\alpha}V^{\beta})(\nabla_{\alpha}V^{\lambda})(\nabla_{\lambda}V_{\beta})-4R_{\lambda\alpha\beta\nu}(\nabla^{\alpha}V^{\beta})V^{\lambda}V^{\nu}.
\end{align}
Using the connection coefficients associated with the metric \(\eqref{metric}\), the equations \(\eqref{2.4},\eqref{2.8},\eqref{2.9},\eqref{2.10}\) become 
\begin{align}
\begin{split}
	(D_{\vec{V}}^{2}+\frac{1}{2}aD_{n})V=&-\frac{1}{2}e^{-2\lambda}\partial_{r}D_{\vec{V}}\sigma^{2}-D_{\vec{V}}\left( e^{-2\lambda}\left\langle V\right\rangle^{2}\mu'-8\pi r e^{2\lambda}\left\langle V\right\rangle^{2}V^{2}+\lambda'V^{2}\right) \\
	&+\left( 4\pi r e^{2\lambda}\left\langle V\right\rangle V^{2}-e^{-2\lambda}\left\langle V\right\rangle\mu'\right) \left( D_{\vec{V}}\left\langle V\right\rangle+\mu'\left\langle V\right\rangle V-4\pi re^{4\lambda}\left\langle V\right\rangle V^{3}\right)\\
	&+\left( 4\pi r e^{2\lambda}\left\langle V\right\rangle^{2}V-\lambda' V\right) \left( D_{\vec{V}}V+e^{-2\lambda}\mu'\left\langle V\right\rangle^{2}-8\pi r e^{2\lambda}\left\langle V\right\rangle^{2}V^{2}+\lambda'V^{2}\right) \\
	&+\frac{1}{2}e^{-2\lambda}\left( \mu'+\lambda'\right) V \partial_{r}\sigma^{2}-4\pi r\left\langle V\right\rangle^{2}V \partial_{r}\sigma^{2}\quad \text{on}\ \partial\calB,
\end{split}
\end{align}
\begin{align}
\begin{split}
\square V=&8\pi(\frac{1}{2}-\sigma^{2})V+D_{\vec{V}}(e^{-2\lambda}\mu')-V\partial_{r}(e^{-2\lambda}\mu')+e^{-2\lambda}\mu'\frac{D_{\vec{V}}\left\langle V\right\rangle}{\left\langle V\right\rangle}-e^{-2\lambda}\mu'\frac{V\partial_{r}\left\langle V\right\rangle}{\left\langle V\right\rangle}\\
&-\frac{1}{\left\langle V\right\rangle}D_{\vec{V}}\left( 4\pi r e^{2\lambda}\left\langle V\right\rangle V^{2}\right) +\partial_{r}\left( 4\pi r \left\langle V\right\rangle^{2}V-e^{-2\lambda}\lambda' V\right) -4\pi r\left\langle V\right\rangle^{2}V+16\pi r^{2}e^{4\lambda}\left\langle V\right\rangle^{2}V^{3}\\
&+e^{-2\lambda}\mu'\left( \frac{D_{\vec{V}}\left\langle V\right\rangle}{\left\langle V\right\rangle}-\frac{V\partial_{r}\left\langle V\right\rangle}{\left\langle V\right\rangle}+\mu'V\right)-e^{-2\lambda}\lambda'\left( \partial_{r}V-4\pi r e^{2\lambda}\left\langle V\right\rangle^{2}V+\lambda'V\right) \\
& -8\pi re^{2\lambda}\left\langle V\right\rangle V\left( \frac{D_{\vec{V}}V}{\left\langle V\right\rangle}-\frac{V\partial_{r}V}{\left\langle V\right\rangle}+e^{-2\lambda}\mu'\left\langle V\right\rangle-4\pi re^{2\lambda}\left\langle V\right\rangle V^{2}\right) \\
&-e^{-2\lambda}\left( -4\pi r e^{2\lambda}\mu'\left\langle V\right\rangle^{2}V-4\pi re^{2\lambda}\lambda'\left\langle V\right\rangle^{2}V-8\pi e^{2\lambda}\left\langle V\right\rangle^{2}V+\mu'\lambda'V+(\lambda')^{2}V+\frac{2}{r}\lambda'V\right),
\end{split}
\end{align}
\begin{align}
\begin{split}
\square\varepsilon=&8\pi\left( \varepsilon-2\left( \varepsilon+\sigma_{\kappa}^{2}\right) ^{2}+2\sigma_{\kappa}^{4}\right) +4e^{-2\lambda}\mu'\left\langle V\right\rangle\partial_{r}\left\langle V\right\rangle+2\left( \frac{V}{r}\right) ^{2}-\frac{V^{2}}{\left\langle V\right\rangle^{2}}\partial_{r}\partial_{r}\sigma_{\kappa}^{2}-\frac{2V}{\left\langle V\right\rangle^{2}}(\partial_{r}V)(\partial_{r}\sigma_{\kappa}^{2})\\
&+\frac{1}{\left\langle V\right\rangle^{2}}(D_{\vec{V}}V)(\partial_{r}\sigma_{\kappa}^{2})-\mu'\frac{V^{2}}{\left\langle V\right\rangle^{2}}\partial_{r}\sigma_{\kappa}^{2}-\frac{VD_{\vec{V}}\left\langle V\right\rangle}{\left\langle V\right\rangle^{3}}\partial_{r}\sigma_{\kappa}^{2}-2\frac{V^{2}\partial_{r}\left\langle V\right\rangle}{\left\langle V\right\rangle^{3}}\partial_{r}\sigma_{\kappa}^{2}-4\pi r e^{2\lambda}V^{2}\partial_{r}\sigma_{\kappa}^{2}\\
&-4\left( \frac{D_{\vec{V}}V}{\left\langle V\right\rangle}-\frac{V\partial_{r}\left\langle V\right\rangle}{\left\langle V\right\rangle}+e^{-2\lambda}\mu'\left\langle V\right\rangle-4\pi re^{2\lambda}\left\langle V\right\rangle V^{2}\right) \left( \partial_{r}\left\langle V\right\rangle-4\pi re^{4\lambda}\left\langle V\right\rangle V^{2}\right) \\
&+e^{-2\lambda}(\mu'-\lambda')\partial_{r}\sigma_{\kappa}^{2}+e^{-2\lambda}\partial_{r}\partial_{r}\sigma_{\kappa}^{2}+e^{-2\lambda}\frac{2}{r}\partial_{r}\sigma_{\kappa}^{2}-2\sigma_{\kappa}^{4}-4e^{-2\lambda}\mu'\left\langle V\right\rangle\partial_{r}\left\langle V\right\rangle,
\end{split}
\end{align}
\begin{align}
\begin{split}
\square D_{\vec{V}}\sigma^{2}=&16\pi\left(D_{\vec{V}}\sigma^{2}-3\sigma^{2}D_{\vec{V}}\sigma^{2}\right)+4\left( \frac{D_{\vec{V}}\left\langle V\right\rangle-V\partial_{r}\left\langle V\right\rangle}{\left\langle V\right\rangle}+\mu'V\right) ^{3}+8\left( \frac{V}{r}\right) ^{3} +64\pi\frac{1}{r}e^{2\lambda}\left\langle V\right\rangle^{2}V^{3}\\
&+6e^{-2\lambda}\left( \partial_{r}V-4\pi r e^{2\lambda}\left\langle V\right\rangle^{2}V+\lambda'V\right) \left( \partial_{r}\partial_{r}\sigma^{2}+4\pi r e^{4\lambda}VD_{\vec{V}}\sigma^{2}-4\pi re^{4\lambda}V^{2}\partial_{r}\sigma^{2}\right) \\
&+12\left( \frac{D_{\vec{V}}\left\langle V\right\rangle-V\partial_{r}\left\langle V\right\rangle}{\left\langle V\right\rangle}+\mu'V\right)\left( e^{-2\lambda}\mu'\left\langle V\right\rangle-4\pi re^{2\lambda}\left\langle V\right\rangle V^{2}\right) \left( \partial_{r}\left\langle V\right\rangle-4\pi re^{4\lambda}\left\langle V\right\rangle V^{2}\right) \\
&+12\frac{D_{\vec{V}}V-V\partial_{r}\left\langle V\right\rangle}{\left\langle V\right\rangle}\left( \frac{D_{\vec{V}}\left\langle V\right\rangle-V\partial_{r}\left\langle V\right\rangle}{\left\langle V\right\rangle}+\mu'V\right) \left( \partial_{r}\left\langle V\right\rangle-4\pi re^{4\lambda}\left\langle V\right\rangle V^{2}\right)\\
&-12\left( \frac{D_{\vec{V}}V}{\left\langle V\right\rangle}-\frac{V\partial_{r}\left\langle V\right\rangle}{\left\langle V\right\rangle}+e^{-2\lambda}\mu'\left\langle V\right\rangle-4\pi re^{2\lambda}\left\langle V\right\rangle V^{2}\right) \left( \frac{[D_{\vec{V}},\partial_{r}]\sigma^{2}-V\partial_{r}\partial_{r}\sigma^{2}}{\left\langle V\right\rangle}\right) \\
&-12\left( \frac{D_{\vec{V}}V}{\left\langle V\right\rangle}-\frac{V\partial_{r}\left\langle V\right\rangle}{\left\langle V\right\rangle}+e^{-2\lambda}\mu'\left\langle V\right\rangle-4\pi re^{2\lambda}\left\langle V\right\rangle V^{2}\right)\left( -\mu'\frac{1}{\left\langle V\right\rangle}D_{\vec{V}}\sigma^{2}+\mu'\frac{V}{\left\langle V\right\rangle}\partial_{r}\sigma^{2}\right) \\
&-12\left( \frac{D_{\vec{V}}V}{\left\langle V\right\rangle}-\frac{V\partial_{r}\left\langle V\right\rangle}{\left\langle V\right\rangle}+e^{-2\lambda}\mu'\left\langle V\right\rangle-4\pi re^{2\lambda}\left\langle V\right\rangle V^{2}\right)\left( \frac{\partial_{r}D_{\vec{V}}\sigma^{2}}{\left\langle V\right\rangle}+4\pi re^{2\lambda}\left\langle V\right\rangle V\partial_{r}\sigma^{2}\right) \\
&-6\left( \frac{D_{\vec{V}}\left\langle V\right\rangle-V\partial_{r}\left\langle V\right\rangle}{\left\langle V\right\rangle}+\mu'V\right)\left( \frac{D_{\vec{V}}D_{\vec{V}}\sigma^{2}}{\left\langle V\right\rangle^{2}}-\frac{(D_{\vec{V}}\left\langle V\right\rangle-V\partial_{r}\left\langle V\right\rangle)(D_{\vec{V}}\sigma^{2}-V\partial_{r}\sigma^{2})}{\left\langle V\right\rangle^{3}}\right) \\
&-6\left( \frac{D_{\vec{V}}\left\langle V\right\rangle-V\partial_{r}\left\langle V\right\rangle}{\left\langle V\right\rangle}+\mu'V\right)\left( -\frac{2V}{\left\langle V\right\rangle^{2}}\partial_{r}D_{\vec{V}}\sigma^{2}-\frac{(D_{\vec{V}}V)(\partial_{r}\sigma^{2})}{\left\langle V\right\rangle^{2}}+\frac{V(\partial_{r}V)(\partial_{r}\sigma^{2})}{\left\langle V\right\rangle^{2}}\right) \\
&-6\left( \frac{D_{\vec{V}}\left\langle V\right\rangle-V\partial_{r}\left\langle V\right\rangle}{\left\langle V\right\rangle}+\mu'V\right)\left( -\frac{V[D_{\vec{V}},\partial_{r}]\sigma^{2}}{\left\langle V\right\rangle^{2}}+\frac{V^{2}\partial_{r}\partial_{r}\sigma^{2}}{\left\langle V\right\rangle^{2}}-e^{-2\lambda}\mu'\partial_{r}\sigma^{2}\right) \\
&+12\left( \partial_{r}V-4\pi r e^{2\lambda}\left\langle V\right\rangle^{2}V+\lambda'V\right) \left( \frac{D_{\vec{V}}V-V\partial_{r}\left\langle V\right\rangle}{\left\langle V\right\rangle}e^{-2\lambda}\mu'\left\langle V\right\rangle-4\pi re^{2\lambda}\left\langle V\right\rangle V^{2}\right)\\
&\quad \left( \partial_{r}\left\langle V\right\rangle-4\pi re^{4\lambda}\left\langle V\right\rangle V^{2}\right)+4\left( \partial_{r}V-4\pi r e^{2\lambda}\left\langle V\right\rangle^{2}V+\lambda'V\right) ^{3}+\frac{8}{r^{2}}e^{-2\lambda}\mu'V\left\langle V\right\rangle^{2}+8\frac{\lambda'}{r^{2}}V^{3}\\
&-4e^{2\lambda}\left( 48\pi r^{2}e^{4\lambda}\left\langle V\right\rangle^{2}V^{2}-4\pi re^{2\lambda}\frac{D_{\vec{V}}(\left\langle V\right\rangle V)}{\left\langle V\right\rangle}+4\pi re^{2\lambda}\frac{V\partial_{r}(\left\langle V\right\rangle V)}{\left\langle V\right\rangle}+\lambda'\mu'e^{-2\lambda}-(\mu')^{2}e^{-2\lambda}-\mu''e^{-2\lambda}\right)\\
&\quad [ -\left( \frac{D_{\vec{V}}\left\langle V\right\rangle-V\partial_{r}\left\langle V\right\rangle}{\left\langle V\right\rangle}+\mu'V\right)V^{2}+\left( D_{\vec{V}}V-V\partial_{r}V+e^{-2\lambda}\mu'\left\langle V\right\rangle^{2}-4\pi re^{2\lambda}\left\langle V\right\rangle^{2}V\right) V \\
&\quad+e^{-2\lambda}\left\langle V\right\rangle^{2}\left( \partial_{r}V-4\pi r e^{2\lambda}\left\langle V\right\rangle^{2}V+\lambda'V\right)] .
\end{split}
\end{align}
Then we compute commutator identities with the main linear operators.
\begin{lemma}\label{2.1.}
	For any scalar function \(\phi\),
	\begin{align*}
	[D_{\vec{V}},\partial_{r}]\phi=&\mu'D_{\vec{V}}\phi-\mu'V\partial_{r}\phi-\frac{\partial_{r}\left\langle V\right\rangle}{\left\langle V\right\rangle}D_{\vec{V}}\phi+\frac{\partial_{r}\left\langle V\right\rangle}{\left\langle V\right\rangle}V\partial_{r}\phi-(\partial_{r}V)(\partial_{r}\phi)\\
	[D_{\vec{V}},\partial_{r}\partial_{r}]\phi=&2\mu'[D_{\vec{V}},\partial_{r}]\phi-\frac{2\partial_{r}\left\langle V\right\rangle}{\left\langle V\right\rangle}[D_{\vec{V}},\partial_{r}]\phi-2\mu'\partial_{r}D_{\vec{V}}\phi-\frac{2\partial_{r}\left\langle V\right\rangle}{\left\langle V\right\rangle}\partial_{r}D_{\vec{V}}\phi-2\mu'V\partial_{r}\partial_{r}\phi\\
	&+\frac{2\partial_{r}\left\langle V\right\rangle}{\left\langle V\right\rangle}V\partial_{r}\partial_{r}\phi+\mu''D_{\vec{V}}\phi-\mu''V\partial_{r}\phi-(\mu')^{2}D_{\vec{V}}\phi+(\mu')^{2}V\partial_{r}\phi+2\mu'\frac{\partial_{r}\left\langle V\right\rangle}{\left\langle V\right\rangle}D_{\vec{V}}\phi\\
	&-2\mu'\frac{\partial_{r}\left\langle V\right\rangle}{\left\langle V\right\rangle}V\partial_{r}\phi-\frac{\partial_{r}\partial_{r}\left\langle V\right\rangle}{\left\langle V\right\rangle}D_{\vec{V}}\phi+V\frac{\partial_{r}\partial_{r}\left\langle V\right\rangle}{\left\langle V\right\rangle}\partial_{r}\phi-2(\partial_{r}V)(\partial_{r}\partial_{r}\phi)-(\partial_{r}\partial_{r}V)(\partial_{r}\phi)\\
	[D_{\vec{V}},\square]\phi=&-D_{\vec{V}}\left( \frac{1}{\left\langle V\right\rangle^{2}}\right) D_{\vec{V}}D_{\vec{V}}\phi+D_{\vec{V}}\left( \frac{2V}{\left\langle V\right\rangle^{2}}\right) \partial_{r}D_{\vec{V}}\phi+\frac{2V}{\left\langle V\right\rangle^{2}}[D_{\vec{V}},\partial_{r}]D_{\vec{V}}\phi-D_{\vec{V}}\left( \frac{2V}{\left\langle V\right\rangle^{2}}\right) (\partial_{r}V)(\partial_{r}\phi)\\
	&-\frac{2V}{\left\langle V\right\rangle^{2}}(\partial_{r}\phi)[D_{\vec{V}},\partial_{r}]V-\frac{2V}{\left\langle V\right\rangle^{2}}(\partial_{r}V)[D_{\vec{V}},\partial_{r}]\phi+D_{\vec{V}}\left( \frac{1}{\left\langle V\right\rangle^{2}}\right) (D_{\vec{V}}V)(\partial_{r}\phi)+\frac{1}{\left\langle V\right\rangle^{2}}(D_{\vec{V}}V)[D_{\vec{V}},\partial_{r}]\phi\\
	&D_{\vec{V}}\left( \mu'\frac{V}{\left\langle V\right\rangle^{2}}\right) D_{\vec{V}}\phi-D_{\vec{V}}\left( \mu'\frac{V^{2}}{\left\langle V\right\rangle^{2}}\right) \partial_{r}\phi-\mu'\frac{V^{2}}{\left\langle V\right\rangle^{2}}[D_{\vec{V}},\partial_{r}]\phi-\frac{VD_{\vec{V}}\left\langle V\right\rangle}{\left\langle V\right\rangle^{3}}[D_{\vec{V}},\partial_{r}]\phi\\
	&+\frac{2V^{2}\partial_{r}\left\langle V\right\rangle}{\left\langle V\right\rangle^{3}}[D_{\vec{V}},\partial_{r}]\phi+D_{\vec{V}}(4\pi re^{2\lambda}V)D_{\vec{V}}\phi-D_{\vec{V}}(4\pi re^{2\lambda}V^{2})\partial_{r}\phi-4\pi re^{2\lambda}V^{2}[D_{\vec{V}},\partial_{r}]\phi\\
	&+D_{\vec{V}}[e^{-2\lambda}(\mu'-\lambda')]\partial_{r}\phi+e^{-2\lambda}(\mu'-\lambda')[D_{\vec{V}},\partial_{r}]\phi+D_{\vec{V}}\left( e^{-2\lambda}-\frac{V^{2}}{\left\langle V\right\rangle^{2}}\right) \partial_{r}\partial_{r}\phi\\
	&\left( e^{-2\lambda}-\frac{V^{2}}{\left\langle V\right\rangle^{2}}\right)[D_{\vec{V}},\partial_{r}\partial_{r}]\phi+D_{\vec{V}}(e^{-2\lambda})\frac{2}{r}\partial_{r}\phi-e^{-2\lambda}V\frac{1}{r^{2}}\partial_{r}\phi+e^{-2\lambda}\frac{2}{r}[D_{\vec{V}},\partial_{r}]\phi\\
	&-\frac{2V}{\left\langle V\right\rangle^{2}}(\partial_{r}D_{\vec{V}}V)(\partial_{r}\phi)+\frac{1}{\left\langle V\right\rangle^{2}}(D_{\vec{V}}D_{\vec{V}}V)(\partial_{r}\phi)+D_{\vec{V}}\left( \frac{D_{\vec{V}}\left\langle V\right\rangle}{\left\langle V\right\rangle^{3}}\right) D_{\vec{V}}\phi\\
	&-D_{\vec{V}}\left( \frac{VD_{\vec{V}}\left\langle V\right\rangle}{\left\langle V\right\rangle^{3}}\right) \partial_{r}\phi-D_{\vec{V}}\left( \frac{2V\partial_{r}\left\langle V\right\rangle}{\left\langle V\right\rangle^{3}}\right) D_{\vec{V}}\phi+D_{\vec{V}}\left( \frac{2V^{2}\partial_{r}\left\langle V\right\rangle}{\left\langle V\right\rangle^{3}}\right) \partial_{r}\phi\\
	[D_{\vec{V}},D_{\vec{V}}^{2}-&\frac{1}{2}\nabla_{\alpha}\sigma^{2}\nabla^{\alpha}]\phi=\frac{1}{2}D_{\vec{V}}\left( \frac{D_{\vec{V}}\sigma^{2}-V\partial_{r}\sigma^{2}}{\left\langle V\right\rangle^{2}}\right) \left( D_{\vec{V}}\phi-V\partial_{r}\phi\right) -\frac{1}{2}D_{\vec{V}}(e^{-2\lambda}\partial_{r}\sigma^{2})\partial_{r}\phi\\
	&-\frac{1}{2}\frac{D_{\vec{V}}\sigma^{2}-V\partial_{r}\sigma^{2}}{\left\langle V\right\rangle^{2}}\left( (D_{\vec{V}}V)(\partial_{r}\phi)+V[D_{\vec{V}},\partial_{r}]\phi\right) -\frac{1}{2}e^{-2\lambda}\partial_{r}\sigma^{2}[D_{\vec{V}},\partial_{r}]\phi.
	\end{align*}
\end{lemma}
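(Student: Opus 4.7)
The plan is to prove the four commutator identities of Lemma 2.1 by direct calculation, starting from the simplest and bootstrapping to the more complicated ones. Since the metric \eqref{metric} is diagonal with coefficients depending only on $(t,r)$, the operator $D_{\vec V}=V^0\partial_t+V\partial_r$ has only two scalar coefficients, so every commutator reduces to computing $\partial_r V^0$, $\partial_r V$, and (later) second derivatives of these together with derivatives of the metric functions $\mu,\lambda$.

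First I would handle $[D_{\vec V},\partial_r]\phi$. A one-line calculation gives
\begin{align*}
[D_{\vec V},\partial_r]\phi=-(\partial_r V^0)\partial_t\phi-(\partial_r V)\partial_r\phi.
\end{align*}
Using $V^0=e^{-\mu}\langle V\rangle$ we get $\partial_r V^0=(-\mu'+\partial_r\langle V\rangle/\langle V\rangle)V^0$, and replacing $\partial_t\phi=(V^0)^{-1}(D_{\vec V}\phi-V\partial_r\phi)$ yields exactly the claimed formula. Next, for $[D_{\vec V},\partial_r\partial_r]\phi$ I would apply the Leibniz-like identity $[D_{\vec V},\partial_r\partial_r]=\partial_r[D_{\vec V},\partial_r]+[D_{\vec V},\partial_r]\partial_r$ and substitute the first identity twice, collecting the terms with $\mu''$, $(\mu')^2$, $\partial_r\partial_r\langle V\rangle$ and the mixed products $\mu'\,\partial_r\langle V\rangle/\langle V\rangle$ that match the stated expression.

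For $[D_{\vec V},\square]\phi$ I would first expand $\square$ in the Schwarzschild-type coordinates: using $\det g$ and the inverse metric one finds
\begin{align*}
\square\phi=-\tfrac{1}{\langle V\rangle^2}D_{\vec V}D_{\vec V}\phi+\tfrac{2V}{\langle V\rangle^2}\partial_r D_{\vec V}\phi+\bigl(e^{-2\lambda}-\tfrac{V^2}{\langle V\rangle^2}\bigr)\partial_r\partial_r\phi+e^{-2\lambda}(\mu'-\lambda')\partial_r\phi+\tfrac{2}{r}e^{-2\lambda}\partial_r\phi+\text{lower order},
\end{align*}
after rewriting $\partial_t$ in terms of $D_{\vec V}$ and $\partial_r$; the extra first-order terms in $\partial_r\phi$ and $D_{\vec V}\phi$ come from $\partial_t$ and $\partial_r$ falling on coefficients. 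Then I would commute $D_{\vec V}$ through each factor separately: terms where $D_{\vec V}$ hits a coefficient produce the $D_{\vec V}(\cdot)$ factors in the statement, while terms where $D_{\vec V}$ crosses a $\partial_r$ produce $[D_{\vec V},\partial_r]$ or $[D_{\vec V},\partial_r\partial_r]$ terms that I already computed. The last identity for $[D_{\vec V},D_{\vec V}^2-\tfrac12\nabla_\alpha\sigma^2\nabla^\alpha]$ is similar but shorter: since $D_{\vec V}$ commutes with $D_{\vec V}^2$, only the boundary ``Dirichlet-part'' $\tfrac12\nabla_\alpha\sigma^2\nabla^\alpha=\tfrac12\bigl[(D_{\vec V}\sigma^2-V\partial_r\sigma^2)\langle V\rangle^{-2}(D_{\vec V}-V\partial_r)+e^{-2\lambda}\partial_r\sigma^2\,\partial_r\bigr]$ contributes, and $D_{\vec V}$ acts on the coefficients $(D_{\vec V}\sigma^2-V\partial_r\sigma^2)/\langle V\rangle^2$ and $e^{-2\lambda}\partial_r\sigma^2$, while crossing the inner $\partial_r$ gives a $[D_{\vec V},\partial_r]$ contribution.

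The main obstacle is not conceptual but notational: keeping track of which terms come from $\partial_r$ acting on $V^0,V,\mu,\lambda,\langle V\rangle$ versus those from genuine commutators, and normalizing $\partial_t$-derivatives back into $(D_{\vec V},\partial_r)$-form consistently. To minimize risk of error I would organize the computation by first listing once and for all the identities $\partial_r V^0=-\mu' V^0+\langle V\rangle^{-1}\partial_r\langle V\rangle\, V^0$ and $\partial_t=(V^0)^{-1}(D_{\vec V}-V\partial_r)$, together with $D_{\vec V}(e^{-2\lambda})$, $D_{\vec V}\mu'$, etc., and then substitute mechanically into the three expansions above. Each of the four identities in the lemma is then a matter of matching terms with the stated right-hand side; no appeal to the steady-state structure from Sections 2--4 is needed.
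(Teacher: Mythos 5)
Your proposal is correct and matches the paper exactly: the paper's proof of Lemma 2.1 is literally the sentence ``The above identities can be obtained by direct calculation,'' and the scaffolding you lay out --- $\partial_r V^0=(-\mu'+\partial_r\langle V\rangle/\langle V\rangle)V^0$, $\partial_t=(V^0)^{-1}(D_{\vec{V}}-V\partial_r)$, and the Leibniz bootstrap $[D_{\vec{V}},\partial_r\partial_r]=\partial_r[D_{\vec{V}},\partial_r]+[D_{\vec{V}},\partial_r]\partial_r$ --- is precisely the right way to organize that calculation. One cautionary remark for when you actually carry it out: the coefficient of $\partial_r D_{\vec{V}}\phi$ in the second identity comes out to $-2(\partial_r V^0)/V^0 = 2\mu'-2\partial_r\langle V\rangle/\langle V\rangle$, so the term printed as $-2\mu'\partial_r D_{\vec{V}}\phi$ in the lemma appears to carry a sign error (the companion $-2\tfrac{\partial_r\langle V\rangle}{\langle V\rangle}\partial_r D_{\vec{V}}\phi$ term and the $D_{\vec{V}}\phi$, $\partial_r\phi$, $\partial_r\partial_r\phi$ coefficients all check out).
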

The above identities can be obtained by direct calculation. Then we give an important relation between the background metric and the fluid variable, which is similar to the Tolman-Oppenheimer-Volkov equation in steady state.

\begin{lemma}\label{2.2.}
	The solution of Einstein-Euler system satisfies the following identities:
	\begin{align}\label{2.15}
	e^{-2\lambda}=&1-\frac{8\pi\int_{0}^{r}s^{2}[\rho(s)+e^{2\lambda}V^{2}]ds}{r}\\ \label{2.16}
	\lambda'+\mu'=&4\pi re^{2\lambda}\left( \rho+p+2e^{2\lambda}V^{2}\right) \\ \label{2.17}
	\mu'=&4\pi re^{2\lambda}\left( p+e^{2\lambda}V^{2}+\frac{\int_{0}^{r}s^{2}[\rho(s)+e^{2\lambda}V^{2}]ds}{r^{3}}\right) .
	\end{align}
\end{lemma}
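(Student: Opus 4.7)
The three identities are direct algebraic consequences of the Hamiltonian and momentum constraint equations \eqref{field eq00}--\eqref{field eq11}, together with the center condition $\lambda(t,0)=0$; essentially no input beyond the steady-state derivation in Section \ref{sec2} is needed, except that one must carry along the kinetic term $e^{2\lambda}V^{2}$ which was absent in the static setting.

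For \eqref{2.15} the plan is to rewrite \eqref{field eq00} as a perfect derivative. Observe that
\[
(re^{-2\lambda})' = e^{-2\lambda}(1-2r\lambda'),
\]
so the left-hand side of \eqref{field eq00} equals $1-(re^{-2\lambda})' = (r-re^{-2\lambda})'$. Thus \eqref{field eq00} becomes $(r-re^{-2\lambda})'=8\pi r^{2}(\rho+e^{2\lambda}V^{2})$. Integrating from $0$ to $r$ and using $\lambda(t,0)=0$ (so the boundary term vanishes) yields \eqref{2.15}. This is the only step where the center condition is invoked.

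For \eqref{2.16} I would simply add \eqref{field eq00} and \eqref{field eq11} term by term: the constants $\pm 1$ cancel, the factor $e^{-2\lambda}$ combines on the left to give $2re^{-2\lambda}(\lambda'+\mu')$, and the right-hand side adds to $8\pi r^{2}(\rho+p+2e^{2\lambda}V^{2})$. Dividing by $2re^{-2\lambda}$ gives \eqref{2.16} at once.

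For \eqref{2.17} the plan is to solve \eqref{field eq11} for $\mu'$, obtaining
\[
2r\mu' = e^{2\lambda}-1+8\pi r^{2}e^{2\lambda}(p+e^{2\lambda}V^{2}),
\]
and then eliminate the factor $e^{2\lambda}-1$ by means of \eqref{2.15}, which gives
\[
e^{2\lambda}-1 = e^{2\lambda}(1-e^{-2\lambda}) = \frac{8\pi e^{2\lambda}}{r}\int_{0}^{r}s^{2}\!\left(\rho+e^{2\lambda}V^{2}\right)ds.
\]
Substituting and dividing by $2r$ produces exactly \eqref{2.17}. There is no genuine obstacle here; the only mild subtlety is to keep track of the fact that the kinetic contribution $e^{2\lambda}V^{2}$ enters identically in both \eqref{field eq00} and \eqref{field eq11}, so the additive structure that produces the steady-state TOV relation \eqref{tov} survives in the dynamical setting with the replacement $\rho+p\mapsto \rho+p+2e^{2\lambda}V^{2}$ in \eqref{2.16}.
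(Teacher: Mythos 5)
Your proposal is correct and follows exactly the same route as the paper's (much terser) proof: integrate \eqref{field eq00} using the center condition for \eqref{2.15}, add \eqref{field eq00} and \eqref{field eq11} for \eqref{2.16}, and substitute \eqref{2.15} into \eqref{field eq11} for \eqref{2.17}. The only addition you make is the explicit perfect-derivative rewriting $e^{-2\lambda}(2r\lambda'-1)+1=(r-re^{-2\lambda})'$, which is a clean way to justify the integration step that the paper leaves implicit.
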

\begin{proof}
	Using the boundary condition at \(r=0\), \eqref{2.15} can be obtained by integrating equation \eqref{field eq00}. Equation \(\eqref{2.16}\) can be proved by adding \(\eqref{field eq00}\) to \eqref{field eq11}. Finally using \(\eqref{2.15}\) to substitute for the term \((e^{-2\lambda}-1)\), we  arrive at the desired identity \(\eqref{2.17}\).
\end{proof}

Let the notation \(\varpi\) represent the perturbed terms including \(V,\varepsilon\) and their derivatives. Applying Lemma \ref{2.1.} and \ref{2.2.} yields higher order equations for the fluid variables which we record in the form of a few lemmas for future reference.
\begin{lemma}\label{2.3.}
	\(D_{\vec{V}}^{k}V\) satisfies
	\begin{align}\label{fk}
	(D_{\vec{V}}^{2}+\frac{1}{2}aD_{n})D_{\vec{V}}^{k}V=f_{k},
	\end{align}
	on \(\partial\calB\), where \(f_{k}\) is a linear combination (coefficients are related to \(\mu',\lambda',e^{2\lambda},\partial_{r}\sigma^{2}\)) of the perturbed terms including the linear terms of \(\partial_{r}D_{\vec{V}}^{j_{1}+1}\sigma^{2},D_{\vec{V}}^{j_{2}}V\)
	and nonlinear terms \(\varpi(\partial_{r}D_{\vec{V}}^{j_{3}+1}\sigma^{2},D_{\vec{V}}^{j_{4}}V,\partial_{r}D_{\vec{V}}^{j_{5}}V)\), with \(j_{1},j_{2}\le k\), \(j_{3},j_{5}\le k-1\), \(j_{4}\le k+1\).
\end{lemma}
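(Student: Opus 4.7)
The plan is to induct on $k$. The base case $k=0$ is supplied directly by the boundary equation derived between \eqref{2.3} and \eqref{2.4} and unfolded in the first display after \eqref{2.10}. There $(D_{\vec{V}}^{2}+\tfrac{1}{2}aD_{n})V$ equals $-\tfrac{1}{2}e^{-2\lambda}\partial_{r}D_{\vec{V}}\sigma^{2}$ (a linear term of $\partial_{r}D_{\vec{V}}^{j_{1}+1}\sigma^{2}$ with $j_{1}=0$ and coefficient $-\tfrac{1}{2}e^{-2\lambda}$) plus a sum of expressions, each of degree $\geq 2$ in $(V,\partial_{r}V,D_{\vec{V}}V,\partial_{r}\sigma^{2})$ with coefficients built from $(\mu',\lambda',e^{2\lambda},r)$; the latter form a $\varpi$-nonlinearity with $j_{3},j_{5}\le -1$ (vacuous) and $j_{4}\le 1$, matching the template for $f_{0}$.

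For the inductive step, I would apply $D_{\vec{V}}$ to the identity $(D_{\vec{V}}^{2}+\tfrac{1}{2}aD_{n})D_{\vec{V}}^{k}V=f_{k}$. This is legitimate on $\partial\mathcal{B}(t)$ because $\vec{V}$ is tangent to $\partial\mathcal{B}(t)$ by \eqref{boundary condition fluid}, so $D_{\vec{V}}$ is intrinsic to the boundary. Since $[D_{\vec{V}},D_{\vec{V}}^{2}]=0$, the only nontrivial commutator is
\[
[D_{\vec{V}},\tfrac{1}{2}aD_{n}]=\tfrac{1}{2}(D_{\vec{V}}a)D_{n}+\tfrac{1}{2}a[D_{\vec{V}},D_{n}],
\]
which I would expand using the boundary identity $\nabla^{\alpha}\sigma^{2}=-an^{\alpha}$ and the last commutator identity of Lemma \ref{2.1.}, while coefficient derivatives $D_{\vec{V}}\mu',D_{\vec{V}}\lambda',D_{\vec{V}}e^{2\lambda},D_{\vec{V}}a$ are absorbed into the coefficient functions or into the $\varpi$-nonlinearity via the Einstein equations \eqref{field eq00}--\eqref{field eq01} and Lemma \ref{2.2.}. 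The resulting commutator contributes to $f_{k+1}$ only nonlinear $\varpi$-monomials whose derivative indices satisfy the required bounds. Meanwhile, $D_{\vec{V}}f_{k}$ is handled by Leibniz: a linear factor $\partial_{r}D_{\vec{V}}^{j_{1}+1}\sigma^{2}\mapsto\partial_{r}D_{\vec{V}}^{j_{1}+2}\sigma^{2}$ plus a quadratic $[D_{\vec{V}},\partial_{r}]$-remainder (from Lemma \ref{2.1.}); a linear factor $D_{\vec{V}}^{j_{2}}V\mapsto D_{\vec{V}}^{j_{2}+1}V$; and a $\varpi$-monomial yields new $\varpi$-monomials in which exactly one factor has its $D_{\vec{V}}$-order raised by one, while $\partial_{r}$-orders remain unchanged modulo further quadratic $[D_{\vec{V}},\partial_{r}]$-brackets.

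The main obstacle is combinatorial bookkeeping to verify the asymmetry $j_{4}\le k+1$ versus $j_{3},j_{5}\le k-1$. Structurally, a $V$-factor carrying no $\partial_{r}$ can absorb every $D_{\vec{V}}$ produced in the induction, so its top $D_{\vec{V}}$-order matches the induction depth exactly; a factor already bearing $\partial_{r}$ must compete with $[D_{\vec{V}},\partial_{r}]$-brackets that trade $\partial_{r}$ for a $D_{\vec{V}}$ acting elsewhere, and a simple count shows that its maximal $D_{\vec{V}}$-order saturates one unit below. Once this asymmetry is recorded carefully through the Leibniz expansions, $f_{k+1}$ has the claimed form, with the new principal linear term $-\tfrac{1}{2}e^{-2\lambda}\partial_{r}D_{\vec{V}}^{k+2}\sigma^{2}$ (i.e.\ $j_{1}=k+1$) arising from differentiating the corresponding term in $f_{k}$.
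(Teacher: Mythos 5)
Your overall strategy — induct on $k$, handle $[D_{\vec V},\tfrac12 aD_n]$ via the last identity of Lemma~\ref{2.1.}, and convert $D_{\vec V}$-derivatives of the metric coefficients into fluid variables before applying Leibniz — is structurally the same as the paper's proof, and your observation that $D_{\vec V}$ is tangential to $\partial\calB$ (so it may be applied to the boundary identity) is the right starting point. However, there is a concrete gap in how you claim the base case and the coefficient-derivative absorption actually land inside the claimed index template.

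The issue is that neither the Einstein constraint equations nor Lemma~\ref{2.2.} alone suffice to rewrite $D_{\vec V}\mu'$ in the required form; the continuity equation \eqref{mass eq SS} (equivalently $\nabla_\alpha V^\alpha=0$) is the step that removes $\partial_r V$. Expanding $D_{\vec V}\mu'$ through \eqref{2.17} produces the bracket $\tfrac{D_{\vec V}\langle V\rangle}{\langle V\rangle}-\tfrac{\partial_r\langle V\rangle}{\langle V\rangle}V+\partial_r V-4\pi re^{2\lambda}\langle V\rangle^2V+(\mu'+\lambda')V$, which visibly contains $\partial_r V$ as a \emph{linear} term; this is only eliminated because \eqref{mass eq SS} identifies that whole bracket with $-\tfrac{2}{r}V$. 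Without this substitution, $f_0$ would contain a linear $\partial_r V$-term, which is not of the form $\partial_r D_{\vec V}^{j_1+1}\sigma^2$ or $D_{\vec V}^{j_2}V$, and for $k=0$ the nonlinear slot $\partial_r D_{\vec V}^{j_5}V$ is vacuous ($j_5\le -1$), so it cannot be absorbed there either. Your own description of the base case, ``a sum of expressions of degree $\ge 2$ in $(V,\partial_r V, D_{\vec V}V,\partial_r\sigma^2)$,'' already puts $\partial_r V$ on the argument list and therefore contradicts the very template ($j_5\le -1$) you then claim to have matched. The same omission propagates through your inductive step: you cite only \eqref{field eq00}--\eqref{field eq01} and Lemma~\ref{2.2.} for absorbing $D_{\vec V}\mu',D_{\vec V}\lambda'$, but the continuity equation is needed each time to replace the $\partial_r V$-type contribution with $V$ plus quadratic terms, and on $\partial\calB$ one also needs $D_{\vec V}\sigma^2=0$ (from $\sigma^2\equiv 1$ on $\partial\calB$ and tangency of $\vec V$) to keep those quadratic corrections from generating undifferentiated $D_{\vec V}^{j+1}\sigma^2$-terms. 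Once \eqref{mass eq SS} and $D_{\vec V}\sigma^2|_{\partial\calB}=0$ are inserted at these points, the rest of your bookkeeping for the asymmetry $j_4\le k+1$, $j_3,j_5\le k-1$ goes through as you sketched.
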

\begin{proof}
  Here we just outline the crucial steps. Applying Lemma \(\ref{2.2.}\), we have
	\begin{align*}
	D_{\vec{V}}\mu'=&D_{\vec{V}}\left[ 4\pi r e^{2\lambda}(p+e^{2\lambda}V^{2})\right] +D_{\vec{V}}\left( \frac{e^{2\lambda}-1}{2r}\right) \\
	=&D_{\vec{V}}\left[ 4\pi r e^{2\lambda}(p+e^{2\lambda}V^{2})\right]+\frac{e^{2\lambda}}{r}D_{\vec{V}}\lambda-V\frac{e^{2\lambda}-1}{2r^{2}}\\
	=&\frac{1}{2}\left[ \mu'-4\pi r e^{2\lambda}(p+e^{2\lambda}V^{2})\right]\left[  \frac{D_{\vec{V}}\left\langle V\right\rangle}{\left\langle V\right\rangle}-\frac{\partial_{r}\left\langle V\right\rangle}{\left\langle V\right\rangle}V+\partial_{r}V-4\pi r e^{2\lambda}\left\langle V\right\rangle^{2}V+\left( \mu'+\lambda'\right)V\right]\\
	&+D_{\vec{V}}\left[ 4\pi r e^{2\lambda}(p+e^{2\lambda}V^{2})\right]+\frac{e^{2\lambda}}{r}D_{\vec{V}}\lambda, 
	\end{align*}
	where we have used \(\eqref{mass eq SS}\). Using equation \(\eqref{field eq01}\), we have
	\begin{align*}
	D_{\vec{V}}\lambda=V^{0}\dot{\lambda}+V\lambda'=-4\pi re^{2\lambda}\left\langle V\right\rangle^{2}V+V\lambda'.
	\end{align*}
	Therefore \(D_{\vec{V}}\) applied to \(\mu'\) have the desired forms, as well as \(D_{\vec{V}}\lambda'\). This means that the derivative acting on the background metric \(\mu,\lambda\) can be converted to the fluid variable \(V,\varepsilon\). By the induction and commutator identities, a lengthy but straightforward argument completes the proof.
\end{proof}

Next we record the wave equation for \(D_{\vec{V}}^{k}V\).
\begin{lemma}\label{2.4.}
	\(D_{\vec{V}}^{k}V\) satisfies
	\begin{align}\label{Fk}
	\square D_{\vec{V}}^{k}V=F_{k}+\frac{1}{r}F_{k-1},
	\end{align}
	in \(\calB\), where \(F_{k}\) is a linear combination (coefficients are related to \(\mu',\mu'',\lambda',\lambda'',e^{2\lambda}\), \(\sigma^{2},\partial_{r}\sigma^{2},\partial_{r}\partial_{r}\sigma^{2}\)) of the perturbed terms including the linear terms of \(\partial_{r}D_{\vec{V}}^{j_{1}}V, D_{\vec{V}}^{j_{2}}V,\partial_{r}D_{\vec{V}}^{j_{3}+1}\sigma^{2},D_{\vec{V}}^{j_{4}+1}\sigma^{2}\), with \(j_{1},j_{4}\le k\), \(j_{2}\le k+1\), \(j_{3}\le k-1\)
	and nonlinear terms \(\varpi(\partial_{r}D_{\vec{V}}^{j_{5}}V,\ D_{\vec{V}}^{j_{6}}V,\partial_{r}\partial_{r}D_{\vec{V}}^{j_{7}}V,\partial_{r}D_{\vec{V}}^{j_{8}+1}\sigma^{2},D_{\vec{V}}^{j_{9}+1}\sigma^{2},\partial_{r}\partial_{r}D_{\vec{V}}^{j_{10}+1}\sigma^{2})\), with \(j_{5},j_{9}\le k\), \(j_{7},j_{8}\le k-1\), \(j_{6}\le k+1\),\(j_{10}\le k-2\).
\end{lemma}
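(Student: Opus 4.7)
My plan is to argue by induction on $k$, using the base case $k=0$ that is already encoded in the explicit formula for $\square V$ displayed earlier in Section \ref{sec5}. In that formula, every appearance of $\mu',\lambda',\mu'',\lambda''$ is permitted by the statement (it enters only as a coefficient), and one can read off directly that the nontrivial curvature-induced term $-e^{-2\lambda}\frac{2}{r}\lambda' V$ is of the form $\frac{1}{r}F_{-1}$ with $F_{-1}$ a coefficient times $V$, while every other term fits into $F_0$ as claimed. The only subtlety in the base case is re-expressing the Ricci-type remainder $8\pi(\tfrac12-\sigma^2)V$ using $\sigma^2=\sigma_\kappa^2+\varepsilon$, which contributes linearly through $\varepsilon=D_{\vec V}^{0+1-1}\sigma^2$ shifted appropriately so that the index ranges in the statement accommodate it.

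For the inductive step, assuming the statement for $0,1,\ldots,k-1$, I would write
\begin{align*}
\square D_{\vec V}^{k}V=D_{\vec V}\bigl(\square D_{\vec V}^{k-1}V\bigr)+[\square,D_{\vec V}]D_{\vec V}^{k-1}V,
\end{align*}
and analyze each piece. The first piece becomes
\begin{align*}
D_{\vec V}F_{k-1}+D_{\vec V}\!\left(\tfrac{1}{r}\right)F_{k-2}+\tfrac{1}{r}D_{\vec V}F_{k-2},
\end{align*}
and $D_{\vec V}(1/r)=-V/r^2$, which yields an admissible $\frac{1}{r}F_{k-1}$ contribution after we store $V$ in the nonlinear factor. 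Applying the Leibniz rule to $D_{\vec V}F_{k-1}$ raises the order of exactly one factor in each term by one, matching the allowed index bumps $j_2\le k+1$, $j_4\le k$, etc. Derivatives falling on the coefficients $\mu',\mu'',\lambda',\lambda'',e^{2\lambda},\sigma^2,\partial_r\sigma^2,\partial_r\partial_r\sigma^2$ are converted to fluid quantities using Lemma \ref{2.2.} together with \eqref{mass eq SS}, \eqref{momentum eq SS}, and \eqref{field eq01} exactly as in the proof of Lemma \ref{2.3.}; in particular $D_{\vec V}\mu'$ and $D_{\vec V}\lambda'$ produce only terms that are coefficients times perturbed fluid terms of admissible order.

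For the commutator $[\square,D_{\vec V}]D_{\vec V}^{k-1}V$, I would substitute the formula from Lemma \ref{2.1.}, with $\phi=D_{\vec V}^{k-1}V$. The structural check proceeds term by term: every occurrence of $\phi$ comes with at most two spatial derivatives, so we produce $\partial_r\partial_r D_{\vec V}^{k-1}V$, $\partial_r D_{\vec V}^{k-1}V$, $D_{\vec V}^{k}V$, and mixed commutators $[D_{\vec V},\partial_r]D_{\vec V}^{k-1}V$ and $[D_{\vec V},\partial_r\partial_r]D_{\vec V}^{k-1}V$, both of which are expanded by Lemma \ref{2.1.} into admissible lower-order combinations. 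The coefficient factors $D_{\vec V}(\cdot)$ on the left of each bracket in Lemma \ref{2.1.} are again reduced to fluid quantities via Lemma \ref{2.2.}. The explicit $1/r$ and $1/r^2$ entries $e^{-2\lambda}\frac{2}{r}[D_{\vec V},\partial_r]\phi$, $e^{-2\lambda}\frac{2}{r}\partial_r\phi$, and $-e^{-2\lambda}V\frac{1}{r^2}\partial_r\phi$ in $[\square,D_{\vec V}]$ are precisely what generates the $\frac{1}{r}F_{k-1}$ contribution (with the $1/r^2$ term absorbing a $V$ into the nonlinear factor to leave one $1/r$).

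The main obstacle I anticipate is bookkeeping rather than conceptual: one must verify that every single term produced above respects the precise index bounds $j_1,j_4\le k$, $j_2\le k+1$, $j_3\le k-1$ for the linear part, and $j_5,j_9\le k$, $j_6\le k+1$, $j_7,j_8\le k-1$, $j_{10}\le k-2$ for the nonlinear part. The tight bounds $j_7\le k-1$ on $\partial_r\partial_r D_{\vec V}^{j_7}V$ and $j_{10}\le k-2$ on $\partial_r\partial_r D_{\vec V}^{j_{10}+1}\sigma^2$ are the most delicate: one has to check that the second spatial derivatives of $V$ and $\sigma^2$ appearing inside $[\square,D_{\vec V}]D_{\vec V}^{k-1}V$ never carry more than $k-1$ (resp.\ $k-1$ in the shifted counting for $\sigma^2$) material derivatives, and that when $D_{\vec V}$ hits a nonlinear term with such second-order factors, the derivative is routed to another factor rather than increasing the order of the second-derivative factor. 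This is forced by the quadratic structure of the nonlinearities in Lemma \ref{2.1.}, where the coefficient in front of $\partial_r\partial_r\phi$ is always $e^{-2\lambda}-V^2/\langle V\rangle^2$, which is itself a smooth function of lower-order fluid quantities, so any $D_{\vec V}$ landing on that coefficient stays in the coefficient algebra rather than raising the order of the highest spatial-derivative factor.
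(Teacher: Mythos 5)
The paper itself omits the proof, remarking only that it "is similar to Lemma~\ref{2.3.}": induction on $k$, using the commutator identities of Lemma~\ref{2.1.} together with Lemma~\ref{2.2.} and equations \eqref{field eq01}, \eqref{momentum eq SS}, \eqref{mass eq SS} to convert $D_{\vec V}$ of metric coefficients into fluid quantities. Your proposal implements exactly this scheme and is essentially the paper's intended argument.

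Two small imprecisions worth fixing in a polished write-up, neither of which changes the structure: (i) in the base case the factor $8\pi(\tfrac12-\sigma^2)V$ is already admissible because $\sigma^2$ is one of the allowed \emph{coefficients} in the statement of $F_0$ — there is no need (and it is not correct) to identify $\varepsilon$ with $D_{\vec V}^{0}\sigma^2$; and (ii) the term $D_{\vec V}(1/r)F_{k-2}=-\tfrac{V}{r^2}F_{k-2}$ should be written as $\tfrac{1}{r}\bigl(-\tfrac{V}{r}F_{k-2}\bigr)$, so the object stored in the nonlinear factor is $V/r$ (a bounded perturbed quantity for spherically symmetric $V$), not $V$ alone; equivalently one can redistribute $\lambda'/r$ or $\mu'/r$ as a bounded coefficient using \eqref{bound} and keep $V$ as the perturbed factor.
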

The next lemma contains the wave equation for \(D_{\vec{V}}^{k+1}\sigma^{2}\).
\begin{lemma}\label{2.5.}
	\(D_{\vec{V}}^{k+1}\sigma^{2}\) satisfies
	\begin{align}\label{Hk}
	\square D_{\vec{V}}^{k+1}\sigma^{2}=-12e^{-2\lambda}\mu'\partial_{r}D_{\vec{V}}^{k+1}\sigma^{2}+H_{k}+\frac{1}{r}H_{k-1},
	\end{align}
	in \(\calB\), where \(H_{k}\) is a linear combination (coefficients are related to \(\mu',\mu'',\lambda',\lambda'',e^{2\lambda}\), \(\sigma^{2},\partial_{r}\sigma^{2},\partial_{r}\partial_{r}\sigma^{2}\)) of the perturbed terms including the linear terms of \(\partial_{r}D_{\vec{V}}^{j_{1}}V, D_{\vec{V}}^{j_{2}}V,\partial_{r}D_{\vec{V}}^{j_{3}+1}\sigma^{2},D_{\vec{V}}^{j_{4}+1}\sigma^{2}\), with \(j_{1},j_{4}\le k\), \(j_{2}\le k+1\), \(j_{3}\le k-1\)
	and nonlinear terms \(\varpi(\partial_{r}D_{\vec{V}}^{j_{5}}V,\ D_{\vec{V}}^{j_{6}}V,\partial_{r}\partial_{r}D_{\vec{V}}^{j_{7}}V,\partial_{r}D_{\vec{V}}^{j_{8}+1}\sigma^{2},D_{\vec{V}}^{j_{9}+1}\sigma^{2},\partial_{r}\partial_{r}D_{\vec{V}}^{j_{10}+1}\sigma^{2})\), with \(j_{5},j_{8}\le k\), \(j_{7},j_{10}\le k-1\), \(j_{6},j_{9}\le k+1\).
\end{lemma}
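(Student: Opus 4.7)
\medskip
\noindent\textbf{Proof plan.} The argument is by induction on $k\ge 0$. For the base case $k=0$, I would start from the explicit expression for $\square D_{\vec V}\sigma^{2}$ already displayed in \eqref{2.10} and its expansion using the connection coefficients of \eqref{metric}. The term $6(\nabla^{\alpha}V^{\beta})(\nabla_{\alpha}\nabla_{\beta}\sigma^{2})$ is the only one producing a second spatial derivative of $\sigma^{2}$; when expanded using the metric, a contribution of the form $12 e^{-2\lambda}(\partial_{r}V)(\partial_{r}\partial_{r}\sigma^{2})$ appears (cf.\ the displayed expression for $\square D_{\vec V}\sigma^{2}$ on page 22). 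Using \eqref{momentum eq SS} to convert $\partial_{r}\sigma^{2}$ into $D_{\vec V}V$ plus a $-2e^{-2\lambda}\mu'\langle V\rangle^{2}$ term, and then commuting $D_{\vec V}$ and $\partial_{r}$ via Lemma \ref{2.1.}, one sees that the second derivative of the unperturbed enthalpy contributes the principal linear piece $-12e^{-2\lambda}\mu'\partial_{r}D_{\vec V}\sigma^{2}$. All remaining terms carry at most one spatial derivative on $D_{\vec V}^{j+1}\sigma^{2}$ with $j\le 0$ and the derivative counts required for $H_{0}$, and the explicit $\frac{2}{r}$ factor in $\square$ together with the $\frac{1}{r^{2}}$ and $\frac{1}{r}$ occurrences arising from \eqref{2.16}--\eqref{2.17} and from $8\pi\frac{1}{r}e^{2\lambda}\langle V\rangle^{2}V^{3}$ are precisely the terms that assemble into $\frac{1}{r}H_{-1}$.

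For the induction step, assume the claim for some $k\ge 0$ and apply $D_{\vec V}$ to both sides. On the left, the commutator identity $[D_{\vec V},\square]$ from Lemma \ref{2.1.} converts $D_{\vec V}\square D_{\vec V}^{k+1}\sigma^{2}$ into $\square D_{\vec V}^{k+2}\sigma^{2}$ plus commutator terms; the highest-order commutator piece is $(e^{-2\lambda}-V^{2}/\langle V\rangle^{2})[D_{\vec V},\partial_{r}\partial_{r}]D_{\vec V}^{k+1}\sigma^{2}$, which via Lemma \ref{2.1.} contributes only terms of the form $\partial_{r}D_{\vec V}^{k+2}\sigma^{2}$ multiplied by either $\mu'V^{2}/\langle V\rangle^{2}$ or other smallness-carrying factors vanishing at the steady state; these are absorbed into $H_{k+1}$. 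On the right, the main term $-12e^{-2\lambda}\mu'D_{\vec V}\partial_{r}D_{\vec V}^{k+1}\sigma^{2}$ is rewritten as $-12e^{-2\lambda}\mu'\partial_{r}D_{\vec V}^{k+2}\sigma^{2}-12e^{-2\lambda}\mu'[D_{\vec V},\partial_{r}]D_{\vec V}^{k+1}\sigma^{2}$, the first factor giving exactly the desired principal term at level $k+1$, and the second being controlled by Lemma \ref{2.1.} in terms of $D_{\vec V}^{j}V$, $\partial_{r}D_{\vec V}^{j+1}\sigma^{2}$, $D_{\vec V}^{j+1}\sigma^{2}$ of the right order to enter $H_{k+1}$.

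The remaining contributions come from $D_{\vec V}(H_{k}+r^{-1}H_{k-1})$. Each factor inside $H_{k}$ is either a fluid quantity, which is handled by the inductive hypothesis and by Lemma \ref{2.1.} (which strictly preserves or raises orders in the allowed range), or a background metric quantity of the form $\mu',\mu'',\lambda',\lambda'',e^{2\lambda},\sigma^{2},\partial_{r}\sigma^{2},\partial_{r}\partial_{r}\sigma^{2}$. Applying $D_{\vec V}$ to these latter factors and using Lemma \ref{2.2.} converts them into combinations of fluid variables of the stated form (e.g., $D_{\vec V}\mu'$ can be expressed via \eqref{mass eq SS} and \eqref{field eq01}, exactly as in the proof of Lemma \ref{2.3.}). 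The $\frac{1}{r}$ factor is preserved when $D_{\vec V}$ acts since $D_{\vec V}(\frac{1}{r})=-\frac{V}{r^{2}}$ which amounts to a quadratic nonlinearity absorbed into $H_{k+1}$, while the new explicit $\frac{1}{r}$ pieces come from $\frac{2}{r}e^{-2\lambda}\partial_{r}$ in $\square$ applied at the new order and from $r^{-2}$ terms in $\eqref{2.17}$ after one application of $D_{\vec V}$.

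\medskip
\noindent\textbf{Main obstacle.} The most delicate step is the bookkeeping of derivative orders to verify that every new term truly fits in the class described for $H_{k+1}$, and in particular that no spurious $\partial_{r}\partial_{r}D_{\vec V}^{k+2}\sigma^{2}$ term is generated once the principal $-12e^{-2\lambda}\mu'\partial_{r}D_{\vec V}^{k+1}\sigma^{2}$ contribution has been isolated. This requires carefully collecting the second-derivative contributions from $[D_{\vec V},\square]$ and from the commutator $[D_{\vec V},\partial_{r}\partial_{r}]$ in Lemma \ref{2.1.}, and checking that they either vanish modulo quadratic corrections (via the coefficient $e^{-2\lambda}-V^{2}/\langle V\rangle^{2}$, which reduces to $e^{-2\lambda}$ at leading order and thus gets renormalized into the $\square$ on the left), or decompose into the allowed linear plus nonlinear pieces of $H_{k+1}$ and $\frac{1}{r}H_{k}$.
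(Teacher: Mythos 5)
Your overall strategy — induction on $k$, applying $D_{\vec V}$ to both sides, pushing it past $\square$ and $\partial_r$ via the commutator identities of Lemma \ref{2.1.}, and using Lemma \ref{2.2.} (together with \eqref{field eq01} and \eqref{mass eq SS}) to convert $D_{\vec V}$ acting on background metric coefficients into fluid quantities — is exactly what the paper intends; it declares Lemma \ref{2.5.} to be proved ``similar to Lemma \ref{2.3.}'', whose proof is precisely an induction plus commutator argument of the type you describe. Your handling of the induction step is also careful on the one genuinely delicate point: when $[D_{\vec V},\partial_r\partial_r]D_{\vec V}^{k+1}\sigma^2$ threatens to produce a new $\partial_r D_{\vec V}^{k+2}\sigma^2$ at linear order, the two pieces $-2\mu'\partial_r D_{\vec V}\phi$ and $-\tfrac{2\partial_r\langle V\rangle}{\langle V\rangle}\partial_r D_{\vec V}\phi$ in Lemma \ref{2.1.} cancel at the steady state because $\partial_r\langle V\rangle/\langle V\rangle\to -\mu_\kappa'$ when $V\to 0$ (using the steady-state relation $\partial_r\sigma_\kappa^2=-2\mu_\kappa'\sigma_\kappa^2$), so the leftover is a genuinely nonlinear factor absorbed in $H_{k+1}$, consistent with your claim.

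The one place where your description is inaccurate is the identification of the origin of the principal term $-12e^{-2\lambda}\mu'\partial_r D_{\vec V}\sigma^2$ in the base case $k=0$. You claim it arises from the piece $\sim e^{-2\lambda}(\partial_r V)(\partial_r\partial_r\sigma^2)$ of $6(\nabla^\alpha V^\beta)(\nabla_\alpha\nabla_\beta\sigma^2)$ after converting $\partial_r\sigma^2$ via \eqref{momentum eq SS} and commuting. That cannot be right: in such a product $\partial_r V$ is already a perturbation, so multiplying by the background $\partial_r\partial_r\sigma_\kappa^2$ produces a linear term in $\partial_r V$, not in $\partial_r D_{\vec V}\sigma^2$, and any conversion via \eqref{momentum eq SS} applied to the $\partial_r\partial_r\varepsilon$ part only creates quadratic terms. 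In the paper's displayed expansion of $\square D_{\vec V}\sigma^2$ the principal term actually appears directly from the factor
\begin{equation*}
-12\left( \frac{D_{\vec V}V}{\langle V\rangle}-\frac{V\partial_r\langle V\rangle}{\langle V\rangle}+e^{-2\lambda}\mu'\langle V\rangle-4\pi re^{2\lambda}\langle V\rangle V^{2}\right)\left( \frac{\partial_r D_{\vec V}\sigma^{2}}{\langle V\rangle}+4\pi re^{2\lambda}\langle V\rangle V\partial_r\sigma^{2}\right),
\end{equation*}
whose linear part is precisely $-12e^{-2\lambda}\mu'\partial_r D_{\vec V}\sigma^2$; the remaining pieces of this product are lower-order or nonlinear and go into $H_0$. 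This is a local inaccuracy in your base-case narrative, not a structural gap: once you simply read off the displayed expansion and verify each summand against the allowed forms, the base case is established, and the rest of your argument carries through as stated.
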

  The proof of the above two lemmas is similar to the Lemma \(\ref{2.3.}\) and we omit the details. In the next section, we will develop the nonlinear theory for solutions to the Einstein-Euler equation in the spherically symmetric motion.

	\section{Nonlinear a priori estimate}\label{sec6}
For any function \(\phi\) we introduce the following energies
\begin{align}
\begin{split}
E[\phi,t]:=&\int_{0}^{R}r^{2}|\partial_{t,r}\phi|^{2}dr+R^{2}|D_{\vec{V}}\phi|^{2}(R),\\
\bar{E}[\phi,t]:=&\int_{0}^{R}r^{2}|\partial_{t,r}\phi|^{2}dr,
\end{split}
\end{align}
where \(R\) is the radius of the domain \(\calB(t)\), and the higher-order energies are defined as
\begin{align*}
E_{j}[\phi,t]=E[D_{\vec{V}}^{j}\phi,t],\quad E_{\le k}[\phi,t]=\sum_{j=0}^{k}E_{j}[\phi,t],\quad \bar{E}_{j}[\phi,t]=\bar{E}[D_{\vec{V}}^{j}\phi,t],\quad \bar{E}_{\le k}[\phi,t]=\sum_{j=0}^{k}\bar{E}_{j}[\phi,t].
\end{align*}
For the perturbed solutions ($\varepsilon,V$) to Einstein-Euler equation, we introduce the following unified energy
\begin{align*}
\calE_{l}(t):=&E_{\le l}[V,t]+\bar{E}_{\le l+1}[\varepsilon,t]\\
\scE_{l}(t):=&\sum_{j+k\le l+1} \int_{0}^{R}r^{2}|\partial_{r}^{j}D_{\vec{V}}^{k}V|^{2}dr+\sum_{j+k\le l+1}\int_{0}^{R}r^{2}|\partial_{r}^{j}D_{\vec{V}}^{k+1}\varepsilon|^{2}dr +\calE_{l}(t).
\end{align*}
Our main goal in this section is to prove the following proposition under the smallness assumption.
\begin{proposition}\label{3.1.}
	Suppose (\(V,\varepsilon\)) is a solution to Einstein-Euler system \(\eqref{field eq00}-\eqref{mass eq SS}\) with
	\begin{align}\label{bootstrap}
	\scE_{l}(t)\le C_{l},\quad\quad |\lambda|\le C_{\lambda},\quad\quad \Arrowvert\partial_{y}^{m'}\eta(y,t)\Arrowvert_{L^{2}(\Omega)},\ |\partial_{y}^{m}\eta(y,t)|,\ |\partial_{y}\eta(y,t)-1|,\ |\eta(y,t)-y|\le C_{\eta},
	\end{align}
	for some constants \(0<C_{l},C_{\eta}\ll 1,C_{\lambda}\) and \(l\) sufficiently large satisfying \(1<m\le l-1, l\le m'\le l+1\).
	Then we have
	\begin{align}\label{3.3}
	\scE_{l}(t)\le C_{0}\scE_{l}(0)+\int_{0}^{t}\varrho\scE_{l}(s)+C_{1}\scE_{l-1}(s)+C_{2}\scE_{l}^{\frac{3}{2}}(s)ds,\quad\quad t\in[0,T],
	\end{align}
	for some positive constants \(C_{0},C_{1},C_{2}\) and sufficiently small absolute constant \(\varrho\) to be chosen later.
\end{proposition}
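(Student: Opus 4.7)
The plan is to perform high-order energy estimates separately for the fluid velocity $V$ (which obeys the interior wave equation \eqref{Fk} together with the Robin-type boundary condition \eqref{fk}) and for the enthalpy perturbation $\varepsilon$ (which obeys the interior wave equation \eqref{Hk} with homogeneous Dirichlet boundary data), then to upgrade these time-derivative estimates into control of the full $\scE_l(t)$ via elliptic estimates, and finally to bound the right-hand sides $f_k,F_k,H_k$ using the bootstrap assumption \eqref{bootstrap}, Sobolev embeddings and interpolation. The overall scheme follows \cite{MS,MSW1}, now adapted to the curved spherically symmetric background.

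\emph{Step 1: energy for $D_{\vec{V}}^{k}V$.} I test the interior equation \eqref{Fk} with $D_{\vec{V}}^{k+1}V$ and the boundary equation \eqref{fk} with $a^{-1}D_{\vec{V}}^{k+1}V$, then integrate over the moving domain $\calB(\tau)$ for $\tau\in[0,t]$. Using the Reynolds transport theorem to handle the motion of $\partial\calB(\tau)$, the top-order contributions assemble into
\begin{align*}
\int_{\calB(t)}|\partial_{t,r}D_{\vec{V}}^{k}V|^{2}\,dx+\int_{\partial\calB(t)}a^{-1}|D_{\vec{V}}^{k+1}V|^{2}\,dS.
\end{align*}
The positivity of the Taylor-type sign quantity $a$ on $\partial\calB(t)$, inherited from the steady state and preserved under the perturbation by \eqref{bootstrap}, is what gives the boundary integral the correct sign. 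Commutator errors produced by re-arranging $\square D_{\vec{V}}^{k}V$ either contribute to $F_k,\tfrac{1}{r}F_{k-1}$ (estimated in Step~3), or can be absorbed by rewriting $D_{\vec{V}}$ acting on the metric coefficients in terms of fluid variables using Lemma \ref{2.2.} together with \eqref{field eq01}.

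\emph{Step 2: energy for $D_{\vec{V}}^{k+1}\varepsilon$.} Since $\varepsilon\equiv 0$ on $\partial\calB(t)$, the same holds for every $D_{\vec{V}}^{k+1}\varepsilon$. I apply to \eqref{Hk} a vector-field multiplier of the form $\alpha\,D_{\vec{V}}D_{\vec{V}}^{k+1}\varepsilon+\beta\,D_{n}D_{\vec{V}}^{k+1}\varepsilon$, with $\alpha,\beta$ chosen so that $\alpha\vec{V}+\beta n$ is timelike and transverse to $\partial\calB(t)$, and integrate by parts over $[0,t]\times\calB(\tau)$. The Dirichlet condition then produces, with a favorable sign, the spacetime boundary term
\begin{align*}
\int_{0}^{t}\!\!\int_{\partial\calB(\tau)}|\partial_{t,r}D_{\vec{V}}^{k+1}\varepsilon|^{2}\,dS\,d\tau,
\end{align*}
together with interior control of $\int_{\calB(t)}|\partial_{t,r}D_{\vec{V}}^{k+1}\varepsilon|^{2}\,dx$. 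The first-order damping term $-12e^{-2\lambda}\mu'\partial_{r}D_{\vec{V}}^{k+1}\varepsilon$ on the right of \eqref{Hk} is absorbed by Cauchy--Schwarz against the spacetime boundary integral; the commutator errors from Lemma \ref{2.1.} are schematically $\partial(\mu,\lambda)\cdot\partial(V,\varepsilon)$ and contribute at most quadratically in $\scE_{l}^{1/2}$.

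\emph{Step 3: elliptic upgrade and nonlinear closure.} Steps 1--2 only control $\partial_{t,r}$ derivatives at the orders $D_{\vec{V}}^{k}V$ and $D_{\vec{V}}^{k+1}\varepsilon$, whereas $\scE_{l}$ requires the full mixed norms $\sum_{j+k\le l+1}\int r^{2}|\partial_{r}^{j}D_{\vec{V}}^{k}(V,\varepsilon)|^{2}\,dr$. I recover these by reading \eqref{Fk} and \eqref{Hk} as second-order spatial equations for $\partial_{r}^{2}(V,\varepsilon)$ and inductively trading one time derivative for two spatial derivatives; weighted Hardy inequalities at $r=0$ dispose of the $\tfrac{1}{r}F_{k-1}$ and $\tfrac{1}{r}H_{k-1}$ singularities, which then sit inside $C_{1}\scE_{l-1}(s)$. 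The right-hand sides $f_{k},F_{k},H_{k}$ from Lemmas~\ref{2.3.}--\ref{2.5.} split into top-order linear pieces (which pick up a small factor $\varrho$ from \eqref{bootstrap} through Sobolev embedding and thereby contribute $\varrho\scE_{l}(s)$), lower-order linear pieces bounded by $C_{1}\scE_{l-1}(s)$, and genuinely cubic perturbations from which one $L^{\infty}$ factor is extracted by Sobolev, yielding $C_{2}\scE_{l}^{3/2}(s)$. Time integration of Steps 1--3 produces \eqref{3.3}. The main obstacle will be Step 1: the coupled interior/boundary identity on the moving domain must be carried out with exact signs, respecting both the Robin-type structure of \eqref{fk} and the Taylor sign positivity of $a$, and the delicate accounting of which commutators are controlled by $a>0$ versus which must be pushed into $\scE_{l-1}$ or into the cubic remainder is precisely what dictates the form of the inequality \eqref{3.3}.
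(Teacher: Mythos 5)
Your overall scheme (interior/boundary energy identities for $D_{\vec V}^kV$ and $D_{\vec V}^{k+1}\varepsilon$, elliptic upgrade via the second-order spatial operator, and classification of $f_k,F_k,H_k$ into small-coefficient top order, $\scE_{l-1}$, and cubic pieces) matches the paper's proof through Proposition \ref{3.2.} and Lemmas \ref{3.8.}--\ref{3.9.}. However, there is a genuine gap in your Step~2, precisely at the top order $k=l$, and it is exactly the point the paper flags as the main difficulty.

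You claim the first-order damping term $-12e^{-2\lambda}\mu'\partial_r D_{\vec V}^{k+1}\varepsilon$ in \eqref{Hk} "is absorbed by Cauchy--Schwarz against the spacetime boundary integral." This cannot work as stated: that damping term, after multiplying by $Q D_{\vec V}^{k+1}\varepsilon$ and integrating, is a \emph{bulk} term of the form
\begin{align*}
\int_{0}^{t}\int_{\calB(\tau)} e^{-2\lambda}\mu'\bigl(\partial_r D_{\vec V}^{k+1}\varepsilon\bigr)\bigl(Q D_{\vec V}^{k+1}\varepsilon\bigr)\,dx_g\,d\tau ,
\end{align*}
which Cauchy--Schwarz bounds only by $\int_0^t C\,\bar E[D_{\vec V}^{k+1}\varepsilon,s]\,ds$ with $C\sim\|\mu'\|_\infty$, an $O(1)$ steady-state quantity, not a small constant; and the boundary term $\int_0^t\int_{\partial\calB(\tau)}\frac12\nu(nD_{\vec V}^{k+1}\varepsilon)^2$ is a positive term on the left of the identity, not an available Young's-inequality absorber for bulk contributions. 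For $k\le l-1$ this extra $\int_0^t C\,\bar E_{k+1}[\varepsilon,s]\,ds$ lands harmlessly inside $C_1\scE_{l-1}(s)$, but for $k=l$ it lands inside $\scE_l(s)$ with a non-small constant, destroying exactly the smallness of $\varrho$ that makes \eqref{3.3} sharp enough to close the bootstrap against the linear growth rate $\sqrt{-\nu_*}$ in Section \ref{sec7}. The paper's resolution (Lemma \ref{3.10.}) is to pass at top order to the conformal metric
\begin{align*}
\tilde g=-e^{2\mu}dt^2+e^{2\lambda}dr^2+e^{14\mu}r^2(d\theta^2+\sin^2\theta\,d\varphi^2),
\end{align*}
whose wave operator $\tilde\square$ produces an extra $14e^{-2\lambda}\mu'\partial_r$ that cancels the offending damping term, leaving only a small residual $2e^{-2\lambda}\mu'(\partial_r D_{\vec V}^{l+1}\varepsilon)(QD_{\vec V}^{l+1}\varepsilon)$ term, which can be handled. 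Without this (or an equivalent device), your Step~2 does not yield \eqref{3.3} with a small $\varrho$, which is the entire point of the proposition.

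A secondary remark: you describe the $V$ boundary equation \eqref{fk} as Robin-type; it is in fact a second-order-in-time evolution equation on $\partial\calB(t)$, which the paper handles by testing against $a^{-1}D_{\vec V}^{k+1}V$ and combining the resulting boundary identity with the interior multiplier identity \eqref{key id} for $Q=\vec V$ (Lemma \ref{3.8.}). Your description is compatible with this, but be aware the positive boundary energy $\int_{\partial\calB(t)}a^{-1}|D_{\vec V}^{k+1}V|^2$ comes from the Taylor sign condition \eqref{taylor sign condition}, not from a Robin sign.
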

To prove Proposition \ref{3.1.} we need to show that higher order energy \(\calE_{l}(t)\) controls the total energy \(\scE_{l}(t)\). The
result is stated in the following proposition. 
\begin{proposition}\label{3.2.}
	Under the assumptions of Proposition \(\ref{3.1.}\), for any \(t\in[0,T]\), we have
	\begin{align}
	\sum_{p+k\le l+1} \int_{0}^{R}r^{2}|\partial_{r}^{p}D_{\vec{V}}^{k}V|^{2}dr+\sum_{p+k\le l+1}\int_{0}^{R}r^{2}|\partial_{r}^{p}D_{\vec{V}}^{k+1}\varepsilon|^{2}dr\lesssim \calE_{l}(t)+\scE_{l}(0)+\int_{0}^{t}\scE_{l}^{\frac{3}{2}}(s)ds.
	\end{align}
	The implicit coefficient in this estimate depends on the constants of the bootstrap assumption \(\eqref{bootstrap}\).
\end{proposition}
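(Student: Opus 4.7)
The plan is to reduce the high-regularity norm $\scE_l(t)$ to the material-derivative energy $\calE_l(t)$ by an elliptic-type trade built from the wave equations of Lemmas~\ref{2.4.}--\ref{2.5.}, proceeding by induction on the number $p$ of pure spatial derivatives. The base case $p\le 1$ is immediate: $E[\phi,t]$ already bounds $\int_0^R r^2|\partial_{t,r}\phi|^2\,dr$, and writing $\partial_t = (V^0)^{-1}(D_{\vec{V}}-V\partial_r)$ allows one to interchange $\partial_t$ and $D_{\vec{V}}$ at the cost of a single spatial derivative already contained in $\calE_l$ at this order. For the inductive step $p\ge 2$, I would expand the d'Alembertian in Schwarzschild coordinates as $\square\phi = -e^{-2\mu}\partial_t^2\phi + e^{-2\lambda}\partial_r^2\phi + (\text{first-order in }\partial_{t,r})\phi$ on radial $\phi$, and combine with Lemma~\ref{2.4.} to obtain
\[
e^{-2\lambda}\partial_r^2 D_{\vec{V}}^k V = e^{-2\mu}\partial_t^2 D_{\vec{V}}^k V + (\mathrm{l.o.t.}) + F_k + r^{-1}F_{k-1}.
\]
Expanding $D_{\vec{V}}^2 = (V^0)^2\partial_t^2 + 2V^0V\,\partial_t\partial_r + V^2\partial_r^2 + (\mathrm{l.o.t.})$ and inverting the leading coefficient matrix (nonsingular by strict timelikeness of $\vec V$, which follows from $V^0=e^{-\mu}\langle V\rangle$ and the bootstrap control of $\lambda$) lets me solve for $\partial_r^2 D_{\vec{V}}^k V$ in terms of $D_{\vec{V}}^{k+2}V$, strictly lower-order $(p,k)$-terms, and forcing. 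An analogous identity holds for $\varepsilon$ via Lemma~\ref{2.5.}, the extra drift $-12e^{-2\lambda}\mu'\partial_r D_{\vec{V}}^{k+1}\varepsilon$ being handled using the homogeneous Dirichlet condition. Iterating this trade $\lfloor p/2\rfloor$ times reduces every term with $p\ge 2$ to the base case modulo controlled forcing.

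The forcing terms $F_k$, $H_k$ are multilinear in the perturbation with at most one spatial derivative per factor and coefficients built from the bounded quantities $\mu,\mu',\mu'',\lambda,\lambda',\lambda'',\sigma_\kappa^2,\partial_r\sigma_\kappa^2,\partial_r^2\sigma_\kappa^2$. Placing the highest-order factor in $L^2$ and the remaining factors in $L^\infty$ via Sobolev embedding (legitimate once $l$ is large enough that $m\ge 2$ in~\eqref{bootstrap}) yields an $L^2$ bound on the genuinely nonlinear pieces by $\scE_l^{3/2}$, while quadratic contributions combine with $\calE_l$ through an extra factor $\scE_l^{1/2}$. The singular prefactor $r^{-1}$ on $F_{k-1}$ and $H_{k-1}$ is absorbed via the Hardy inequality $\int_0^R r^2 |\phi/r|^2\,dr \le C\int_0^R r^2 |\partial_r\phi|^2\,dr$, available because spherical symmetry of smooth perturbations forces vanishing at $r=0$. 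The $\scE_l(0) + \int_0^t\scE_l^{3/2}(s)\,ds$ structure arises from applying the fundamental theorem of calculus in the Lagrangian frame to recover the relevant $\partial_r^p D_{\vec{V}}^k$ quantities at time $t$ from those at $t=0$; the nonlinear commutator contributions generated in this process produce precisely the $\scE_l^{3/2}$ integrand.

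The main obstacle will be the combinatorial bookkeeping of the recursive elliptic reduction: one must verify that the successive uses of the commutator identities of Lemma~\ref{2.1.} never produce a term whose total derivative count exceeds what $\scE_l$ controls, and that all quadratic contributions split cleanly as $\scE_l\cdot\scE_l^{1/2}$ rather than $\scE_l^2$, which is essential for matching the $\scE_l^{3/2}$ integrand of~\eqref{3.3}. A secondary technical issue is maintaining uniform invertibility of the coefficient matrix relating $(\partial_t^2,\partial_t\partial_r,\partial_r^2)$ to $(D_{\vec{V}}^2,D_{\vec{V}}\partial_r,\partial_r^2)$ all the way to $\partial\calB(t)$, which requires the hyperbolicity condition $e^{-2\lambda}\langle V\rangle^2 > V^2$; this is automatic from the definition of $\vec{V}$ combined with the bootstrap assumption \eqref{bootstrap}.
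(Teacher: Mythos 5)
Your proposal takes essentially the same approach as the paper. Both rest on expressing $\partial_r^2$ in terms of $\square$, $D_{\vec V}^2$, $\partial_r D_{\vec V}$, and lower-order pieces — the paper packages this as the elliptic operator $A:=\frac{a^{rr}}{r^2}\partial_r(r^2\partial_r\,\cdot\,)$ with $a^{rr}=e^{-2\lambda}-V^2/\langle V\rangle^2$ in \lems{3.6.} and~\ref{3.7.} — then inducting on the number $p$ of pure radial derivatives, using the wave equations of \lems{2.4.}--\ref{2.5.} as forcing, $L^\infty$ control via Sobolev embedding under the bootstrap, Hardy inequalities to absorb the $r^{-1}$ weights, and the Lagrangian transport estimate (\lem{3.5.}, your ``fundamental theorem of calculus in the Lagrangian frame'') for the top-order nonlinear pieces. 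The positivity $a^{rr}>0$ you identify is exactly what the paper verifies, and it is indeed equivalent to $\sigma^2>0$, i.e.\ to timelikeness of $\vec V$.

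Two small imprecisions worth flagging, neither of which breaks the argument. First, your count of $\lfloor p/2\rfloor$ iterations assumes each trade removes two radial derivatives; because the inversion unavoidably generates the mixed term $\partial_r D_{\vec V}$, which removes only one $\partial_r$, the worst-case reduction proceeds one spatial derivative per step — precisely the paper's single-increment induction from $p$ to $p+1$. Second, the $2\times 3$ system you display (coordinate forms of $\square$ and of $D_{\vec V}^2$) is underdetermined and cannot be ``inverted'' as written; one must adjoin the relation $\partial_r D_{\vec V}=V^0\partial_t\partial_r+V\partial_r^2+\text{l.o.t.}$ as a third row. With that, the $3\times 3$ coefficient matrix has determinant proportional to $V^0\langle V\rangle^2 a^{rr}$, which is exactly the role of the cross term $-\frac{2V}{\langle V\rangle^2}\partial_r D_{\vec V}$ in the paper's identity in \lem{3.6.}. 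Once these bookkeeping points are tightened, your argument coincides with the one in the paper.
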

In order to prove Proposition \(\ref{3.2.}\), we first recall standard Hardy inequalities.
\begin{lemma}\label{hardy}
	For a given function \(\phi\), we have
	\begin{align*}
	\int_{0}^{R}\phi^{2}dr\lesssim\int_{0}^{R}r^{2}\left( \phi^{2}+|\partial_{r}\phi|^{2}\right) dr,
	\end{align*}
	and if \(k<1\), the following estimate hold
	\begin{align*}
	\int_{0}^{R}r^{k-2}\left( \phi-\phi(0)\right) ^{2}dr\lesssim \int_{0}^{R}r^{k}|\partial_{r}\phi|^{2}dr.
	\end{align*}
\end{lemma}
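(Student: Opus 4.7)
Both bounds are standard weighted Hardy-type inequalities, and the plan is to prove each by an integration by parts followed by Cauchy--Schwarz and a Young-inequality absorption.

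For the first inequality I would write $1 = \partial_r r$ and integrate by parts,
\begin{equation*}
\int_0^R \phi^2\,dr = R\phi(R)^2 - 2\int_0^R r\phi\,\partial_r\phi\,dr \leq R\phi(R)^2 + \tfrac{1}{2}\int_0^R \phi^2\,dr + 2\int_0^R r^2(\partial_r\phi)^2\,dr,
\end{equation*}
where the last step is Young's inequality. Absorbing $\tfrac{1}{2}\int \phi^2$ to the left reduces matters to controlling the trace $R\phi(R)^2$. For this I would take $r \in [R/2, R]$, write $\phi(R) - \phi(r) = \int_r^R \partial_s\phi\,ds$, and apply Cauchy--Schwarz with the weight split $s^{-2}\cdot s^2$ to obtain $|\phi(R) - \phi(r)|^2 \lesssim R^{-1}\int_0^R s^2(\partial_s\phi)^2\,ds$. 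Averaging over $r \in [R/2, R]$ and using $r^2 \gtrsim R^2$ on this annulus to bound $\int_{R/2}^R \phi^2\,dr \leq 4R^{-2}\int_0^R r^2\phi^2\,dr$ then yields $R\phi(R)^2 \lesssim \int_0^R r^2(\phi^2 + (\partial_r\phi)^2)\,dr$, with constants depending on $R$; this is harmless since the bootstrap assumption \eqref{bootstrap} keeps $R$ comparable to $R_\kappa$.

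For the second inequality, set $u := \phi - \phi(0)$, so that $u(0) = 0$ and $u = O(r)$ for smooth $\phi$. Using the identity $r^{k-2} = \frac{1}{k-1}\partial_r(r^{k-1})$, integration by parts gives
\begin{equation*}
\int_0^R r^{k-2}u^2\,dr = \frac{R^{k-1}}{k-1}u(R)^2 - \frac{2}{k-1}\int_0^R r^{k-1}\,u\,\partial_r\phi\,dr.
\end{equation*}
The boundary contribution at $r=0$ vanishes because $r^{k-1}u^2 = O(r^{k+1}) \to 0$; since $k<1$, the coefficient $1/(k-1)$ is negative, so the $u(R)^2$ trace term has the favourable sign and may be dropped. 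A single Cauchy--Schwarz on the remaining integral, splitting the weight as $r^{(k-2)/2}\cdot r^{k/2}$, followed by dividing through by $(\int r^{k-2}u^2)^{1/2}$, then produces the desired bound with explicit constant $(2/(1-k))^2$.

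The only subtle point is the sign issue in the first inequality: unlike the second case, the trace term $R\phi(R)^2$ cannot simply be dropped and must be estimated by a genuine trace-type bound using the annulus $[R/2, R]$, where the geometric weight $r^2$ is comparable to $R^2$. The presence of the $\phi^2$ term on the right-hand side is precisely what enables this trace estimate; without it one cannot dominate $\int_{R/2}^R \phi^2\,dr$, because $\int_0^R (\partial_r\phi)^2\,dr$ alone is not controlled by $\int_0^R r^2(\partial_r\phi)^2\,dr$.
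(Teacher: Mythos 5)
The paper does not actually prove this lemma; it simply refers to \cite{KMP}, so there is no in-paper argument to compare against. Your proposal supplies a correct, self-contained proof by the standard mechanism (integrate by parts against a power of $r$, then absorb via Cauchy--Schwarz/Young, handling boundary traces as needed), which is exactly the structure one would expect from a direct proof of these weighted Hardy inequalities.

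Two small remarks, neither fatal. First, for the first inequality your trace bound produces the constant $\max(32/R^2,12)$, so the implicit constant in $\lesssim$ depends on $R$; as you note, this is harmless in the paper's setting, where the bootstrap assumption keeps $R(t)$ comparable to $R_\kappa$, but a reader should be aware that the inequality is not scale-invariant (indeed it cannot be, since the two sides of the first inequality have different scaling in $r$). Second, in the second inequality your justification that the $r=0$ boundary contribution vanishes rests on $r^{k-1}u^2 = O(r^{k+1})\to 0$, which uses $u=O(r)$ and actually needs $k>-1$, not merely $k<1$. This is fine in practice because for smooth $\phi$ with $\partial_r\phi(0)\neq 0$ the right-hand side $\int_0^R r^k(\partial_r\phi)^2\,dr$ is already infinite when $k\le -1$, so the inequality is vacuous there; alternatively one can argue directly that $r^{k-1}u(r)^2 \le \frac{1}{1-k}\int_0^r s^k(\partial_s\phi)^2\,ds \to 0$ by Cauchy--Schwarz whenever the right-hand side is finite, which covers all $k<1$ uniformly. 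The remainder of the computation, including the sign of $1/(k-1)$ killing the $u(R)^2$ term and the final constant $(2/(1-k))^2$, is correct.
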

The proof of Lemma \(\ref{hardy}\) refer to \cite{KMP}. The next lemma allows us to bound lower order terms in \(L^{\infty}\).
\begin{lemma}
	Under the bootstrap assumption \eqref{bootstrap}, we have
	\begin{align*}
	\Arrowvert\partial_{r}^{a}D_{\vec{V}}^{k}V\Arrowvert_{\infty}+\Arrowvert\partial_{r}^{a}D_{\vec{V}}^{k+1}\varepsilon\Arrowvert_{\infty}\lesssim\scE_{l}^{\frac{1}{2}}(t),\quad\quad a+k\le l-1,\ t\in[0,T].
	\end{align*}
\end{lemma}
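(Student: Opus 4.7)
Since every relevant quantity is spherically symmetric, I would view the radial profile $\phi(r)$ as a scalar function on the $3$-dimensional ball $B_R \subset \bbR^3$ occupied by the fluid, so that $\int_0^R r^2 |\phi|^2 dr \sim \|\phi\|_{L^2(B_R)}^2$. The plan is then to appeal to the standard three-dimensional Sobolev embedding $H^2(B_R)\hookrightarrow L^\infty(B_R)$ (via extension to $\bbR^3$) and absorb any unweighted $1$-dimensional integrals that appear using Hardy's inequality (Lemma~\ref{hardy}).

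Concretely, expressing Cartesian derivatives in spherical coordinates one obtains, for a radial function $\phi$,
\begin{equation*}
\|\phi\|_{L^\infty(B_R)}^2 \;\lesssim\; \int_0^R r^2 \bigl(|\phi|^2 + |\partial_r \phi|^2 + |\partial_r^2 \phi|^2\bigr)\, dr + \int_0^R |\partial_r \phi|^2 \, dr,
\end{equation*}
the last term coming from the contribution $|r^{-1}\partial_r \phi|^2$ in the Cartesian Hessian. By Lemma~\ref{hardy} applied to $\partial_r \phi$, the unweighted tail is bounded by the weighted integrals of $\partial_r \phi$ and $\partial_r^2 \phi$. Now I would set $\phi := \partial_r^a D_{\vec V}^k V$ with $a+k \le l-1$ and note that $\partial_r^j \phi = \partial_r^{a+j} D_{\vec V}^k V$ for $j=0,1,2$ satisfies $(a+j)+k \le l+1$; thus each weighted integral on the right-hand side is controlled by $\scE_l(t)$ directly from its definition. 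The identical argument applies to $\phi := \partial_r^a D_{\vec V}^{k+1} \varepsilon$, using the $\varepsilon$-part of $\scE_l$.

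The step that I would expect to require the most care is a clean justification of the passage between the one-dimensional weighted $L^2$ integrals that make up $\scE_l$ and the full three-dimensional $H^2$ seminorm, in particular the treatment of the singular weight at $r=0$: here the radial symmetry of $\phi$ kills all angular contributions except the $r^{-1}\partial_r\phi$ piece already accounted for, and the Hardy inequality applied at the origin prevents any loss. The rest of the argument is routine: the bootstrap assumption~\eqref{bootstrap} guarantees $l$ is large enough for $a+k \le l-1$ to be meaningful, and the embedding constants depend only on fixed geometric data through the bounds $|\partial_y \eta - 1|, |\eta - y| \le C_\eta$ which control the Jacobian between Eulerian and Lagrangian coordinates.
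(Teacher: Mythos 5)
Your argument is correct and yields the stated bound, but it takes a slightly more elaborate route than the paper. The paper simply observes that, on the one-dimensional interval $(0,R)$, the Morrey embedding $H^{1}(0,R)\hookrightarrow L^{\infty}(0,R)$ gives
\begin{equation*}
\|\phi\|_{L^\infty(0,R)}^{2}\;\lesssim\;\int_{0}^{R}\bigl(|\phi|^{2}+|\partial_{r}\phi|^{2}\bigr)\,dr,
\end{equation*}
and then two applications of the Hardy inequality (Lemma~\ref{hardy}), to $\phi$ and to $\partial_r\phi$, convert each unweighted integral into the $r^{2}$-weighted ones that appear in $\scE_l$, at the cost of one extra $\partial_r$ each; taking $\phi=\partial_r^{a}D_{\vec V}^{k}V$ or $\phi=\partial_r^{a}D_{\vec V}^{k+1}\varepsilon$ and requiring $a+2+k\le l+1$ gives exactly the constraint $a+k\le l-1$. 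Your approach instead lifts the radial profile to the three-dimensional ball $B_R\subset\bbR^3$, invokes $H^{2}(B_R)\hookrightarrow L^{\infty}(B_R)$, and observes that for a radial function the Cartesian Hessian contributes $\int_0^R r^2|\partial_r^2\phi|^2\,dr+\int_0^R|\partial_r\phi|^2\,dr$, with the unweighted piece absorbed by the same Hardy inequality. This is a valid alternative and produces the identical derivative count; the payoff of the paper's route is that it is purely one-dimensional and avoids the need to track the 3D extension operator or the uniformity of the $H^2(B_{R(t)})\hookrightarrow L^\infty$ constant as the free boundary moves (both routes ultimately rely on the bootstrap assumption to keep $R(t)$ and the Jacobian of $\eta$ controlled). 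Your proposal is sound; just note that the unweighted $\int_0^R|\phi|^2\,dr$ term also appears once you go through the 1D embedding, so the Hardy step is needed twice either way.
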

\begin{proof}
	This follows from the Sobolev embedding \(H^{1}(0,R)\hookrightarrow L^{\infty}(0,R)\) and standard Hardy inequality.
\end{proof}

The following lemma gives a estimate for the \(L^{2}\) norms.
\begin{lemma}\label{3.5.}
	Under the bootstrap assumption \eqref{bootstrap}, if \(a+k\le l, t\in[0,T]\), then we have
	\begin{align*}
	\int_{0}^{R}r^{2}|\partial_{r}^{a}D_{\vec{V}}^{k}V|^{2}dr+\int_{0}^{R}r^{2}|\partial_{r}^{a}D_{\vec{V}}^{k+1}\varepsilon|^{2}dr\lesssim \scE_{l}(0)+\int_{0}^{t}\scE_{l}(s)ds.
	\end{align*}
\end{lemma}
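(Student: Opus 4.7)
The plan is to combine the fundamental theorem of calculus in time with a transport identity on the moving domain $\calB(t)$. Fix $a+k\le l$, set $f:=\partial_r^{a}D_{\vec V}^{k}V$, and compute $\frac{d}{dt}\int_0^{R(t)}r^{2}f^{2}\,dr$. Because the boundary is advected by the fluid (see \eqref{eta def}), $\dot R(t)=(V/V^{0})(R,t)$, so Reynolds' transport formula produces a flux contribution $R^{2}f^{2}(R)\dot R$. Inside the interior integral I would substitute $\partial_{t}=\tfrac{1}{V^{0}}(D_{\vec V}-V\partial_{r})$ and then integrate the piece $\tfrac{r^{2}V}{V^{0}}\partial_{r}(f^{2})$ by parts; the boundary term at $r=R$ that appears is exactly $-R^{2}(V/V^{0})(R)f^{2}(R)$, which cancels the Reynolds flux. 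This leaves the identity
\[
\frac{d}{dt}\int_{0}^{R}r^{2}f^{2}\,dr \;=\; \int_{0}^{R}\frac{2r^{2}}{V^{0}}\,f\,D_{\vec V}f\,dr \;+\; \int_{0}^{R}\partial_{r}\!\left(\frac{r^{2}V}{V^{0}}\right)f^{2}\,dr.
\]

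Next I would bound the right-hand side by $\scE_{l}(t)$. Cauchy-Schwarz on the first integral gives $\lesssim \|f\|_{L^{2}(r^{2}dr)}^{2}+\|D_{\vec V}f\|_{L^{2}(r^{2}dr)}^{2}$. For the second, note $\partial_{r}(r^{2}V/V^{0})/r^{2}=\tfrac{2V}{rV^{0}}+\partial_{r}(V/V^{0})$; the factor $V/r$ is bounded because $V$ vanishes at the center by spherical symmetry and the preceding Sobolev-type $L^{\infty}$ lemma controls $\partial_{r}V$, while $\partial_{r}(V/V^{0})$ is bounded by the bootstrap assumption \eqref{bootstrap}, so this term is absorbed into $C\|f\|_{L^{2}(r^{2}dr)}^{2}$. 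To handle $\|D_{\vec V}f\|_{L^{2}(r^{2}dr)}$ I would commute $D_{\vec V}$ past $\partial_{r}^{a}$ using Lemma~\ref{2.1.}, producing the principal term $\partial_{r}^{a}D_{\vec V}^{k+1}V$ (whose $L^{2}(r^{2}dr)$ norm is in $\scE_{l}(t)$ since $a+(k+1)\le l+1$) plus a finite sum of commutator products. Each commutator is schematically (geometric coefficient built from $\mu',\lambda',e^{2\lambda},V,V^{0},\langle V\rangle$ and their lower derivatives)$\cdot$(derivative of $V$ of total order $\le l+1$); the geometric factors are uniformly bounded by the bootstrap and by the $L^{\infty}$ lemma, while the derivative factor lies in $\scE_{l}(t)$.

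Combining the two bounds yields $\tfrac{d}{dt}\|f(t)\|_{L^{2}(r^{2}dr)}^{2}\lesssim \scE_{l}(t)$, and integrating from $0$ to $t$ together with $\|f(0)\|_{L^{2}(r^{2}dr)}^{2}\le \scE_{l}(0)$ gives the $V$ part of the claim. The $\varepsilon$ part is entirely parallel with $f:=\partial_{r}^{a}D_{\vec V}^{k+1}\varepsilon$: then $D_{\vec V}f=\partial_{r}^{a}D_{\vec V}^{k+2}\varepsilon+\text{commutators}$, and since $a+k\le l$ implies $a+(k+2)\le l+2$, the principal term corresponds to $\partial_r^a D_{\vec V}^{m+1}\varepsilon$ with $a+m\le l+1$ and is again included in $\scE_{l}(t)$.

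I expect the main technical obstacle to be the precise cancellation of the two boundary contributions above, which hinges on matching the material derivative $D_{\vec V}$ against the boundary velocity $\dot R = (V/V^{0})|_{r=R}$; if this match failed one would acquire an uncontrolled trace of $f$ on $\partial\calB(t)$ that cannot be recovered from $\scE_{l}$ alone. A secondary care-point is the regularity of the coefficient $V/r$ at the origin, which is handled by spherical symmetry together with the Sobolev embedding. Everything else is a routine weighted $L^{2}$ energy identity with commutator bookkeeping controlled by \eqref{bootstrap}.
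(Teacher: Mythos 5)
Your proposal is correct and is essentially the same argument as the paper's: both control $\frac{d}{dt}\int r^2 f^2\,dr$ by converting $\partial_t$ into the material derivative $D_{\vec V}$ and then invoking the bootstrap to bound the resulting coefficients, commutators, and Jacobian/flux terms. The only difference is presentational: the paper pulls back to Lagrangian coordinates $(y,t)$, where the boundary flux and the transport terms cancel automatically (the moving domain becomes the fixed interval $[0,R_\kappa]$ and $D_{\vec V}/V^0$ becomes $\partial_t$), whereas you work in Eulerian coordinates via Reynolds' transport and verify the boundary-term cancellation by hand; these are two equivalent ways of writing the same change of variables, and the bookkeeping of indices ($a+(k+1)\le l+1$, $a+(k+2)\le l+2$ landing inside $\scE_l$) is the same in both.
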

\begin{proof}
	We rcall the Lagrangian parametrization \(\eta(y,t)\), that is
	\begin{align*}
	\partial_{t}\eta=\frac{V}{V^{0}}\circ\eta\ \ \ \  \text{with} \ \ \ \ \eta(y,0)=\eta_{0}(y).
	\end{align*}
	For any function \(\phi\), we have
	\begin{align}
	\frac{d\phi\left( \eta(y,t),t\right) }{dt}=\frac{D_{\vec{V}}\phi}{V^{0}}(\eta(y,t),t).
	\end{align}
	Multiplying both sides of the above equation by \(y^{2}\phi\) and integrating over \([0,R_{\kappa}]\times [0,t]\), we get
	\begin{align*}
	\frac{1}{2}\int_{0}^{R_{\kappa}}y^{2}\left| \phi(\eta(y,t),t)\right| ^{2}dy-	\frac{1}{2}\int_{0}^{R_{\kappa}}y^{2}\left| \phi(\eta(y,0),0)\right| ^{2}dy=\int_{0}^{t}\int_{0}^{R_{\kappa}}y^{2}\phi(\eta(y,s),s)\frac{D_{\vec{V}}\phi}{V^{0}}(\eta(y,s),s)dyds.
	\end{align*}
	Under the bootstrap assumption \eqref{bootstrap}, we can bound the Jacobian of the Lagrangian coordinate transformation from \([0,R_{\kappa}]\) to \([0,R(t)]\) using the fundamental theorem of calculus. Therefore we give
	\begin{align}\label{22}
	\int_{0}^{R}r^{2}\left| \phi(r,t)\right| ^{2}dr\lesssim\int_{0}^{R}r^{2}\left| \phi(r,0)\right| ^{2}dr+\int_{0}^{t}\int_{0}^{R}r^{2}\left( |D_{\vec{V}}\phi(r,s)|^{2}+|\phi(r,s)|^{2}\right) drds.
	\end{align}
	We apply this estimate to \(\phi=\partial_{r}^{a}D_{\vec{V}}^{k}V\) as well \(\phi=\partial_{r}^{a}D_{\vec{V}}^{k+1}\varepsilon\). Then as long as \(a+k\le l\), the right-hand side of \(\eqref{22}\) is bounded by
	\begin{align*}
     \scE_{l}(0)+\int_{0}^{t}\scE_{l}(s)ds,
	\end{align*}
	which completes the proof of Lemma \(\ref{3.5.}\).
\end{proof}

The next lemma will be used to control \(\Arrowvert\partial_{r}\partial_{r}\phi\Arrowvert_{L^{2}(\calB(t))}\) in terms of \(\Arrowvert\square\phi\Arrowvert_{L^{2}(\calB(t))}\) and \(\Arrowvert\partial_{r}D_{\vec{V}}\phi\Arrowvert_{L^{2}(\calB(t))}\).
\begin{lemma}\label{3.6.}
	Let \(A\) be defined as \(A:=\frac{a^{rr}}{r^{2}}\partial_{r}(r^{2}\partial_{r}\phi)\), where \(a^{rr}=e^{-2\lambda}-\frac{V^{2}}{\left\langle V\right\rangle^{2}}\). Then for any \(\phi\),
	\begin{align}\label{11}
	\begin{split}
	A\phi=&\square\phi+\frac{1}{\left\langle V\right\rangle^{2}}D_{\vec{V}}D_{\vec{V}}\phi-\frac{2V}{\left\langle V\right\rangle^{2}}\partial_{r}D_{\vec{V}}\phi+\frac{2V}{\left\langle V\right\rangle^{2}}(\partial_{r}V)(\partial_{r}\phi)-\frac{1}{\left\langle V\right\rangle^{2}}(D_{\vec{V}}V)(\partial_{r}\phi)-\mu'\frac{V}{\left\langle V\right\rangle^{2}}D_{\vec{V}}\phi\\
	&+\mu'\frac{V^{2}}{\left\langle V\right\rangle^{2}}\partial_{r}\phi-\frac{D_{\vec{V}}\left\langle V\right\rangle}{\left\langle V\right\rangle^{3}}D_{\vec{V}}\phi+\frac{VD_{\vec{V}}\left\langle V\right\rangle}{\left\langle V\right\rangle^{3}}\partial_{r}\phi+\frac{2V\partial_{r}\left\langle V\right\rangle}{\left\langle V\right\rangle^{3}}D_{\vec{V}}\phi-\frac{2V^{2}\partial_{r}\left\langle V\right\rangle}{\left\langle V\right\rangle^{3}}\partial_{r}\phi-4\pi re^{2\lambda}VD_{\vec{V}}\phi\\
	&+4\pi re^{2\lambda}V^{2}\partial_{r}\phi+e^{-2\lambda}(\lambda'-\mu')\partial_{r}\phi-\frac{V^{2}}{\left\langle V\right\rangle^{2}}\frac{2}{r}\partial_{r}\phi\\
	=:&\square\phi+R_{\phi},
	\end{split}
	\end{align}
	and there is constant \(c_{0}>0\) such that
	\begin{align*}
	\inf_{0\le t\le T} a^{rr}>c_{0}.
	\end{align*}
\end{lemma}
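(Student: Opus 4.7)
The plan is to verify the identity by a direct coordinate expansion and to obtain the lower bound on $a^{rr}$ via an algebraic simplification. For the identity, I would first compute $\square\phi$ in Schwarzschild coordinates. Using the metric \eqref{metric} and the Christoffels listed thereafter, for a spherically symmetric $\phi(t,r)$ one obtains
\begin{align*}
\square\phi = -e^{-2\mu}\partial_t^2\phi + e^{-2\lambda}\partial_r^2\phi - e^{-2\mu}(\dot\lambda-\dot\mu)\partial_t\phi + e^{-2\lambda}(\mu'-\lambda')\partial_r\phi + \tfrac{2e^{-2\lambda}}{r}\partial_r\phi.
\end{align*}
Since $A\phi = a^{rr}\partial_r^2\phi + \tfrac{2a^{rr}}{r}\partial_r\phi$, the difference $A\phi - \square\phi$ contains the terms $-\tfrac{V^2}{\langle V\rangle^2}\partial_r^2\phi$, $+e^{-2\mu}\partial_t^2\phi$, plus first-order contributions. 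The key observation is that, using $(V^0)^2 = e^{-2\mu}\langle V\rangle^2$, one computes
\begin{align*}
\tfrac{1}{\langle V\rangle^2}D_{\vec V}^2\phi - \tfrac{2V}{\langle V\rangle^2}\partial_r D_{\vec V}\phi = e^{-2\mu}\partial_t^2\phi - \tfrac{V^2}{\langle V\rangle^2}\partial_r^2\phi + \tfrac{D_{\vec V}V^0 - 2V\partial_r V^0}{\langle V\rangle^2}\partial_t\phi + \tfrac{D_{\vec V}V - 2V\partial_r V}{\langle V\rangle^2}\partial_r\phi,
\end{align*}
which reproduces exactly the two second-order contributions inside $A\phi - \square\phi$. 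It remains to match the residual first-order discrepancy with the remaining terms of $R_\phi$.

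This residual matching is purely algebraic. One expands $D_{\vec V}V^0 = -e^{-2\mu}\langle V\rangle^2\dot\mu - e^{-\mu}V\mu'\langle V\rangle + e^{-\mu}D_{\vec V}\langle V\rangle$ and $\partial_r V^0 = -e^{-\mu}\mu'\langle V\rangle + e^{-\mu}\partial_r\langle V\rangle$, and substitutes the field equation \eqref{field eq01} to replace every $\dot\lambda$ contribution by $-4\pi r e^{-\mu+2\lambda}\langle V\rangle V$. One then eliminates $\partial_t\phi$ via $\partial_t\phi = \tfrac{e^\mu}{\langle V\rangle}(D_{\vec V}\phi - V\partial_r\phi)$. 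After collecting coefficients of $D_{\vec V}\phi$ and $\partial_r\phi$ one recovers exactly the lines of $R_\phi$: the $-4\pi r e^{2\lambda}V\,D_{\vec V}\phi$ and $+4\pi re^{2\lambda}V^2\partial_r\phi$ terms originate from the $\dot\lambda$ substitution, the $\mu'V/\langle V\rangle^2$ and $\mu'V^2/\langle V\rangle^2$ terms from the $\mu$ factor in $V^0$, the $D_{\vec V}\langle V\rangle/\langle V\rangle^3$ and $\partial_r\langle V\rangle/\langle V\rangle^3$ terms from the remaining pieces of $D_{\vec V}V^0$ and $\partial_r V^0$, while $e^{-2\lambda}(\lambda'-\mu')\partial_r\phi$ and $-\tfrac{2V^2}{r\langle V\rangle^2}\partial_r\phi$ survive directly from $\square\phi$ and from $A\phi$.

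For the positivity of $a^{rr}$, the algebraic manipulation
\begin{align*}
a^{rr} = e^{-2\lambda} - \tfrac{V^2}{\langle V\rangle^2} = \tfrac{e^{-2\lambda}(\sigma^2 + e^{2\lambda}V^2) - V^2}{\langle V\rangle^2} = \tfrac{e^{-2\lambda}\sigma^2}{\langle V\rangle^2}
\end{align*}
makes the sign manifest. The bootstrap bound $|\lambda|\le C_\lambda$ in \eqref{bootstrap} gives $e^{-2\lambda}\ge e^{-2C_\lambda}$; the equation of state \eqref{state eq} together with $\rho\ge \rho_0 = 1$ gives $\sigma^2 = \rho + p \ge 1$; and $\scE_l(t)\le C_l$ combined with the Sobolev embedding $H^1(0,R)\hookrightarrow L^\infty(0,R)$ provides a pointwise bound on $|V|$, hence an upper bound on $\langle V\rangle^2 = \sigma^2 + e^{2\lambda}V^2$. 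Putting these together yields a uniform $c_0 > 0$ on $[0,T]$.

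The main obstacle is not conceptual but rather the bookkeeping in the residual matching: more than a dozen first-order terms appear in $R_\phi$, and each must be produced with the correct sign and numerical coefficient. In practice I would organize the verification into two separate sub-identities — one equating coefficients of $D_{\vec V}\phi$ and one equating coefficients of $\partial_r\phi$ — and carefully track every occurrence of $\dot\mu, \dot\lambda, \mu', \lambda'$ generated when the chain rule hits $V^0 = e^{-\mu}\langle V\rangle$.
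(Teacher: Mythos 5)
Your proposal is correct and follows essentially the same route as the paper, which verifies \eqref{11} by writing $\square\phi=-e^{-2\mu}\partial_t^2\phi+e^{-2\lambda}\partial_r^2\phi+e^{-2\mu}(\dot\mu-\dot\lambda)\partial_t\phi+e^{-2\lambda}(\mu'-\lambda'+\tfrac{2}{r})\partial_r\phi$ in coordinates and matching terms, and establishes positivity via the algebraic rewriting $a^{rr}=e^{-2\lambda}\sigma^2/\langle V\rangle^2>0$. The paper's version is terser (it leaves the term-by-term matching to the reader and appeals to continuity on the compact interval $[0,T]$ for the uniform lower bound), whereas you spell out the cancellations and derive $c_0>0$ quantitatively from the bootstrap assumptions, but the underlying argument is the same.
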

\begin{proof}
	The proof of \(\eqref{11}\) is a direct calculation using the definition of the wave operator
	\begin{align*}
	\square\phi=-e^{-2\mu}\partial_{t}\partial_{t}\phi+e^{-2\lambda}\partial_{r}\partial_{r}\phi+e^{-2\mu}(\dot{\mu}-\dot{\lambda})\partial_{t}\phi+e^{-2\lambda}\left( \mu'-\lambda'+\frac{2}{r}\right) \partial_{r}\phi,
	\end{align*}
	and we have
	\begin{align*}
	a^{rr}=e^{-2\lambda}-\frac{V^{2}}{\left\langle V\right\rangle^{2}}=e^{-2\lambda}-\frac{V^{2}}{\sigma^{2}+e^{2\lambda}V^{2}}=\frac{e^{-2\lambda}\sigma^{2}}{\sigma^{2}+e^{2\lambda}V^{2}}>0.
	\end{align*}
	By continuty, the proof of Lemma \ref{3.6.} is completed.
\end{proof}

To use the above Lemma, we will use standard Hardy inequalities, and derive the following elliptic estimate which is adapted to our energy spaces with  weight \(r^{2}\).
\begin{lemma}\label{3.7.}
	Under the bootstrap assumption \eqref{bootstrap}, we have the elliptic estimate
	\begin{align*}
		\int_{0}^{R}r^{2}|\partial_{r}\partial_{r}\phi|^{2}dr\lesssim \int_{0}^{R}r^{2}|\partial_{r}\phi|^{2}dr+\int_{0}^{R}r^{2}\left| A\phi\right| ^{2}dr.
	\end{align*}
\end{lemma}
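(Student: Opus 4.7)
The plan is to exploit the pointwise identity established inside Lemma \ref{3.6.},
\[
A\phi \;=\; \frac{a^{rr}}{r^{2}}\partial_{r}(r^{2}\partial_{r}\phi) \;=\; a^{rr}\!\left(\partial_{r}\partial_{r}\phi+\frac{2}{r}\partial_{r}\phi\right),
\]
together with the uniform ellipticity $a^{rr}\ge c_{0}>0$ (also from Lemma \ref{3.6.}) and the uniform upper bound $a^{rr}\le C$ coming from the bootstrap assumption \eqref{bootstrap} on $\lambda$ and on the fluid variables. Dividing by $a^{rr}$ and squaring, it suffices to prove
\[
\int_{0}^{R} r^{2}\bigl|\partial_{r}\partial_{r}\phi\bigr|^{2}\,dr \;\lesssim\; \int_{0}^{R} r^{2}\left|\partial_{r}\partial_{r}\phi+\tfrac{2}{r}\partial_{r}\phi\right|^{2}\,dr + \int_{0}^{R} r^{2}|\partial_{r}\phi|^{2}\,dr,
\]
and the result for $A\phi$ then follows.

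Next I would expand the square on the right-hand side and handle the cross term by integration by parts. Writing
\[
\int_{0}^{R}r^{2}\left|\partial_{r}\partial_{r}\phi+\tfrac{2}{r}\partial_{r}\phi\right|^{2}dr
=\int_{0}^{R}r^{2}|\partial_{r}\partial_{r}\phi|^{2}\,dr+4\int_{0}^{R}r\,(\partial_{r}\phi)(\partial_{r}\partial_{r}\phi)\,dr+4\int_{0}^{R}|\partial_{r}\phi|^{2}\,dr,
\]
the middle term equals $2\int_{0}^{R}r\,\partial_{r}\bigl(|\partial_{r}\phi|^{2}\bigr)\,dr = 2R|\partial_{r}\phi(R)|^{2}-2\int_{0}^{R}|\partial_{r}\phi|^{2}dr$ after one integration by parts. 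The boundary term at $r=0$ vanishes because spherical symmetry together with the interior regularity encoded by $\scE_{l}(t)\le C_{l}$ forces $r|\partial_{r}\phi(r)|^{2}\to 0$ as $r\downarrow 0$. Substituting back, every remaining term on the right is nonnegative, so in particular
\[
\int_{0}^{R}r^{2}|\partial_{r}\partial_{r}\phi|^{2}\,dr+2R|\partial_{r}\phi(R)|^{2}+2\int_{0}^{R}|\partial_{r}\phi|^{2}\,dr\;\le\;\tfrac{1}{c_{0}^{2}}\int_{0}^{R}r^{2}|A\phi|^{2}\,dr,
\]
which proves the desired estimate (with slack to spare: the $\int r^{2}|\partial_{r}\phi|^{2}$ term on the RHS of Lemma \ref{3.7.} is not actually needed, but is included to allow the estimate to be absorbed into the norms appearing in Proposition \ref{3.2.}).

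Because the argument reduces to a single algebraic identity followed by one integration by parts with a favorable sign on the boundary term, there is no real obstacle. The only mild subtlety is justifying the vanishing boundary behavior at $r=0$; this I would handle by first proving the estimate for smooth spherically symmetric test functions (where $\partial_{r}\phi(0)=0$ is automatic) and then passing to the limit using the regularity provided by the bootstrap assumption \eqref{bootstrap} through Sobolev embedding on $(0,R)$. All constants are uniform in $t\in[0,T]$ since $c_{0}$ is, and the implicit constant in $\lesssim$ depends only on $c_{0}$ and on the bound on $a^{rr}$, both of which are controlled by the constants appearing in \eqref{bootstrap}.
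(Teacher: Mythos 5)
Your proof is correct, and it takes a genuinely different (and tighter) route than the paper's. The paper starts from the multiplier identity $\int (A\phi)\,\partial_r(r^2\partial_r\phi)\,dr = \int\frac{a^{rr}}{r^2}|\partial_r(r^2\partial_r\phi)|^2\,dr$, expands, and integrates the cross term by parts \emph{with} $a^{rr}$ still inside the integrand. That produces the extra term $\int_0^R 2(\partial_r a^{rr})\,r\,|\partial_r\phi|^2\,dr$, which must then be controlled by observing that $r^{-1}\partial_r a^{rr}$ is bounded under the bootstrap assumption \eqref{bootstrap}; this is precisely why the $\int r^2|\partial_r\phi|^2$ term appears on the right-hand side of the lemma. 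You instead divide $A\phi = a^{rr}\left(\partial_r\partial_r\phi + \tfrac{2}{r}\partial_r\phi\right)$ by $a^{rr}$ pointwise \emph{before} expanding, so the coefficient disappears from the computation entirely and the identity becomes constant-coefficient; no derivative of $a^{rr}$ ever enters. Both arguments then expand the same perfect square and integrate the cross term by parts, exploiting the sign of the boundary term at $r=R$ and the vanishing at $r=0$. Your route yields the strictly stronger bound $\int_0^R r^2|\partial_r\partial_r\phi|^2\,dr \le c_0^{-2}\int_0^R r^2|A\phi|^2\,dr$ (the $\int r^2|\partial_r\phi|^2$ term is genuinely superfluous, not merely convenient), and it uses only uniform ellipticity $a^{rr}\ge c_0$ rather than additional control on $\partial_r a^{rr}$. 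What the paper's version buys is that it is the standard variable-coefficient multiplier computation, applicable even when the coefficient cannot be cleanly factored out of the operator; in the present spherically symmetric diagonal setting, your shortcut is available and is the cleaner choice.
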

\begin{proof}
	We start by exploiting the elliptic structure of the integral
	\begin{align}\label{111}
	\begin{split}
	\int_{0}^{R}(A\phi)\left[ \partial_{r}(r^{2}\partial_{r}\phi)\right] dr=&\int_{0}^{R}\frac{a^{rr}}{r^{2}}\left| \partial_{r}(r^{2}\partial_{r}\phi)\right| ^{2}dr\\
	=&\int_{0}^{R}\left( a^{rr}r^{2}\left| \partial_{r}\partial_{r}\phi\right| ^{2}+4a^{rr}r(\partial_{r}\partial_{r}\phi)(\partial_{r}\phi)+4a^{rr}\left| \partial_{r}\phi\right| ^{2}\right) dr.
	\end{split}
	\end{align}
	Using integration by parts, we have
	\begin{align}\label{1111}
	\int_{0}^{R}4a^{rr}r(\partial_{r}\partial_{r}\phi)(\partial_{r}\phi)dr=2a^{rr}(R)R\left| \partial_{r}\phi\right| ^{2}(R)-\int_{0}^{R}2a^{rr}\left| \partial_{r}\phi\right| ^{2}dr-\int_{0}^{R}2(\partial_{r}a^{rr})r\left| \partial_{r}\phi\right| ^{2}dr.
	\end{align}
	Combined with \(\eqref{111}\) and \eqref{1111}, we get
	\begin{align}\label{222}
	\begin{split}
	&\int_{0}^{R}\left( a^{rr}r^{2}\left| \partial_{r}\partial_{r}\phi\right| ^{2}+2a^{rr}\left| \partial_{r}\phi\right| ^{2}\right) dr+2a^{rr}(R)R\left| \partial_{r}\phi\right| ^{2}(R)\\
	=&\int_{0}^{R}(A\phi)\left[ \partial_{r}(r^{2}\partial_{r}\phi)\right] dr+\int_{0}^{R}2(\partial_{r}a^{rr})r\left| \partial_{r}\phi\right| ^{2}dr.
	\end{split}
	\end{align}
	Since \(a^{rr}>c_{0}>0\) by Lemma \ref{3.6.}, the left-hand side of \(\eqref{222}\) controls
	\begin{align*}
	\int_{0}^{R}r^{2}\left| \partial_{r}\partial_{r}\phi\right| ^{2}dr+\int_{0}^{R}\left| \partial_{r}\phi\right| ^{2}dr.
	\end{align*}
	Notice that \(r^{-1}(\partial_{r}a^{rr})\) is bounded under the bootstrap assumption \eqref{bootstrap}. Therefore using Cauchy-Schwarz and Hardy inequalities, the right-hand side of \eqref{222} can be bounded by
	\begin{align*}
	\int_{0}^{R}r^{2}|\partial_{r}\phi|^{2}dr+\int_{0}^{R}r^{2}\left| A\phi\right| ^{2}dr,
	\end{align*}
	which completes the proof of Lemma \ref{3.7.}.
\end{proof}

Notice that Einstein-Euler system \(\eqref{field eq00}-\eqref{mass eq SS}\) does not contain the time derivative of \(\mu\), so we need some additional estimates. 
\begin{lemma}\label{mu lemma}
	Under the bootstrap assumption \eqref{bootstrap}, if \(k\le l\), then we have
	\begin{align*}
	|\mu(t,r)|\lesssim 1\quad\quad
	\text{and}\quad\quad |\partial_{t}^{k}\mu(r,t)|\lesssim \scE_{l}^{\frac{1}{2}}(t).
	\end{align*}
\end{lemma}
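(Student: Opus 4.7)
The starting observation is that the Einstein--Euler system does not contain an evolution equation for $\mu$; instead, $\mu$ is determined (up to the boundary condition $\mu(\cdot,\infty)\equiv 0$) by the Hamiltonian constraint, which under spherical symmetry is the algebraic--integral identity \eqref{2.17}:
\begin{align*}
\mu'(r,t)=4\pi r e^{2\lambda(r,t)}\Big(p(r,t)+e^{2\lambda(r,t)}V^{2}(r,t)+\tfrac{1}{r^{3}}\!\!\int_{0}^{r}\!\!s^{2}\big[\rho(s,t)+e^{2\lambda}V^{2}(s,t)\big]\,ds\Big).
\end{align*}
For the first bound $|\mu|\lesssim 1$, I would integrate this from $r$ out to $+\infty$, noting that for $r\ge R(t)$ the integrand reduces to the Schwarzschild expression $m_{\mathrm{tot}}/(r^{2}-2m_{\mathrm{tot}}r)$ with $m_{\mathrm{tot}}$ conserved (Birkhoff), so the exterior contribution is explicitly $-\tfrac12\ln(1-2m_{\mathrm{tot}}/r)$, bounded uniformly under the assumption $|\lambda|\le C_{\lambda}$ (which keeps us away from the Buchdahl bound). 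The interior contribution is controlled by the bootstrap since $\mu'\sim r\cdot(\text{bounded quantities})$ is integrable on $[0,R(t)]$.

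For the second bound, I would differentiate the integral representation
\begin{align*}
\mu(r,t)=-\int_{r}^{\infty}\mu'(s,t)\,ds
\end{align*}
$k$ times in $t$. The crucial point is that outside the fluid $\mu'$ is time-independent (Schwarzschild with conserved mass), so $\partial_{t}^{k}\mu=0$ there, and when we split the spatial integral at $s=R(t)$ the Leibniz boundary terms from the two pieces cancel each other by continuity of $\mu'$ across $\partial\calB(t)$ (note: $p,V,m$ are continuous at the boundary even though $\rho$ jumps). Hence for $r\le R(t)$,
\begin{align*}
\partial_{t}^{k}\mu(r,t)=-\int_{r}^{R(t)}\partial_{t}^{k}\mu'(s,t)\,ds.
\end{align*}
I would then expand $\partial_{t}^{k}\mu'$ via the chain rule: the time derivatives falling on $\lambda$ are eliminated using \eqref{field eq01}, which expresses $\dot\lambda$ algebraically in $V$ and metric coefficients, while time derivatives of $V$ and $\sigma^{2}=1+\varepsilon+\sigma_{\kappa}^{2}$ are converted to material derivatives via the identity $\partial_{t}=\frac{1}{V^{0}}(D_{\vec V}-V\partial_{r})$. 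After this reduction, $\partial_{t}^{k}\mu'$ is a polynomial (with coefficients that are bounded under \eqref{bootstrap}) in quantities of the form $\partial_{r}^{a}D_{\vec V}^{b}V$ and $\partial_{r}^{a}D_{\vec V}^{b+1}\varepsilon$ with $a+b\le k\le l$, together with spatial integrals of such terms coming from $m$.

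To pass from $\partial_{t}^{k}\mu'$ to the desired $L^{\infty}$ bound on $\partial_{t}^{k}\mu$, I would distinguish two regimes. For $k\le l-1$, the Sobolev-type bound already recorded in the excerpt gives $L^{\infty}$ control of all factors by $\scE_{l}^{1/2}$, and the product (after absorbing the small quadratic terms using $\scE_{l}\le C_{l}\ll 1$) is pointwise bounded by $\scE_{l}^{1/2}$; integrating $|\partial_{t}^{k}\mu'|\lesssim s\cdot \scE_{l}^{1/2}$ on $[r,R(t)]$ produces the stated estimate. For $k=l$ the top-order factor is only in $L^{2}$, so I would use Cauchy--Schwarz with the weight $s^{2}$ coming from the prefactor $r$ in \eqref{2.17}:
\begin{align*}
\Big|\int_{r}^{R(t)}\partial_{t}^{l}\mu'(s,t)\,ds\Big|\lesssim \Big(\int_{0}^{R(t)}\!\!s^{2}|\partial_{t}^{l}\mu'|^{2}ds\Big)^{1/2}\cdot \Big(\int_{0}^{R(t)}\!\!\tfrac{1}{s^{2}}\cdot s^{2}\cdot s^{0}\,ds\Big)^{1/2},
\end{align*}
where the $r$-factor in $\mu'$ contributes an extra $s$ that removes the singularity at the origin, and the weighted $L^{2}$ norm is bounded by $\scE_{l}^{1/2}$ by Lemma~\ref{3.5.} after the conversion $\partial_{t}\rightsquigarrow \frac{1}{V^{0}}(D_{\vec V}-V\partial_{r})$. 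The main technical obstacle is the careful bookkeeping in this last reduction: ensuring that no term of order higher than $l$ in $D_{\vec V}$ or $\partial_{r}$ appears after commuting and converting all the $\partial_{t}$'s, and verifying that the non-top-order products generated by Leibniz's rule are genuinely quadratic-or-higher in perturbed quantities so that the smallness from \eqref{bootstrap} absorbs them into $\scE_{l}^{1/2}$.
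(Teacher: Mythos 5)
Your proposal follows essentially the same approach as the paper's own proof: integrate $\mu' = \frac{e^{2\lambda}-1}{2r}+4\pi re^{2\lambda}(p+e^{2\lambda}V^2)$ (the paper's \eqref{mu}, equivalent to your \eqref{2.17}) from $r$ to $\infty$ for the first bound, then differentiate in $t$, substituting \eqref{field eq01} for $\dot\lambda$ and $\partial_t = \tfrac{1}{V^0}(D_{\vec V}-V\partial_r)$ for the fluid derivatives, and close via weighted $L^2$/Sobolev. Your additional discussion of the exterior Schwarzschild region and the cancellation of the Leibniz boundary terms at $\partial\calB(t)$ is correct and makes explicit a step the paper passes over in one line. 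Two small inaccuracies are worth fixing, though. First, for $k=l$ the weighted $L^2$ bound you need, namely $\int_0^R s^2 |\partial_r^a D_{\vec V}^b V|^2\,ds \lesssim \scE_l(t)$ for $a+b\le l+1$, is by \emph{definition} part of $\scE_l(t)$; citing Lemma~\ref{3.5.} is wrong on two counts, since that lemma only treats $a+b\le l$ and its conclusion is the time-integrated bound $\scE_l(0)+\int_0^t\scE_l$, not the pointwise $\scE_l(t)$ that the stated estimate $|\partial_t^k\mu|\lesssim\scE_l^{1/2}(t)$ requires. Second, your displayed Cauchy--Schwarz with the $\tfrac{1}{s^2}\cdot s^2\cdot s^0$ factor is garbled: as written the two integrands do not multiply to $\partial_t^l\mu'$, and a literal pairing of $s\,\partial_t^l\mu'$ against $s^{-1}$ produces a divergent $\int s^{-2}\,ds$. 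The clean version, which matches your stated intent, is to factor $\partial_t^l\mu' = s\cdot Z$ using the explicit $r$-prefactor, then pair $sZ$ against $1$: $\big|\int_r^R sZ\,ds\big|\le\big(\int_0^R s^2|Z|^2\,ds\big)^{1/2}R^{1/2}\lesssim\scE_l^{1/2}(t)$. With those corrections your argument is sound.
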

\begin{proof}
	For the first estimate, we consider the equation \(\eqref{field eq11}\) by recasting the following form
	\begin{align}\label{mu}
	\mu'(t,r)=\frac{e^{2\lambda}-1}{2r}+4\pi re^{2\lambda}(p+e^{2\lambda}V^{2}).
	\end{align}
	By integrating \eqref{mu} and using the boundary condition \(\eqref{boundary condition field}\), we get
	\begin{align*}
	\mu(t,r)=-\int_{r}^{\infty}\left( \frac{e^{2\lambda}-1}{2s}+4\pi se^{2\lambda}(p+e^{2\lambda}V^{2})\right) ds.
	\end{align*}
	When \(r\ge R\), \(p(t,r)=V(t,r)=0\) and therefore we have
	\begin{align}\label{mu1}
	\mu(t,r)=-\int_{R}^{\infty}\left( \frac{e^{2\lambda}-1}{2s}\right) ds-\int_{r}^{R}\left(\frac{e^{2\lambda}-1}{2s}+ 4\pi se^{2\lambda}(p+e^{2\lambda}V^{2})\right) ds.
	\end{align}
	As mentioned in Lemma \ref{2.4..}, we can show that the first term of the right-hand side of \(\eqref{mu1}\) is bounded. We omit the details. Combined with \(\eqref{bootstrap}\) and \(\eqref{mu1}\), we prove the first estimate in Lemma \(\ref{mu lemma}\). Then we differentiate the time derivative with respect to equation \(\eqref{mu}\) to obtain
	\begin{align*}
	(\dot{\mu})'(t,r)=-4\pi e^{\mu+4\lambda}\left\langle V\right\rangle V+4\pi r e^{\mu}\frac{D_{\vec{V}}(e^{2\lambda}p+e^{4\lambda}V^{2})}{\left\langle V\right\rangle}-4\pi r e^{\mu}\frac{V\partial_{r}(e^{2\lambda}p+e^{4\lambda}V^{2})}{\left\langle V\right\rangle}.
	\end{align*}
	According to \eqref{boundary condition field} and \(\eqref{bootstrap}\), we integrate the above equation to get
	\begin{align*}
	|\dot{\mu}(r,t)|\lesssim \scE_{l}^{\frac{1}{2}}(t)\lesssim 1,
	\end{align*}
	where we have used the Sobolev embedding inequality and the Hardy inequality. For \(1< k\le l\), we only need to differentiate the \(k\) time derivative about equation \(\eqref{mu}\) and use a similar method as before, which completes the proof of Lemma \ref{mu lemma}.
\end{proof}

\begin{proof}[Proof of Proposition \(\eqref{3.2.}\)]
	we will use the induction argument on the order \(p\) of \(\partial_{r}^{p}\), starting with the fluid quantities \(V\) and \(\varepsilon\). When \(p=0,1\) the estimates for \(\partial_{r}^{p}D_{\vec{V}}^{k}V\) and \(\partial_{r}^{p}D_{\vec{V}}^{k+1}\varepsilon\) follow from the definition of the energies. Now we assume that the estimate holds for orders less or
	equal to \(1\le p\le l\), that is
	\begin{align}\label{p}
		\sum_{q\le p}\sum_{q+k\le l+1} \int_{0}^{R}r^{2}|\partial_{r}^{q}D_{\vec{V}}^{k}V|^{2}dr+\sum_{q\le p}\sum_{q+k\le l+1}\int_{0}^{R}r^{2}|\partial_{r}^{q}D_{\vec{V}}^{k+1}\varepsilon|^{2}dr\lesssim \calE_{l}(t)+\scE_{l}(0)+\int_{0}^{t}\scE_{l}^{\frac{3}{2}}(s)ds,
	\end{align}
	and prove the estimates for \(p+1\), that is,
	\begin{align}\label{p+1}
		\sum_{k\le l-p} \int_{0}^{R}r^{2}|\partial_{r}^{p+1}D_{\vec{V}}^{k}V|^{2}dr+\sum_{k\le l-p}\int_{0}^{R}r^{2}|\partial_{r}^{p+1}D_{\vec{V}}^{k+1}\varepsilon|^{2}dr\lesssim \calE_{l}(t)+\scE_{l}(0)+\int_{0}^{t}\scE_{l}^{\frac{3}{2}}(s)ds.
	\end{align}
	We start with the estimate for \(\Arrowvert\partial_{r}^{p+1}D_{\vec{V}}^{k+1}\varepsilon\Arrowvert^{2}_{L^{2}(\calB(t))}\). Applying Lemma \(\ref{3.7.}\) to \(\phi:=\partial_{r}^{p-1}D_{\vec{V}}^{k+1}\varepsilon\) we need to estimate \(\Arrowvert A\partial_{r}^{p-1}D_{\vec{V}}^{k+1}\varepsilon\Arrowvert^{2}_{L^{2}(\calB(t))}\). Using the notation of Lemma \ref{2.5.}, we have
	\begin{align}\label{ss}
	\begin{split}
	\int_{0}^{R}r^{2}|\partial_{r}^{p+1}D_{\vec{V}}^{k+1}\varepsilon|^{2}dr\lesssim&\int_{0}^{R}r^{2}|\partial_{r}^{p}D_{\vec{V}}^{k+1}\varepsilon|^{2}dr+\int_{0}^{R}r^{2}\left| [\square,\partial_{r}^{p-1}]D_{\vec{V}}^{k+1}\varepsilon\right| ^{2}dr+
	\int_{0}^{R}r^{2}|\partial_{r}(\mu'\partial_{r}D_{\vec{V}}^{k+1}\varepsilon)|^{2}dr\\
	&+\int_{0}^{R}r^{2}\left| R_{\partial_{r}^{p-1}D_{\vec{V}}\varepsilon}\right| ^{2}dr+\int_{0}^{R}r^{2}\left| \partial_{r}^{p-1}\left( H_{k}+\frac{1}{r}H_{k-1}\right) \right| ^{2}dr.
		\end{split}
	\end{align}
	Except for the last term on the right-hand side of \(\eqref{ss}\), all the terms are bounded by the right-hand side of \(\eqref{p+1}\) using the induction hypothesis \(\eqref{p}\). In view of Lemma \ref{2.5.} and the Hardy inequalities, the remainder terms in the last term on the right-hand side of \eqref{ss} are
	\begin{align*}
	\int_{0}^{R}r^{2}|\varpi\cdot\partial_{r}^{p+1}D_{\vec{V}}^{k}\varepsilon|^{2}dr,\quad \int_{0}^{R}r^{2}|\varpi\cdot\partial_{r}^{p+1}D_{\vec{V}}^{k-1}V|^{2}dr,
	\end{align*}
	where \(\Arrowvert\varpi\Arrowvert_{\infty}\lesssim\scE_{l}^{\frac{1}{2}}(t)\), and the all other term appearing in \eqref{ss} can be bounded by the right-hand side of \(\eqref{p+1}\) using the induction hypothesis \(\eqref{p}\). For the two top order terms above, since \(p+1+k-1\le l\) we can use Lemma \ref{3.5.} to bound these terms by the right-hand side of \(\eqref{p+1}\) as well. The estimate for \(\Arrowvert\partial_{r}^{p+1}D_{\vec{V}}^{k}V\Arrowvert^{2}_{L^{2}(\calB(t))}\) is similar, so we omit the details. Based on the argument inductively, we finally obtain
	\begin{align*}
	\sum_{p+k\le l+1} \int_{0}^{R}r^{2}|\partial_{r}^{p}D_{\vec{V}}^{k}V|^{2}dr+\sum_{p+k\le l+1}\int_{0}^{R}r^{2}|\partial_{r}^{p}D_{\vec{V}}^{k+1}\varepsilon|^{2}dr\lesssim \calE_{l}(t)+\scE_{l}(0)+\int_{0}^{t}\scE_{l}^{\frac{3}{2}}(s)ds,
	\end{align*}
	which completes the proof of Proposition \ref{3.2.}.
\end{proof}
The following we develop the necessary nonlinear energy estimates for sufficiently small perturbed solutions \((V,\varepsilon)\) to the Einstein-Euler system \eqref{field eq00}-\eqref{mass eq SS}. We start by recording a general multiplier identity for the wave equation. Let \(Q=Q^{\alpha}\nabla_{\alpha}\) be an arbitrary first order multiplier. Then a direct calculation shows that
\begin{align}\label{key id}
(\square\phi)(Q\phi)=\nabla_{\alpha}((Q\phi)(\nabla^{\alpha}\phi)-\frac{1}{2}Q^{\alpha}(\nabla_{\beta}\phi)(\nabla^{\beta}\phi))+\frac{1}{2}(\nabla_{\alpha}Q^{\alpha})(\nabla_{\beta}\phi)(\nabla^{\beta}\phi)-(\nabla^{\alpha}Q^{\beta})(\nabla_{\alpha}\phi)(\nabla_{\beta}\phi).
\end{align}
The next energy estimate is used for \(D_{\vec{V}}^{k}V\).
\begin{lemma}\label{3.8.}
	Under the bootstrap assumption \eqref{bootstrap}, for any \(k\le l\) we have
	\begin{align}\label{estimate1}
	\begin{split}
	E[D_{\vec{V}}^{k}V,t]
	\lesssim&\scE_{l}(0)+\int_{0}^{t}\int_{0}^{R}r^{2}\left( F_{k}+\frac{1}{r}F_{k-1}\right) \left( D_{\vec{V}}^{k+1}V\right) drds+\int_{0}^{t}R^{2}\left( f_{k}\right) \left( D_{\vec{V}}^{k+1}V\right) (R)ds\\
	&+\int_{0}^{t}\int_{0}^{R}r^{2}|\partial_{r}D_{\vec{V}}^{k}V|^{2}drds+\int_{0}^{t}\scE_{l}^{\frac{3}{2}}(s)ds.
	\end{split}
	\end{align}
\end{lemma}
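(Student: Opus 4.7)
The plan is to apply the multiplier identity \eqref{key id} to the interior wave equation $\square(D_{\vec V}^k V) = F_k + r^{-1} F_{k-1}$ from Lemma \ref{2.4.}, with multiplier $Q = \vec V$ (so $Q\phi = D_{\vec V}^{k+1}V$), and then integrate over the spacetime slab $\Omega_t := \bigcup_{0\le s\le t}\{s\}\times \calB(s)$. After applying the divergence theorem, the resulting terms fall into three categories: (i) boundary terms on the time slices $\{s=0\}$ and $\{s=t\}$ that, using $V^{0}>0$ bounded away from zero, are coercive and equivalent to $\int_{\calB(s)}r^{2}|\partial_{t,r}D_{\vec V}^{k}V|^{2}dr$; (ii) a boundary flux along the lateral boundary $\bigcup_{s}\{s\}\times \partial \calB(s)$ of the form $\int_{0}^{t}\!\int_{\partial\calB(s)}(D_{n}D_{\vec V}^{k}V)(D_{\vec V}^{k+1}V)\,dS\,ds$ plus lower order terms; (iii) the bulk forcing $\int_{\Omega_{t}}r^{2}(F_{k}+r^{-1}F_{k-1})(D_{\vec V}^{k+1}V)\,drds$, which is exactly the first forcing term in the claim.

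The deformation error $(\nabla Q)(\nabla\phi)(\nabla\phi)$ produces a bulk integrand of schematic form $\varpi\cdot|\partial_{t,r}D_{\vec V}^{k}V|^{2}$ with $\|\varpi\|_{\infty}\lesssim \scE_{l}^{1/2}(s)$, which after bootstrap yields the $\int_{0}^{t}\scE_{l}^{3/2}(s)\,ds$ contribution on the right; the pieces of this error coming from $\nabla_{r}Q^{r}$ rather than full spacetime divergences can only be bounded by $\int r^{2}|\partial_{r}D_{\vec V}^{k}V|^{2}dr$, which is why that term explicitly survives in \eqref{estimate1}.

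The key step is the reduction of the lateral boundary flux. On $\partial \calB(s)$ I use the boundary equation $(D_{\vec V}^{2}+\tfrac{a}{2}D_{n})D_{\vec V}^{k}V=f_{k}$ from Lemma \ref{2.3.} to write $D_{n}D_{\vec V}^{k}V=\tfrac{2}{a}\bigl(f_{k}-D_{\vec V}^{2}D_{\vec V}^{k}V\bigr)$. Substituting, the $f_{k}$ piece gives directly the $\int_{0}^{t}R^{2}f_{k}(D_{\vec V}^{k+1}V)(R)\,ds$ term in the claim (using that $dS\sim R^{2}$ on the sphere). For the remaining piece, I use the algebraic identity $2(D_{\vec V}^{2}D_{\vec V}^{k}V)(D_{\vec V}^{k+1}V)=D_{\vec V}|D_{\vec V}^{k+1}V|^{2}$. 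Since $\vec V$ is future timelike and tangent to $\partial\calB$ (because $u|_{\partial\calB}\in T\partial\calB$ and $\sigma^{2}\equiv 1$ there), $D_{\vec V}$ is a derivative along the boundary in the time direction, so this integrand is a total time derivative of the weighted density $\tfrac{1}{a}|D_{\vec V}^{k+1}V|^{2}$. Integrating in $s$ produces the boundary energy $R^{2}|D_{\vec V}^{k+1}V|^{2}(R)$ at $s=t$ and $s=0$, the former being incorporated into $E[D_{\vec V}^{k}V,t]$ on the left-hand side and the latter into $\scE_{l}(0)$. Errors from $D_{\vec V}(1/a)$ and from the time-dependence of the induced boundary volume element are controlled by $\scE_{l}^{1/2}$ and absorbed into the $\scE_{l}^{3/2}$ term.

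The main obstacle lies in the second paragraph's accounting: matching the lateral boundary flux from the interior integration with the reduction supplied by the boundary equation, and verifying that the weighted derivative $\tfrac{1}{a}D_{\vec V}|D_{\vec V}^{k+1}V|^{2}$ truly integrates along $\partial\calB$ to the boundary part of the desired energy. This rests on two structural facts from Section \ref{sec5}: first, $a=\sqrt{\nabla_{\alpha}\sigma^{2}\nabla^{\alpha}\sigma^{2}}$ is uniformly bounded above and below under the bootstrap assumptions \eqref{bootstrap} (since $\nabla\sigma^{2}$ is spacelike and nonvanishing at $\partial\calB$, as $\sigma^{2}|_{\partial\calB}\equiv 1$ is constant while $\sigma_{\kappa}^{2}$ has $\mu_{\kappa}'\neq 0$), and second, $\vec V$ is smoothly tangent to $\partial\calB$ so that $D_{\vec V}$ genuinely descends to a derivation on $\partial\calB$. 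Given these, combining (i)--(iii) with the boundary reduction and absorbing errors into the $\scE_{l}^{3/2}$ and $\int r^{2}|\partial_{r}D_{\vec V}^{k}V|^{2}$ terms produces \eqref{estimate1}.
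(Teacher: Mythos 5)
Your proposal follows essentially the same route as the paper's proof: the paper sets up two integral identities---one obtained by multiplying the boundary equation from Lemma~\ref{2.3.} by $a^{-1}D_{\vec V}^{k+1}V$ and integrating over $\partial\calB$, and one from the bulk multiplier $Q=\vec V$ via \eqref{key id}---and then adds them so that the $\int_0^t\int_{\partial\calB(\tau)}(D_n D_{\vec V}^k V)(D_{\vec V}^{k+1}V)$ cross-terms cancel, leaving the boundary energy $\int_{\partial\calB(t)}\frac{V^0}{a}(D_{\vec V}^{k+1}V)^2$, the forcing $\frac{2}{a}f_k D_{\vec V}^{k+1}V$, and the deformation/lower-order errors. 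Your algebraic substitution of $D_n D_{\vec V}^k V=\frac{2}{a}(f_k - D_{\vec V}^{k+2}V)$ into the lateral flux followed by recognizing $\frac{2}{a}(D_{\vec V}^{k+2}V)(D_{\vec V}^{k+1}V)=\frac{1}{a}D_{\vec V}|D_{\vec V}^{k+1}V|^2$ is exactly the same cancellation written in a different order, and the remaining accounting (Taylor sign coercivity, $\slashed{\div}\,V$ and $D_{\vec V}a$ errors absorbed into $\scE_l^{3/2}$, surviving $\int r^2|\partial_r D_{\vec V}^k V|^2$ piece) matches the paper's.
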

\begin{proof}
	Multiplying the equation \eqref{fk} by \(a^{-1}D_{\vec{V}}^{k+1}V\) we get
	\begin{align*}
	\frac{1}{2}D_{\vec{V}}\left( \frac{1}{a}(D_{\vec{V}}^{k+1}V)^{2}\right) +\frac{1}{2}(D_{n}D_{\vec{V}}^{k}V)(D_{\vec{V}}^{k+1}V)=\frac{1}{a}f_{k}D_{\vec{V}}^{k+1}V-\frac{1}{2a^{2}}(D_{\vec{V}}a)(D_{\vec{V}}^{k+1}V)^{2},
	\end{align*}
	which upon integration over \(\partial\calB=\cup_{t\in[0,T]}\partial\calB(t)\) gives
	\begin{align}\label{key 1}
	\begin{split}
	&\int_{\partial\calB(t)}\frac{V^{0}}{a}(D_{\vec{V}}^{k+1}V)^{2}dS_{g}+\int_{0}^{t}\int_{\partial\calB(\tau)}(D_{n}D_{\vec{V}}^{k}V)(D_{\vec{V}}^{k+1}V)dS_{g}d\tau\\
	=&\int_{\partial\calB(0)}\frac{V^{0}}{a}(D_{\vec{V}}^{k+1}V)^{2}dS_{g}+\int_{0}^{t}\int_{\partial\calB(\tau)}\frac{2}{a}f_{k}D_{\vec{V}}^{k+1}VdS_{g}d\tau\\
	&-\int_{0}^{t}\int_{\partial\calB(\tau)}\frac{1}{a^{2}}(D_{\vec{V}}a)(D_{\vec{V}}^{k+1}V)^{2}dS_{g}d\tau+\int_{0}^{t}\int_{\partial\calB(\tau)}\frac{\div\mkern-17.5mu\slash\ \ V}{a}(D_{\vec{V}}^{k+1}V)^{2}dS_{g}d\tau,
	\end{split}
	\end{align}
	where \(\div\mkern-17.5mu\slash\ \) denotes the divergence operator on \(\partial\calB\). To treat the second term on the left, we integrate \(\eqref{key id}\) with \(Q=V,\phi=D_{\vec{V}}^{k}V\) over \(\cup_{t\in[0,T]}\calB(t)\). Using the fact
	that \(V\) is tangent to \(\partial\calB\), we get
	\begin{align}\label{key 2}
	\begin{split}
	&\int_{\calB(t)}\left( (D_{\vec{V}}^{k+1}V)(\nabla_{0}D_{\vec{V}}^{k}V)+\frac{V^{0}}{2}(\nabla^{\alpha}D_{\vec{V}}^{k}V)(\nabla_{\alpha}D_{\vec{V}}^{k}V)\right) dx_{g}-\int_{0}^{t}\int_{\partial\calB(\tau)}(D_{\vec{V}}^{k+1}V)(D_{n}D_{\vec{V}}^{k}V)dS_{g}d\tau\\
	=&\int_{\calB(0)}\left( (D_{\vec{V}}^{k+1}V)(\nabla_{0}D_{\vec{V}}^{k}V)+\frac{V^{0}}{2}(\nabla^{\alpha}D_{\vec{V}}^{k}V)(\nabla_{\alpha}D_{\vec{V}}^{k}V)\right) dx_{g}-\int_{0}^{t}\int_{\calB(\tau)}(\square D_{\vec{V}}^{k}V)(D_{\vec{V}}^{k+1}V)dx_{g}d\tau\\
	&\int_{0}^{t}\int_{\calB(\tau)}\frac{1}{2}(\nabla_{\alpha}V^{\alpha})(\nabla^{\beta}D_{\vec{V}}^{k}V)(\nabla_{\beta}D_{\vec{V}}^{k}V)dx_{g}d\tau-\int_{0}^{t}\int_{\calB(\tau)}(\nabla^{\alpha}V^{\beta})(\nabla_{\alpha}D_{\vec{V}}^{k}V)(\nabla_{\beta}D_{\vec{V}}^{k}V)dx_{g}d\tau.
	\end{split}
	\end{align}
	By adding \(\eqref{key 2}\) to \(\eqref{key 1}\), we have
	\begin{align}\label{key 3}
	\begin{split}
	&\int_{\calB(t)}\left( (D_{\vec{V}}^{k+1}V)(\nabla_{0}D_{\vec{V}}^{k}V)+\frac{V^{0}}{2}(\nabla^{\alpha}D_{\vec{V}}^{k}V)(\nabla_{\alpha}D_{\vec{V}}^{k}V)\right) dx_{g}+\int_{\partial\calB(t)}\frac{V^{0}}{a}(D_{\vec{V}}^{k+1}V)^{2}dS_{g}\\
	=&\int_{\calB(0)}\left( (D_{\vec{V}}^{k+1}V)(\nabla_{0}D_{\vec{V}}^{k}V)+\frac{V^{0}}{2}(\nabla^{\alpha}D_{\vec{V}}^{k}V)(\nabla_{\alpha}D_{\vec{V}}^{k}V)\right) dx_{g}+\int_{\partial\calB(0)}\frac{V^{0}}{a}(D_{\vec{V}}^{k+1}V)^{2}dS_{g}\\
	&-\int_{0}^{t}\int_{\calB(\tau)}\left( F_{k}+\frac{1}{r}F_{k-1}\right) (D_{\vec{V}}^{k+1}V)dx_{g}d\tau-\int_{0}^{t}\int_{\calB(\tau)}(\nabla^{\alpha}V^{\beta})(\nabla_{\alpha}D_{\vec{V}}^{k}V)(\nabla_{\beta}D_{\vec{V}}^{k}V)dx_{g}d\tau\\
	&+\int_{0}^{t}\int_{\partial\calB(\tau)}\frac{2}{a}f_{k}D_{\vec{V}}^{k+1}VdS_{g}d\tau-\int_{0}^{t}\int_{\partial\calB(\tau)}\frac{1}{a^{2}}(D_{\vec{V}}a)(D_{\vec{V}}^{k+1}V)^{2}dS_{g}d\tau+\int_{0}^{t}\int_{\partial\calB(\tau)}\frac{\div\mkern-17.5mu\slash\ \ V}{a}(D_{\vec{V}}^{k+1}V)^{2}dS_{g}d\tau,
	\end{split}
	\end{align}	
	where we have used \(\eqref{re mass eq}\) and \(\eqref{Fk}\). Because the vectorfield \(V\) is timelike and future-directed, according to the positivity of energy-momentum tensor, the first term on the left-hand side of \eqref{key 3} satisfies
	\begin{align*}
	(D_{\vec{V}}^{k+1}V)(\nabla_{0}D_{\vec{V}}^{k}V)+\frac{V^{0}}{2}(\nabla^{\alpha}D_{\vec{V}}^{k}V)(\nabla_{\alpha}D_{\vec{V}}^{k}V)\gtrsim|\partial_{t,r}D_{\vec{V}}^{k}V|^{2}.
	\end{align*}
	For the steady-state solution to \eqref{1steady field eq00}-\eqref{1steady momentum eq}, we have the following \textit{Taylor sign condition}
	\begin{align}\label{taylor sign condition}
	\nabla_{\calN}\sigma_{\kappa}^{2}\le -a_{0}< 0, \quad \text{on}\quad \partial\calB,\quad\quad \text{where}\quad \nabla_{\calN}=\calN^{\alpha}\partial_{\alpha}.
	\end{align}
	Since the solution we shall construct in this paper is a small perturbation of the steady-state solution, the condition \eqref{taylor sign condition} holds also for the perturbed solution. Therefore \(a\) must be positive and the left-hand side of \eqref{key 3} controls
	\begin{align*}
	\int_{\calB(t)}|\partial_{t,r}D_{\vec{V}}^{k}V|^{2}dx_{g}+\int_{\partial\calB(t)}|D_{\vec{V}}^{k+1}V|^{2}dS_{g}\gtrsim	E[D_{\vec{V}}^{k}V,t],
	\end{align*}
	which completes the proof of Lemma \ref{3.8.} through some direct calculations.
\end{proof}

The following lemma is the main energy identity for \(D_{\vec{V}}^{k+1}\varepsilon\).
\begin{lemma}\label{3.9.}
 Suppose \(Q=V-\nu n\) with \(\nu>0\) sufficiently small, then the following energy identity holds
	\begin{align}\label{estimate2}
	\begin{split}
	&\int_{\calB(t)}\left[ -(QD_{\vec{V}}^{k+1}\varepsilon)(\nabla^{0}D_{\vec{V}}^{k+1}\varepsilon)+\frac{Q^{0}}{2}(\nabla_{\alpha}D_{\vec{V}}^{k+1}\varepsilon)(\nabla^{\alpha}D_{\vec{V}}^{k+1}\varepsilon)\right] dx_{g}+\int_{0}^{t}\int_{\partial\calB(\tau)}\frac{1}{2}\nu(nD_{\vec{V}}^{k+1}\varepsilon)^{2}dS_{g}d\tau\\
	=&\int_{\calB(0)}\left[ -(QD_{\vec{V}}^{k+1}\varepsilon)(\nabla^{0}D_{\vec{V}}^{k+1}\varepsilon)+\frac{Q^{0}}{2}(\nabla_{\alpha}D_{\vec{V}}^{k+1}\varepsilon)(\nabla^{\alpha}D_{\vec{V}}^{k+1}\varepsilon)\right] dx_{g}-\int_{0}^{t}\int_{\calB(\tau)}\left( H_{k}+\frac{1}{r}H_{k-1}\right) (QD_{\vec{V}}^{k+1}\varepsilon)dx_{g}d\tau\\
	&+\int_{0}^{t}\int_{\calB(\tau)}\left[ -(\nabla^{\alpha}Q^{\beta})(\nabla_{\alpha}D_{\vec{V}}^{k+1}\varepsilon)(\nabla_{\beta}D_{\vec{V}}^{k+1}\varepsilon)-2e^{-2\lambda}\mu'(\partial_{r}D_{\vec{V}}^{k+1}\varepsilon)(QD_{\vec{V}}^{k+1}\varepsilon)\right] dx_{g}d\tau\\
	&+\int_{0}^{t}\int_{\calB(\tau)}\frac{1}{2}(\nabla_{\alpha}Q^{\alpha})(\nabla_{\beta}D_{\vec{V}}^{k+1}\varepsilon)(\nabla^{\beta}D_{\vec{V}}^{k+1}\varepsilon)dx_{g}d\tau+\int_{0}^{t}\int_{\calB(\tau)}14e^{-2\lambda}\mu'(\partial_{r}D_{\vec{V}}^{k+1}\varepsilon)(QD_{\vec{V}}^{k+1}\varepsilon)dx_{g}d\tau.
	\end{split}
	\end{align}
\end{lemma}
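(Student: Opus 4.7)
The proof strategy parallels Lemma 3.8 but is adapted to a wave equation with Dirichlet (rather than Robin) boundary data. The starting point is Lemma 2.5, which gives
$$\square \phi = -12\, e^{-2\lambda}\mu'\,\partial_{r}\phi + H_{k} + \tfrac{1}{r}H_{k-1}, \qquad \phi := D_{\vec{V}}^{k+1}\varepsilon,$$
together with the homogeneous Dirichlet condition $\phi|_{\partial\calB} = 0$. The latter is inherited inductively: $\varepsilon|_{\partial\calB} = 0$ since $\sigma^{2}$ and $\sigma_{\kappa}^{2}\!\circ\eta^{-1}$ both equal $1$ on $\partial\calB$, and each application of $D_{\vec{V}}$ preserves this vanishing because $\vec{V}$ is tangent to $\partial\calB$. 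A crucial consequence is that the tangential derivatives of $\phi$ vanish on $\partial\calB$, so $\nabla_{\alpha}\phi = (n\phi)\,n_{\alpha}$ and $|\nabla\phi|^{2} = (n\phi)^{2}$ there.

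The plan is to apply the multiplier identity \eqref{key id} with this $\phi$ and with $Q = V - \nu n$, and to integrate over the spacetime slab $\bigcup_{\tau\in[0,t]}\calB(\tau)$ via the divergence theorem. The fluxes through the two time slices assemble, using the form of the metric \eqref{metric}, into the energy-type quantities on the first line of the left- and right-hand sides of \eqref{estimate2}. The lateral flux through $\bigcup_{\tau\in[0,t]}\partial\calB(\tau)$ reduces to
$$\int_{0}^{t}\!\int_{\partial\calB(\tau)}\!\Bigl[(Q\phi)(n\phi) - \tfrac{1}{2}(Q^{\alpha}n_{\alpha})(n\phi)^{2}\Bigr] dS_{g}\,d\tau.$$
The first summand vanishes by the Dirichlet condition; the second is computed using $g(\vec{V},n)=0$ (since $\vec{V}$ is tangent to $\partial\calB$) and $g(n,n)=1$, yielding $Q^{\alpha}n_{\alpha} = -\nu$. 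Thus the lateral flux collapses to the sign-definite quantity $\tfrac{\nu}{2}\int_{0}^{t}\!\int_{\partial\calB(\tau)}(n\phi)^{2} dS_{g}\,d\tau$ that appears on the left of \eqref{estimate2}. This is exactly what motivates the choice $Q = V - \nu n$: a purely tangential multiplier would produce no boundary gain, whereas the small transverse component $-\nu n$ extracts quantitative control of the normal trace $n\phi$.

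The bulk side assembles from three sources: the deformation-tensor term $-(\nabla^{\alpha}Q^{\beta})(\nabla_{\alpha}\phi)(\nabla_{\beta}\phi)$ and the divergence term $\tfrac{1}{2}(\nabla_{\alpha}Q^{\alpha})|\nabla\phi|^{2}$ from \eqref{key id}; the source $-(H_{k}+r^{-1}H_{k-1})(Q\phi)$ from substituting the wave equation; and a first-order contribution $12\,e^{-2\lambda}\mu'(\partial_{r}\phi)(Q\phi)$ from that same substitution. The last is algebraically split as $-2\,e^{-2\lambda}\mu'(\partial_{r}\phi)(Q\phi)$ (grouped with the deformation-tensor bracket on the third line of the right-hand side of \eqref{estimate2}, so that the pairing has a structurally useful form) plus $+14\,e^{-2\lambda}\mu'(\partial_{r}\phi)(Q\phi)$ (kept as an explicit lower-order remainder, to be absorbed by Young's inequality together with the Hardy and elliptic bounds of Section 6). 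Collecting all contributions produces \eqref{estimate2}.

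The main technical difficulty is the careful accounting of signs under the divergence theorem, given that $Q$ mixes a timelike/null vector $\vec{V}$ with a spacelike normal $n$, and that the three boundary faces of the spacetime slab have normals of different causal character. The identities $g(\vec{V},n)=0$ (encoding that $\partial\calB(t)$ is swept out by integral curves of $\vec{V}$) together with the Taylor-sign condition \eqref{taylor sign condition} inherited from the background steady state are precisely what allow the sign-definite coefficient $\nu/2$ to be extracted at the boundary while keeping the remaining bulk terms in a form that can be controlled at the next stage of the argument.
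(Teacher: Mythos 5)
Your overall strategy---integrating the multiplier identity \eqref{key id} with $\phi = D_{\vec V}^{k+1}\varepsilon$ and $Q = V - \nu n$ over the spacetime slab, using the Dirichlet condition at $\partial\calB$ to simplify the lateral flux, and substituting the wave equation of Lemma~\ref{2.5.} to produce the source term and the $12\,e^{-2\lambda}\mu'\partial_r\phi$ contribution, split as $-2 + 14$---is exactly the paper's approach and is correctly described in the bulk.

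There is, however, a concrete error in the boundary computation. You claim the first summand $(Q\phi)(n\phi)$ of the lateral flux vanishes by the Dirichlet condition. This contradicts your own (correct) observation that $\nabla_\alpha\phi = (n\phi)\,n_\alpha$ on $\partial\calB$: from it, $Q\phi = Q^\alpha\nabla_\alpha\phi = (Q^\alpha n_\alpha)(n\phi) = -\nu\,(n\phi)$, so only the tangential part $V\phi$ vanishes, while the normal part $-\nu\,n\phi$ does not. Hence the first summand is $-\nu(n\phi)^2$, not zero, and the full bracket equals
\begin{equation*}
n_\alpha\bigl[(Q\phi)(\nabla^\alpha\phi) - \tfrac12 Q^\alpha(\nabla_\beta\phi)(\nabla^\beta\phi)\bigr] = -\nu(n\phi)^2 + \tfrac{\nu}{2}(n\phi)^2 = -\tfrac{\nu}{2}(n\phi)^2,
\end{equation*}
not $+\tfrac{\nu}{2}(n\phi)^2$ as you assert. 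The dropped term is the larger of the two, and the flux comes out with the opposite sign to the one you computed. Carried through the intermediate identity \eqref{key 4}, where the lateral integral enters with a minus sign, the correct value $-\tfrac{\nu}{2}(n\phi)^2$ gives the positive term $\tfrac{\nu}{2}\int(n\phi)^2$ on the left of \eqref{estimate2}; your value would produce the wrong sign. The step should be repaired by evaluating both summands through $Q\phi = -\nu\,n\phi$ on $\partial\calB$ before collecting.
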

\begin{proof}
	Integrating \(\eqref{key id}\) with \(\phi=D_{\vec{V}}^{k+1}\varepsilon\) over \(\cup_{t\in[0,T]}\calB(t)\), we get
	\begin{align}\label{key 4}
	\begin{split}
	&\int_{\calB(t)}\left[ -(QD_{\vec{V}}^{k+1}\varepsilon)(\nabla^{0}D_{\vec{V}}^{k+1}\varepsilon)+\frac{Q^{0}}{2}(\nabla_{\alpha}D_{\vec{V}}^{k+1}\varepsilon)(\nabla^{\alpha}D_{\vec{V}}^{k+1}\varepsilon)\right] dx_{g}\\
	&-\int_{0}^{t}\int_{\partial\calB(\tau)}n_{\alpha}\left( (QD_{\vec{V}}^{k+1}\varepsilon)(\nabla^{\alpha}D_{\vec{V}}^{k+1}\varepsilon)-\frac{1}{2}Q^{\alpha}(\nabla_{\alpha}D_{\vec{V}}^{k+1}\varepsilon)(\nabla^{\alpha}D_{\vec{V}}^{k+1}\varepsilon)\right) dS_{g}d\tau\\
	=&\int_{\calB(0)}\left[ -(QD_{\vec{V}}^{k+1}\varepsilon)(\nabla^{0}D_{\vec{V}}^{k+1}\varepsilon)+\frac{Q^{0}}{2}(\nabla_{\alpha}D_{\vec{V}}^{k+1}\varepsilon)(\nabla^{\alpha}D_{\vec{V}}^{k+1}\varepsilon)\right] dx_{g}-\int_{0}^{t}\int_{\calB(\tau)}(\square D_{\vec{V}}^{k+1}\varepsilon)(QD_{\vec{V}}^{k+1}\varepsilon)dx_{g}d\tau\\
	&+\int_{0}^{t}\int_{\calB(\tau)}\frac{1}{2}(\nabla_{\alpha}Q^{\alpha})(\nabla_{\beta}D_{\vec{V}}^{k+1}\varepsilon)(\nabla^{\beta}D_{\vec{V}}^{k+1}\varepsilon)dx_{g}d\tau-\int_{0}^{t}\int_{\calB(\tau)}(\nabla^{\alpha}Q^{\beta})(\nabla_{\alpha}D_{\vec{V}}^{k+1}\varepsilon)(\nabla_{\beta}D_{\vec{V}}^{k+1}\varepsilon)dx_{g}d\tau.
	\end{split}
	\end{align}
	Since \(D_{\vec{V}}^{k+1}\varepsilon\) is constant on \(\partial\calB\), we have
	\begin{align*}
	(\nabla_{\alpha}D_{\vec{V}}^{k+1}\varepsilon)(\nabla^{\alpha}D_{\vec{V}}^{k+1}\varepsilon)=(nD_{\vec{V}}^{k+1}\varepsilon)^{2},
	\end{align*}
	and
	\begin{align*}
	n_{\alpha}\left( (QD_{\vec{V}}^{k+1}\varepsilon)(\nabla^{\alpha}D_{\vec{V}}^{k+1}\varepsilon)-\frac{1}{2}Q^{\alpha}(\nabla_{\alpha}D_{\vec{V}}^{k+1}\varepsilon)(\nabla^{\alpha}D_{\vec{V}}^{k+1}\varepsilon)\right)=-\frac{1}{2}\nu(nD_{\vec{V}}^{k+1}\varepsilon)^{2}
	\end{align*}
	on \(\partial\calB\). Using the notation of Lemma \ref{2.5.}, we arrive at \eqref{estimate2}.
\end{proof}

Since \(Q\) is future-directed timelike, the first term on the left-hand side of \(\eqref{estimate2}\) satisfies
\begin{align*}
\int_{0}^{R}r^{2}|\partial_{t,r}D_{\vec{V}}^{k+1}\varepsilon|^{2}dr\lesssim\int_{\calB(t)}\left[ -(QD_{\vec{V}}^{k+1}\varepsilon)(\nabla^{0}D_{\vec{V}}^{k+1}\varepsilon)+\frac{Q^{0}}{2}(\nabla_{\alpha}D_{\vec{V}}^{k+1}\varepsilon)(\nabla^{\alpha}D_{\vec{V}}^{k+1}\varepsilon)\right] dx_{g}.
\end{align*}
Therefore the left-hand side of the energy identity \(\eqref{estimate2}\) controls
\begin{align*}
\bar{E}[D_{\vec{V}}^{k+1}\varepsilon,t]+\int_{0}^{t}R^{2}|\partial_{t,r}D_{\vec{V}}^{k+1}\varepsilon|^{2}(R)ds.
\end{align*}
When \(k=l\) the last term on the right-hand side of \(\eqref{estimate2}\) is equivalent to
\begin{align*}
\int_{0}^{t}\bar{E}[D_{\vec{V}}^{l+1}\varepsilon,s]ds,
\end{align*}
and cannot be controlled by the right-hand side of \(\eqref{3.3}\). In order to close the energy estimates, we introduce conformal metric \(\tilde{g}\) defined as
\begin{align*}
\tilde{g}=-e^{2\mu(t,r)}dt^{2}+e^{2\lambda(t,r)}dr^{2}+e^{14\mu}r^{2}(d\theta^{2}+\sin^{2}\theta d\varphi^{2}).
\end{align*}
Using the notation \(\tilde{\square}\) to represent the wave operator with respect to metric \(\tilde{g}\), a straightforward calculation shows that 
\begin{align*}
\tilde{\square}D_{\vec{V}}^{l+1}\varepsilon=\square D_{\vec{V}}^{l+1}\varepsilon+14e^{-2\lambda}\mu'\partial_{r}D_{\vec{V}}^{l+1}\varepsilon+ \textrm{non-linear terms},
\end{align*}
and this makes the last term on the right-hand side of \(\eqref{estimate2}\) can be eliminated, which is recorded in the following lemma.
\begin{lemma}\label{3.10.}
	Suppose \(Q=V-\nu n\) with \(\nu>0\) sufficiently small, then the following energy identity holds
		\begin{align}\label{estimate3}
	\begin{split}
	&\int_{\calB(t)}\left[ -(QD_{\vec{V}}^{l+1}\varepsilon)(\tilde{\nabla}^{0}D_{\vec{V}}^{l+1}\varepsilon)+\frac{Q^{0}}{2}(\tilde{\nabla}_{\alpha}D_{\vec{V}}^{l+1}\varepsilon)(\tilde{\nabla}^{\alpha}D_{\vec{V}}^{l+1}\varepsilon)\right] dx_{\tilde{g}}+\int_{0}^{t}\int_{\partial\calB(\tau)}\frac{1}{2}\nu(nD_{\vec{V}}^{l+1}\varepsilon)^{2}dS_{\tilde{g}}d\tau\\
	=&\int_{\calB(0)}\left[ -(QD_{\vec{V}}^{l+1}\varepsilon)(\tilde{\nabla}^{0}D_{\vec{V}}^{l+1}\varepsilon)+\frac{Q^{0}}{2}(\tilde{\nabla}_{\alpha}D_{\vec{V}}^{l+1}\varepsilon)(\tilde{\nabla}^{\alpha}D_{\vec{V}}^{l+1}\varepsilon)\right] dx_{\tilde{g}}-\int_{0}^{t}\int_{\calB(\tau)}\left( \tilde{\square}D_{\vec{V}}^{l+1}\varepsilon\right) (QD_{\vec{V}}^{l+1}\varepsilon)dx_{\tilde{g}}d\tau\\
	&+\int_{0}^{t}\int_{\calB(\tau)}\left[ -(\tilde{\nabla}^{\alpha}Q^{\beta})(\tilde{\nabla}_{\alpha}D_{\vec{V}}^{l+1}\varepsilon)(\tilde{\nabla}_{\beta}D_{\vec{V}}^{l+1}\varepsilon)-2e^{-2\lambda}\mu'(\partial_{r}D_{\vec{V}}^{l+1}\varepsilon)(QD_{\vec{V}}^{l+1}\varepsilon)\right] dx_{\tilde{g}}d\tau\\
	&+\int_{0}^{t}\int_{\calB(\tau)}\frac{1}{2}(\tilde{\nabla}_{\alpha}Q^{\alpha})(\tilde{\nabla}_{\beta}D_{\vec{V}}^{l+1}\varepsilon)(\tilde{\nabla}^{\beta}D_{\vec{V}}^{l+1}\varepsilon)dx_{\tilde{g}}d\tau+\int_{0}^{t}\int_{\calB(\tau)} 2e^{-2\lambda}\mu'\left( \partial_{r}D_{\vec{V}}^{l+1}\varepsilon\right) (QD_{\vec{V}}^{l+1}\varepsilon)dx_{\tilde{g}}d\tau,
	\end{split}
	\end{align}
	where the notation \(\tilde{\nabla}\) denotes covariant derivative with respect to \(\tilde{g}\).
\end{lemma}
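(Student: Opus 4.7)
The strategy mirrors the proof of Lemma~\ref{3.9.}, but carries it out with respect to the conformal metric $\tilde{g}$ rather than $g$. The reason for introducing $\tilde{g}$ is precisely the arithmetic $-12 + 14 = 2$: the angular conformal factor $e^{14\mu}$ is chosen so that $\tilde{\square}$ differs from $\square$ by $14 e^{-2\lambda}\mu'\partial_r$ (modulo terms that can be absorbed into the nonlinear remainder), and this combines with the $-12 e^{-2\lambda}\mu'\partial_r D_{\vec{V}}^{l+1}\varepsilon$ appearing in the wave equation of Lemma~\ref{2.5.} so that the surviving coefficient $+2 e^{-2\lambda}\mu'$ is small enough in $\kappa$ to be absorbed later.

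First, I would apply the general multiplier identity \eqref{key id} to $\phi = D_{\vec{V}}^{l+1}\varepsilon$ with multiplier $Q = V - \nu n$, but now with all covariant derivatives, volume form, and wave operator taken with respect to $\tilde{g}$. Integrating over $\cup_{\tau\in[0,t]} \mathcal{B}(\tau)$ and applying the divergence theorem in the conformal metric yields an identity of exactly the form \eqref{estimate2} with every object decorated by a tilde and a stray $(\tilde{\square}D_{\vec{V}}^{l+1}\varepsilon)(QD_{\vec{V}}^{l+1}\varepsilon)$ in the bulk in place of $(\square D_{\vec{V}}^{l+1}\varepsilon)(QD_{\vec{V}}^{l+1}\varepsilon)$. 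The Dirichlet condition $D_{\vec{V}}^{l+1}\varepsilon = 0$ on $\partial\mathcal{B}$ forces $\tilde{\nabla}_\alpha D_{\vec{V}}^{l+1}\varepsilon \propto n_\alpha$ there, so $(\tilde{\nabla}_\alpha D_{\vec{V}}^{l+1}\varepsilon)(\tilde{\nabla}^\alpha D_{\vec{V}}^{l+1}\varepsilon) = (n D_{\vec{V}}^{l+1}\varepsilon)^2$ on $\partial\mathcal{B}$, and the boundary integrand reduces to $-\tfrac{1}{2}\nu (n D_{\vec{V}}^{l+1}\varepsilon)^2$, giving the claimed coercive boundary term (note that $n$ and $\partial\mathcal{B}$ itself are the same objects as before, though the induced surface measure is $dS_{\tilde{g}}$).

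Next, I would carry out the conformal wave-operator computation: writing $\tilde{g}^{\alpha\beta}$ and $\sqrt{-\tilde{g}}$ explicitly in terms of $g$ and the conformal factor $e^{14\mu}$ on the angular block, a direct calculation gives
\[
\tilde{\square}\,\psi \;=\; \square\psi \;+\; 14 e^{-2\lambda}\mu' \partial_r \psi \;+\; (\text{terms involving } \dot{\mu},\,\mu'',\,\text{etc.})\cdot\psi,
\]
for any scalar $\psi$, where the lower-order tail can be absorbed into the nonlinear remainder already controlled via Lemma~\ref{2.5.}. Substituting $\square D_{\vec{V}}^{l+1}\varepsilon = \tilde{\square}D_{\vec{V}}^{l+1}\varepsilon - 14 e^{-2\lambda}\mu'\partial_r D_{\vec{V}}^{l+1}\varepsilon + (\cdots)$ into the rewritten analogue of \eqref{estimate2} and reorganising the terms exactly as in the proof of Lemma~\ref{3.9.} produces the right-hand side displayed in \eqref{estimate3}: the coefficient of $e^{-2\lambda}\mu' (\partial_r D_{\vec{V}}^{l+1}\varepsilon)(Q D_{\vec{V}}^{l+1}\varepsilon)$ becomes $-12 + 14 = +2$, matching the last bulk term.

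The main obstacle is the bookkeeping in the conformal computation: one must track precisely which $\mu$-derivative terms are generated by the switch $g \rightsquigarrow \tilde{g}$ in $\sqrt{-\tilde{g}}$ and in the inverse metric on the angular block, and then check that everything produced aside from the clean coefficient $14 e^{-2\lambda}\mu'\partial_r$ is either a zeroth order term multiplying $D_{\vec{V}}^{l+1}\varepsilon$ (hence harmless) or can be merged into the $H_k$, $H_{k-1}$ structure of Lemma~\ref{2.5.}. Once this verification is complete, the identity \eqref{estimate3} follows by the same divergence-theorem argument as Lemma~\ref{3.9.} applied in the conformal geometry, and the cancellation $-12 + 14 = 2$ is what makes the later Gr\"onwall closure of the top-order $\bar{E}[D_{\vec{V}}^{l+1}\varepsilon,\cdot]$ energy possible.
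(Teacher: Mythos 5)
Your proposal is correct and follows essentially the same route as the paper, which proves Lemma \ref{3.10.} by repeating the argument of Lemma \ref{3.9.} with all covariant derivatives, the wave operator, the volume form, and the surface measure taken with respect to the conformal metric $\tilde{g}$, and then simplifying the boundary term via the Dirichlet condition. One small remark: as written, identity \eqref{estimate3} is just the $\tilde{g}$-version of the multiplier identity with the two $\pm 2e^{-2\lambda}\mu'(\partial_r D_{\vec{V}}^{l+1}\varepsilon)(Q D_{\vec{V}}^{l+1}\varepsilon)$ terms added and subtracted so that the statement mirrors \eqref{estimate2}; the cancellation $-12+14=2$ you describe (with $\tilde{\square}=\square+14e^{-2\lambda}\mu'\partial_r - 14e^{-2\mu}\dot\mu\,\partial_t$ and Lemma \ref{2.5.}) is what makes the lemma useful in Step 1 of the proof of Proposition \ref{3.1.}, rather than being how the right-hand side of \eqref{estimate3} is produced.
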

The proof of the above lemma is similar to the Lemma \(\ref{3.9.}\) and we omit the details. Then we turn to the proof of Proposition \(\ref{3.1.}\). 
\begin{proof}[Proof of Proposition \(\eqref{3.1.}\)]
	According to Proposition \(\ref{3.2.}\), we only need to show that \(\calE_{l}(t)\) is bounded by the right-hand side of \(\eqref{3.3}\), that is, 
	\begin{align}\label{3.12}
	E_{\le l}[V,t]+\bar{E}_{\le l+1}[\varepsilon,t]\le C_{0}\scE_{l}(0)+\int_{0}^{t}\varrho\scE_{l}(s)+C_{1}\scE_{l-1}(s)+C_{2}\scE_{l}^{\frac{3}{2}}(s)ds.
	\end{align}
	{\bf Step\ 1:} First we show that
	\begin{align}
	\bar{E}_{\le l+1}[\varepsilon,t]\le C_{0}\scE_{l}(0)+\int_{0}^{t}\varrho\scE_{l}(s)+C_{1}\scE_{l-1}(s)+C_{2}\scE_{l}^{\frac{3}{2}}(s)ds.
	\end{align}
	As we have discussed before, the method is to apply energy estimate \(\eqref{estimate2}\) for \(k\le l-1\) and energy estimate \(\eqref{estimate3}\) for \(k=l\). Using the Lemma \(\ref{3.9.}\), \(\bar{E}_{\le l}[\varepsilon,t]\) is directly bounded by the right-hand of \(\eqref{3.3}\). Therefore we only consider the most difficult case, that is \(k=l\). In view of Lemma \(\ref{3.10.}\), we have
	\begin{align}\label{3.13}
	\begin{split}
	\bar{E}[D_{\vec{V}}^{l+1}\varepsilon,t]\lesssim&\scE_{l}(0)+\left| \int_{0}^{t}\int_{\calB(\tau)}\left[ \tilde{\square}D_{\vec{V}}^{l+1}\varepsilon-2e^{-2\lambda}\mu'\left( \partial_{r}D_{\vec{V}}^{l+1}\varepsilon\right) \right] (QD_{\vec{V}}^{l+1}\varepsilon)dx_{\tilde{g}}d\tau\right| \\
		&+\left| \int_{0}^{t}\int_{\calB(\tau)}\left[ -(\tilde{\nabla}^{\alpha}Q^{\beta})(\tilde{\nabla}_{\alpha}D_{\vec{V}}^{l+1}\varepsilon)(\tilde{\nabla}_{\beta}D_{\vec{V}}^{l+1}\varepsilon)-2e^{-2\lambda}\mu'(\partial_{r}D_{\vec{V}}^{l+1}\varepsilon)(QD_{\vec{V}}^{l+1}\varepsilon)\right] dx_{\tilde{g}}d\tau\right| \\
	&+\left| \int_{0}^{t}\int_{\calB(\tau)}\frac{1}{2}(\tilde{\nabla}_{\alpha}Q^{\alpha})(\tilde{\nabla}_{\beta}D_{\vec{V}}^{l+1}\varepsilon)(\tilde{\nabla}^{\beta}D_{\vec{V}}^{l+1}\varepsilon)dx_{\tilde{g}}d\tau\right|. 
	\end{split}
	\end{align}
	By using Cauchy-Schwarz inequality and taking \(\nu\) small enough, a straight forward calculation shows that the last two terms on the right-hand of \(\eqref{3.13}\) can be bounded by the right-hand of \(\eqref{3.3}\). For the main term
	\begin{align*}
	\left| \int_{0}^{t}\int_{\calB(\tau)}\left[ \tilde{\square}D_{\vec{V}}^{l+1}\varepsilon-2e^{-2\lambda}\mu'\left( \partial_{r}D_{\vec{V}}^{l+1}\varepsilon\right) \right] (QD_{\vec{V}}^{l+1}\varepsilon)dx_{\tilde{g}}d\tau\right|,
	\end{align*}
	in fact, we need to consider the contribution 
	\begin{align*}
	\int_{0}^{t}\int_{0}^{R}r^{2}\left|H_{l}+\frac{1}{r}H_{l-1} \right|^{2}drd\tau.
	\end{align*}
	In view of Lemma \ref{2.5.} and the Hardy inequalities, all the terms can be bounded by the right-hand of \(\eqref{3.3}\) except for the top order linear terms
	\begin{align}\label{3.15}
	\int_{0}^{t}\int_{0}^{R}r^{2}\left| \partial_{r}D_{\vec{V}}^{l}V\right| ^{2}drd\tau+\int_{0}^{t}\int_{0}^{R}r^{2}\left| D_{\vec{V}}^{l+1}V\right| ^{2}drd\tau.
	\end{align}
	Notice that \(l\) is large enough, therefore using  \(\eqref{momentum eq SS}\) and computing commutators the contribution can be bounded by
	\begin{align*}
	\sum_{j+k\le l}\int_{0}^{t}\int_{0}^{R}r^{2}\left| \partial_{r}^{j}D_{\vec{V}}^{k}V\right| ^{2}drd\tau+\sum_{j+k\le l+1}\int_{0}^{t}\int_{0}^{R}r^{2}\left| \partial_{r}^{j}D_{\vec{V}}^{k}\varepsilon\right| ^{2}drd\tau+\int_{0}^{t}\scE_{l}^{\frac{3}{2}}(\tau)d\tau,
	\end{align*}
	which completes the proof of \(\eqref{3.12}\).\\
	{\bf Step\ 2:} Here we prove that
	\begin{align}
	E_{\le l}[V,t]\le C_{0}\scE_{l}(0)+\int_{0}^{t}\varrho\scE_{l}(s)+C_{1}\scE_{l-1}(s)+C_{2}\scE_{l}^{\frac{3}{2}}(s)ds.
	\end{align}
	Similar to the previous step, we consider the most difficult part \(E[D_{\vec{V}}^{l}V,t]\). In view of Lemma \(\ref{3.8.}\), we need to control
	\begin{align}\label{3.17}
	\int_{0}^{t}\int_{0}^{R}r^{2}\left( F_{l}+\frac{1}{r}F_{l-1}\right) \left( D_{\vec{V}}^{l+1}V\right) drd\tau+\int_{0}^{t}R^{2}\left( f_{l}\right) \left( D_{\vec{V}}^{l+1}V\right) (R)d\tau.
	\end{align}
	Using Cauchy-Schwarz and Hardy inequalities, the contribution of the first term of \(\eqref{3.17}\) can be bounded by
	\begin{align*}
		\sum_{j+k\le l+1}\int_{0}^{t}\int_{0}^{R}r^{2}\left| \partial_{r}^{j}D_{\vec{V}}^{k}V\right| ^{2}drd\tau+\sum_{j+k\le l+1}\int_{0}^{t}\int_{0}^{R}r^{2}\left| \partial_{r}^{j}D_{\vec{V}}^{k}\varepsilon\right| ^{2}drd\tau+\int_{0}^{t}\scE_{l}^{\frac{3}{2}}(\tau)d\tau,
	\end{align*}
	therefore it can be bounded by the right-hand of \(\eqref{3.3}\) in the same way as in the treatment of \(\eqref{3.15}\). For the second term of \(\eqref{3.17}\), by Lemma \(\ref{2.3.}\) and Cauchy-Schwarz we need to control
	\begin{align}\label{3.18}
	\sum_{k=0}^{l+1}\int_{0}^{t}R^{2}\left| \partial_{r}D_{\vec{V}}^{k}\varepsilon\right|^{2}(R)d\tau+\sum_{k=0}^{l}\int_{0}^{t}R^{2}\left| D_{\vec{V}}^{k}V\right|^{2}(R)d\tau+\int_{0}^{t}\scE_{l}^{\frac{3}{2}}(\tau)d\tau.
	\end{align}
	In view of Lemma \(\ref{3.9.}\) and \(\eqref{3.10.}\), the first term of \(\eqref{3.18}\) can be bounded by the right-hand of \(\eqref{3.3}\) in the same way as before. Combined with the previous discussion, we have completed the proof of Proposition \(\ref{3.1.}\).
\end{proof}

\section{Nonlinear instability}\label{sec7}
In this section, we describe how to prove the nonlinear instability. Based on the sharp nonlinear estimates in the previous section, we are now ready to show a bootstrap argument that allows us to control the growth of \(\scE_{l}(t)\) in terms of the linear growth rate \(\sqrt{-\nu_{\ast}}\). The idea comes from a bootstrap framework \cite{GHS}, which is key passage from linear instability to nonlinear instability. The following we make some necessary preparations. Define the Lagrangian variables
\begin{align*}
&\bar{V}=V\circ\eta \ \ (Lagrangian\ velocity),\\
&\bar{\varepsilon}=\varepsilon\circ\eta\ \ (Lagrangian\ enthalpy),
\end{align*}
and the corresponding energy
\begin{align*}
\bar{\scE_{l}}(t):=\sum_{j+k\le l+1}\left(  \int_{0}^{R_{\kappa}}y^{2}|\partial_{y}^{j}\partial_{t}^{k}\bar{V}|^{2}dy\right) ^{\frac{1}{2}}+\sum_{j+k\le l+1}\left( \int_{0}^{R_{\kappa}}y^{2}|\partial_{y}^{j}\partial_{t}^{k+1}\bar{\varepsilon}|^{2}dy\right) ^{\frac{1}{2}}+\sum_{k\le l+1}\left( R_{\kappa}|\partial_{t}^{k}\bar{V}|(R_{\kappa})\right) .
\end{align*}
The bootstrap assumptions \(\eqref{bootstrap}\) implies that the norm in the Lagrangian coordinate system is
equivalent to the norm in the Cartesian coordinate system, so Proposition \ref{3.1.} still holds in the Lagrangian coordinate system, that is
\begin{align}\label{estimate lagrangian}
\bar{\scE_{l}}(t)\le C_{0}\bar{\scE_{l}}(0)+\int_{0}^{t}\varrho\bar{\scE_{l}}(s)+C_{1}\bar{\scE}_{l-1}(s)+C_{2}\bar{\scE_{l}}^{\frac{3}{2}}(s)ds,
\end{align}
for some positive constants \(C_{0},C_{1},C_{2}\) and sufficiently small absolute constant \(\varrho\) to be chosen later. In order to control the growth rate of the linear term \(\bar{\scE}_{l-1}\), we apply Sobolev interpolation inequality and roughly illustrate the idea.
\begin{align}\label{6.2}
\begin{split}
\left( \int_{0}^{t}\bar{\scE}_{l-1}(s)ds\right) ^{2}
\simeq\sum_{j+k\le l}\int_{0}^{t}\int_{0}^{R_{\kappa}}y^{2}|\partial_{y}^{j}\partial_{t}^{k}\bar{V}|^{2}dyds+\sum_{j+k\le l}\int_{0}^{t}\int_{0}^{R_{\kappa}}y^{2}|\partial_{y}^{j}\partial_{t}^{k+1}\bar{\varepsilon}|^{2}dyds+\sum_{k\le l}\int_{0}^{t}R_{\kappa}^{2}|\partial_{t}^{k}\bar{V}|^{2}(R_{\kappa})ds.
\end{split}
\end{align}
The first term on the right-hand side of \eqref{6.2} can be written as
\begin{align}\label{interpolation}
\begin{split}
&\sum_{j+k\le l}\int_{0}^{t}\int_{0}^{R_{\kappa}}y^{2}|\partial_{y}^{j}\partial_{t}^{k}\bar{V}|^{2}dyds\\
\le&\theta_{1}\sum_{j+k\le l+1}\int_{0}^{t}\int_{0}^{R_{\kappa}}y^{2}|\partial_{y}^{j}\partial_{t}^{k}\bar{V}|^{2}dyds+C_{\theta_{1}}\sum_{j\le l}\int_{0}^{t}\int_{0}^{R_{\kappa}}y^{2}|\partial_{y}^{j}\bar{V}|^{2}dyds\\
\le&\theta_{1}\sum_{j+k\le l+1}\int_{0}^{t}\int_{0}^{R_{\kappa}}y^{2}|\partial_{y}^{j}\partial_{t}^{k}\bar{V}|^{2}dyds+\theta_{2}C_{\theta_{1}}\sum_{j\le l+1}\int_{0}^{t}\int_{0}^{R_{\kappa}}y^{2}|\partial_{y}^{j}\bar{V}|^{2}dyds+C_{\theta_{1}}C_{\theta_{2}}\int_{0}^{t}\int_{0}^{R_{\kappa}}y^{2}|\bar{V}|^{2}dyds,
\end{split}
\end{align}
where we apply Sobolev interpolation inequality to time norm and space norm with weight \(y^{2}\) respectively. Since \(\theta_{1},\theta_{2}\) can be small enough, we have
\begin{align*}
\left( \sum_{j+k\le l}\int_{0}^{t}\int_{0}^{R_{\kappa}}y^{2}|\partial_{y}^{j}\partial_{t}^{k}\bar{V}|^{2}dyds\right) ^{\frac{1}{2}}\le \int_{0}^{t}\varrho\bar{\scE}_{l}(s)+C_{\varrho}\Arrowvert\bar{V}\Arrowvert_{L^{2}(\Omega)}ds,
\end{align*}
for sufficiently small absolute constant \(\varrho\) to be chosen later. The remainder terms on the the right-hand side of \eqref{6.2} are bounded by the trace theorem and the same way. Combined with the previous discussion, we get
\begin{align}\label{6.4}
\bar{\scE}_{l}(t)\le C_{0}\bar{\scE_{l}}(0)+\int_{0}^{t}\varrho\bar{\scE_{l}}(s)+C_{2}\bar{\scE_{l}}^{\frac{3}{2}}(s)+C_{\varrho}\Arrowvert\bar{V},\bar{\varepsilon}\Arrowvert_{L^{2}(\Omega)}ds,
\end{align}
for some positive constants \(C_{0},C_{2},C_{\varrho}\) and sufficiently small absolute constant \(\varrho\) to be chosen later. In the statement of the following proposition, for any given \(\delta>0\) and \(\theta_{0}>\delta\), we define
\begin{align}\label{T delta}
T^{\delta}\equiv\frac{1}{\sqrt{-\nu_{\ast}}}\ln\frac{\theta_{0}}{\delta},
\end{align}
where \(\sqrt{-\nu_{\ast}}\) is the fastest linear growth mode.
\begin{proposition}\label{7.1.}
	Assume the stars have large enough central densities. For any sufficiently small \(\delta>0\), there exists a family of initial data \((\bar{V}^{\delta}(0),\bar{\varepsilon}^{\delta}(0))=\delta(\bar{V}_{0},\bar{\varepsilon}_{0})\) such that the perturbed solutions \((\bar{V}^{\delta}(t),\bar{\varepsilon}^{\delta}(t))\) to the Einstein-Euler system with equation of state \eqref{state eq} for \(t\in[0,T^{\delta}]\) satisfy
	\begin{align*}
	\Arrowvert\bar{V}^{\delta}(T^{\delta}),\bar{\varepsilon}^{\delta}(T^{\delta})\Arrowvert_{L^{2}(\Omega)}\ge \tau_{0}>0,
	\end{align*}
	where \(\tau_{0}\) is independent of \(\delta\).
\end{proposition}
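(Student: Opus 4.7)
The plan is to follow the standard bootstrap scheme of \cite{GHS}, decomposing the nonlinear evolution as a linear growing mode plus a quadratically small correction, and tracking both in $L^2$ up to the escape time $T^\delta$. By Proposition \ref{prop: linear unstable}, for $\kappa$ sufficiently large the operator $L$ admits a negative eigenvalue $\nu_\ast<0$ with eigenfunction $\chi_\ast\in H^1_r$, so the linearized equation \eqref{linear eq} has a growing mode $\zeta(y,t)=e^{\sqrt{-\nu_\ast}t}\chi_\ast(y)$. Unwinding the identifications \eqref{linear rho}--\eqref{linear p} and \eqref{linear eta}, I would produce initial data $(\bar V_0,\bar\varepsilon_0)$ of unit $L^2(\Omega)$-size, smooth and satisfying the compatibility/boundary conditions of the local well-posedness theorem of \cite{MS}, such that the linearized semigroup satisfies the lower bound
\begin{align*}
\bigl\| e^{tL}\delta(\bar V_0,\bar\varepsilon_0)\bigr\|_{L^2(\Omega)}\ \geq\ c_0\,\delta\, e^{\sqrt{-\nu_\ast}t},\qquad t\geq 0,
\end{align*}
for some $c_0>0$ independent of $\delta$. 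The scaled initial data $(\bar V^\delta(0),\bar\varepsilon^\delta(0)) = \delta(\bar V_0,\bar\varepsilon_0)$ will satisfy $\bar{\scE}_l(0)\lesssim \delta^2$.

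Next I would run a bootstrap for the nonlinear solution on the interval $[0,T^\delta]$ with the assumption
\begin{align*}
\bigl\|\bar V(t),\bar\varepsilon(t)\bigr\|_{L^2(\Omega)}\ \leq\ \tfrac12\,\delta\, e^{\sqrt{-\nu_\ast}t}.
\end{align*}
Feeding this into the sharp a priori estimate \eqref{6.4}, choosing the absolute constant $\varrho$ small so that the $\varrho\bar{\scE}_l$ term is absorbed by Gronwall, and using $\bar{\scE}_l(0)\lesssim\delta^2$, one obtains
\begin{align*}
\bar{\scE}_l(t)\ \leq\ C\delta^2 + C_\varrho\int_0^t \tfrac12\,\delta\, e^{\sqrt{-\nu_\ast}s}\,ds + C_2\int_0^t \bar{\scE}_l^{3/2}(s)\,ds\ \leq\ C'\delta\, e^{\sqrt{-\nu_\ast}t},
\end{align*}
as long as the cubic remainder is controlled, which holds provided $\delta\,e^{\sqrt{-\nu_\ast}t}$ stays small, i.e.\ for $t\le T^\delta$ with $\theta_0$ sufficiently small. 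This simultaneously verifies the high-order bootstrap hypotheses \eqref{bootstrap} used in Section \ref{sec6}.

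Finally, I would split $(\bar V,\bar\varepsilon) = e^{tL}\delta(\bar V_0,\bar\varepsilon_0) + (\bar V_{\mathrm{NL}},\bar\varepsilon_{\mathrm{NL}})$, where the correction solves an inhomogeneous linear equation with source given by the quadratic nonlinear terms of the quasilinear system derived in Section \ref{sec5}. By Duhamel and the energy control $\bar{\scE}_l(s)\le C'\delta e^{\sqrt{-\nu_\ast}s}$, the correction obeys
\begin{align*}
\bigl\|(\bar V_{\mathrm{NL}},\bar\varepsilon_{\mathrm{NL}})(t)\bigr\|_{L^2(\Omega)}\ \lesssim\ \int_0^t e^{\sqrt{-\nu_\ast}(t-s)}\,\bar{\scE}_l(s)^2\,ds\ \lesssim\ \delta^2 e^{2\sqrt{-\nu_\ast}t}.
\end{align*}
Evaluating at $t=T^\delta$, where $\delta e^{\sqrt{-\nu_\ast}T^\delta}=\theta_0$, the linear lower bound dominates the quadratic correction once $\theta_0$ is chosen sufficiently small (but independent of $\delta$), yielding
\begin{align*}
\bigl\|\bar V^\delta(T^\delta),\bar\varepsilon^\delta(T^\delta)\bigr\|_{L^2(\Omega)}\ \geq\ c_0\theta_0 - C\theta_0^2\ \geq\ \tfrac{c_0}{2}\theta_0\ =:\ \tau_0.
\end{align*}
The same inequality at earlier times reproduces the $\tfrac12\delta e^{\sqrt{-\nu_\ast}t}$ bootstrap, closing the argument.

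The principal obstacle is the second step: reconciling the high-regularity energy $\bar{\scE}_l$, whose natural a priori control \eqref{estimate lagrangian} is linear in $\bar{\scE}_l$ itself (not in the $L^2$ norm), with the desired sharp growth rate $\delta e^{\sqrt{-\nu_\ast}t}$. This is precisely what the interpolation identity \eqref{interpolation} accomplishes, trading a factor of top-order energy for a factor of $L^2$ norm plus a small-constant times top energy; executing it rigorously for both the interior bulk terms and the boundary trace terms at $\partial\calB$ is the delicate technical core. A secondary difficulty is choosing the initial data so that it projects onto the fastest growing mode of the full $(V,\varepsilon)$ linearization --- not merely the $\zeta$-linearization --- while remaining compatible with the Taylor sign condition \eqref{taylor sign condition} and the free-boundary compatibility conditions required by \cite{MS}.
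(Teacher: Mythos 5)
Your plan follows essentially the same route as the paper's proof: pick initial data along the fastest growing mode of the linearization (so that the semigroup bound is saturated), run a bootstrap with the sharp Gronwall-able estimate \eqref{6.4}, and compare the Duhamel correction to the linear lower bound at the escape time $T^\delta$. The structure is correct and the key ideas match.

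There is one genuine slip that would derail the argument as written: the $L^2$ bootstrap hypothesis $\|\bar V(t),\bar\varepsilon(t)\|_{L^2(\Omega)}\le \tfrac12\,\delta e^{\sqrt{-\nu_\ast}t}$ cannot close. With $\|\bar V_0,\bar\varepsilon_0\|_{L^2(\Omega)}=1$ and the linear semigroup lower bound $\|e^{tL}\delta(\bar V_0,\bar\varepsilon_0)\|_{L^2}\ge c_0\,\delta e^{\sqrt{-\nu_\ast}t}$ with $c_0$ of order one (indeed, with the exact eigenfunction it is $=\delta e^{\sqrt{-\nu_\ast}t}$), the Duhamel split gives $\|\Upsilon(t)\|_{L^2}\ge \delta e^{\sqrt{-\nu_\ast}t}-C\delta^2 e^{2\sqrt{-\nu_\ast}t}$, which already exceeds $\tfrac12\,\delta e^{\sqrt{-\nu_\ast}t}$ once $\theta_0$ is small; likewise the upper bound you obtain is $\le(1+C\theta_0)\delta e^{\sqrt{-\nu_\ast}t}$, which does not improve $\tfrac12$. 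The bootstrap constant must lie \emph{above} the linear rate, e.g.\ $\|\Upsilon(t)\|_{L^2}\le 2\delta e^{\sqrt{-\nu_\ast}t}$, which the Duhamel estimate then improves to $\tfrac32\delta e^{\sqrt{-\nu_\ast}t}$ — this is exactly what the paper does. A second, inconsequential slip: since $\bar{\scE}_l$ is defined with square roots (it scales linearly in $(\bar V,\bar\varepsilon)$), the initial bound should read $\bar{\scE}_l(0)\lesssim\delta$ rather than $\delta^2$; the Gronwall conclusion $\bar{\scE}_l(t)\lesssim\delta e^{\sqrt{-\nu_\ast}t}$ is unaffected. Finally, you should state the companion bootstrap hypothesis $\bar{\scE}_l^{1/2}(t)\le \sqrt{-\nu_\ast}/(4C_2)$ explicitly so that the cubic term $C_2\bar{\scE}_l^{3/2}$ is absorbed into the Gronwall term; your remark that ``$\delta e^{\sqrt{-\nu_\ast}t}$ stays small'' is the right intuition but not quite a closed assumption.
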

\begin{remark}
	The above result shows that no matter how small the amplitude of initial perturbed data is taken to be, we can find a solution such that the corresponding energy escapes at a time \(T^{\delta}\): the system is nonlinear instability.
\end{remark}

\begin{proof}
Formally the perturbation solution \(\Upsilon=(\bar{V},\bar{\varepsilon})\) of Einstein-Euler system satisfies
\begin{align}\label{duhamel}
\dot{\Upsilon}=L\Upsilon+N\Upsilon,
\end{align}
where \(L\) is the linearized operator and \(N\) is the nonlinear operator. According to the Sobolev embedding inequalities, we have
\begin{align*}
\Arrowvert N\Upsilon\Arrowvert_{L^{2}(\Omega)}\le C_{N}\bar{\scE_{l}}^{2},
\end{align*}
for some positive constant \(C_{N}\). In Section \ref{sec4} we prove that the existence of the fastest linear growth mode \(\sqrt{-\nu_{\ast}}\) of the linearized operator around steady states with large central density. This means that \(e^{tL}\) generates a strongly continuous semigroup on \(L^{2}(\Omega)\) such that
\begin{align*}
\Arrowvert e^{tL}\Arrowvert_{(L^{2}(\Omega),L^{2}(\Omega))}\le C_{L}e^{\sqrt{-\nu_{\ast}}t},
\end{align*}
for some positive constant \(C_{L}\). Consider a family of initial data \(\Upsilon^{\delta}(0)=\delta \Upsilon_{0}\) with \(\Arrowvert\Upsilon_{0}\Arrowvert_{L^{2}(\Omega)}=1\), where \(\Upsilon_{0}\) is the eigenfunction corresponding to the fastest growing modes of the linearized system satisfying
\begin{align}\label{5555}
\Arrowvert e^{tL}\Upsilon_{0}\Arrowvert_{L^{2}(\Omega)}=e^{\sqrt{-\nu_{\ast}}t}.
\end{align}
In order to finish instability argument, we need the following two additional bootstrap assumptions
\begin{align}\label{bootstrap2}
\bar{\scE_{l}}^{\frac{1}{2}}(t)\le\frac{\sqrt{-\nu_{\ast}}}{4C_{2}},\quad\quad \Arrowvert\Upsilon(t)\Arrowvert_{L^{2}(\Omega)}\le 2\delta e^{\sqrt{-\nu_{\ast}}t}.
\end{align}
Then we apply \(\eqref{6.4}\) with \(\varrho=\frac{\sqrt{-\nu_{\ast}}}{4}\), \(C_{\varrho}=C_{\nu_{\ast}}\) and obtain
\begin{align}
\begin{split}
\bar{\scE_{l}}(t)\le& C_{0}\bar{\scE_{l}}(0)+\int_{0}^{t}\frac{\sqrt{-\nu_{\ast}}}{4}\bar{\scE_{l}}(s)+C_{2}\bar{\scE_{l}}^{\frac{3}{2}}(s)+C_{\nu_{\ast}}\Arrowvert\Upsilon(s)\Arrowvert_{L^{2}(\Omega)}ds\\
\le&C_{0}\bar{\scE_{l}}(0)+\int_{0}^{t}\frac{\sqrt{-\nu_{\ast}}}{2}\bar{\scE_{l}}(s)+2C_{\nu_{\ast}}\delta e^{\sqrt{-\nu_{\ast}}s}ds\\
\le&C_{0}\bar{\scE_{l}}(0)+\frac{2C_{\nu_{\ast}}}{\sqrt{-\nu_{\ast}}}\delta e^{\sqrt{-\nu_{\ast}}t}+\int_{0}^{t}\frac{\sqrt{-\nu_{\ast}}}{2}\bar{\scE_{l}}(s)ds.
\end{split}
\end{align}
It follows from the Gronwall lemma that
\begin{align}\label{6.9}
\begin{split}
\bar{\scE_{l}}(t)\le& C_{0}\bar{\scE_{l}}(0)+\frac{2C_{\nu_{\ast}}}{\sqrt{-\nu_{\ast}}}\delta e^{\sqrt{-\nu_{\ast}}t}+\int_{0}^{t}\frac{\sqrt{-\nu_{\ast}}}{2}\left( C_{0}\bar{\scE_{l}}(0)+\frac{2C_{\nu_{\ast}}}{\sqrt{-\nu_{\ast}}}\delta e^{\sqrt{-\nu_{\ast}}s}\right) e^{\frac{\sqrt{-\nu_{\ast}}}{2}(t-s)}ds\\
\le& C_{0}\bar{\scE_{l}}(0)e^{\frac{\sqrt{-\nu_{\ast}}}{2}t}+\frac{2C_{\nu_{\ast}}}{\sqrt{-\nu_{\ast}}}\delta e^{\sqrt{-\nu_{\ast}}t}\\
\le&\tilde{C}(C_{\Upsilon_{0}}+1)\delta e^{\sqrt{-\nu_{\ast}}t},
\end{split}
\end{align}
where \(\tilde{C}=\max(\frac{2C_{\nu_{\ast}}}{\sqrt{-\nu_{\ast}}},C_{0})\) and \(C_{\Upsilon_{0}}>0\). Applying the Duhamel principle to \eqref{duhamel} yields
\begin{align}\label{dul1}
\begin{split}
\Arrowvert\Upsilon(t)-\delta e^{Lt}\Upsilon_{0}\Arrowvert_{L^{2}(\Omega)}=&\left| \left| \int_{0}^{t}e^{L(t-s)}N(\Upsilon(s))ds\right| \right| _{L^{2}(\Omega)}\\
\le& C_{L}\int_{0}^{t}e^{\sqrt{-\nu_{\ast}}(t-s)}\Arrowvert N(\Upsilon(s))\Arrowvert_{L^{2}(\Omega)}ds\\
\le&C_{L}C_{N}\int_{0}^{t}e^{\sqrt{-\nu_{\ast}}(t-s)}\bar{\scE_{l}}^{2}(s)ds\\
\le& 2\tilde{C}^{2}C_{L}C_{N}[C_{\Upsilon_{0}}^{2}+1]\delta^{2}\int_{0}^{t}e^{\sqrt{-\nu_{\ast}}(t-s)}e^{2\sqrt{-\nu_{\ast}}s}ds\\
\le& \bar{C}[C_{\Upsilon_{0}}^{2}+1]\delta^{2}e^{2\sqrt{-\nu_{\ast}}t},
\end{split}
\end{align}
where \(\bar{C}=\frac{2\tilde{C}^{2}C_{L}C_{N}}{\sqrt{-\nu_{\ast}}}\). Using \eqref{5555} and \eqref{dul1}, at the escape time \(t=T^{\delta}\) we have
\begin{align}\label{6.11}
\begin{split}
\Arrowvert\Upsilon(T^{\delta})\Arrowvert_{L^{2}(\Omega)}\ge&\left| \delta e^{\sqrt{-\nu_{\ast}}T^{\delta}}-\bar{C}[C_{\Upsilon_{0}}^{2}+1]\delta^{2}e^{2\sqrt{-\nu_{\ast}}T^{\delta}}\right| \\
\ge&\left| \theta_{0}-\bar{C}[C_{\Upsilon_{0}}^{2}+1]\theta_{0}^{2}\right| .
\end{split}
\end{align}
Then we close the bootstrap assumptions \eqref{bootstrap} and \eqref{bootstrap2} by taking an appropriate constant \(\theta_{0}\). Set
\begin{align*}
\theta_{0}=\min\left( \frac{1}{2\bar{C}[C_{\Upsilon_{0}}^{2}+1]},\frac{\left(\frac{\sqrt{-\nu_{\ast}}}{4C_{2}} \right) ^{2}}{4\tilde{C}[C_{\Upsilon_{0}}+1]},\frac{C_{l}^{\frac{1}{2}}}{2\tilde{C}[C_{\Upsilon_{0}}+1]},\frac{C_{\eta}}{2\tilde{C}[C_{\Upsilon_{0}}+1]}   \right).
\end{align*}
We have from \eqref{6.9} and \eqref{dul1},
\begin{align*}
\bar{\scE_{l}}^{\frac{1}{2}}(t)\le&\tilde{C}(C_{\Upsilon_{0}}+1)\delta e^{\sqrt{-\nu_{\ast}}T^{\delta}}\le \frac{\sqrt{-\nu_{\ast}}}{8C_{2}}\\
\Arrowvert\Upsilon(t)\Arrowvert_{L^{2}(\Omega)}\le&\left( 1+\bar{C}[C_{\Upsilon_{0}}^{2}+1]\delta e^{\sqrt{-\nu_{\ast}}T^{\delta}}\right) \delta e^{\sqrt{-\nu_{\ast}}t}\le\frac{3}{2}\delta e^{\sqrt{-\nu_{\ast}}t},
\end{align*}
which close the bootstrap assumptions \eqref{bootstrap2}. Applying the fundamental theorem of calculus, the bootstrap assumptions \eqref{bootstrap} can be closed in the same way as in the treatment of \eqref{bootstrap2}. We omit the details. Combined with the previous estimate \eqref{6.11}, we shall see at the escape time \(t=T^{\delta}\),
\begin{align*}
	\Arrowvert\Upsilon(T^{\delta})\Arrowvert_{L^{2}(\Omega)}\ge \left| \theta_{0}-\bar{C}[C_{\Upsilon_{0}}^{2}+1]\theta_{0}^{2}\right|\ge \frac{1}{2}\theta_{0},
\end{align*}
where \(\theta_{0}\) is independent of \(\delta\), which completes the proof of Proposition \ref{7.1.}.
\end{proof}

\bibliographystyle{plain}
\bibliography{ee}

\bigskip

\centerline{\scshape Zeming Hao}
\smallskip
{\footnotesize
	\centerline{School of Mathematics and Statistics, Wuhan University}
	\centerline{Wuhan, Hubei 430072, China}
	\centerline{\email{2021202010062@whu.edu.cn}}
}

\medskip

\centerline{\scshape Shuang Miao}
\smallskip
{\footnotesize
	\centerline{School of Mathematics and Statistics, Wuhan University}
	\centerline{Wuhan, Hubei 430072, China}
	\centerline{\email{shuang.m@whu.edu.cn}}
}

\end{document}